\theoremstyle{plain}
\newtheorem{thm}{Theorem}[section]
\newtheorem{lemma}[thm]{Lemma}
\newtheorem{prop}[thm]{Proposition}
\newtheorem{cor}[thm]{Corollary}
\newtheorem{thm1}{Theorem}
\newtheorem{cor1}{Corollary}
\theoremstyle{definition}
\newtheorem{defi}[thm]{Definition}
\newtheorem{rmk}[thm]{Remark}
\newtheorem{ex}[thm]{Example}
\newcommand{\R}{\mathbb{R}}
\newcommand{\C}{\mathbb{C}}
\newcommand{\Z}{\mathbb{Z}}
\newcommand{\N}{\mathbb{N}}
\newcommand{\g}{\mathfrak{g}}
\newcommand{\id}{\mathbbm{1}}
\providecommand{\MR}{\relax\ifhmode\unskip\space\fi MR }
\providecommand{\href}[2]{#2}
\title[Globalizations of infinitesimal actions on supermanifolds]
{Globalizations of infinitesimal actions on supermanifolds}
\author{Hannah Bergner}
\address{Fakult\"at f\"ur Mathematik, Ruhr-Universit\"at Bochum, Universit\"atsstr. 150, D-44780 Bochum, Germany}
\email{Hannah.Bergner-C9q@rub.de}
\thanks{Financial support by SFB/TR 12 ``Symmetries and Universality in
Mesoscopic Systems'' of the DFG is gratefully acknowledged.}
\begin{document}
\begin{abstract}
 Let $\mathcal G$ be a Lie supergroup with Lie superalgebra $\g$, $\mathcal M$ a 
supermanifold and $\mathrm{Vec}(\mathcal M)$ the set of vector fields on 
$\mathcal M$. Let $\lambda:\g\rightarrow \mathrm{Vec}(\mathcal M)$ be an 
infinitesimal action, i.e. a homomorphism of Lie superalgebras.
 We show the existence of a local $\mathcal G$-action on $\mathcal M$ inducing 
the infinitesimal action $\lambda$ and find necessary and sufficient conditions 
for the existence of a 
 globalization in the sense of Palais.
\end{abstract}

\maketitle
\tableofcontents
\section{Introduction}
Let $\g$ be a finite-dimensional Lie superalgebra, $\mathcal G$ any Lie 
supergroup with $\g$ as its Lie algebra of right-invariant vector fields, 
$\mathcal M$ a supermanifold and denote the set of vector fields on $\mathcal M$ 
by $\mathrm{Vec}(\mathcal M)$.
Any action $\varphi:\mathcal G\times\mathcal M\rightarrow \mathcal M$, or local 
action, of the Lie supergroup $\mathcal G$ on $\mathcal M$
induces an infinitesimal action on $\mathcal M$, i.e. a homomorphism of Lie 
superalgebra $\lambda: \g\rightarrow\mathrm{Vec}(\mathcal M)$,
by setting 
$$\lambda(X)=(X(e)\otimes \mathrm{id}_\mathcal M^*)\circ\varphi^*.$$
The vector field $(X\otimes \mathrm{id}_\mathcal M^*)$ denotes the 
extension of the right-invariant vector field $X$ on $\mathcal G$ to a vector 
field 
on $\mathcal G\times\mathcal M$, and 
$(X(e)\otimes \mathrm{id}_\mathcal M^*)$ is its evaluation in the neutral 
element $e$ of $\mathcal G$.
Starting with an infinitesimal action $\lambda: 
\g\rightarrow\mathrm{Vec}(\mathcal M)$ of $\mathcal G$ on $\mathcal M$, 
it is a natural question to ask in which cases this infinitesimal action is 
induced, in the just describes way, by a local or global action of $\mathcal G$ 
on $\mathcal M$, or some
larger supermanifold $\mathcal M^*$ containing $\mathcal M$ as an open 
subsupermanifold.

In the case of a smooth manifold $M$ and a Lie group $G$, Palais studied these 
questions in detail (\cite{Palais}).
Concerning the existence of local actions, he showed that every infinitesimal 
action $\lambda$ is induced by a local action of $G$ on $M$.
This generalizes the fact that the flow of any vector field on $M$ defines a 
local $\R$-action on $M$.

In the case of one (not necessarily homogeneous) vector field $X$ on a 
supermanifold,
Monterde and S\'anchez-Valenzuela, and Garnier and Wurzbacher proved that the 
flow 
$\varphi:\mathcal W\subseteq \R^{1|1}\times\mathcal M\rightarrow\mathcal M$ of 
$X$ defines a local $\R^{1|1}$-action on $\mathcal M$
if and only if $X$ is contained in a $1|1$-dimensional Lie subsuperalgebra $\g$ 
of $\mathrm{Vec}(\mathcal M)$
(see \cite{MonterdeSanchez} and \cite{GarnierWurzbacher}). In 
\cite{GarnierWurzbacher} the same is also shown for a holomorphic vector field 
on 
a complex supermanifold.

In \cite{Palais}, Palais also found necessary and sufficient conditions for the 
existence of a globalization,
i.e. a (possibly non-Hausdorff) manifold $M^*$, containing on $M$ as an open 
submanifold, with an $G$-action on $M^*$ that induces $\lambda$ and satisfies 
$G\cdot M=M^*$.

In this paper, we extend these results to the case of (real or complex) 
supermanifolds and (real or complex) Lie supergroups.
The existence of a local actions with a given infinitesimal actions and 
conditions for the existence of globalizations
are proven.
A key point in the proof is, as in the classical case in \cite{Palais}, the 
study of the distribution $\mathcal D=\mathcal D_\lambda$ on the product 
$\mathcal G\times\mathcal M$
spanned by vector fields of the form
$$X+\lambda(X)\text{  for  } X\in\g,$$
considering $X$ and $\lambda(X)$ as vector fields on the product $\mathcal 
G\times\mathcal M$. 
Also, the fact that the flow of one even vector field on a supermanifold defines 
a local $\R$-action, or $\C$-action in the complex case,
is used (see \cite{MonterdeSanchez} and \cite{GarnierWurzbacher}).

The outline of this paper is the following:\\
First, some notations are introduced and a few basic definitions are given. 
In Section~$\S 3$, facts about distributions on supermanifolds are collected.
Then, the relation between infinitesimal and local actions on supermanifolds is 
studied in $\S 4$. 
The main result there is the equivalence of infinitesimal and local actions up 
to shrinking:

\begin{thm1}
 Let $\lambda:\g\rightarrow \mathrm{Vec}(\mathcal M)$ be an infinitesimal 
action.
Then there exists a local $\mathcal G$-action $\varphi:\mathcal W\subseteq 
\mathcal G\times\mathcal M\rightarrow \mathcal M$ on $\mathcal M$ with induced 
infinitesimal action $\lambda$.

Moreover, any local action $\varphi:\mathcal W\rightarrow \mathcal M$ is 
uniquely determined by its induced infinitesimal action and domain of 
definition.
\end{thm1}

In Section $\S 5$, conditions for the existence of globalizations are studied. 
To generalize the classical result of Palais, the notion 
of univalence of an infinitesimal action $\lambda$ is extended to
the case of supermanifolds. 
In the case of an infinitesimal action $\lambda$ whose underlying action is 
globalizable, the obstruction for an $\lambda$ to be globalizable 
is a holonomy phenomenon. 
The main result for the conditions of globalizability is the following:

\begin{thm1}
 The infinitesimal action $\lambda:\g\rightarrow\mathrm{Vec}(\mathcal M)$ is 
globalizable if and only if one of the following equivalent conditions is 
satisfied:
 \begin{enumerate}[(i)]
  \item The restricted infinitesimal action 
$\lambda|_{\g_0}:\g_0\rightarrow\mathrm{Vec}(\mathcal M)$ is globalizable to an 
action of $G$, where $\g_0$ denotes the even part of $\g$.
  \item The infinitesimal action $\lambda$ is univalent.
  \item The underlying infinitesimal action is globalizable,
  and all leaves $\Sigma\subset G\times M$ of the distribution $\mathcal 
D_\lambda$ are ``holonomy free''.
 \end{enumerate}
\end{thm1}

Condition $(iii)$ of this theorem together with the appropriate classical results 
of Palais yields the following in Section $\S 6$:

\begin{cor1}\label{cor: Cor 1}
 Let $\mathcal G$ be a simply-connected Lie supergroup and 
$\lambda:\g\rightarrow \mathrm{Vec}(\mathcal M)$ an
 infinitesimal action whose support is relatively compact in $M$. 
 Then there exists a global $\mathcal G$-action on $\mathcal M$ which induces 
$\lambda$.
 
 In particular, there is a one-to-one correspondence between infinitesimal and 
global actions of a simply-connected Lie supergroup on 
supermanifold with compact underlying manifold.
 \end{cor1}

\begin{cor1}
 Let $\lambda:\g\rightarrow\mathrm{Vec}(\mathcal M)$ be an infinitesimal action 
of a simply-connected Lie supergroup $\mathcal G$
 and let $\{X_i\}_{i\in I}$, $X_i\in \g_0$ be a set of generators of $\g_0$ such 
that underlying vector fields of 
 $\lambda(X_i)$, $i\in I$, on $M$ have global flows.
 Then there exists a global $\mathcal G$-action on $\mathcal M$ which induces 
$\lambda$.
\end{cor1}

A special version of Corollary 2 in the context of DeWitt supermanifolds was 
published by Tuynman in \cite{Tuynman}.

I would like to thank Peter Heinzner for suggesting this topic and helpful discussions.

\section{Notation}
Throughout, we work with the ``Berezin-Leites-Kostant''-approach to 
supermanifolds (see, e.g. \cite{Berezin}, \cite{Leites}, or \cite{Kostant}). 
A supermanifold is denoted by a calligraphic letter, e.g. $\mathcal M$, and the 
underlying manifold by the corresponding standard uppercase letters, e.g. $M$. 
The structure sheaf of a supermanifold $\mathcal M$ shall be denoted by 
$\mathcal O_\mathcal M$. 
If not otherwise mentioned, the considered supermanifolds are assumed to be real 
supermanifolds, i.e. the structure sheaf is locally isomorphic to $\mathcal 
C_U^{\infty}\otimes\bigwedge \R^n$ 
for appropriate open subsets $U\subset M$, where $\mathcal C_U^\infty$ denotes 
the sheaf of smooth functions on $U$. 
By a complex supermanifold a supermanifold whose structure sheaf is locally 
isomorphic to $\mathcal O_U\otimes\bigwedge \C^n$ is meant, where $\mathcal O_U$ 
is the sheaf of holomorphic 
functions on $U$.

If $\varphi:\mathcal M\rightarrow \mathcal N$ is a morphism between 
supermanifolds $\mathcal M$ and $\mathcal N$, let $\tilde{\varphi}:M\rightarrow 
N$ be the underlying map and
$\varphi^*:\mathcal O_\mathcal N\rightarrow \tilde{\varphi}_*\mathcal O_\mathcal 
M$ its pullback, i.e. $\varphi=(\tilde{\varphi},\varphi^*)$.

A (smooth/holomorphic) vector field or derivation $X$ on a (real/complex) 
supermanifold $\mathcal M$ is a graded 
(real-/complex-linear) derivation of sheaves $X:\mathcal O_\mathcal M\rightarrow 
\mathcal O_\mathcal M$. 
The notation for the sheaf of derivations or tangent sheaf will be 
$\mathrm{Der}\,\mathcal O_\mathcal M$ or $\mathcal T_\mathcal M$, and
$\mathrm{Vec}(\mathcal M)=\mathrm{Der}\,\mathcal O_\mathcal M(M)$. 
$\mathrm{Vec}(\mathcal M)$ is a Lie superalgebra, possibly of infinite 
dimension.

A (real/complex) Lie supergroup can be defined to be a group object in the 
category of (real/complex) supermanifolds, i.e. a (real/complex) supermanifold 
$\mathcal G=(G,\mathcal O_\mathcal G)$ together
with morphisms for multiplication, inversion and the neutral element such that 
the usual group axioms are satisfied. 
The underlying manifold $G$ is a classical Lie group. In the following, we 
always assume that $G$ is connected.
A vector field $X$ on a Lie supergroup $\mathcal G$ is called right-invariant if 
$\mu^*\circ X=(X\otimes \mathrm{id}_\mathcal G^*)\circ\mu^*$
\footnote{The vector field $(X\otimes\mathrm{id}_\mathcal G^*)$ is the extension 
of the vector field $X$ on 
$\mathcal G$ to a vector field on the product $\mathcal G\times\mathcal G$ such 
that $X=(X\otimes\mathrm{id}_\mathcal G^*)\circ\pi_1^*$ and 
$0=(X\otimes\mathrm{id}_\mathcal G^*)\circ\pi_2^*$ if $\pi_i:\mathcal 
G\times\mathcal G\rightarrow \mathcal G$, $i=1,2$, is the projection onto the 
$i$-th component.}, 
where $\mu:\mathcal G\times\mathcal G\rightarrow \mathcal G$ denotes the 
multiplication on $\mathcal G$.
We define the Lie superalgebra $\g$ of $\mathcal G$ to be the set of 
right-invariant vector fields on $\mathcal G$. Its even part 
$\g_0$ can be identified with the Lie algebra (of right-invariant vector fields) 
of $G$.

\begin{defi}
 A local action of a Lie supergroup $\mathcal G=(G,\mathcal O_\mathcal G)$ on a 
supermanifold $\mathcal M=(M,\mathcal O_\mathcal M)$ is a morphism
$\varphi:\mathcal W\rightarrow \mathcal M$, $\mathcal W=(W,\mathcal O_{\mathcal 
G\times \mathcal M}|_W)$, where $W$ is an open neighbourhood of 
$\{e\}\times M$ in 
$G\times M$ such that $W_p=\{g\in G|\,(g,p)\in W\}$ is connected for each $p\in 
M$, satisfying the action properties:
\begin{enumerate}[(i)]
 \item If $\iota_e:\mathcal M\rightarrow \{e\}\times \mathcal M\subset \mathcal 
G\times \mathcal M$ is the canonical inclusion, then $\varphi\circ 
\iota_e=\text{id}_\mathcal M$.
\item If $\mu:\mathcal G\times \mathcal G\rightarrow G$ is the multiplication on 
$\mathcal G$, then the equation 
      $$\varphi\circ(\mu\times \text{id}_\mathcal M)=\varphi\circ 
(\text{id}_\mathcal G\times \varphi)$$ holds on the open subsupermanifold of 
      $\mathcal G\times\mathcal G\times\mathcal M$ where both sides are defined.
\end{enumerate}
\end{defi}

\section{Distributions on supermanifolds}

\subsection{Commuting vector fields}

Let $\mu:\R^{m|n}\times\R^{m|n}\rightarrow \R^{m|n}$ denote the addition on 
$\R^{m|n}$ 
which is given by $((s,\sigma),(t,\tau))\mapsto (s+t,\sigma +\tau)$ in 
coordinates.
The goal of this section is the proof of the following result:.

\begin{thm}\label{thm: commuting vector fields and local actions}
 Let $\mathcal{M}=(M,\mathcal{O}_\mathcal{M})$ be a supermanifold, 
$X_1,\ldots,X_m$ be even and $Y_1,\ldots,Y_n$ odd vector fields on $\mathcal{M}$
 which all commute. Then, for any $p\in M$ there exists an open connected 
neighbourhood $U$ of $0$ in $\R^m$, an open neighbourhood $V$ 
of $p$ in $M$ and a morphism
$\varphi=(\tilde{\varphi},\varphi^*): \mathcal{U}\times \mathcal{V} 
\rightarrow\mathcal{M}$, where $\mathcal{U}=(U,\mathcal{O}_{\R^{m|n}}|_U)$ and 
$\mathcal{V}=(V,\mathcal{O}_\mathcal{M}|_V)$, such that:
\begin{enumerate}[(i)]
 \item The map $\varphi$ has the action property, i.e. we have $\varphi\circ 
\iota_0=\mathrm{id}_\mathcal{M} $ if $\iota_0:\mathcal M\hookrightarrow 
\{0\}\times \mathcal M
\subset \R^{m|n}\times\mathcal M$ denotes the canonical inclusion, 
particular the evaluation in $(t,\tau)=(0,0)$ is the pullback $\iota_0^*$), 
and the equality 
$\varphi\circ(\mathrm{id}_{\R^{m|n}}\times 
\varphi)=\varphi\circ(\mu\times\mathrm{id}_\mathcal{M})$
holds on the open
subsupermanifold of $\R^{m|n}\times \R^{m|n}\times\mathcal{M}$ where both sides 
are defined.

\item If $(t,\tau)$ are coordinates on $\R^{m|n}$, then for 
 all $i=1,\ldots,m$, $j=1\ldots,n$ we have
$$\frac{\partial}{\partial t_i}\circ \varphi^*=\varphi^*\circ X_i
\ \ \ \text{ and }\ \ \ 
\frac{\partial}{\partial \tau_j}\circ \varphi^*=\varphi^*\circ Y_j.
\footnote{Considering $\frac{\partial}{\partial t_i}$ and 
$\frac{\partial}{\partial \tau_j}$ as vector fields on the 
product $\R^{m|n}\times\mathcal M$.}$$
\end{enumerate}
By replacing $\R^{m|n}$ by $\C^{m|n}$ an analogous result also holds true for a 
complex supermanifold $\mathcal M$ and holomorphic vector fields 
$X_1,\ldots,X_m$ and $Y_1,\ldots,Y_n$.
\end{thm}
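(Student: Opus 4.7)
The plan is to build $\varphi$ in two stages: first handle the even commuting vector fields by their flows, then extend to the odd directions algebraically, exploiting the nilpotency inherent in odd vector fields.

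\emph{Stage 1 (even part).} For each $X_i$, the single even vector-field flow theorem cited in the introduction (following Monterde--S\'anchez-Valenzuela and Garnier--Wurzbacher) supplies a local $\R$-action $\Phi_i$ on $\mathcal M$ with infinitesimal generator $X_i$. Pairwise commutation $[X_i,X_k]=0$ forces $\Phi_i\circ\Phi_k=\Phi_k\circ\Phi_i$ by the standard uniqueness argument for the underlying flow equation, so the compositions assemble into a local $\R^m$-action $\varphi_0:\mathcal U\times\mathcal V\to\mathcal M$ satisfying $\frac{\partial}{\partial t_i}\circ\varphi_0^*=\varphi_0^*\circ X_i$ on a suitable neighbourhood of $\{0\}\times V$.

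\emph{Stage 2 (odd extension).} The commutation hypotheses, read in the super-bracket, give $Y_j^2=0$, $Y_jY_k=-Y_kY_j$ for $j\ne k$, and $X_iY_j=Y_jX_i$. I propose
$$\varphi^*(f):=\sum_{J\subseteq\{1,\dots,n\}}\tau_J\,\varphi_0^*(Y_Jf),\qquad J=\{j_1<\dots<j_k\},\ \tau_J=\tau_{j_1}\cdots\tau_{j_k},\ Y_J=Y_{j_1}\cdots Y_{j_k},$$
equivalently $\varphi^*=\varphi_0^*\circ\prod_{j=1}^{n}(\mathrm{id}+\tau_jY_j)$. The factors $(\mathrm{id}+\tau_jY_j)$ commute pairwise because $Y_j^2=\tau_j^2=0$ and the anticommutation signs between distinct $Y_j$'s cancel those between distinct $\tau_j$'s. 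The Leibniz rule with graded signs shows $\varphi^*$ is an algebra homomorphism, so it defines a morphism $\varphi$.

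Properties (i) and (ii) are then routine. Pulling back along $\iota_0$ kills all $t_i$ and $\tau_j$, leaving only the $J=\varnothing$ term at $t=0$, which is $\mathrm{id}_\mathcal M$. The identity $\frac{\partial}{\partial t_i}\circ\varphi^*=\varphi^*\circ X_i$ is inherited from $\varphi_0^*$ via $[X_i,Y_j]=0$, and $\frac{\partial}{\partial \tau_j}\circ\varphi^*=\varphi^*\circ Y_j$ follows from a direct term-by-term calculation using $Y_j^2=0$ and $\frac{\partial}{\partial \tau_j}\tau_J=\pm\tau_{J\setminus\{j\}}$.

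The main obstacle is the associativity axiom in (i), which I intend to settle by a uniqueness argument rather than a direct formula manipulation. Both morphisms $\varphi\circ(\mathrm{id}_{\R^{m|n}}\times\varphi)$ and $\varphi\circ(\mu\times\mathrm{id}_\mathcal M)$ are defined on the same open subsupermanifold of $\R^{m|n}\times\R^{m|n}\times\mathcal M$ and coincide when the first $\R^{m|n}$-factor sits at the origin, both restricting there to $\varphi$. Regarding them as pullbacks depending on the additional coordinates $(s,\sigma)$, property (ii) combined with the left-invariance of $\frac{\partial}{\partial s_i},\frac{\partial}{\partial\sigma_j}$ under $\mu^*$ shows that both sides satisfy the same first-order system in $(s,\sigma)$ with the same initial data. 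Classical ODE uniqueness along the even directions, together with a finite linear expansion in the odd variables, then forces equality. The holomorphic statement proceeds identically, invoking the complex one-vector-field flow theorem from \cite{GarnierWurzbacher} in place of its real counterpart.
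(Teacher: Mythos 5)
Your construction is essentially the paper's: compose the commuting flows of the $X_i$ to obtain the even part, then adjoin the odd directions via $\prod_{j}(\mathrm{id}+\tau_j Y_j)=\exp\big(\sum_j \tau_j Y_j\big)$, which is exactly the morphism $\beta\circ(\mathrm{id}_{\R^m}\times\alpha)$ used in the paper (your two orderings of $\varphi_0^*$ and the odd exponential agree precisely because $[X_i,Y_j]=0$ lets the flow pullbacks commute with the $Y_j$). The only divergence is the verification of associativity---the paper deduces it from the separate action properties of $\beta$ and $\alpha$ together with Corollary~\ref{cor: commuting flows}, whereas you invoke uniqueness of solutions of the first-order system in $(s,\sigma)$ with fixed initial data---but both arguments rest on the same ingredients (the commutation corollary and uniqueness of superflows), so this is a cosmetic rather than substantive difference.
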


\begin{rmk}
 Note that $\varphi$ locally defines a local action of $\R^{m|n}$ on $\mathcal 
M$ in the sense that
 $\varphi$ would be a local action if the assumption that $\{0\}\times \mathcal 
M$ is contained in the domain of definition was dropped.
\end{rmk}

The proof makes use of the flows of vector fields on supermanifolds.

\begin{defi}
Let $X$ be an even vector field on a supermanifold $\mathcal M$. 
A flow of $X$ (with respect to the initial condition $\mathrm{id}_ \mathcal 
M:\mathcal M\rightarrow\mathcal M$ and $t_0=0\in\R$)
is a morphism $\varphi=\varphi^X:\mathcal{W}\rightarrow \mathcal{M}$,
where $\mathcal{W}=(W,\mathcal{O}_{\R\times\mathcal{M}}|_W)$ and $W\subseteq 
\R\times M$ is an open neighbourhood of $\{0\}\times M$ such that 
$W_p=\{t\in\R|(t,p)\in W\}\subseteq\R$ 
is connected for each $p\in M$, with 
\begin{enumerate}[(i)]
\item $\varphi\circ\iota_0=\mathrm{id}_\mathcal{M}$, and
\item $\frac{\partial}{\partial t}\circ \varphi^*=\varphi^*\circ X$.
\end{enumerate}

The flow $\varphi:\mathcal W\rightarrow \mathcal M$ is called maximal if for any 
flow 
$\varphi':\mathcal W'=(W',\mathcal O_{\R\times \mathcal M}|_{W'})\rightarrow 
\mathcal M$ of $X$
we have $W'\subseteq W$ and $\varphi'=\varphi|_{W'}$.
\end{defi}

\begin{rmk}\label{rmk: reduced vector field}
 Any vector field $X$ on a supermanifold $\mathcal M$ induces a vector field 
$\tilde{X}$ on 
the underlying manifold $M$. The reduced vector field $\tilde X$ can be defined 
by $\tilde{X}(f)=\mathrm{ev}(X(F))$,
if $F$ is a function on $\mathcal M$ with $\mathrm{ev}(F)=\tilde{F}=f$,
where $\mathrm{ev}:\mathcal O_\mathcal M\rightarrow \mathcal C^\infty_M$ denotes 
the evaluation map.
\end{rmk}

As in the classical case, there exists a unique maximal flow of an even vector 
field on a supermanifold.

\begin{thm}[see \cite{MonterdeSanchez}, Theorem 3.5/3.6, or 
\cite{GarnierWurzbacher}, Theorem 2.3 and Theorem 3.4]\label{thm: flow of a 
vector field}
Let $X$ be an even vector field on a supermanifold $\mathcal{M}$. then there 
exists a unique maximal flow $\varphi$ of $X$.
The reduced map $\tilde{\varphi}:W\rightarrow M$ is then the unique maximal flow 
of the reduced vector field $\tilde{X}$ on $M$. 
Moreover, the flow $\varphi$ defines a local $\R$-action on $\mathcal{M}$.
\end{thm}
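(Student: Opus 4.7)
The plan is in two stages: first build a local $\R^m$-action from the even vector fields using the flow theorem, then twist by the odd vector fields using nilpotency of the odd coordinates. The action property is proved at the end via a uniqueness argument.

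\textbf{Step 1 (even part).} Let $\psi^{X_i}$ be the maximal flow of $X_i$ from Theorem~\ref{thm: flow of a vector field}. From $[X_i,X_j]=0$ and the uniqueness clause of that theorem, both $\psi^{X_i}_{s}\circ\psi^{X_j}_{t}$ and $\psi^{X_j}_{t}\circ\psi^{X_i}_{s}$ are flows of $X_j$ in the variable $t$ that agree at $t=0$, so they coincide on their common domain. Iterating, I set
$$\psi^*:=\psi^{X_m}_{t_m}{}^*\circ\cdots\circ\psi^{X_1}_{t_1}{}^*$$
on a small enough neighbourhood $U_0\times V_0$ of $(0,p)$ in $\R^m\times M$. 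This yields a morphism $\psi:\mathcal U_0\times\mathcal V_0\to\mathcal M$ with $\psi\circ\iota_0=\mathrm{id}_\mathcal M$, $\partial_{t_i}\circ\psi^*=\psi^*\circ X_i$, and the local $\R^m$-action property.

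\textbf{Step 2 (odd twist).} For each $j$, the operator $\tau_j Y_j$ on $\R^{m|n}\times\mathcal V_0$ is even and nilpotent: $(\tau_j Y_j)^2=-\tau_j^2\,Y_j^2=0$, since $\tau_j^2=0$ and $Y_j^2=\tfrac12[Y_j,Y_j]=0$. The operators $\tau_j Y_j$ and $\tau_k Y_k$ pairwise commute thanks to $[Y_j,Y_k]=0$ and the Koszul sign rule. I define
$$\varphi^*\;:=\;\psi^*\circ\exp\Bigl(\sum_{j=1}^n\tau_j Y_j\Bigr)\;=\;\psi^*\circ\prod_{j=1}^n(\id+\tau_j Y_j),$$
a finite product of $2^n$ terms. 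Property (i) follows because specializing $t=\tau=0$ collapses $\varphi^*$ to $\mathrm{id}$. The identity $\partial_{t_i}\circ\varphi^*=\varphi^*\circ X_i$ is immediate once $X_i$ is moved across each $Y_j$ (they commute) and $\partial_{t_i}$ across each $\tau_j$. The identity $\partial_{\tau_j}\circ\varphi^*=\varphi^*\circ Y_j$ is a direct differentiation of the explicit expansion $\varphi^*(f)=\sum_{I}\tau_I\,\psi^*(Y^I f)$, keeping track of signs from the ordering of $\tau_I$ and $Y^I$.

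\textbf{Step 3 (action property).} To establish $\varphi\circ(\mu\times\mathrm{id}_\mathcal M)=\varphi\circ(\mathrm{id}_{\R^{m|n}}\times\varphi)$, I argue by uniqueness. Both sides are morphisms from an open subsupermanifold of $\R^{m|n}\times\R^{m|n}\times\mathcal V_0$ to $\mathcal M$; they agree when the first $\R^{m|n}$-variable $(s,\sigma)$ equals $0$ (both then reduce to $\varphi$), and both satisfy the differential identities $\partial_{s_i}\circ(\cdot)^*=(\cdot)^*\circ X_i$ and $\partial_{\sigma_j}\circ(\cdot)^*=(\cdot)^*\circ Y_j$ with respect to $(s,\sigma)$, a consequence of (ii) applied to $\varphi$. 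Uniqueness for such a system is obtained by expanding in the odd coordinates $\sigma$: the $\sigma$-constant coefficient is determined by the classical uniqueness of flows of the reduced even vector fields (Theorem~\ref{thm: flow of a vector field}), and each higher order coefficient is then determined by successive applications of $\partial_{\sigma_j}$.

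The main obstacle is Step 3: one must organize the uniqueness argument so that it simultaneously controls the classical flow on the underlying manifold and the odd expansion. The sign bookkeeping in Step 2 is a close second difficulty, but is mechanical once a fixed total order for $\tau_I$ and $Y^I$ is chosen. The holomorphic case is identical after replacing $\R$ by $\C$ and invoking the complex version of the flow theorem cited in Theorem~\ref{thm: flow of a vector field}.
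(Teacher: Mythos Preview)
Your proposal does not address the stated theorem. The statement you were asked to prove is the existence and uniqueness of the maximal flow of a \emph{single} even vector field $X$ on $\mathcal M$ (together with the facts that the reduced map is the classical flow of $\tilde X$ and that the flow defines a local $\R$-action). Instead, you have written a proof of Theorem~\ref{thm: commuting vector fields and local actions}: starting from commuting even vector fields $X_1,\ldots,X_m$ and odd vector fields $Y_1,\ldots,Y_n$, you construct a local $\R^{m|n}$-action. Worse, in Step~1 you invoke Theorem~\ref{thm: flow of a vector field} itself as an input (``Let $\psi^{X_i}$ be the maximal flow of $X_i$ from Theorem~\ref{thm: flow of a vector field}''), so as a proof of the stated result your argument is circular.

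For the record, the paper does not prove Theorem~\ref{thm: flow of a vector field} either: it is quoted from \cite{MonterdeSanchez} and \cite{GarnierWurzbacher}, where the argument proceeds by expanding in the odd coordinates and reducing to a system of classical ODEs on the coefficients. If your intention was in fact to prove Theorem~\ref{thm: commuting vector fields and local actions}, then your approach is essentially the same as the paper's: compose the individual flows $\varphi^{X_i}$ to get the even part, then apply the nilpotent exponential $\exp(\sum_j\tau_jY_j)$ for the odd part. The only notable differences are that the paper writes the composition in the order $\varphi^*=\exp(\sum_j\tau_jY_j)\circ(\varphi^{X_1}_{t_1})^*\circ\cdots\circ(\varphi^{X_m}_{t_m})^*$ (which agrees with yours thanks to Corollary~\ref{cor: commuting flows}~(ii)), and that the paper verifies the action property by a direct calculation using the action properties of the even and odd pieces separately together with Corollary~\ref{cor: commuting flows}, rather than by the uniqueness argument you sketch in Step~3.
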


\begin{rmk}
For any even holomorphic vector field $X$ on a complex supermanifold~$\mathcal 
M$ there also exists a holomorphic flow 
$\varphi:\mathcal W\subseteq \C\times\mathcal M\rightarrow\mathcal M$ 
(see \cite{GarnierWurzbacher}, Theorem 5.4).
But there are differences to the real case in terms of the possible domains of 
definitions of the flow; 
see \cite{GarnierWurzbacher} for details and examples.

As in the classical case, a flow $\varphi:\mathcal W\rightarrow \mathcal M$ of 
an even holomorphic vector field may not satisfy the equation
$\varphi\circ(\mathrm{id}_\C\times \varphi)=\varphi\circ (\mu_\C\times 
\mathrm{id}_\mathcal M)$ on all of the open subsupermanifold of 
$\C\times\C\times\mathcal M$
on which both sides of the equation are defined, denoting the multiplication on 
$\C$ by $\mu_\C$, and thus not define
a local $\C$-action on its domain of definition. One obtains a local $\C$-action 
only after shrinking its
domain of definition in a suitable way; for example after shrinking $W$ such 
that each $W_p=\{z\in \C|\,(z,p)\in W\}\subseteq \C$ is convex.
\end{rmk}

\begin{lemma}
 Let $X$ and $Y$ be vector fields on $\mathcal{M}$ and let $Y$ be even with flow 
$\varphi^Y$. 
If $\iota_t:\mathcal M\hookrightarrow \{t\}\times\mathcal M\subset 
\R\times\mathcal M$ is the canonical inclusion, denote by $\varphi^Y_t$ the 
composition $\varphi^Y\circ \iota_t$.
Then $$[X,Y]=\left.\frac{\partial}{\partial t}\right|_0(\varphi^Y_t)_*X$$ 
for $(\varphi^Y_t)_*X=(\varphi^Y_{-t})^*\circ X\circ (\varphi^Y_t)^*$, 
$\left.\frac{\partial}{\partial t}\right|_s=\iota_s^*\circ 
\frac{\partial}{\partial t}$ for $s\in \R$.
\end{lemma}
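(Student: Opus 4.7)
The plan is to compute the right-hand side directly, using the flow equation together with a two-parameter chain rule. As a preliminary step, I would derive the identity $\tfrac{\partial}{\partial t}\big|_0 \circ (\varphi^Y_t)^* = Y$ by applying $\iota_0^*$ to both sides of the defining flow equation $\tfrac{\partial}{\partial t} \circ (\varphi^Y)^* = (\varphi^Y)^* \circ Y$ and using $\iota_0^* \circ (\varphi^Y)^* = (\varphi^Y \circ \iota_0)^* = \mathrm{id}_{\mathcal O_\mathcal M}$. Precomposing $\varphi^Y$ with the sign-reversal $\nu : \R \to \R$, $t \mapsto -t$, and running the same argument yields the companion formula $\tfrac{\partial}{\partial t}\big|_0 \circ (\varphi^Y_{-t})^* = -Y$.

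Next, for a local section $f \in \mathcal O_\mathcal M$, I would introduce the two-parameter family
$$G(u,v) := (\varphi^Y_u)^* \bigl(X\bigl((\varphi^Y_v)^* f\bigr)\bigr),$$
defined on a neighbourhood of $\{(0,0)\} \times M \subset \R^2 \times M$. Viewing $G$ as a genuine section of $\mathcal O_{\R^2 \times \mathcal M}$ obtained by composable pullbacks and the extension of $X$ to the product, the preliminary step gives $\partial_u G(0,0,\cdot) = Y(Xf)$ (set $v = 0$, differentiate in $u$) and $\partial_v G(0,0,\cdot) = X(Yf)$ (set $u = 0$, differentiate in $v$, using that $X$ is $\R$-linear and commutes with pullback by the identity).

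Observing that $(\varphi^Y_t)_* X(f) = G(-t, t)$ and applying the chain rule for the commuting even coordinate vector fields $\partial_u$ and $\partial_v$ along the curve $t \mapsto (-t, t)$, I would conclude
$$\left.\tfrac{\partial}{\partial t}\right|_0 (\varphi^Y_t)_* X(f) \;=\; -\partial_u G(0,0) + \partial_v G(0,0) \;=\; -Y(Xf) + X(Yf) \;=\; [X,Y](f),$$
where the last equality uses that $Y$ is even, so the super bracket reduces to the ordinary commutator $XY - YX$ regardless of the parity of $X$.

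The main obstacle is making the two-parameter chain rule rigorous in the supermanifold framework: one has to realize $G$ as an honest morphism of sheaves with values in $\mathcal O_{\R^2 \times \mathcal M}$, so that the symbols $\partial_u G$ and $\partial_v G$ literally mean the application of the commuting even coordinate vector fields on $\R^2 \times \mathcal M$, and then invoke the ordinary super Leibniz rule for these derivations together with the flow equation. Once this setup is in place, the computation above closes the proof.
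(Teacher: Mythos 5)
Your argument is correct, but it takes a genuinely different route from the paper's. The paper proves the lemma by mimicking Kobayashi--Nomizu, Proposition 1.9: the key ingredient there is a Hadamard-type Taylor expansion $(\varphi^Y_t)^*(f)=f+t\hat g_t$ with $\hat g_0=Y(f)$, where $\hat g$ is an honest section of $\mathcal O_{\R\times\mathcal M}$; substituting this into $(\varphi^Y_{-t})^*\circ X\circ(\varphi^Y_t)^*$ and differentiating at $t=0$ (flow equation for the outer factor, product rule for the factor $t\hat g_t$) yields $-Y(Xf)+X(Yf)=[X,Y]f$. You instead package the two occurrences of the flow into the two-parameter section $G(u,v)=(\varphi^Y_u)^*\bigl(X((\varphi^Y_v)^*f)\bigr)$ of $\mathcal O_{\R^2\times\mathcal M}$ and differentiate along the curve $t\mapsto(-t,t)$ by the chain rule. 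Both devices serve the same purpose --- avoiding pointwise difference quotients, which are meaningless for sections of a supermanifold's structure sheaf --- and both ultimately reduce to the flow equation $\frac{\partial}{\partial t}\circ(\varphi^Y)^*=(\varphi^Y)^*\circ Y$ together with the observation that, $Y$ being even, the super bracket is the plain commutator $XY-YX$ whatever the parity of $X$. The paper's expansion keeps everything on $\R\times\mathcal M$ at the cost of introducing the auxiliary remainder $\hat g$; your version trades that for the (routine but genuinely necessary) verification that $G$ is a bona fide section of $\mathcal O_{\R^2\times\mathcal M}$ on which the even coordinate fields $\partial_u,\partial_v$ act and commute with the extension of $X$ and with evaluation in the other variable --- a point you correctly single out as the only real technical burden. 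Your preliminary identities $\frac{\partial}{\partial t}\big|_0\circ(\varphi^Y_{\pm t})^*=\pm Y$ and the sign bookkeeping along $t\mapsto(-t,t)$ are all in order.
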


\begin{proof}
The lemma can be proven in a very similar way as the anologous classical result 
(see e.g. \cite{KobayashiNomizu}, Proposition 1.9).
A key point is the Taylor expansion $(\varphi^Y_t)^*(f)=f+t\hat{g}_t$ with 
$\hat{g}_0=Y(f)$,
where $\hat{g}\in \mathcal O_{\R\times \mathcal M} (I\times V)$, $I\times 
V\subseteq \R\times M$ open, $\hat{g}_t(x)=\hat{g}(t,x)$.
\end{proof}

As a corollary we get, as in the classical case (cf. \cite{KobayashiNomizu}, 
Corollary 1.10 and 1.11):

\begin{cor}\label{cor: commuting flows}
 Under the assumptions of the above lemma, we have:
\begin{enumerate}[(i)]
 \item $\left.\frac{\partial}{\partial 
t}\right|_s(\varphi^Y_t)_*X=(\varphi^Y_s)_*[X,Y]$
\item If $[X,Y]=0$, then $(\varphi^Y_t)_*X=X$, i.e. $(\varphi^Y_t)^*\circ 
X=X\circ(\varphi^Y_t)^*$, for all $t$.
\item If $[X,Y]=0$ and $X$ is also even, then the flows of $X$ and $Y$ commute, 
i.e. $\varphi^X_t\circ\varphi^Y_s=\varphi^Y_s\circ \varphi^X_t$ for all $s$, 
$t$.
\end{enumerate}
\end{cor}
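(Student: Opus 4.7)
The plan is to deduce all three parts from the preceding lemma together with the local $\R$-action property of $\varphi^Y$ provided by Theorem \ref{thm: flow of a vector field}.

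For (i), I would first rewrite $(\varphi^Y_{s+t})_* X = (\varphi^Y_s)_*\bigl((\varphi^Y_t)_* X\bigr)$, which follows from the action identity $\varphi^Y_{s+t} = \varphi^Y_s \circ \varphi^Y_t$ on the relevant open subsupermanifold after converting to pullbacks via the definition of pushforward. Differentiating at $t = 0$ and applying the preceding lemma to identify $\left.\frac{\partial}{\partial t}\right|_0 (\varphi^Y_t)_* X = [X,Y]$ then yields the claim, modulo the routine observation that the $\R$-linear operator $(\varphi^Y_s)_*$ commutes with the limit defining $\left.\frac{\partial}{\partial t}\right|_0$.

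For (ii), part (i) shows that the curve $t \mapsto (\varphi^Y_t)_* X$ has identically vanishing $t$-derivative; since $W_p$ is connected for each $p$ by the definition of the flow, this curve is constant in $t$, and its value at $t = 0$ is $X$.

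For (iii), the idea is the standard one: fix $s$ and set $\alpha_t := \varphi^X_t \circ \varphi^Y_s$ and $\beta_t := \varphi^Y_s \circ \varphi^X_t$. Both have $\alpha_0 = \beta_0 = \varphi^Y_s$. A direct calculation shows $\frac{\partial}{\partial t}\alpha_t^* = \alpha_t^* \circ X$ immediately from the flow equation for $X$, while the analogous identity for $\beta_t^*$ needs part (ii) to commute $X$ past $(\varphi^Y_s)^*$. Because the uniqueness assertion in Theorem \ref{thm: flow of a vector field} is phrased for the initial condition $\mathrm{id}_\mathcal M$, I would reduce to that case by post-composing each family with $\varphi^Y_{-s}$ on a small enough neighbourhood (so that $\varphi^Y_{-s}$ inverts $\varphi^Y_s$ there, by the local $\R$-action property). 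Both $\alpha_t \circ \varphi^Y_{-s}$ and $\beta_t \circ \varphi^Y_{-s}$ then satisfy the flow equation for $X$ starting from $\mathrm{id}_\mathcal M$, so by uniqueness they coincide, which gives $\alpha_t = \beta_t$.

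The main obstacle I anticipate is the careful bookkeeping of domains of definition in (iii), since $\varphi^Y_s$ is only a locally defined morphism of supermanifolds and $\varphi^X_t \circ \varphi^Y_s$, $\varphi^Y_s \circ \varphi^X_t$ are defined only on overlapping but not identical open subsets of $\R \times \R \times M$; the computational content is formally the same as in the classical proof.
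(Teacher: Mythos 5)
Your proposal is correct and follows essentially the same route as the paper, which gives no proof of its own but refers to the classical argument (Kobayashi--Nomizu, Corollaries 1.10 and 1.11): (i) via the group property $(\varphi^Y_{s+t})_*=(\varphi^Y_s)_*(\varphi^Y_t)_*$ and the preceding lemma, (ii) by vanishing derivative on the connected sets $W_p$, and (iii) by showing both compositions satisfy the flow equation of $X$ and invoking uniqueness of flows. The reduction to the initial condition $\mathrm{id}_\mathcal{M}$ by conjugating with $\varphi^Y_{-s}$, together with your acknowledgment of the domain bookkeeping, is exactly the care needed to transcribe that classical argument to the supermanifold setting.
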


\begin{proof}[Proof of Theorem \ref{thm: commuting vector fields and local 
actions}]
Let $X_1,\ldots, X_m$ be even and $ Y_1,\ldots, Y_n$ odd vector fields on 
$\mathcal{M}$ which all commute. Furthermore,  let 
$\varphi^{X_i}:\mathcal{W}_i\rightarrow \mathcal{M}$ denote
the flow of $X_i$ for any $i$. By Corollary~\ref{cor: commuting flows}~$(iii)$ 
these flows all commute.
Given $p\in M$, there exist an open neighbourhood $V\subseteq M$ of $p$ and an 
open connected neighbourhood $U\subseteq \R^m$ of $0$ such that 
$$\beta:U\times \mathcal{V}\rightarrow \mathcal{M},\, 
\beta_t=\varphi^{X_1}_{t_1}\circ\ldots\circ\varphi^{X_m}_{t_m},$$
is defined, where $\mathcal{V}=(V,\mathcal{O}_\mathcal{M}|_V)$.
Since the flows $\varphi^{X_i}$ commute and each flow defines a local 
$\R$-action on $\mathcal{M}$, the map $\beta$ has the action property, i.e.
${\beta_0}^*=\iota_0^*\circ\beta^*=\text{id}_\mathcal{M}^*$ and  
$\beta_s\circ\beta_s=\beta_{s+t}$ for all $s,t$ such that both sides of the 
equation are defined.

Let $\tau_1,\ldots,\tau_n$ be coordinates on $\R^{0|n}$ and define 
$$\alpha:\R^{0|n}\times \mathcal{M}\rightarrow \mathcal{M}  \text{ by 
}\alpha^*(f)=\exp \bigg(\sum_{j=1}^n \tau_j Y_j\bigg)(f).
\footnotemark$$ The underlying map is $\tilde{\alpha}=\text{id}_M$.
The sum $\exp(\sum_{j=1}^n \tau_j Y_j)(f)=\sum_{k=0}^\infty \frac 1 {k!} 
(\sum_{j=1}^n \tau_j Y_j)^k (f)$ is finite because $(\sum_{j=1}^n \tau_j 
Y_j)^{n+1}=0$.
\footnotetext{
The sum $\exp(\sum_{j=1}^n \tau_j Y_j)(f)=\sum_{k=0}^\infty \frac 1 {n!} 
(\sum_{j=1}^n \tau_j Y_j)^k (f)$ is to be understood in the following way: 
The vector fields $Y_j$, which are a priori vector fields on $\mathcal{M}$, are 
considered as vector fields on the product $\R^{0|n}\times \mathcal M $ and 
similarly $f$ is  considered as a function on this product. Hence, $\tau_j 
Y_j(f)$ here in fact means 
$\tau_j\cdot (\text{id}_{\R^{0|n}}^*\otimes Y_j)(  \pi_ \mathcal M^*(f))$, 
where $\tau_j$ is now considered as a coordinate on $\R^{0|n}\times 
\mathcal{M}$, 
$\pi_\mathcal M:\R^{0|n}\times \mathcal{M}\rightarrow \mathcal{M}$ is the 
canonical projection onto $\mathcal{M}$ and $\text{id}_{\R^{0|n}}^*\otimes Y_j$ 
is the extension of $Y_j$ to a vector field on $\R^{0|n}\times \mathcal{M}$.
}
Since the odd vector fields $Y_j$ all commute, i.e. $[Y_i,Y_j]=Y_iY_j+Y_jY_i=0$, 
we get 
$(\sigma_i Y_i)(\tau_j Y_j)=(\tau_j Y_j)(\sigma_i Y_i),$ 
and thus  
$\exp(\sum_{i} \sigma_i Y_i)\exp(\sum_{j} \tau_j Y_j)=\exp(\sum_{k} 
(\sigma_k+\tau_k) Y_k)$
if $(\sigma_1,\ldots,\sigma_n,\tau_1,\ldots,\tau_n)$ are coordinates on 
$\R^{0|n}\times \R^{0|n}$.
Consequently, the map $\alpha$ defines an $\R^{0|n}$-action on $\mathcal{M}$.

Now, let $\mathcal{W}=(U\times V,\mathcal{O}_{\R^{m|n}\times 
\mathcal{M}}|_{U\times V})$ and define 
$$\varphi:\mathcal{W}\rightarrow\mathcal{M},\,\varphi=\beta\circ(\text{id}_{\R^m
}\times \alpha)\text{ on }\mathcal{W}.$$
Then $\varphi^*(f)=\exp(\sum_{j}\tau_j 
Y_j)(\varphi^{X_1}_{t_1})^*\circ\ldots\circ(\varphi^{X_m}_{t_m})^*(f)$. 
The map $\varphi$ satisfies $\varphi\circ \iota_0=\mathrm{id}_\mathcal{M}$ and 
$\frac{\partial}{\partial t_i}\circ\varphi^*
=X_i\circ\varphi^*,
 \frac{\partial}{\partial \tau_j}\circ\varphi^*=Y_j\circ\varphi^*$ for all 
$i,j$,
making use of the commutativity of the vector fields and Corollary \ref{cor: 
commuting flows}.
A calculation using the action properties of $\alpha$ and $\beta$ and again 
Corollary~\ref{cor: commuting flows} shows that
$\varphi\circ(\text{id}_{\R^{m|n}}\times \varphi)
=\varphi\circ(\mu\times\text{id}_\mathcal{M})
$
on the open subsupermanifold of $\R^{m|n}\times\R^{m|n}\times \mathcal{M}$ on 
which both $\varphi\circ(\text{id}_{\R^{m|n}}\times \varphi)$ and
$\varphi\circ(\mu\times\text{id}_\mathcal{M})$ are defined.
\end{proof}
\begin{rmk}
 The complex version of the result can be proven along the lines of the real 
case using holomorphic flows and an analogue of Corollary \ref{cor: commuting 
flows} for holomorphic vector fields.
\end{rmk}

\subsection{Distributions and Frobenius theorem}

\begin{defi}
 A (smooth or holomorphic) distribution $\mathcal{D}$ on a (real or complex) 
supermanifold $\mathcal{M}$ is 
 a graded $\mathcal{O}_\mathcal{M}$-subsheaf 
of the tangent sheaf 
$\mathcal{T}_\mathcal{M}=\text{Der}\,\mathcal{O}_\mathcal{M}$ of $\mathcal{M}$ 
which is locally a direct factor, 
i.e. for each point $p\in M$ there exists an open neighbourhood $U$ of $p$ in 
$M$ and a subsheaf $\mathcal{E}$ of $\text{Der}\,\mathcal{O}_\mathcal{M}|_U$ on 
$U$ such that 
$\mathcal{D}|_U\oplus  \mathcal{E}=\text{Der}\,\mathcal{O}_\mathcal{M}|_U$.
\end{defi}

\begin{rmk}[cf. \cite{Varadarajan}, Section 4.7]
Locally there exist independent vector fields $X_1,\ldots,$ $X_r,$%
$Y_1,\ldots ,Y_s$ spanning a distribution $\mathcal{D}$, where the $X_i$ are 
even and the $Y_j$ are odd. 
Moreover, $(r|s)=\dim \mathcal{D}(p)$ for any $p\in M$ (if $M$ is connected) and 
$(r|s)$ is called the rank of the distribution $\mathcal{D}$.

If $\psi:\mathcal M\rightarrow \mathcal N$ is a diffeomorphism and $\mathcal D$ 
is a distribution on $\mathcal M$, then there is a distribution 
$\psi_*(\mathcal D)$ on $\mathcal N$ which is spanned by vector fields of the 
form $\psi_*(X)=(\psi^{-1})^*\circ X\circ \psi^*$ for vector fields $X$ on 
$\mathcal M$ belonging 
to $\mathcal D$.
\end{rmk}

\begin{rmk}\label{rmk: reduced distribution}
A distribution $\mathcal D$ of rank $(r|s)$ on a supermanifold $\mathcal M$ 
induces a distribution 
$\widetilde{\mathcal D}$ of rank $r$ on the underlying classical manifold $M$ by 
defining 
$\widetilde{\mathcal D}(p)=\{\tilde X (p)|\,X\in\mathcal D\}\subseteq T_pM$ for 
$p\in M$. 
A vector field on $M$ belongs to the reduced distribution $\widetilde{\mathcal 
D}$ if and only if it is the reduced vector field $\tilde X$ of some 
vector field $X$ on $\mathcal M$ belonging to $\mathcal D$. 
\end{rmk}

As in the classical case, one can define the notion of an involutive 
distribution.
\begin{defi}
 A distribution $\mathcal{D}$ is called involutive if $\mathcal{D}_p$ is a Lie 
subsuperalgebra of $(\text{Der}\,\mathcal{O}_\mathcal{M})_p$ for each $p\in M$, 
i.e. if for any
two vector fields $X$ and $Y$ on $\mathcal{M}$ belonging to $\mathcal{D}$ their 
commutator $[X,Y]$ also belongs to $\mathcal{D}$.
\end{defi}

The local structure of an involutive distribution on a supermanifold is 
described by the following version of Frobenius theorem.

\begin{thm}[Local Frobenius Theorem, cf. \cite{Varadarajan} Theorem 
4.7.1]\label{thm: local Frobenius}
 A distribution $\mathcal{D}$ on a real (resp. complex) supermanifold 
$\mathcal{M}$ is involutive if and only if there are local coordinates 
$(x, \theta)=(x_1,\ldots,x_m,\theta_1,\ldots,\theta_n)$ around each point $p\in 
M$ such that $\mathcal{D}$ is locally spanned by $\frac{\partial}{\partial 
x_1},\ldots,\frac{\partial}{\partial x_r},
\frac{\partial}{\partial \theta_1},\ldots,\frac{\partial}{\partial \theta_s}$.
\end{thm}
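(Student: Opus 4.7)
The plan is to prove the nontrivial direction—involutivity implies existence of straightening coordinates—by reducing to Theorem~\ref{thm: commuting vector fields and local actions}. The converse is immediate: the coordinate vector fields $\partial/\partial x_i$ and $\partial/\partial \theta_j$ all pairwise super-commute, hence any distribution spanned by a subset of them is automatically involutive.

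First, I would produce a pairwise super-commuting local frame of $\mathcal{D}$ near $p$. Starting from any local frame $X_1,\dots,X_r,Y_1,\dots,Y_s$ of $\mathcal{D}$ (with $X_i$ even, $Y_j$ odd), I would choose supermanifold coordinates $(u,\eta)=(u_1,\dots,u_m,\eta_1,\dots,\eta_n)$ around $p$ such that $X_i(p)=\partial/\partial u_i|_p$ for $i\le r$ and $Y_j(p)=\partial/\partial \eta_j|_p$ for $j\le s$. Expanding the frame in these coordinates and inverting the leading $(r|s)\times(r|s)$ super-matrix of coefficients (whose value at $p$ is the identity, hence invertible in a neighborhood of $p$), I would obtain an equivalent frame of $\mathcal{D}$ of the normal form
$$X'_i=\frac{\partial}{\partial u_i}+\sum_{k>r}a^k_i\frac{\partial}{\partial u_k}+\sum_{\ell>s}b^\ell_i\frac{\partial}{\partial \eta_\ell},\qquad Y'_j=\frac{\partial}{\partial \eta_j}+\sum_{k>r}c^k_j\frac{\partial}{\partial u_k}+\sum_{\ell>s}d^\ell_j\frac{\partial}{\partial \eta_\ell}.$$
A direct computation shows that any super-commutator of two fields in this form has no component along $\partial/\partial u_i$ ($i\le r$) or $\partial/\partial \eta_j$ ($j\le s$). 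On the other hand, involutivity forces the commutator to lie in $\mathcal{D}$; expanding this dependence in the frame $\{X'_i,Y'_j\}$ itself and comparing coefficients of the leading terms shows that all such commutators must vanish identically.

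Next, I would apply Theorem~\ref{thm: commuting vector fields and local actions} to the commuting vector fields $X'_1,\dots,X'_r,Y'_1,\dots,Y'_s$ to obtain a morphism $\varphi:\mathcal{U}\times\mathcal{V}\to\mathcal{M}$, defined near $(0,p)$, with $\partial/\partial t_i\circ\varphi^*=\varphi^*\circ X'_i$ and $\partial/\partial\tau_j\circ\varphi^*=\varphi^*\circ Y'_j$. Restricting $\mathcal{V}$ to the transversal slice $\mathcal{V}_0\subset\mathcal{V}$ cut out by $u_1=\cdots=u_r=0$ and $\eta_1=\cdots=\eta_s=0$, I would verify that the differential of $\varphi$ at $(0,p)$ sends the standard frame of $\R^{r|s}$ to $\partial/\partial u_i|_p$ and $\partial/\partial \eta_j|_p$ (by the form of the commuting frame) and fixes the coordinate directions tangent to $\mathcal{V}_0$ (since $\varphi\circ\iota_0=\mathrm{id}_\mathcal{M}$). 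Hence $\varphi|_{\mathcal{U}\times\mathcal{V}_0}$ is a local diffeomorphism, and its inverse pulls back the product coordinates $(t_1,\dots,t_r,u_{r+1},\dots,u_m,\tau_1,\dots,\tau_s,\eta_{s+1},\dots,\eta_n)$ to coordinates $(x,\theta)$ on $\mathcal{M}$ in which $X'_i=\partial/\partial x_i$ and $Y'_j=\partial/\partial \theta_j$; since these span $\mathcal{D}$, the theorem follows. The complex case proceeds identically using the holomorphic version of Theorem~\ref{thm: commuting vector fields and local actions}.

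The main obstacle is the bookkeeping in Step~1: verifying that the super-matrix normalization is valid over an entire open neighborhood of $p$, and then extracting the vanishing of the super-commutators from the combination of the normal form and involutivity. Once the pairwise commuting frame is in hand, the appeal to Theorem~\ref{thm: commuting vector fields and local actions} and the transversal-slice argument are essentially mechanical.
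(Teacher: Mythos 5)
Your proposal is correct and follows essentially the same route as the paper, which also sketches the proof by (a) noting the converse is immediate, (b) producing a homogeneous pairwise super-commuting local frame of $\mathcal{D}$ via the classical normal-form argument (the paper cites Lee, Theorem 19.10, for exactly the matrix-inversion and leading-coefficient comparison you carry out), and (c) applying Theorem~\ref{thm: commuting vector fields and local actions} to straighten the frame. Your transversal-slice argument correctly fills in the detail, left implicit in the paper, of how the morphism $\varphi$ from that theorem yields the desired coordinates.
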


If $\mathcal D$ is locally spanned by $\frac{\partial}{\partial 
x_1},\ldots,\frac{\partial}{\partial x_r},
\frac{\partial}{\partial \theta_1},\ldots,\frac{\partial}{\partial \theta_s}$, 
the involutiveness of $\mathcal{D}$ follows directly. 
A key point in the proof of the local Frobenius theorem is the existence of 
homogeneous 
commuting vector fields which locally span the distribution. Their existence can 
be proven as in the classical case (as for example in \cite{Lee}, Theorem 
19.10). Then the desired coordinates can be found by 
applying Theorem~\ref{thm: commuting vector fields and local actions}.

\begin{defi}
 Let $\mathcal{D}$ be a distribution of rank $(r|s)$ on a supermanifold 
$\mathcal{M}=(M,\mathcal{O}_\mathcal{M})$.
An $(r|s)$-dimensional subsupermanifold $j:\mathcal{N}\rightarrow \mathcal{M}$, 
$\mathcal{N}=(N,\mathcal{O}_\mathcal{N})$, of $\mathcal{M}$ is called an 
integral manifold of $\mathcal{D}$ 
through $p\in M$ if
\begin{enumerate}[(i)]
 \item the point $p$ is contained in $\tilde{\jmath}(N)$, and
\item the distribution $\mathcal D$ is tangent to $\mathcal N$, i.e. for any 
vector field $X$ on $\mathcal{M}$ belonging to $\mathcal{D}$ there exists a 
vector field $\bar{X}$ on $\mathcal{N}$ such that $\bar{X}\circ j^*=j^*\circ X$, 
or equivalently
$X(\ker j^*)\subseteq \ker j^*$, 
and all vector fields on $\mathcal{N}$ arise in this way. 
\end{enumerate}
\end{defi}

\begin{rmk}
 In the case of supermanifolds, integral manifolds of a distribution do not 
provide as much information about the distribution as in the classical case.

Contrary to the classical case, there is no global version of a Frobenius 
theorem for the above defined notion of an integral manifold; the existence of 
integral manifolds through every
point is not equivalent to the involutiveness of a distribution. 
Nevertheless, the local Frobenius theorem still guarantees the existence of 
integral manifolds through every point for an involutive distribution.
\end{rmk}

\begin{ex}[A non-involutive distribution with integral manifolds]\label{ex: 
non-involutive distribution}
 Let $\mathcal{M}=\R^{0|2}$, with coordinates $\theta_1$ and $\theta_2 $, and 
let $\mathcal{D}$ be the distribution on $\mathcal{M}$ 
spanned by the odd vector field $$X= \frac{\partial}{\partial\theta_1} 
+\theta_1\theta_ 2\frac{\partial}{\partial \theta_2}.$$
The distribution $\mathcal{D}$ is not involutive since 
$[X,X]=2XX=2\theta_2\frac{\partial}{\partial\theta_2}\notin\mathcal{D}$,
but $\mathcal{N}=\R^{0|1}\times\{0\}\subset\mathcal{M}$ is an integral manifold 
of $\mathcal D$, and thus there exist integral manifolds through each point 
(which is only $0$).
Moreover, remark that the involutive distribution spanned by 
$\frac{\partial}{\partial \theta_1}$ has the same integral manifolds as 
$\mathcal{D}$.
\end{ex}

\section{Infinitesimal and local group actions}
In the classical case, the flow $\varphi^X:\Omega\subseteq \R\times M\rightarrow 
M$ of a vector field~$X$ on a manifold $M$ defines a local $\R$-action on $M$.
More generally, as proven in \cite{Palais}, Chapter II, if we have a Lie algebra 
homomorphism $\lambda:\g\rightarrow \text{Vec}(M)$ 
of a finite dimensional Lie algebra $\g$ into the 
Lie algebra of vector fields on $M$, there is a local action 
$\varphi:\Omega\subseteq G\times M\rightarrow M$ of a Lie group $G$, 
with Lie algebra of right-invariant vector fields $\g$, such that
its induced infinitesimal action 
$$\g\rightarrow \text{Vec}(M),\,X\mapsto \left.\frac{\partial}{\partial 
t}\right|_0\varphi(\exp(tX),-)=(X(e)\otimes \text{id}_{M}^*)\circ \varphi^*$$ 
coincides with $\lambda$. A typical example is the case where $X_1,\ldots,X_n$ 
are vector fields on $M$ whose
$\R$-span $\g=\text{span}_\R\{X_1,\ldots,X_n\}$  is a Lie subalgebra of 
$\text{Vec}(M)$ and $\lambda$ is the inclusion $\g\hookrightarrow 
\text{Vec}(M)$.

The goal in this section is the proof of an analogous theorem on supermanifolds. 
For that purpose a suitable distribution on $\mathcal G\times\mathcal M$ is 
introduced, as in the classical case in \cite{Palais}.

This also generalizes the result in \cite{MonterdeSanchez} and 
\cite{GarnierWurzbacher} that the flow of one vector field $X$ on a 
supermanifold $\mathcal M$  
is a local $\R^{1|1}$-action if and only if $X$ is contained in a 
$1|1$-dimensional Lie subsuperalgebra $\g\subset \mathrm{Vec}(\mathcal M)$.

The results in this section are formulated for the real case, but are equally 
true in the complex case, i.e. for complex supermanifolds $\mathcal M$, 
complex Lie supergroups $\mathcal G$ and morphisms $\lambda:\g\rightarrow 
\mathrm{Vec}(\mathcal M)$ of complex Lie superalgebras.

\begin{prop}
 Let $\varphi$ be a local action of a Lie supergroup $\mathcal G$, with Lie 
superalgebra $\g$, on a supermanifold $\mathcal M$. 
Then $(X(e)\otimes \mathrm{id}_\mathcal M^*)\circ \varphi^* $ is a vector field 
on $\mathcal M$ for any $X\in \g$.\footnotemark
The map $$\lambda_\varphi:\g\rightarrow \mathrm{Vec}(\mathcal 
M)=\mathrm{Der}\,\mathcal O_\mathcal M(M),\ X\mapsto (X(e)\otimes 
\mathrm{id}_\mathcal M^*)\circ\varphi^*$$
is a homomorphism of Lie superalgebras.

\footnotetext{Here, and in the following, $(X\otimes \mathrm{id}_\mathcal M^*)$ 
denotes again the canonical 
extension of the vector field $X$ on
$\mathcal G$ to a vector field on $\mathcal G\times \mathcal M$,
and $(X(e)\otimes\mathrm{id}_\mathcal M^*)$ is its evaluation in $e$, i.e. 
$(X(e)\otimes\mathrm{id}_\mathcal M^*)=\iota_e^*\circ (X\otimes 
\mathrm{id}_\mathcal M^*)$.}

\end{prop}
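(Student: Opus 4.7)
The plan is to establish the key intertwining identity
\[
(X\otimes \mathrm{id}_{\mathcal M}^*)\circ \varphi^* \;=\; \varphi^*\circ \lambda_\varphi(X),
\]
where $X\otimes \mathrm{id}_{\mathcal M}^*$ denotes the extension of $X\in\g$ to a vector field on $\mathcal G\times \mathcal M$. From this identity everything else will follow formally: the assignment $X\mapsto X\otimes \mathrm{id}_{\mathcal M}^*$ is manifestly an even Lie superalgebra homomorphism $\g\to\mathrm{Vec}(\mathcal G\times \mathcal M)$, and action axiom (i) yields $\iota_e^*\circ \varphi^* = (\varphi\circ\iota_e)^* = \mathrm{id}_{\mathcal O_{\mathcal M}}$, so that $\iota_e^*$ is a left inverse of $\varphi^*$ and in particular $\varphi^*$ is injective.

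To derive the intertwining identity, I apply the vector field $X\otimes \mathrm{id}^*_{\mathcal G}\otimes \mathrm{id}^*_{\mathcal M}$ on $\mathcal G\times\mathcal G\times\mathcal M$ (i.e.\ $X$ acting on the first $\mathcal G$-factor) to both sides of the pulled-back action axiom (ii),
\[
(\mu\times \mathrm{id}_{\mathcal M})^*\circ \varphi^* \;=\; (\mathrm{id}_{\mathcal G}\times \varphi)^*\circ \varphi^*.
\]
On the left-hand side, right-invariance $(X\otimes \mathrm{id}_{\mathcal G}^*)\circ \mu^* = \mu^*\circ X$ lets me push $X$ through $\mu^*$; on the right-hand side, $X$ commutes through $\mathrm{id}_{\mathcal G}^*\otimes \varphi^*$ because they act on disjoint tensor factors. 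The resulting equality is
\[
(\mu\times \mathrm{id}_{\mathcal M})^*\circ (X\otimes \mathrm{id}_{\mathcal M}^*)\circ \varphi^* \;=\; (\mathrm{id}_{\mathcal G}\times \varphi)^*\circ (X\otimes \mathrm{id}_{\mathcal M}^*)\circ \varphi^*.
\]
Pulling back along the insertion $\mathcal G\times\mathcal M\to\mathcal G\times\mathcal G\times\mathcal M$, $(g,m)\mapsto(e,g,m)$, simplifies the left-hand side to $(X\otimes \mathrm{id}_{\mathcal M}^*)\circ \varphi^*$, since $\mu$ composed with this insertion is the identity on $\mathcal G\times\mathcal M$, and the right-hand side to $\varphi^*\circ \iota_e^*\circ (X\otimes \mathrm{id}_{\mathcal M}^*)\circ \varphi^* = \varphi^*\circ \lambda_\varphi(X)$, since $\mathrm{id}_{\mathcal G}\times \varphi$ composed with this insertion equals $\iota_e\circ \varphi$.

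Once the intertwining identity is in hand, the Leibniz rule for $\lambda_\varphi(X)$ is derived by expanding the graded Leibniz rule for $X\otimes \mathrm{id}_{\mathcal M}^*$ on $\varphi^*(f)\,\varphi^*(g)$ and applying $\iota_e^*$, using that $\iota_e^*$ is an algebra homomorphism with $\iota_e^*\circ \varphi^* = \mathrm{id}$; this shows $\lambda_\varphi(X)\in\mathrm{Vec}(\mathcal M)$. Parity preservation and linearity of $\lambda_\varphi$ are immediate from the definition. For the bracket, the chain
\[
\varphi^*\circ [\lambda_\varphi(X),\lambda_\varphi(Y)] = [X\otimes \mathrm{id}_{\mathcal M}^*,\,Y\otimes \mathrm{id}_{\mathcal M}^*]\circ \varphi^* = ([X,Y]\otimes \mathrm{id}_{\mathcal M}^*)\circ \varphi^* = \varphi^*\circ \lambda_\varphi([X,Y])
\]
combined with injectivity of $\varphi^*$ yields $\lambda_\varphi([X,Y]) = [\lambda_\varphi(X),\lambda_\varphi(Y)]$.

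The main obstacle I anticipate is the sheaf-theoretic bookkeeping in the intertwining computation: one must track carefully which tensor factor each derivation acts on, check that the trivial extension of a vector field commutes with pullbacks of morphisms whose components do not touch that factor, and handle the implicit restrictions of all sheaves and operators to $\mathcal W$ throughout. Once the intertwining identity is secured, both the derivation property of $\lambda_\varphi(X)$ and the homomorphism property of $\lambda_\varphi$ reduce to formal manipulations built around $\iota_e^*$ as a left inverse of $\varphi^*$.
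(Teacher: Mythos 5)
Your proof is correct and follows essentially the same route as the paper: the Leibniz-rule computation via $\iota_e^*$ and $\varphi\circ\iota_e=\mathrm{id}_\mathcal M$ is identical, and your intertwining identity $(X\otimes\mathrm{id}_\mathcal M^*)\circ\varphi^*=\varphi^*\circ\lambda_\varphi(X)$ is exactly the ``calculation using this identity'' that the paper leaves implicit when it derives $\lambda_\varphi(X)\lambda_\varphi(Y)=(XY(e)\otimes\mathrm{id})\circ\varphi^*$ from axiom (ii) and right-invariance. Making that identity explicit and concluding via injectivity of $\varphi^*$ (from $\iota_e^*\circ\varphi^*=\mathrm{id}$) is a clean reorganization, not a different argument.
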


\begin{proof}
Let $X,Y\in\g$ be homogeneous.
The vector field $X\otimes \mathrm{id}_\mathcal M^*$ has the parity 
$\left|X\right|$ of $X$.
Let $f,g\in \mathcal O_\mathcal M(M)$ and let $f$ be homogeneous with parity 
$\left|f\right|$. 
 Then, using $\varphi\circ\iota_e=\mathrm{id}_\mathcal M$, we get
\begin{align*}
&((X(e)\otimes\mathrm{id}_\mathcal 
M^*)\circ\varphi^*)(fg)=\iota_e^*(X\otimes\mathrm{id}_\mathcal 
M^*)(\varphi^*(f)\varphi^*(g))\\
&= \Big(((X(e)\otimes \mathrm{id}_\mathcal M^*)\circ\varphi^*)(f)\Big)g
+(-1)^{\left|X\right|\left|f\right|}f\Big(((X(e)\otimes\mathrm{id}_\mathcal 
M^*)\circ\varphi^*)(g)\Big).
\end{align*}
Since $\varphi$ is a local action, we have 
$\varphi\circ(\mu\times\mathrm{id}_\mathcal M)=\varphi\circ 
(\mathrm{id}_\mathcal G\times\varphi)$.
 A calculation using this identity then gives
$\lambda_\varphi(X)\lambda_\varphi(Y)=(XY(e)\otimes\id)\circ\varphi^*.$
Thus 
$$[\lambda_\varphi(X),\lambda_\varphi(Y)]=\left(\Big(XY(e)-(-1)^{
\left|X\right|\left|Y\right|}
YX(e)\Big)\otimes\id\right)\circ\varphi^*=\lambda_\varphi([X,Y]).$$
\end{proof}

\begin{defi}
Let $\mathcal M$ be a supermanifold and $\mathcal G$ a Lie supergroup with Lie 
superalgebra~$\g$.
An infinitesimal action of $\mathcal G$ on $\mathcal M$ is a  homomorphism 
$\lambda:\g\rightarrow \mathrm{Vec}(\mathcal M)$ of Lie superalgebras.

The induced infinitesimal action of a local $\mathcal G$-action~$\varphi$ on 
$\mathcal M$ is the homomorphism
$$\lambda_\varphi:\g\rightarrow \text{Vec}(\mathcal M 
),\,\lambda_\varphi(X)=(X(e)\otimes \text{id}_\mathcal M^*)\circ \varphi^*.$$
\end{defi}

As in the classical case, there is an equivalence of infinitesimal actions and 
local actions up to shrinking. This is is the content of the following theorem, 
whose proof is carried out
in the remainder of this section.
\begin{thm} 
Let $\lambda:\g\rightarrow \mathrm{Vec}(\mathcal M)$ be an infinitesimal action.
Then there exists a local $\mathcal G$-action $\varphi:\mathcal W\subseteq 
\mathcal G\times\mathcal M\rightarrow \mathcal M$ on $\mathcal M$ such that its 
induced infinitesimal action
$\lambda_\varphi$ equals $\lambda$.

Moreover, any local action $\varphi:\mathcal W\rightarrow \mathcal M$ is 
uniquely determined by its induced infinitesimal action and domain of 
definition.
\end{thm}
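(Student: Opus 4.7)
The plan is to adapt Palais's classical construction to the super setting by studying the distribution $\mathcal D_\lambda$ on $\mathcal G\times\mathcal M$ spanned by the vector fields $X+\lambda(X)$ for $X\in\g$, where $X$ denotes the canonical extension to $\mathcal G\times\mathcal M$ of the right-invariant vector field on $\mathcal G$ and $\lambda(X)$ the canonical extension of the vector field on $\mathcal M$. The action $\varphi$ will be recovered from the integral manifold of $\mathcal D_\lambda$ through $(e,p)$, which heuristically is the graph $\{(g,\varphi(g,p))\}$.

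First I verify that $\mathcal D_\lambda$ is an involutive distribution of rank $\dim\g$. For involutivity the key identity is
\[
[X+\lambda(X),Y+\lambda(Y)] = [X,Y] + [\lambda(X),\lambda(Y)] = [X,Y] + \lambda([X,Y]),
\]
since $X$ and $\lambda(Y)$ commute (they come from vector fields on different factors of the product), right-invariant vector fields on $\mathcal G$ form a Lie subsuperalgebra, and $\lambda$ is a homomorphism of Lie superalgebras. The rank equals $\dim\g$ because the projection $\mathcal D_\lambda\to\mathcal T_\mathcal G$ is an isomorphism at each stalk, using that right-invariant vector fields on $\mathcal G$ are linearly independent at every point; the same argument shows that $\mathcal D_\lambda$ is everywhere transverse to the ``vertical'' subsheaf $\mathcal T_\mathcal M\subset\mathcal T_{\mathcal G\times\mathcal M}$, and in particular to $\{e\}\times\mathcal M$.

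Applying the local Frobenius theorem (Theorem \ref{thm: local Frobenius}) around any point $(e,p_0)$, I choose coordinates that straighten $\mathcal D_\lambda$ to a coordinate distribution; transversality to $\{e\}\times\mathcal M$ allows the complementary coordinates to restrict to coordinates on $\mathcal M$ along $\{e\}\times\mathcal M$. The leaf of $\mathcal D_\lambda$ through $(e,p)$ is then locally the graph of a morphism in the $g$-variable, and patching these graphs as $p_0$ varies yields a morphism $\varphi:\mathcal W\to\mathcal M$ on some neighborhood $\mathcal W$ of $\{e\}\times\mathcal M$, shrinking if necessary so that each fiber $W_p$ is connected. By construction $\varphi\circ\iota_e=\mathrm{id}_\mathcal M$, and since $X+\lambda(X)$ is tangent to the graph, a direct computation gives $(X(e)\otimes\mathrm{id}_\mathcal M^*)\circ\varphi^*=\lambda(X)$, so $\lambda_\varphi=\lambda$.

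For the associativity $\varphi\circ(\mu\times\mathrm{id}_\mathcal M)=\varphi\circ(\mathrm{id}_\mathcal G\times\varphi)$ I use right-invariance: both sides, where defined, parametrize the leaf of $\mathcal D_\lambda$ through $(e,p)$, and uniqueness of integral manifolds of the involutive distribution $\mathcal D_\lambda$ forces the two sides to agree. Uniqueness of $\varphi$ given $\lambda$ and the domain $\mathcal W$ follows by the same principle, since the graph of any local action inducing $\lambda$ must coincide with the integral manifolds of $\mathcal D_\lambda$. The main obstacle I anticipate is the patching step: Frobenius only produces local straightenings, so some care is required to glue the local graphs into a globally defined morphism $\varphi$ whose domain $\mathcal W$ has connected fibers $W_p$. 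Tracking this connectedness is also what makes the uniqueness assertion work cleanly, so it must be built into the construction from the outset rather than imposed after the fact.
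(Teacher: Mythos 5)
Your overall strategy is the same as the paper's: form the involutive distribution $\mathcal D_\lambda$ spanned by $X+\lambda(X)$, straighten it locally by the super Frobenius theorem, read off $\varphi$ from charts respecting the projection to $\mathcal G$, and deduce both existence and uniqueness from a rigidity statement for these charts. The involutivity computation and the rank/transversality remarks are fine. But the two places you yourself flag as delicate are exactly where the substance lies, and as written the argument has a genuine gap there.

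The gap is your reliance on \emph{integral manifolds} (``leaves'', ``graphs'') as the objects that pin everything down. In the super category this fails: integral manifolds do not determine a distribution (the paper's example $X=\frac{\partial}{\partial\theta_1}+\theta_1\theta_2\frac{\partial}{\partial\theta_2}$ on $\R^{0|2}$ gives a non-involutive distribution with the same integral manifolds as the involutive one spanned by $\frac{\partial}{\partial\theta_1}$), and a fortiori a reduced leaf $\Sigma\subset G\times M$ does not determine a morphism into $\mathcal G\times\mathcal M$ — all the odd data is invisible at that level. So the step ``both sides of the associativity identity parametrize the leaf through $(e,p)$, hence agree by uniqueness of integral manifolds'' does not prove anything about the two morphisms $\varphi\circ(\mu\times\mathrm{id})$ and $\varphi\circ(\mathrm{id}\times\varphi)$. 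What is actually needed is a uniqueness theorem for the \emph{parametrizations}: the paper isolates the notion of a flat chart $\psi$ (a diffeomorphism with $\psi_*(\mathcal D_\mathcal G)=\mathcal D$, $\pi_\mathcal G\circ\psi=\pi_\mathcal G$, and a prescribed restriction over one slice $\{g\}\times\mathcal V$) and proves that such a chart is unique — first by a coordinate computation near the slice, then by a connectedness argument in the $G$-direction. That lemma is what makes both your patching step and your uniqueness assertion work, and it has to be proved, not assumed.

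Even with that lemma in hand, the associativity requires more care than you allow. The paper verifies it by exhibiting two morphisms $\Psi_1,\Psi_2$ of $\mathcal G\times\mathcal G\times\mathcal M$ into itself, showing both are flat charts for the auxiliary distribution $\mathcal D_{\id\otimes\lambda}$ on $\mathcal G\times(\mathcal G\times\mathcal M)$, and then applying uniqueness — but uniqueness only applies when the two charts are defined over a common \emph{connected} open set in the group direction containing the base point. Arranging this is why the construction starts from a nested, symmetric neighbourhood basis $\{U_\alpha\}$ of $e$ (each $U_\alpha=U_\alpha^{-1}$, any two comparable under inclusion) and builds $\psi$ from flat charts on $U_{\alpha(p)}^2\times V_p$. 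Without building this domain control into the construction from the start, the final appeal to uniqueness cannot be made. So: right skeleton, but the flat-chart uniqueness lemma and the domain bookkeeping for associativity are the actual content of the proof, and both are missing.
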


\subsection{Distributions associated to infinitesimal actions}
Given an infinitesimal action $\lambda:\g\rightarrow \text{Vec}(\mathcal M)$, 
define
the distribution $\mathcal D=\mathcal D_\lambda$ on $\mathcal G\times \mathcal 
M$ as the distribution spanned by vector fields of the form $X+\lambda(X)$%
\footnote{The vector field $X+\lambda(X)$ is here considered as a vector field 
on $\mathcal G\times \mathcal M$, so more formally one should write 
$X\otimes\text{id}_\mathcal M^*+\text{id}_\mathcal G^*\otimes \lambda(X)$ for 
$X+\lambda(X)$.}
for $X\in\g$ (cf. \cite{Palais}, Chapter II, Definition 7). 
The rank of the distribution $\mathcal D$ equals the dimension of the Lie 
superalgebra $\g$.

If the homomorphism $\lambda:\g\rightarrow \text{Vec}(\mathcal M)$ is given, we 
can take 
$\mathcal G$ to be any Lie supergroup with Lie superalgebra $\g$. One choice is 
for example the unique Lie supergroup with simply-connected underlying Lie group
and Lie superalgebra $\g$ (for the existence of such $\mathcal G$ see, e.g., 
\cite{Koszul}, and in the complex case \cite{Vishnyakova}).

In the following, properties of distributions associated to an infinitesimal 
action are studied.

\begin{lemma}
The distribution $\mathcal D=\mathcal D_\lambda$ associated to an infinitesimal 
action $\lambda$ is involutive.
\end{lemma}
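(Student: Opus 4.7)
The plan is to check involutivity directly by computing the super-bracket of two generating vector fields $X+\lambda(X)$ and $Y+\lambda(Y)$ for homogeneous $X,Y\in\g$, and to see that it is again a generator of $\mathcal D$. Since $\mathcal D$ is locally spanned (as an $\mathcal O_{\mathcal G\times\mathcal M}$-module) by these generators, involutivity on stalks will reduce to this bilinear computation together with the graded Leibniz rule.

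The key structural point I would emphasize is the identification: writing more carefully $X+\lambda(X)$ as $X\otimes\id_\mathcal M^*+\id_\mathcal G^*\otimes\lambda(X)$, the two summands are extensions of vector fields living on distinct factors of the product $\mathcal G\times\mathcal M$. Consequently the cross-brackets vanish, i.e.
\begin{equation*}
[X\otimes\id_\mathcal M^*,\,\id_\mathcal G^*\otimes\lambda(Y)]=0=[\id_\mathcal G^*\otimes\lambda(X),\,Y\otimes\id_\mathcal M^*].
\end{equation*}
This can be verified by testing both sides on pullbacks of functions of the form $\pi_\mathcal G^*(a)\cdot\pi_\mathcal M^*(b)$ with $a\in\mathcal O_\mathcal G$, $b\in\mathcal O_\mathcal M$, which generate the structure sheaf of the product locally, or by observing that one of the two fields annihilates the image of the pullback on which the other acts nontrivially. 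This is the only spot where a small computation is needed, and I expect it to be essentially routine once the extension conventions stated in the footnotes are unpacked.

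Given this vanishing, the super-bracket expands as
\begin{equation*}
[X+\lambda(X),\,Y+\lambda(Y)]=[X,Y]+[\lambda(X),\lambda(Y)].
\end{equation*}
Here $[X,Y]$ lies in $\g$ because $\g$ is, by definition, the Lie superalgebra of right-invariant vector fields on $\mathcal G$, and $[\lambda(X),\lambda(Y)]=\lambda([X,Y])$ because $\lambda$ is a homomorphism of Lie superalgebras. Setting $Z:=[X,Y]\in\g$, the result is of the form $Z+\lambda(Z)$, which belongs to $\mathcal D$ by definition of the distribution. Extending by the graded Leibniz rule to arbitrary $\mathcal O_{\mathcal G\times\mathcal M}$-linear combinations of the generators shows that $\mathcal D$ is closed under the bracket at every stalk, hence involutive.

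The only mild obstacle I anticipate is writing the cross-bracket vanishing in a way that is convincing in the Berezin-Leites-Kostant setting without descending into index-heavy local coordinate computations; the cleanest route is probably to note that $X\otimes\id_\mathcal M^*$ is characterised by $(X\otimes\id_\mathcal M^*)\circ\pi_\mathcal G^*=\pi_\mathcal G^*\circ X$ and $(X\otimes\id_\mathcal M^*)\circ\pi_\mathcal M^*=0$, and symmetrically for $\id_\mathcal G^*\otimes\lambda(Y)$, so that composing the two in either order kills any product $\pi_\mathcal G^*(a)\cdot\pi_\mathcal M^*(b)$ after applying Leibniz.
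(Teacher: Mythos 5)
Your proposal is correct and follows essentially the same route as the paper: compute the bracket of two generators $X+\lambda(X)$ and $Y+\lambda(Y)$, use the vanishing of the cross-brackets (which the paper leaves implicit) and the homomorphism property of $\lambda$ to obtain $[X,Y]+\lambda([X,Y])\in\mathcal D$. Your explicit justification of the cross-bracket vanishing via the characterisation of the extended vector fields on the product is a welcome elaboration of a step the paper takes for granted, but it does not constitute a different argument.
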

\begin{proof}
Since $\mathcal D$ is spanned by vector fields of the form $X+\lambda(X)$ for 
$X\in\g$, it is enough to check that
the bracket of two such vector fields belongs again to $\mathcal D$. 
Using that $\lambda$ is a homomorphism of Lie superalgebras we get
$$[X+\lambda(X),Y+\lambda(Y)]=[X,Y]+[\lambda(X),\lambda(Y)]=[X,Y]+\lambda([X,Y]
)$$ for any $X,Y\in\g$  and thus 
$[X+\lambda(X),Y+\lambda(Y)]=[X,Y]+\lambda([X,Y])$ also belongs to $\mathcal D$.
\end{proof}

The local Frobenius theorem now yields that a distribution $\mathcal D$ 
associated to an infinitesimal action locally looks like a standard distribution 
on a product.
In the following, local charts for the distribution $\mathcal D$ which locally 
transform $\mathcal D$ to a standard distribution and satisfy a few more 
properties with 
respect to the product structure of $\mathcal G\times \mathcal M$ are of special 
interest.

\begin{defi}[flat chart]
 Let $\mathcal G$ be a Lie supergroup, $\mathcal M$ a supermanifold and 
$\mathcal D$ the distribution on 
$\mathcal G\times \mathcal M$ associated to an infinitesimal action on $\mathcal 
M$.
Let $U\subseteq G$ be an open connected neighbourhood of a point $g\in G$, 
$\mathcal U=(U,\mathcal O_\mathcal G|_U)$, and denote by 
$\iota_g:\mathcal M\hookrightarrow \{g\}\times \mathcal M\subset \mathcal 
G\times \mathcal M$ the canonical inclusion.
Moreover, let $V\subseteq M$ be open, $\mathcal V=(V,\mathcal O_\mathcal M|_V)$, 
and let $\rho:\mathcal V\rightarrow \mathcal M$ be a diffeomorphism onto its 
image.
Denote by $\mathcal D_\mathcal G$ the standard distribution on $\mathcal G\times 
\mathcal M$ in $\mathcal G$-direction, which is spanned by vector fields 
$X\otimes \mathrm{id}_\mathcal M^*$ on $\mathcal G\times\mathcal M$, where $X$ 
is an arbitrary vector field
on $\mathcal G$.

A diffeomorphism onto its image $\psi:\mathcal U\times \mathcal V\rightarrow 
\mathcal G\times \mathcal M$ is called a flat chart with respect to
$(\mathcal D,U,V,g,\rho)$, or simply a flat chart (in $g$), if the following 
conditions are satisfied:
\begin{enumerate}[(i)]
 \item $\psi_*(\mathcal D_\mathcal G|_W)=\mathcal D|_{\tilde{\psi}(W)}$ for each 
open subset $W\subseteq U\times V$
 \item $\pi_\mathcal G|_{U\times V}=\pi_\mathcal G\circ\psi$ for the projection 
$\pi_\mathcal G:\mathcal G\times \mathcal M\rightarrow \mathcal G$
 \item $\psi\circ\iota_g|_V=\iota_g|_{\tilde{\rho}(V)}\circ \rho$
\end{enumerate}
\end{defi}

\begin{rmk}\label{rmk: flat chart}
If $\psi:\mathcal U\times \mathcal V_1\rightarrow \mathcal G\times \mathcal M$ 
is a flat chart with respect to $(\mathcal D, U, V_1, g, \rho_1)$ and 
$\rho_2:\mathcal V_2\rightarrow\mathcal M$ is 
 diffeomorphism onto its image with $\tilde{\rho}_2(V_2)\subseteq V_1$, then the 
map
$\psi'=\psi\circ(\mathrm{id}_\mathcal U\times  \rho_2)$ is a flat chart with 
respect to $(\mathcal D,U,V_2,g,\rho_1\circ \rho_2)$.
\end{rmk}

\begin{lemma}\label{lemma: even part of a flat chart}
 Let $\mathcal D$ be the distribution on $\mathcal G\times \mathcal M$ 
associated to the infinitesimal action 
$\lambda:\g\rightarrow\mathrm{Vec}(\mathcal M)$, 
$\lambda_0=\lambda|_{\g_0}:\g_0\rightarrow \mathrm{Vec}(\mathcal M)$ 
the restriction of $\lambda$ to the even part $\g_0$ of $\g$, 
and $\mathcal D_0$ the distribution on $G\times\mathcal M$ associated to 
$\lambda_0$.
Let $\psi:\mathcal U\times\mathcal V\rightarrow \mathcal G\times\mathcal M$ be a 
flat chart with 
respect to $(\mathcal D, U, V, g, \rho)$
and define 
$\psi_0:U\times \mathcal V\rightarrow G\times \mathcal M$ by
$\psi_0=(\mathrm{id}_G\times \varphi_0)\circ 
(\mathrm{diag}\times\mathrm{id}_\mathcal V)$,
where $\mathrm{diag}:U\rightarrow  U\times U$ denotes the diagonal and 
$\varphi_0$ is the composition of $\pi_\mathcal M\circ \psi$ and the canonical 
inclusion 
$U\times \mathcal V\hookrightarrow\mathcal U\times \mathcal V$.
Then $\psi_0$ is a flat chart with respect to $(\mathcal D_0,U, V, g,\rho)$.
\end{lemma}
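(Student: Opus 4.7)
The plan is to verify the three flat chart conditions for $\psi_0$ with respect to $(\mathcal D_0, U, V, g, \rho)$. Denote by $j_U:U \times \mathcal V \hookrightarrow \mathcal U \times \mathcal V$ and $j_G:G \times \mathcal M \hookrightarrow \mathcal G \times \mathcal M$ the canonical inclusions obtained from the evaluation map on the $\mathcal G$-factor. The structural identity underlying the whole argument is $\psi\circ j_U = j_G \circ \psi_0$, which is forced by the definition $\varphi_0 = \pi_\mathcal M \circ \psi \circ j_U$ together with condition (ii) of the flat chart for $\psi$.

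Before checking the three conditions, I would verify that $\psi_0$ is a diffeomorphism onto its image: by condition (ii) of $\psi$, the isomorphism $\psi^*$ carries $\pi_\mathcal G^*\mathcal J_\mathcal G$ bijectively onto $\pi_\mathcal U^*\mathcal J_\mathcal U$, where $\mathcal J_\mathcal G$ is the odd ideal of $\mathcal O_\mathcal G$ and similarly $\mathcal J_\mathcal U$, and therefore sends $\ker j_G^*$ isomorphically onto $\ker j_U^*$; the induced map on the quotient structure sheaves is precisely $\psi_0^*$. Conditions (ii) and (iii) for $\psi_0$ then follow by direct inspection: $\pi_G \circ \psi_0 = \pi_U$ is immediate from the explicit formula $\psi_0 = (\mathrm{id}_G \times \varphi_0)\circ(\mathrm{diag}\times \mathrm{id}_\mathcal V)$, and condition (iii) for $\psi$ gives $\varphi(g,\,\cdot\,) = \rho$ with $\varphi = \pi_\mathcal M \circ \psi$, so precomposing with $j_U$ yields $\varphi_0(g,\,\cdot\,) = \rho$ and hence $\psi_0\circ \iota_g = \iota_g \circ \rho$.

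The heart of the proof is condition (i), that is, $(\psi_0)_*(\mathcal D_G) = \mathcal D_0$. I would first establish that for a homogeneous basis $\{X_i\}$ of $\g$, viewed as right-invariant vector fields on $\mathcal G$, one has
$$\psi_*(X_i \otimes \mathrm{id}_\mathcal M^*) = X_i + \lambda(X_i).$$
Indeed, the left-hand side lies in $\mathcal D$ by condition (i) for $\psi$ and thus decomposes uniquely as $\sum_j f_{ij}(X_j + \lambda(X_j))$; condition (ii) for $\psi$ forces its $\mathcal G$-component $\sum_j f_{ij}X_j$ to equal $X_i$, and pointwise linear independence of the $X_j(g)$ in $T_g\mathcal G$ then gives $f_{ij} = \delta_{ij}$. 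For $X \in \g_0$, both $X \otimes \mathrm{id}_\mathcal M^*$ and $X + \lambda(X)$ are even vector fields preserving $\ker j_G^*$ (as even derivations preserve the odd ideal), and therefore descend via $j_G$ to the analogous vector fields on $G \times \mathcal M$. A diagram chase using $\psi\circ j_U = j_G \circ \psi_0$ then shows $(\psi_0)_*(X \otimes \mathrm{id}_\mathcal M^*) = X + \lambda(X) \in \mathcal D_0$; since $\{X \otimes \mathrm{id}_\mathcal M^*\}_{X \in \g_0}$ freely generates $\mathcal D_G$ while $\{X + \lambda(X)\}_{X \in \g_0}$ freely generates $\mathcal D_0$ of the same rank, equality of distributions follows. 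The principal obstacle is the pushforward identity $\psi_*(X_i \otimes \mathrm{id}_\mathcal M^*) = X_i + \lambda(X_i)$; once it and the descent compatibility for even vector fields are in place, the remainder is a systematic diagram chase.
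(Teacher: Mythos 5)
Your proof is correct and supplies in full the ``direct calculations'' that the paper's own proof of this lemma merely alludes to (the paper's entire argument is the observation that even right-invariant vector fields on $\mathcal G$ identify with right-invariant vector fields on $G$). Your key identity $\psi_*(X\otimes\mathrm{id}_{\mathcal M}^*)=X\otimes\mathrm{id}_{\mathcal M}^*+\mathrm{id}_{\mathcal G}^*\otimes\lambda(X)$ is exactly the statement the paper proves later, by the same pointwise-linear-independence argument, in Proposition~\ref{prop: infinitesimal action as required}, and the descent along $j_G$ is precisely where the identification $\mathrm{Lie}(G)\cong\g_0$ enters. One justification should be tightened: a general even derivation of $\mathcal O_{\mathcal G\times\mathcal M}$ does \emph{not} preserve $\ker j_G^*$, because that ideal is generated only by the odd functions pulled back from the $\mathcal G$-factor and not by all odd functions --- for instance on $\R^{0|1}\times\R^{0|1}$ with coordinates $\theta$, $\eta$ the even field $\eta\,\partial/\partial\theta$ sends $\theta$ to $\eta\notin\langle\theta\rangle$. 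What is true, and suffices here, is that for $X\in\g_0$ the field $X\otimes\mathrm{id}_{\mathcal M}^*$ maps $\pi_{\mathcal G}^*\mathcal J_{\mathcal G}$ into itself because $X$ is an even derivation of $\mathcal O_{\mathcal G}$, while $\mathrm{id}_{\mathcal G}^*\otimes\lambda(X)$ annihilates $\pi_{\mathcal G}^*\mathcal O_{\mathcal G}$; being derivations, both therefore preserve the generated ideal $\ker j_G^*$. With that local repair your diagram chase goes through as written, the surjectivity of $j_G^*$ allowing you to cancel it on the right and conclude $(\psi_0)_*(X\otimes\mathrm{id}_{\mathcal M}^*)=X+\lambda(X)$ for $X\in\g_0$, whence $(\psi_0)_*(\mathcal D_G)=\mathcal D_0$.
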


\begin{proof}
 It can be checked by direct calculations that $\psi_0$ is a flat chart,
 using that the even right-invariant vector fields on $\mathcal G$ can 
 be identified with the right-invariant vector fields on $G$, i.e. 
$\mathrm{Lie}(G)\cong \g_0$
 if $\g_0$ denotes the even part of the Lie superalgebra $\g=\g_0+\g_1$ of 
$\mathcal G$.
\end{proof}

\begin{prop}[Local existence of flat charts]\label{prop: existence of flat 
charts}
 Let $\mathcal D$ be the distribution associated to the infinitesimal action 
$\lambda:\g\rightarrow \mathrm{Vec}(\mathcal M)$ on the supermanifold $\mathcal 
M$ 
and let $\mathcal G$ be a Lie supergroup with $\g$ as its Lie superalgebra of 
right-invariant vector fields.
For any point $(g,p)\in G\times M$ there are an open connected neighbourhood $U$ 
of $g$ in $G$ and an open neighbourhood $V$ of~$p$ in $M$ such that there exists
a flat chart $\psi:\mathcal U\times \mathcal V\rightarrow \mathcal G\times 
\mathcal M$ with respect to $(\mathcal D,U,V,g,\rho=\mathrm{id}_\mathcal V)$. 

By the above remark this implies moreover
the existence of $U$ and $V$ and a flat chart with respect to $(\mathcal D, U, 
V, g, \rho)$ for arbitrary $\rho$.
\end{prop}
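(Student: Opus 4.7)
The plan is to construct $\psi$ as an explicit composition that integrates $\mathcal D$ starting from the slice $\{g\}\times\mathcal V$, reparametrised so that the $\mathcal G$-component of $\psi$ is the identity. First I would fix a homogeneous basis $X_1,\ldots,X_m$ of $\g_0$ and $Y_1,\ldots,Y_n$ of $\g_1$ and set $\tilde X_i=X_i+\lambda(X_i)$, $\tilde Y_j=Y_j+\lambda(Y_j)$; these span $\mathcal D$. Theorem~\ref{thm: flow of a vector field} supplies the flows $\varphi^{X_i}$ on $\mathcal G$ and $\varphi^{\tilde X_i}$ on $\mathcal G\times\mathcal M$ of the even fields. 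For the odd fields I would introduce an ``odd exponential'' as follows: for any odd vector field $Y$ on a supermanifold $\mathcal N$, the operator $\tau Y$ on $\R^{0|1}\times\mathcal N$ is an even derivation with $(\tau Y)^2=0$, so a short $\Z_2$-graded Leibniz computation shows that $f\mapsto f+\tau Y(f)$ is multiplicative and hence the pullback of a morphism $e^{\tau Y}:\R^{0|1}\times\mathcal N\to\mathcal N$.

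Iterating these ingredients, I would define
\[
 \kappa(t,\tau)=\varphi^{X_1}_{t_1}\circ\cdots\circ\varphi^{X_m}_{t_m}\circ e^{\tau_1 Y_1}\circ\cdots\circ e^{\tau_n Y_n}(g)
\]
on a neighbourhood $\mathcal U'\subseteq\R^{m|n}$ of the origin, and analogously
\[
 \tilde\Psi(t,\tau,q)=\varphi^{\tilde X_1}_{t_1}\circ\cdots\circ\varphi^{\tilde X_m}_{t_m}\circ e^{\tau_1\tilde Y_1}\circ\cdots\circ e^{\tau_n\tilde Y_n}(g,q)
\]
on $\mathcal U'\times\mathcal V$ for $\mathcal V\ni p$ sufficiently small. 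At $(0,0)$ the differential of $\kappa$ sends $\partial_{t_i},\partial_{\tau_j}$ to $X_i(g),Y_j(g)$, a basis of $T_g\mathcal G$, so $\kappa$ is a local diffeomorphism onto an open neighbourhood $\mathcal U$ of $g$ (chosen connected). The candidate flat chart is then $\psi:=\tilde\Psi\circ(\kappa^{-1}\times\mathrm{id}_\mathcal V):\mathcal U\times\mathcal V\to\mathcal G\times\mathcal M$.

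Conditions $(ii)$ and $(iii)$ then follow quickly. Right-invariance of $X_i$ and $Y_j$ on $\mathcal G$ yields $\pi_\mathcal G\circ\varphi^{\tilde X_i}_t=\varphi^{X_i}_t\circ\pi_\mathcal G$ and $\pi_\mathcal G\circ e^{\tau\tilde Y_j}=e^{\tau Y_j}\circ\pi_\mathcal G$, hence $\pi_\mathcal G\circ\tilde\Psi=\kappa\circ\pi_{\R^{m|n}}$; composing with $\kappa^{-1}$ gives $(ii)$. All flows and odd exponentials act as the identity at $(t,\tau)=(0,0)$, so $\psi\circ\iota_g=\iota_g$, which is $(iii)$ with $\rho=\mathrm{id}_\mathcal V$. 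The general case for arbitrary $\rho$ is then immediate from Remark~\ref{rmk: flat chart}.

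The main obstacle is $(i)$. The strategy is to show that at every point of $\tilde\Psi(\mathcal U'\times\{q\})$ the tangent vectors $\partial_{t_i}\tilde\Psi,\partial_{\tau_j}\tilde\Psi$ all lie in $\mathcal D$; then $(i)$ follows from a rank count together with $(ii)$. These tangent vectors are pushforwards of $\tilde X_i$ or $\tilde Y_j$ through the intervening flows and odd exponentials, so it suffices to prove that pushforward by $\varphi^{\tilde X_k}_s$ or $e^{\tau\tilde Y_k}$ preserves $\mathcal D$. For the even flows this is Corollary~\ref{cor: commuting flows}(i) combined with involutivity of $\mathcal D$. For the odd exponentials I would verify directly, using the graded commutator and the identity $\tau^2=0$, that $(e^{\tau\tilde Y_k})_*V=V-\tau[\tilde Y_k,V]$; this lies in $\mathcal D$ by involutivity. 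This step---that odd exponentials preserve the involutive distribution---is the one genuinely new element compared with the classical argument of Palais, and once it is in hand the construction is complete.
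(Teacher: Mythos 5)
Your proposal is correct in substance but takes a genuinely different route from the paper. The paper first produces, via the Frobenius-type lemma on involutive distributions, \emph{commuting} homogeneous vector fields $A_i,B_j$ spanning $\mathcal D$ and adapted to product coordinates $(t,\tau,x,\theta)$, then feeds them into Theorem~\ref{thm: commuting vector fields and local actions} to get a local $\R^{k|l}$-``action'' $\varphi$ and defines $\psi$ as its restriction to a slice; with that construction, property $(i)$ is automatic from $\frac{\partial}{\partial t_i}\circ\varphi^*=\varphi^*\circ A_i$, while property $(ii)$ requires a small coordinate computation showing $\psi^*(t_i)=t_i$ and $\psi^*(\tau_j)=\tau_j$. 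You instead integrate the natural, non-commuting generators $X_i+\lambda(X_i)$, $Y_j+\lambda(Y_j)$ one at a time (coordinates of the second kind), which reverses the burden of proof: $(ii)$ becomes automatic because each lifted flow and odd exponential is $\pi_\mathcal G$-related to the corresponding flow on $\mathcal G$, whereas $(i)$ now needs the lemma that pushforward along the flow, resp.\ odd exponential, of a section of an involutive distribution preserves that distribution. Your identity $(e^{\tau\tilde Y})_*V=V-\tau[\tilde Y,V]$ handles the odd case cleanly and is indeed the genuinely new ingredient; for the even case, Corollary~\ref{cor: commuting flows}$(i)$ plus involutivity is the right input, but the standard ODE (or Frobenius-coordinate) argument deducing invariance of $\mathcal D$ from it should be written out, since it is not a one-liner in the graded setting. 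Two further routine points to make explicit: $\tilde\Psi$ itself is a local diffeomorphism along $\{0\}\times\mathcal V$ (its differential sends $\partial_{t_i},\partial_{\tau_j}$ to a basis of $\mathcal D(g,q)$, which is transversal to $T(\{g\}\times\mathcal M)$ because the $\mathcal G$-components $X_i(g),Y_j(g)$ span $T_g\mathcal G$ --- this is the same transversality the paper invokes), so that $\psi$ really is a diffeomorphism onto its image as the definition of flat chart demands; and the ``rank count'' for $(i)$ should be phrased at the level of the $\mathcal O$-module $\mathcal D$ (the pushed-forward frame is a set of $\dim\g$ independent sections of $\mathcal D$, hence generates it), not merely pointwise. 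With those details supplied, your construction yields the same flat chart as the paper's by the uniqueness proposition.
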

\begin{proof}
 Let $X_1,\ldots,X_k,Y_1,\ldots,Y_l$ be a basis of $\g$ such that 
$X_1,\ldots,X_k$ are even and $Y_1,\ldots,Y_l$ odd vector fields. 
Then the tangent vectors $X_1(g'),\ldots,X_k(g')$, $Y_1(g'),\ldots ,Y_l(g')\in 
T_{g'}\mathcal G=\{X(g')|\,X\in\g\}$ are linearly independent for all $g'\in 
G$. 
Since the distribution $\mathcal D$ is spanned by vector fields of the form 
$X+\lambda(X)$ for $X\in \g$, there exist local coordinates $(t,\tau)$ for 
$\mathcal G$  on an open connected neighbourhood $U\subseteq G$ 
of $g$ and local coordinates $(x,\theta)$ for $\mathcal M$ on an open 
neighbourhood  $V\subseteq M$ of $p$ so that 
$\mathcal D$ is locally spanned by the commuting vector fields
$$A_i=\frac{\partial}{\partial t_i}+\sum_{u=1}^m a_{iu}\frac{\partial}{\partial 
x_u}+\sum_{v=1}^n b_{iv}\frac{\partial}{\partial \theta_v} 
\ \ \ \ \mathrm{ and }\ \ \ \
B_j=\frac{\partial}{\partial \tau_j}+\sum_{u=1}^m c_{ju}\frac{\partial}{\partial 
x_u}+\sum_{v=1}^n d_{jv}\frac{\partial}{\partial \theta_v}$$
for $i=1\ldots,k$ and $j=1,\ldots,l$, where $a_{iu},b_{iv},c_{ju}, d_{jv}\in 
\mathcal O_{\mathcal G\times \mathcal M}(U\times V)$.

After shrinking $U$ and $V$, $\mathcal U=(U, \mathcal O_\mathcal G|_U)$ can be 
assumed to be an open subsupermanifold of $\R^{k|l}$ 
with $g=0$ and there is
a morphism $\varphi:\mathcal U\times (\mathcal U\times \mathcal V)\rightarrow 
\mathcal G\times \mathcal M$ 
associated to the above defined commuting vector fields, satisfying the
properties $\varphi\circ\iota_0=\mathrm{id}$ and 
$\varphi\circ(\mathrm{id}_{\R^{k|l}}\times 
\varphi)=\varphi\circ(\mu_{\R^{k|l}}\times \mathrm{id}_\mathcal M)$
(cf. Theorem \ref{thm: commuting vector fields and local actions}). 
Since $$\frac{\partial}{\partial t_i}\circ\varphi^*=
\varphi^*\circ A_i
\ \ \ \ \mathrm{ and }\ \ \ \
\frac{\partial}{\partial \tau_j}\circ\varphi^*=
\varphi^*\circ B_j,$$
the subsupermanifold $\mathcal V\cong\{0\}\times \mathcal V\subset \mathcal 
U\times \mathcal V$ is transversal to $\varphi(\mathcal U\times \{(0,p)\})$ in 
$\tilde{\varphi}(0,0,p)=(0,p)$.
The map $\psi:\mathcal U\times \mathcal V\rightarrow \mathcal G\times \mathcal 
M$, $\psi=\varphi|_{\mathcal U\times\{0\}\times \mathcal V}$. identifies the 
standard distribution $\mathcal D_\mathcal U$ on $\mathcal U\times \mathcal V$ 
with $\mathcal D$,
i.e. $\psi_*(\mathcal D_\mathcal G|_W)=\psi_*(\mathcal D_\mathcal U|_W)=\mathcal 
D|_{\tilde{\psi}(W)}$
 for all open subsets $W\subseteq U\times V$. 
The action property of the map $\varphi$ moreover implies that $$\psi\circ 
\iota_g|_V=\psi\circ\iota_0|_V=\varphi|_{\mathcal U\times\{0\}\times\mathcal 
V}\circ\iota_0|_V
=\mathrm{id}_{\mathcal U\times\mathcal V}|_{\{0\}\times 
V}=\iota_0|_V=\iota_g|_V.$$
Hence, it only remains to show that $\pi_\mathcal G|_{U\times V}=\pi_\mathcal 
G\circ\psi$ in order to prove that $\psi$ is a flat chart.
This is equivalent to showing $\psi^*(t_i)=t_i$ and $\psi^*(\tau_j)=\tau_j$ for 
all $i$, $j$, where $t_i$ and $\tau_j$ are now considered as local coordinate 
functions on 
$\mathcal G\times \mathcal M$. 
Since $\frac{\partial}{\partial 
t_r}(\psi^*(t_i))=\psi^*(\psi_*(\frac{\partial}{\partial 
t_r})(t_i))=\psi^*(0)=0$ for all $r\neq i$,
$\frac{\partial}{\partial t_i}(\psi^*(t_i))=1$ and 
$\frac{\partial}{\partial 
\tau_s}(\psi^*(t_i))=\psi^*(\psi_*(\frac{\partial}{\partial 
\tau_s})(t_i))=\psi^*(0)=0$ for all $s$, 
the function $\psi^*(t_i)\in\mathcal O_{\mathcal G\times\mathcal M}(U\times V)$ 
is of the form 
$$\psi^*(t_i)=(t_i+c_i(x))+\sum_{\nu\neq 0} c_\nu(x)\theta^\nu$$
for some smooth functions $c_i,c_\nu(\nu\neq 0)$  on  $V$.
The property $\psi\circ\iota_0=\iota_0$ now implies 
$0=\iota_0^*(t_i)=\iota_0^*\circ\psi^*(t_i)=(0+c_i(x))+\sum_{\nu\neq 0} 
c_\nu(x)\theta^\nu$ and therefore $c_i=c_\nu=0$. 
Hence $\psi^*(t_i)=t_i$ as required.
A similar argument yields $\psi^*(\tau_j)=\tau_j$.
\end{proof}

\begin{lemma}\label{lemma: almost flat chart}
 Let $\psi:\mathcal U\times \mathcal V\rightarrow \mathcal G\times \mathcal M$ 
be a diffeomorphism onto its image such that 
\begin{enumerate}[(i)]
 \item $\psi_*(\mathcal D_\mathcal G|_W)=\mathcal D|_{\tilde{\psi}(W)}$ for a 
distribution $\mathcal D$ associated to an infinitesimal action and any open 
subset
$W\subseteq U\times V$, and
\item $\pi_\mathcal G|_{U\times V}=\pi_\mathcal G\circ\psi$.
\end{enumerate}
Then given any element $g\in U$ there exists a diffeomorphism onto its image 
$\rho:\mathcal V\rightarrow \mathcal M$ 
such that $\psi\circ\iota_{g}|_V=\iota_{g}|_{\tilde{\rho}(V)}\circ \rho$. Hence 
$\psi$ is a flat chart with respect  to
$(\mathcal D, U, V, g,\rho)$.
\end{lemma}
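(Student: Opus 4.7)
The strategy is to define $\rho$ as the ``fiber-over-$g$'' restriction of $\psi$, namely as the unique morphism $\mathcal V\to\mathcal M$ through which $\psi\circ\iota_g$ factors via $\iota_g$. With this definition the required identity
\[
\psi\circ\iota_g|_V=\iota_g|_{\tilde{\rho}(V)}\circ\rho
\]
will hold tautologically as the defining equation of the factorization, and $\psi$ will automatically satisfy condition (iii) of the flat-chart definition (conditions (i) and (ii) being hypotheses).

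First I would verify that $\psi\circ\iota_g:\mathcal V\to\mathcal G\times\mathcal M$ does factor through the closed embedding $\iota_g:\mathcal M\hookrightarrow\mathcal G\times\mathcal M$. On underlying spaces this is immediate from condition (ii): $\pi_G\circ\tilde\psi\circ\tilde\iota_g=\pi_G\circ\tilde\iota_g|_V\equiv g$, so the image lies in $\{g\}\times M$. The substantive content is at the sheaf level. Pick local coordinates $(t,\tau)$ on $\mathcal G$ centered at $g$; then the ideal of $\{g\}\times\mathcal M$ inside $\mathcal O_{\mathcal G\times\mathcal M}$ is locally generated by the pullbacks $\pi_\mathcal G^*(t_i)$ and $\pi_\mathcal G^*(\tau_j)$. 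Applying $(\psi\circ\iota_g)^*=\iota_g^*\circ\psi^*$ and using condition (ii) in the form $\psi^*\circ\pi_\mathcal G^*=\pi_\mathcal G^*|_{U\times V}$, each of these generators pulls back to the constant $t_i(g)=0$, respectively $0$. Since $\iota_g^*$ is surjective with kernel exactly the ideal generated by these coordinates, there is a unique morphism $\rho:\mathcal V\to\mathcal M$ satisfying $\iota_g\circ\rho=\psi\circ\iota_g$, given explicitly by $\rho=\pi_\mathcal M\circ\psi\circ\iota_g$.

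Next I would check that $\rho$ is a diffeomorphism onto its image. Since $\psi$ is a diffeomorphism onto its image and $\iota_g$ is a closed embedding, the composition $\psi\circ\iota_g$ is a diffeomorphism of $\mathcal V$ onto its image inside $\{g\}\times\mathcal M$. The canonical identification $\{g\}\times\mathcal M\cong\mathcal M$ implemented by $\pi_\mathcal M$ on this slice then shows that $\rho$ inherits the diffeomorphism-onto-image property.

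The main (and essentially only) obstacle is sheaf-theoretic: one cannot rely on the set-theoretic remark that ``the image lies in $\{g\}\times M$,'' since in the super-setting this does not by itself imply factorization of the morphism through $\iota_g$. The coordinate computation above, in which \emph{both} the even generators $t_i$ and the odd generators $\tau_j$ of the ideal of $\{g\}\times\mathcal M$ are seen to pull back to zero, is what closes this gap; everything else is then formal.
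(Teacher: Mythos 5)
Your proposal is correct and follows essentially the same route as the paper: both define $\rho$ by factoring $\psi\circ\iota_g$ through the slice $\{g\}\times\mathcal M\cong\mathcal M$, reduce the factorization to the identity $(\psi\circ\iota_g)^*\circ\pi_\mathcal G^*=\mathrm{ev}_g$ (which your coordinate computation with the generators $t_i,\tau_j$ of the ideal of the slice makes explicit), and deduce the diffeomorphism-onto-image property of $\rho$ from that of $\psi$. The paper phrases the sheaf-level step more compactly via the universal property of the point $\{g\}\subset\mathcal G$, but the content is identical.
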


\begin{proof}
The property $\pi_\mathcal G \circ \psi=\pi_\mathcal G|_{U\times V}$ implies 
$(\tilde{\psi}\circ \tilde{\iota}_{g})(V)\subset \{g\}\times M$.
To show that $\psi\circ\iota_{g}|_V:\mathcal V\rightarrow \mathcal 
G\times\mathcal M$ induces a map $\rho:\mathcal V\rightarrow \mathcal 
M\cong\{g\}\times\mathcal M$,
it is enough to check that $(\psi\circ\iota_{g}|_V)^*\circ\pi_\mathcal 
G^*=\mathrm{ev}_g$, where $\mathrm{ev}_g:\mathcal O_\mathcal G(G)\rightarrow \R$ 
denotes the evaluation in $g$. 

The fact $\pi_\mathcal G\circ \psi=\pi_\mathcal G|_{U\times V}$ implies
$(\psi\circ\iota_g|_V)^*\circ\pi_\mathcal G^*
=(\pi_\mathcal G\circ\psi\circ\iota_g|_V)^*=(\pi_\mathcal G|_{U\times V}\circ 
\iota_g|_V)^*
=\mathrm{ev}_g$ since $(\pi_\mathcal G\circ \iota_g)$ is the unique map 
$\mathcal M\rightarrow \{g\}\subset \mathcal G$.

The map $\rho$ satisfies 
$\psi\circ\iota_{g}|_V=\iota_{g}|_{\tilde{\rho}(V)}\circ \rho$ by definition and 
is a diffeomorphism onto its image since $\psi$ is 
a diffeomorphism onto its image
and $\mathcal V$ and $\mathcal M$ have the same dimension.
\end{proof}

\begin{prop}[Uniqueness of flat charts] \label{prop: uniqueness of flat charts}
If $\mathcal D$ is a distribution on $\mathcal G\times \mathcal M$ associated to 
an infinitesimal action $\lambda:\g\rightarrow \mathrm{Vec}(\mathcal M)$, then a
 flat chart $\psi:\mathcal U\times \mathcal V\rightarrow \mathcal G\times 
\mathcal M$ with respect to $(\mathcal D, U, V, g, \rho)$ is unique.
\end{prop}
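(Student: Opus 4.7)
My plan is to reduce uniqueness of a flat chart to the uniqueness of the morphism produced by the flow-of-commuting-vector-fields Theorem~\ref{thm: commuting vector fields and local actions}, which in turn rests on Theorem~\ref{thm: flow of a vector field} for the even directions and on the canonicity of the nilpotent exponential for the odd ones.

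After possibly shrinking $U$, I would pick local coordinates $(t_1,\ldots,t_k,\tau_1,\ldots,\tau_l)$ on $\mathcal U$ centered at $g$ and, exactly as in the proof of Proposition~\ref{prop: existence of flat charts}, isolate the commuting vector fields
$$A_i=\frac{\partial}{\partial t_i}+R_i,\qquad B_j=\frac{\partial}{\partial \tau_j}+S_j$$
spanning $\mathcal D$ on $\tilde{\psi}(U\times V)$, where $R_i$ and $S_j$ act only in the $\mathcal M$-direction. These are characterized, and hence uniquely determined, as the elements of $\mathcal D$ which are $\pi_{\mathcal G}$-related to $\partial/\partial t_i$ respectively $\partial/\partial \tau_j$.

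I would then exploit flat-chart properties $(i)$ and $(ii)$ to pin down the pushforwards of the $\mathcal G$-direction coordinate fields. Property $(ii)$, $\pi_{\mathcal G}\circ\psi=\pi_{\mathcal G}$, forces $\psi_{*}(\tfrac{\partial}{\partial t_i}\otimes\mathrm{id})$ to be $\pi_{\mathcal G}$-related to $\tfrac{\partial}{\partial t_i}$, while property $(i)$ places it in $\mathcal D$. Expanding such a vector field in the basis $\{A_i,B_j\}$ and matching $\pi_{\mathcal G}$-projections identifies
$$\psi_{*}\bigl(\tfrac{\partial}{\partial t_i}\otimes\mathrm{id}\bigr)=A_i, \qquad \psi_{*}\bigl(\tfrac{\partial}{\partial \tau_j}\otimes\mathrm{id}\bigr)=B_j,$$
equivalently
$$\tfrac{\partial}{\partial t_i}\circ\psi^{*}=\psi^{*}\circ A_i,\qquad \tfrac{\partial}{\partial \tau_j}\circ\psi^{*}=\psi^{*}\circ B_j.$$

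Property $(iii)$ supplies the Cauchy datum $\iota_g|_V^{*}\circ\psi^{*}=\rho^{*}\circ\iota_g|_{\tilde{\rho}(V)}^{*}$, which depends only on the flat-chart data $(\mathcal D,U,V,g,\rho)$. Any second flat chart $\psi'$ with the same data thus solves the same system with the same initial condition, and I would conclude $\psi=\psi'$ from the uniqueness of the morphism arising from the commuting flows in Theorem~\ref{thm: commuting vector fields and local actions}: the reduced map $\tilde{\psi}$ is the unique composition of classical flows of the $\tilde{A}_i$, and its extension to the odd coordinates is determined inductively by the odd PDE, since $\psi^{*}$ must coincide with $\exp\bigl(\sum_j \tau_j B_j\bigr)\circ(\varphi^{A_1}_{t_1})^{*}\circ\cdots\circ(\varphi^{A_k}_{t_k})^{*}$ precomposed with $\rho^{*}$, precisely as in the construction in Theorem~\ref{thm: commuting vector fields and local actions}. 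The main subtlety I expect to have to handle carefully is verifying that $(i)$ and $(ii)$ together really do single out $A_i, B_j$, and not some other $\mathcal O_{\mathcal G\times\mathcal M}$-linear combination, as pushforwards; this boils down to a short coefficient-matching check in the local coordinates, using that the projection of a general element $\sum_i f_i A_i+\sum_j g_j B_j\in\mathcal D$ via $\pi_{\mathcal G}$ depends linearly on the coefficients $f_i,g_j$.
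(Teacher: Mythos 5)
Your local argument is sound and is a genuinely different route from the paper's step near $\{g\}\times V$: you characterize $A_i$, $B_j$ as the unique sections of $\mathcal D$ that are $\pi_{\mathcal G}$-related to $\frac{\partial}{\partial t_i}$, $\frac{\partial}{\partial \tau_j}$, deduce $\psi_*(\frac{\partial}{\partial t_i}\otimes\mathrm{id})=A_i$ from properties $(i)$ and $(ii)$, and then invoke uniqueness of solutions of the resulting flow system with the initial datum supplied by $(iii)$. The paper instead first proves by a direct coordinate computation that a flat chart with respect to $(\mathcal D_{\mathcal G},U,V,g,\mathrm{id})$ must be the identity, and reduces the general local case to this by considering $\psi_2^{-1}\circ\psi_1$; both arguments do the same job locally. (One small point to make explicit in your version: the paper's flow theorem is stated only for the initial condition $\mathrm{id}_{\mathcal M}$, whereas your slice at $t=0$ is $\iota_g\circ\rho$, so you are really using the uniqueness of solutions of superdifferential equations with arbitrary initial condition from \cite{MonterdeSanchez}.)

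The genuine gap is the phrase ``after possibly shrinking $U$''. The proposition asserts uniqueness on $U\times V$ for an \emph{arbitrary} connected open neighbourhood $U$ of $g$, and this generality is used essentially later in the paper (gluing flat charts over unions $U_1\cup U_2$, flat charts covering compact pieces of leaves in the univalence arguments, the sets $U_{\alpha(p)}^2$ in the construction of the local action). Your commuting frame $A_i,B_j$ exists only on a coordinate neighbourhood of $g$ in $G$ (the coefficients expressing $\frac{\partial}{\partial t_i}$ in a basis of $\g$ require the coordinates $(t,\tau)$), and the flow-uniqueness argument propagates agreement only along coordinate segments within that neighbourhood; moreover, away from $\{g\}\times V$ the two charts have no prescribed common initial datum. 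So your proof establishes only the germ of uniqueness along $\{g\}\times V$. To get the full statement one still needs the paper's second step: for each $p\in V$ the set of $t\in U$ near which $\psi_1=\psi_2$ is open, contains $g$, and is shown to be closed by using Lemma~\ref{lemma: almost flat chart} to re-anchor both charts as flat charts with respect to a new base point $s_0$ and a new $\rho_0$, after which the local argument applies again. Without this open--closed propagation along the connected set $U$, the proposition as stated is not proved.
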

 The proof of this proposition is carried out in two steps:
\begin{enumerate}[(i)]
\item First, it is shown that two flat charts $\psi_1$ and $\psi_2$ with respect 
to $(\mathcal D, U, V, g, \rho)$,
coincide on an open neighbourhood of $\{g\}\times V$ in $U\times V$.
\item
Second, the local statement and the connectedness of $U$ are used to globally 
get $\psi_1=\psi_2$.
\end{enumerate}

\begin{proof} (i)
Consider first the case where $\rho=\mathrm{id}$ and $\mathcal D=\mathcal 
D_\mathcal G$, i.e. the infinitesimal action $\lambda$ is the zero map. Now, 
let 
$\psi$ be a flat chart with respect to $(\mathcal D_\mathcal 
G,U,V,g,\mathrm{id})$. Since the identity  $\mathrm{id}_{\mathcal G\times 
\mathcal M}|_{U\times V}$ is
also a flat chart with respect to $(\mathcal D_\mathcal G, U,V, g,\mathrm{id})$, 
the equality of $\psi$ and the identity in a neighbourhood of any point $(g,p)$ 
for $p\in V$
 needs to be shown.\\
Let $(t,\tau)$ be local coordinates on a connected neighbourhood $U'$ of $g\in 
G$ such that $g=0$ in the coordinates
 and let $(x,\theta)$ be local coordinates on a neighbourhood $V'$
of $p\in V$. Then $(t,\tau, x,\theta)$ are local coordinates for $\mathcal 
G\times \mathcal M$ in a neighbourhood of $(0,p)=\tilde{\psi}(0,p)$ 
and therefore on 
a neighbourhood of $\tilde{\psi}(U''\times V'')\subseteq U'\times V'$ for 
appropiate subsets $U''\subseteq U'$ and $V''\subseteq V'$.
Since $\pi_\mathcal G|_{U\times V}=\pi_\mathcal G\circ \psi$, we have 
$$\psi^*(t,\tau)=\psi^*(\pi_\mathcal G^*(t,\tau))=\pi_\mathcal 
G^*(t,\tau)=(t,\tau).$$
Moreover, $\psi_*(\mathcal D_\mathcal G)=\mathcal D_\mathcal G$ implies 
$\psi_*\left(\frac{\partial}{\partial t_r}\right),\, 
\psi_*\left(\frac{\partial}{\partial \tau_s}\right)
\in\mathrm{span} \left\{ \frac{\partial}{\partial 
t_1},\ldots,\frac{\partial}{\partial t_k},\frac{\partial}{\partial 
\tau_1},\ldots,\frac{\partial}{\partial \tau_l}\right\}$
for all $r$, $s$. 
Hence $$\psi_*\left(\frac{\partial}{\partial 
t_r}\right)(x,\theta)=\psi_*\left(\frac{\partial}{\partial 
\tau_s}\right)(x,\theta)=(0,0),$$ and then  
$\frac{\partial}{\partial 
t_r}(\psi^*(x,\theta))=\psi^*(\psi_*(\frac{\partial}{\partial 
t_r})(x,\theta))=\psi^*(0,0)=(0,0)$ and similarly 
$\frac{\partial}{\partial \tau_s}(\psi^*(x,\theta))=(0,0)$. 
Consequently, we have 
$$\psi^*(x,\theta)=\iota_0^*\circ 
\psi^*(x,\theta)=\iota_0^*(x,\theta)=(x,\theta)$$
and thus $\psi=\mathrm{id}$ on $U''\times V''$.\\
Let $\psi_1$ and $\psi_2$ now be flat charts with respect to $(\mathcal D, 
U,V,g,\rho)$ for a distribution $\mathcal D$ associated to an arbitrary 
infinitesimal action and
arbitrary~$\rho$. For any $p\in V$, we have 
$$\tilde{\psi}_1(g,p)=\tilde{\psi}_1\circ\tilde{\iota}_g(p)=\tilde{\iota}
_g(\tilde{\rho}(p))=(g,\tilde{\rho}(p))=\tilde{\psi}_2\circ\tilde{\iota}
_g(p)=\tilde{\psi}_2(g,p).$$ 
Now let $U'$ and $U''$ be open connected neighbourhoods 
of $g$ and $V'$ and $V''$ open neighbourhoods of $p$ with $U''\subseteq U'$ and 
$V''\subseteq V'$ such that the associated open subsupermanifolds $\mathcal U'$ 
of
$\mathcal G$ and $\mathcal V'$ of $\mathcal M$ are isomorphic to superdomains 
and 
$\tilde{\psi}_1(U''\times V'')$ is contained in $\tilde{\psi}_2(U'\times V')$. 
The composition $\psi_2^{-1}\circ \psi_1$ is defined on $U''\times V''$ and a 
flat chart with
respect to $(\mathcal D_\mathcal G, U'',V'',g,\mathrm{id})$. By the above 
argumentation, $\psi_2^{-1}\circ \psi_1=\mathrm{id}$ and thus $\psi_1=\psi_2$  
on $U''\times V''$.\vspace{5.5pt}\\
\noindent(ii)
Let $\psi_1$ and $\psi_2$ be two flat charts with respect to $(\mathcal D, U, 
V,g, \rho)$. For each $p\in V$ define the subset $W_p\subseteq U$ containing the 
points $t\in U$ such that $\psi_1=\psi_2$ on an open neighbourhood of $(t,p)$ in 
$U\times V$.
The sets $W_p$ are open by definition and contain $g$ as a consequence of (i). To 
prove $\psi_1=\psi_2$, i.e. $W_p=U$ for each $p$, it is therefore enough to show 
that each 
$W_p$ is also closed in $U$ due to the connectedness of $U$.\\
If $W_p$ is not closed, then there is a point $t_0\in U\setminus W_p$ such that 
$W_p\cap \Omega\neq\emptyset$ for every open neighbourhood $\Omega$ of $t_0$. 
The continuity of the underlying maps implies 
$\tilde{\psi}_1(t_0,p)=\tilde{\psi}_2(t_0,p)$.
Let $U'$ be an open neighbourhood of $t_0$ in $U$ and $V'$ and open 
neighbourhood of $p$ in $V$ such that the associated open subsupermanifolds 
$\mathcal U'$ and 
$\mathcal V'$ are isomorphic to superdomains. Now let $U''\subseteq U'$ be an 
open connected neighbourhood of $t_0$ and $V''\subseteq V'$ an open 
neighbourhood of
$p$ such that $\tilde{\psi}_1(U''\times V'')\subseteq \tilde{\psi_2}(U'\times 
V')$. Moreover, let $s_0$ be an element of $U''\cap W_p$, which exists by 
assumption on the choice of $t_0$.
After shrinking $V'$ and $V''$, the maps $\psi_1$ and $\psi_2$ coincide on an 
open neigbhourhood of $\{s_0\}\times V'$. By Lemma \ref{lemma: almost flat 
chart} there exists a 
diffeomorphism onto its image $\rho_0:\mathcal V'\rightarrow \mathcal M$ such 
that the restrictions of $\psi_1$ and $\psi_2$ to $U'\times V'$ are flat charts 
with respect to $(\mathcal D, U',V',s_0,\rho_0)$. The same argument as given in 
(i) then shows that $\psi_1$ and $\psi_2$ coincide on $U''\times V''$ which is a 
contradiction
to the assumption $t_0\notin W_p$.
\end{proof}

\begin{rmk}\label{rmk: equality of flat charts and even parts}
 Let $\psi$ and $\psi'$ be flat charts and denote by $\psi_0$ and $\psi_0'$ the 
associated flat charts for the even part 
 (cf. Lemma~\ref{lemma: extension of infinitesimal action}).
 Then $\psi=\psi'$ if and only if $\psi_0=\psi_0'$.
 
 Moreover, if $\psi_0:U\times \mathcal V\rightarrow G\times \mathcal M$ is a 
flat chart with respect to 
 $(\mathcal D_0,U, V, g,\rho)$ then the uniqueness and local existence of flat 
charts imply 
 that there is a flat chart $\psi:\mathcal U\times\mathcal V\rightarrow\mathcal 
G\times\mathcal M$ 
 with respect to $(\mathcal D, U, V, g,\rho)$.
\end{rmk}

\subsection{Uniqueness of local actions}

\begin{lemma}\label{lemma: local action and flat charts}
 Let $\varphi:\mathcal W\subseteq \mathcal G\times\mathcal M\rightarrow \mathcal 
M$ be the local action of the Lie supergroup $\mathcal G$
on a supermanifold $\mathcal M$ with induced infinitesimal action 
$\lambda_\varphi$. Let $\mathcal D$ denote the distribution associated to 
$\lambda_\varphi$.
Define $$\psi=(\mathrm{id}_\mathcal G\times \varphi)\circ 
(\mathrm{diag}\times\mathrm{id}_\mathcal M):\mathcal W \rightarrow \mathcal 
G\times \mathcal M$$ where 
$\mathrm{diag}:\mathcal G\rightarrow \mathcal G\times \mathcal G$ denotes the 
diagonal.

Let $p\in M$ and $U\subset W_p=\{g\in G|\,(g,p)\in W\}$ be a relatively compact 
connected open neighbourhood of $e\in W_p\subset G$. 
Then there exists an open neighbourhood $V$ of $p\in M$ with $U\times V\subset 
W$. 
The restriction $\psi|_{U\times V}$ is a flat chart with respect to $(\mathcal 
D, U, V, e, \mathrm{id})$.
\end{lemma}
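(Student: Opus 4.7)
The plan is to find $V$ and then verify each of the three defining properties of a flat chart for $\psi|_{U\times V}$. For the domain, since $U$ is relatively compact in $W_p$, the compact set $\overline{U}\times\{p\}$ lies in the open set $W\subseteq G\times M$; covering it by finitely many product open sets contained in $W$ and intersecting their $M$-factors yields an open neighbourhood $V\ni p$ with $\overline{U}\times V\subset W$, and in particular $U\times V\subset W$. The morphism $\psi$ is a diffeomorphism onto its image because its underlying map $(g,p)\mapsto(g,\tilde{\varphi}(g,p))$ has classical inverse $(g,q)\mapsto(g,\tilde{\varphi}(g^{-1},q))$, and the analogous morphism of supermanifolds, defined using $\varphi$ and the inversion of $\mathcal G$, is a two-sided inverse by the action axioms (after shrinking if necessary). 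Property (ii), $\pi_\mathcal G\circ\psi=\pi_\mathcal G|_{U\times V}$, is immediate from the definition $\psi=(\mathrm{id}_\mathcal G\times\varphi)\circ(\mathrm{diag}\times\mathrm{id}_\mathcal M)$, and property (iii), $\psi\circ\iota_e=\iota_e$, follows from $\varphi\circ\iota_e=\mathrm{id}_\mathcal M$.

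The substance of the lemma lies in property (i), namely $\psi_*(\mathcal D_\mathcal G|_W)=\mathcal D|_{\tilde{\psi}(W)}$. Since $\mathcal D_\mathcal G$ is spanned by the extended right-invariant vector fields $X\otimes\mathrm{id}_\mathcal M^*$ with $X\in\g$ and $\mathcal D$ by the fields $X+\lambda_\varphi(X)$, it suffices to show that
$$\psi_*(X\otimes\mathrm{id}_\mathcal M^*)=X+\lambda_\varphi(X)=(X\otimes\mathrm{id}_\mathcal M^*)+(\mathrm{id}_\mathcal G^*\otimes\lambda_\varphi(X))$$
for every $X\in\g$. Writing $Z$ for the right-hand side, this is equivalent to the intertwining relation $(X\otimes\mathrm{id}_\mathcal M^*)\circ\psi^*=\psi^*\circ Z$ on functions. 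The key tool is the identity
$$(X\otimes\mathrm{id}_\mathcal M^*)\circ\varphi^*=\varphi^*\circ\lambda_\varphi(X)$$
on $\mathcal O_\mathcal M$. I would prove it by applying the derivation $X\otimes\mathrm{id}_\mathcal G^*\otimes\mathrm{id}_\mathcal M^*$ to both sides of $(\mu\times\mathrm{id}_\mathcal M)^*\circ\varphi^*=(\mathrm{id}_\mathcal G\times\varphi)^*\circ\varphi^*$, the pullback of the action axiom. Right-invariance of $X$ lets the derivation commute past $(\mu\times\mathrm{id}_\mathcal M)^*$ and past $(\mathrm{id}_\mathcal G\times\varphi)^*$. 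Then evaluating at the identity in the first $\mathcal G$-slot via the inclusion $j_e\colon(g,p)\mapsto(e,g,p)$, and using $(\mu\times\mathrm{id}_\mathcal M)\circ j_e=\mathrm{id}$, the left-hand side collapses to $(X\otimes\mathrm{id}_\mathcal M^*)\varphi^*(f)$ while the right-hand side becomes $\varphi^*((X(e)\otimes\mathrm{id}_\mathcal M^*)\varphi^*(f))=\varphi^*(\lambda_\varphi(X)(f))$.

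With this identity in hand, the intertwining $\psi^*\circ Z=(X\otimes\mathrm{id}_\mathcal M^*)\circ\psi^*$ follows by a graded Leibniz computation on functions of the form $F=\pi_\mathcal G^*(h)\cdot\pi_\mathcal M^*(f)$: one has $\psi^*\pi_\mathcal G^*(h)=\pi_\mathcal G^*(h)$ by (ii) and $\psi^*\pi_\mathcal M^*(f)=\varphi^*(f)$ by definition, so both $(X\otimes\mathrm{id}_\mathcal M^*)(\psi^*F)$ and $\psi^*(ZF)$ reduce to
$$\pi_\mathcal G^*(Xh)\cdot\varphi^*(f)+(-1)^{|X||h|}\pi_\mathcal G^*(h)\cdot\varphi^*(\lambda_\varphi(X)(f)),$$
using the key identity to convert $(X\otimes\mathrm{id}_\mathcal M^*)\varphi^*(f)$ into $\varphi^*(\lambda_\varphi(X)(f))$ on the left and matching signs on the right. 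The main obstacle is the key identity itself; everything else is graded bookkeeping, and the final comparison then establishes that $\psi|_{U\times V}$ satisfies all three flat-chart conditions for the tuple $(\mathcal D,U,V,e,\mathrm{id})$.
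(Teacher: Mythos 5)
Your proposal is correct and follows essentially the same route as the paper: choose $V$ by compactness of $\overline{U}\times\{p\}$, read off properties (ii) and (iii) from the definition of $\psi$ and the action axiom, and obtain $\psi_*(X\otimes \mathrm{id}_\mathcal M^*)=X\otimes \mathrm{id}_\mathcal M^*+\mathrm{id}_\mathcal G^*\otimes\lambda_\varphi(X)$ by differentiating the associativity identity $\varphi\circ(\mu\times\mathrm{id}_\mathcal M)=\varphi\circ(\mathrm{id}_\mathcal G\times\varphi)$ with the right-invariant field $X$ and evaluating at $e$ in the first slot. The paper compresses this into ``by a calculation'' using $X=(X(e)\otimes\mathrm{id}_\mathcal G^*)\circ\mu^*$ and the diagonal's Leibniz rule, whereas you isolate the equivalent intermediate identity $(X\otimes\mathrm{id}_\mathcal M^*)\circ\varphi^*=\varphi^*\circ\lambda_\varphi(X)$; this is a presentational difference only.
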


\begin{proof}
Since $\overline{U}\times \{p\}\subset W$ is compact, we can find an open 
neighbourhood $V$ of $p$ with 
$U\times V\subseteq W$.
By definition of $\psi$, we have $\pi_\mathcal G\circ \psi=\pi_\mathcal G$. 
Moreover,
$\psi\circ\iota_e=\iota_e$ since $\varphi$ is a local action.

Let $X\in \g$, then 
$X\circ \mathrm{diag}^*=\mathrm{diag}^*\circ (X\otimes\mathrm{id}_\mathcal G^* 
+\mathrm{id}_\mathcal G^*\otimes X)$ since $X$ is a derivation. 
If $\iota_e^\mathcal G:\mathcal G\hookrightarrow \{e\}\times \mathcal G\subset 
\mathcal G\times \mathcal G$ is the inclusion, then 
$\mu\circ\iota_e^\mathcal G=\mathrm{id}_\mathcal G$ for the multiplication $\mu$ 
on $\mathcal G$. 
Since $X$ is right-invariant, we have
$X=(X(e)\otimes \mathrm{id}_\mathcal G^*)\circ \mu^*.$
By a calculation, using these facts and that $\varphi$ is a local action, we 
obtain
$$ \psi_*(X\otimes \id)=X\otimes \id+\id\otimes \lambda_{\varphi}(X),$$
which yields $\psi_*(\mathcal D_\mathcal G)=\mathcal D$. 
\end{proof}

The lemma implies that every local action of a Lie supergroup is uniquely 
determined by its domain of definition and its induced infinitesimal action:

\begin{cor}\label{cor: uniqueness of local action}
The domain of definition and the induced infinitesimal action uniquely determine 
a local action, i.e.
 if $\varphi_1:\mathcal W\rightarrow \mathcal G\times \mathcal M$ and 
$\varphi_2: \mathcal W\rightarrow \mathcal G\times \mathcal M$ are two local 
actions of 
the Lie supergroup $\mathcal G$ on the supermanifold $\mathcal M$ with the same 
induced infinitesimal action $\lambda=\lambda_{\varphi_1}=\lambda_{\varphi_2}$, 
then
$\varphi_1=\varphi_2$.
\end{cor}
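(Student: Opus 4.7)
The strategy is to reduce the claim to the uniqueness of flat charts (Proposition \ref{prop: uniqueness of flat charts}) by means of the construction in Lemma \ref{lemma: local action and flat charts}. Write $\mathcal{D} = \mathcal{D}_\lambda$; this distribution is common to both local actions because $\lambda_{\varphi_1} = \lambda_{\varphi_2} = \lambda$. For $i = 1,2$ set
$$\psi_i = (\mathrm{id}_\mathcal{G} \times \varphi_i) \circ (\mathrm{diag} \times \mathrm{id}_\mathcal{M}) : \mathcal{W} \longrightarrow \mathcal{G} \times \mathcal{M}.$$
Concretely $\psi_i$ is of the form $(g,m) \mapsto (g, \varphi_i(g,m))$, so $\pi_\mathcal{M} \circ \psi_i = \varphi_i$, and it is enough to prove $\psi_1 = \psi_2$.

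Fix an arbitrary point $(g_0, p_0) \in W$. Since $W_{p_0}$ is open and connected in $G$ (by definition of a local action), I would choose a relatively compact connected open neighborhood $U \subset W_{p_0}$ of $e$ that also contains $g_0$. Applying Lemma \ref{lemma: local action and flat charts} to $\varphi_1$ and $\varphi_2$ separately yields open neighborhoods $V_1, V_2$ of $p_0$ such that $U \times V_i \subset W$ and $\psi_i|_{U \times V_i}$ is a flat chart with respect to $(\mathcal{D}, U, V_i, e, \mathrm{id})$. Taking $V = V_1 \cap V_2$ (and noting that restriction of a flat chart to a smaller product remains a flat chart for the restricted data) makes both $\psi_1|_{U \times V}$ and $\psi_2|_{U \times V}$ flat charts with respect to the identical data $(\mathcal{D}, U, V, e, \mathrm{id})$.

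By Proposition \ref{prop: uniqueness of flat charts} the two flat charts must coincide on $U \times V$, whence $\varphi_1 = \varphi_2$ on the open neighborhood $U \times V$ of $(g_0, p_0)$. Since $(g_0, p_0) \in W$ was arbitrary, this gives $\varphi_1 = \varphi_2$ throughout $\mathcal{W}$. The heavy lifting is entirely done by the two preceding results, so there is no genuine obstacle; the only point demanding care is the choice of $U$, which simultaneously must contain $e$ (to invoke Lemma \ref{lemma: local action and flat charts}) and $g_0$ (to place the test point inside the chart), while remaining connected and relatively compact in $W_{p_0}$. This is exactly where the connectedness clause in the definition of a local action is used.
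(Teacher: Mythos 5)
Your proof is correct and follows essentially the same route as the paper: both form $\psi_i=(\mathrm{id}_\mathcal G\times\varphi_i)\circ(\mathrm{diag}\times\mathrm{id}_\mathcal M)$, invoke Lemma~\ref{lemma: local action and flat charts} to see these are locally flat charts with respect to $(\mathcal D,U,V,e,\mathrm{id})$, and conclude via Proposition~\ref{prop: uniqueness of flat charts}. You merely spell out the pointwise localization (choosing $U$ connected, relatively compact in $W_{p_0}$ and containing both $e$ and $g_0$) that the paper's one-line proof leaves implicit.
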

\begin{proof}
 By the preceding lemma and the uniqueness of flat charts we have 
$\psi_1=(\mathrm{id}_\mathcal G\times \varphi_1)\circ 
(\mathrm{diag}\times\mathrm{id}_\mathcal M)$
and $\psi_2=(\mathrm{id}_\mathcal G\times \varphi_2)\circ 
(\mathrm{diag}\times\mathrm{id}_\mathcal M)$ and hence $\varphi_1=\pi_\mathcal 
M\circ \psi_1=\pi_\mathcal M\circ \psi_2=\varphi_2$.
\end{proof}

\subsection{Construction of a local action}
In the following, let $\lambda:\g\rightarrow \mathrm{Vec}(\mathcal M)$ be a 
fixed infinitesimal action and $\mathcal G$ a Lie supergroup with multiplication 
$\mu:\mathcal G\times\mathcal G\rightarrow\mathcal G$ and
Lie superalgebra of right-invariant vector fields $\g$. 

The goal is now to find a local $\mathcal G$-action on $\mathcal M$ with induced 
infinitesimal 
action $\lambda$. 
Such a local action of $\mathcal G$ on $\mathcal M$ is constructed using flat 
charts for the distribution $\mathcal D$ associated to $\lambda$.
The domain of definition of the constructed action depends, in general, on some 
choices. After restricting two local actions with a the same infinitesimal 
action a neighbourhood of $\{e\}\times  M$ in $G\times M$ the local actions 
coincide as proven in the previous paragraph. 
Nevertheless, in general there is, as in the classical case (cf. \cite{Palais} 
Chapter III.4),
no unique maximal domain of definition on which the local action can be 
defined.\vspace{\baselineskip}

\noindent\textbf{Definition of a local action:}\\
Choose a neighbourhood basis $\{U_\alpha\}_{\alpha\in A}$ of the identity $e\in 
G$ such that (cf. \cite{Palais}, Chapter II, \S 7):
\begin{enumerate}[(i)]
 \item Each $U_\alpha$ is connected and $U_\alpha=(U_\alpha)^{-1}=\{g\in 
G|\,g^{-1}\in U_\alpha\}$.
 \item For $\alpha,\beta \in A$ either $U_\alpha\subseteq U_\beta$ or 
$U_\beta\subseteq U_\alpha$ holds.
\end{enumerate}
Note that the two conditions guarantee the connectedness of $U_\alpha\cap 
U_\beta$ for arbitrary $\alpha,\beta\in A$.

For each $p\in M$ choose $\alpha(p)\in A$ and a neighbourhood $V_p\subseteq M$ 
of $p$ such that there is a flat chart 
$$\psi_p:\mathcal U_{\alpha(p)}^2\times \mathcal V_p\rightarrow \mathcal G\times 
\mathcal M$$ with respect to $(\mathcal D,U_{\alpha(p)}^2,V_p,e,\mathrm{id})$, 
where $\mathcal U_{\alpha(a)}^2$ and $\mathcal V_p$ denote again the open 
subsupermanifolds of $\mathcal G$ and $\mathcal M$ with underlying sets 
$U_{\alpha(p)}^2=\{gh|\,g,h\in U_\alpha\}$ and $V_p$.
For two elements $p,q\in M$, we may assume  $U_{\alpha(p)}\subseteq 
U_{\alpha(q)}$. Therefore, if the intersection 
$$(U_{\alpha(p)}^2\times V_p )\cap (U_{\alpha(q)}^2\times 
V_q)=U_{\alpha(p)}^2\times (V_p\cap V_q)$$ is non-empty, then the restrictions 
of $\psi_p$ and $\psi_q$ to their common
domain of definition are both flat charts with respect to $(\mathcal D, 
U_{\alpha(p)}^2, (V_p\cap V_q), e,\mathrm{id})$ and hence coincide.
Let 
$$W=\bigcup_{p\in M} (U_{\alpha(p)}\times V_p)\subseteq G\times M.$$
The set $W$ is open by definition and contains $\{e\}\times M$. Furthermore, for 
each $p\in M$ the subset $W_p=\{g\in G|\, (g,p)\in W\}\subseteq G$ is connected 
since all
$U_{\alpha(q)}$ are connected. Let $ \mathcal W=(W,\mathcal O_{G\times \mathcal 
M}|_W)$ and  define a morphism 
$\psi:\mathcal W\rightarrow G\times \mathcal M$
by $\psi|_{U_{\alpha(p)}\times V_p}=\psi_p$ for each $p\in M$. Let
$$\varphi:\mathcal W\rightarrow \mathcal M,\, \varphi=\pi_\mathcal M\circ\psi.$$
We now show that $\varphi $ defines a local group action with induced 
infinitesimal action~$\lambda$.

\begin{prop}\label{prop: local action}
 The map $\varphi$ defines a local action of $\mathcal G$ of the supermanifold 
$\mathcal M$.
\end{prop}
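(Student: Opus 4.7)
The plan is to verify the two axioms in the definition of a local $\mathcal G$-action for the morphism $\varphi=\pi_{\mathcal M}\circ\psi$ constructed just above.

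Axiom (i), namely $\varphi\circ\iota_e=\mathrm{id}_{\mathcal M}$, is immediate: each local piece $\psi_p$ is a flat chart with respect to $(\mathcal D,U_{\alpha(p)}^2,V_p,e,\mathrm{id})$, so $\psi_p\circ\iota_e|_{V_p}=\iota_e|_{V_p}$, and projecting to $\mathcal M$ gives $\varphi\circ\iota_e|_{V_p}=\mathrm{id}_{\mathcal V_p}$; the pieces glue since the $\psi_p$ do.

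For axiom (ii) I would set $\Phi_1:=\varphi\circ(\mu\times\mathrm{id}_{\mathcal M})$ and $\Phi_2:=\varphi\circ(\mathrm{id}_{\mathcal G}\times\varphi)$ on the common open subsupermanifold of $\mathcal G\times\mathcal G\times\mathcal M$ where both are defined, and prove $\Phi_1=\Phi_2$ by a uniqueness argument parallel to Proposition~\ref{prop: uniqueness of flat charts}. The preparatory step is to reverse the derivation of Lemma~\ref{lemma: local action and flat charts}: the flat-chart conditions $\pi_{\mathcal G}\circ\psi=\pi_{\mathcal G}$ and $\psi_*(\mathcal D_{\mathcal G})=\mathcal D$, together with the right-invariance of the vector fields $X+\lambda(X)$ spanning $\mathcal D$, force $\psi_*(X\otimes\mathrm{id}_{\mathcal M}^*)=X\otimes\mathrm{id}_{\mathcal M}^*+\mathrm{id}_{\mathcal G}^*\otimes\lambda(X)$ for every $X\in\g$; composing with $\pi_{\mathcal M}^*$ yields the equivariance $(X\otimes\mathrm{id}_{\mathcal M}^*)\circ\varphi^*=\varphi^*\circ\lambda(X)$, which also produces $\lambda_\varphi=\lambda$ upon precomposition with $\iota_e^*$. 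Combining this equivariance with the right-invariance identity $\mu^*\circ X=(X\otimes\mathrm{id}_{\mathcal G}^*)\circ\mu^*$ on $\mathcal G$, a direct calculation shows
\[
(X\otimes\mathrm{id}_{\mathcal G}^*\otimes\mathrm{id}_{\mathcal M}^*)\circ\Phi_i^*=\Phi_i^*\circ\lambda(X),\quad i=1,2,
\]
where $X$ acts on the first $\mathcal G$-factor, and both $\Phi_i$ restrict to $\varphi$ on $\{e\}\times\mathcal G\times\mathcal M$ (using $\mu\circ(\iota_e\times\mathrm{id}_{\mathcal G})=\mathrm{id}_{\mathcal G}$ for $\Phi_1$ and axiom (i) for $\Phi_2$). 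Treating $\mathcal G\times\mathcal M$ as a parameter supermanifold carrying the trivially extended infinitesimal action $X\mapsto\mathrm{id}_{\mathcal G}^*\otimes\lambda(X)$ of $\g$, the lifted morphisms $\Psi^{(i)}:=(\mathrm{id}_{\mathcal G}\times\Phi_i)\circ(\mathrm{diag}_{\mathcal G}\times\mathrm{id}_{\mathcal G\times\mathcal M})$ become flat charts in the first $\mathcal G$-slot for the associated distribution on $\mathcal G\times(\mathcal G\times\mathcal M)$, sharing the same initial datum. The parametrised analogue of Proposition~\ref{prop: uniqueness of flat charts} then gives $\Psi^{(1)}=\Psi^{(2)}$, hence $\Phi_1=\Phi_2$, first on a neighbourhood of $\{e\}\times\mathcal G\times\mathcal M$ and then on the full common domain by the connectedness argument of step~(ii) of that proof, invoking the fact that each fibre $W_p$ of $W$ is connected by construction.

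The main obstacle is the need to articulate the parametrised version of Proposition~\ref{prop: uniqueness of flat charts}: mechanically the arguments there carry over with $\mathcal M$ replaced by $\mathcal G\times\mathcal M$ and $\mathcal D$ replaced by the extended distribution, but one must verify that the local-coordinate computation (step~(i) of that proof) and the connectedness propagation (step~(ii)) both go through verbatim in the new setting, paying attention to which factor each vector field, diagonal, and projection lives on. Once this bookkeeping is complete, axiom (ii) follows and $\varphi$ is the desired local $\mathcal G$-action.
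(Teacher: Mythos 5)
Your verification of axiom (i) and your local analysis of axiom (ii) follow the paper's route: the paper likewise lifts the two sides of the associativity identity to self-maps $\Psi_1,\Psi_2$ of $\mathcal G\times\mathcal G\times\mathcal M$ (with underlying maps $(g,h,p)\mapsto(g,h,\tilde\varphi(g,\tilde\varphi(h,p)))$ and $(g,h,p)\mapsto(g,h,\tilde\varphi(gh,p))$), checks that both are locally flat charts for the distribution $\mathcal D_{\id\otimes\lambda}$ associated to $X\mapsto\mathrm{id}_\mathcal G^*\otimes\lambda(X)$ with the same datum $(\ldots,e,\psi)$, and invokes uniqueness of flat charts. (As a side remark, your formula $\Psi^{(i)}=(\mathrm{id}_{\mathcal G}\times\Phi_i)\circ(\mathrm{diag}_{\mathcal G}\times\mathrm{id}_{\mathcal G\times\mathcal M})$ lands in $\mathcal G\times\mathcal M$ rather than $\mathcal G\times(\mathcal G\times\mathcal M)$; the intended lift must retain the middle factor, i.e.\ $(g,h,p)\mapsto(g,h,\Phi_i(g,h,p))$.)

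The genuine gap is in your final step, where you claim equality propagates ``on the full common domain by the connectedness argument of step (ii) of that proof, invoking the fact that each fibre $W_p$ of $W$ is connected.'' Connectedness of $W_p$ is not the relevant property. For fixed $(g',p)\in W$ with $q=\tilde\varphi(g',p)$, the fibre of the common domain of $\varphi\circ(\mu\times\mathrm{id})$ and $\varphi\circ(\mathrm{id}\times\varphi)$ over $(g',p)$ is
\[
\bigl(W_p\,(g')^{-1}\bigr)\cap W_q ,
\]
an intersection of two connected sets containing $e$, which in general is \emph{not} connected. A closed-and-open propagation starting from $\{e\}\times\mathcal G\times\mathcal M$ therefore only reaches the connected component of $e$ in each such fibre, while the action axiom must hold on the entire common domain. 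This is exactly the point where the paper's proof does something you have not reproduced: it exploits the special neighbourhood basis $\{U_\alpha\}$ (totally ordered by inclusion and symmetric) together with the fact that the defining flat charts $\psi_p$ were constructed on the squares $U_{\alpha(p)}^2\times V_p$ while only $U_{\alpha(p)}\times V_p$ enters the definition of $W$. Around an \emph{arbitrary} point $(g,g',p)$ of the common domain it then produces, by case distinction $U_\alpha\subseteq U_\beta$ or $U_\beta\subseteq U_\alpha$ and by translating to sets such as $U_\beta(g')^{-1}\subseteq U_\beta^2$, two flat charts for $\mathcal D_{\id\otimes\lambda}$ defined on one and the same connected product domain with the same datum based at $e$, so that uniqueness of flat charts applies directly, with no propagation through a possibly disconnected fibre. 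Without this (or some substitute for it), your argument establishes the action identity only near $\{e\}\times\mathcal G\times\mathcal M$ and on the $e$-components of the fibres, not on the whole set where both sides are defined.
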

\begin{proof}
Since the map $\psi$ defining $\varphi=\pi_\mathcal M\circ \psi$ is locally 
given by flat charts with respect to $(\mathcal D, U_{\alpha(p)}, 
V_p,e,\mathrm{id})$, we have 
$\psi\circ \iota_e=\iota_e$ and therefore
$\varphi\circ\iota_e=\pi_\mathcal M\circ\psi\circ\iota_e=\pi_\mathcal 
M\circ\iota_e=\mathrm{id}_\mathcal M.$
Thus it remains to show that 
\begin{equation}
\varphi\circ (\mu\times \mathrm{id}_\mathcal M)=\varphi\circ 
(\mathrm{id}_\mathcal G\times \varphi)\tag{$\star$}\end{equation}
on the open subsupermanifold of $\mathcal G\times \mathcal G\times\mathcal M$ 
where both sides are defined.
To prove ($\star$) the special form of the distribution associated to an 
infinitesimal action with respect to the group structure of $\mathcal G$ is 
used. 
The commutativity of the following diagram will be shown:
$$\xymatrix{
\mathcal G\times \mathcal G\times \mathcal M\ar[rrrr]^{\chi\times 
\mathrm{id}_\mathcal M}\ar[dd]_{\mathrm{id}_\mathcal G\times \psi}   
&&&&\mathcal G\times \mathcal G\times \mathcal M \ar[dd]^{(\chi^{-1}\times 
\mathrm{id}_\mathcal M)\circ (\mathrm{id}_\mathcal G\times \psi)} \\
&&&&\\
\mathcal G\times \mathcal G\times \mathcal M \ar[rrrr]_{(\tau\times 
\mathrm{id}_\mathcal M)\circ (\mathrm{id}_\mathcal G\times \psi)\circ 
(\tau\times \mathrm{id}_\mathcal M)}
&&&& \mathcal G\times \mathcal G\times \mathcal M
}$$

In the above diagram all maps are only defined on appropriate open 
subsupermanifolds of $\mathcal G\times \mathcal G\times \mathcal M$.
The map $\tau:\mathcal G\times \mathcal G\rightarrow \mathcal G\times \mathcal 
G$ denotes the map which interchanges the two components. 
Moreover, the map $\chi:\mathcal G\times \mathcal G\rightarrow \mathcal G\times 
\mathcal G$ is defined by 
$$\chi=(\mathrm{id}_\mathcal G\times \mu)\circ 
(\mathrm{diag}\times\mathrm{id}_\mathcal G),$$  
such that $\mu=\pi_2\circ\chi$ and $\pi_1=\pi_1\circ \chi$, where $\pi_i$, 
$i=1,2$,
is the projection onto the $i$-th factor.
The underlying map is given by $\tilde\chi(g,h)=(g,gh)$.

Note that if $\mathcal G= \mathcal M$ and the infinitesimal action $\lambda$ is 
the canonical inclusion
$\mathfrak g\hookrightarrow \mathrm{Vec}(\mathcal G)$ of the right-invariant 
vector fields, then $\chi$ is a flat chart for the distribution $\mathcal D$
with respect to $(\mathcal D, G, M,e,\mathrm{id})$, $\chi$ and $\psi$ coincide 
(on their common domain of definition) and ($\star$) is
equivalent to the associativity of the multiplication $\mu$.

Let 
$$\Psi_1=(\tau\times \mathrm{id}_\mathcal M)\circ (\mathrm{id}_\mathcal G\times 
\psi)\circ (\tau\times \mathrm{id}_\mathcal M)\circ(\mathrm{id}_\mathcal G\times 
\psi)$$
and 
$$\Psi_2= (\chi^{-1}\times \mathrm{id}_\mathcal M)\circ (\mathrm{id}_\mathcal 
G\times \psi)\circ(\chi\times \mathrm{id}_\mathcal M).$$
The underlying maps are 
$\tilde\Psi_1(g,h,p)=(g,h,\tilde\varphi(g,\tilde\varphi(h,p)))$ 
and $\tilde\Psi_2(g,h,p)=(g,h,\tilde\varphi(gh,p))$.
The open subsupermanifold of $\mathcal G\times \mathcal G\times \mathcal M$ on 
which both morphisms $\Psi_1$ and $\Psi_2$ are defined is exactly the open 
subsupermanifold
which is the common domain of definition of $\varphi\circ (\mu\times 
\mathrm{id}_\mathcal M)$ and $\varphi\circ (\mathrm{id}_\mathcal G\times 
\varphi)$.
A calculation shows
$\pi_\mathcal M\circ \Psi_1
=\varphi\circ (\mathrm{id}_\mathcal G\times \varphi)$
and 
 $\pi_\mathcal M\circ \Psi_2
=\varphi\circ (\mu\times \mathrm{id}_\mathcal M)$
and the commutativity of the above diagram directly implies ($\star$).

To show the equality of $\Psi_1$ and $\Psi_2$, we consider the distribution 
$\mathcal D_{\id\otimes \lambda}$
on $\mathcal G\times (\mathcal G\times \mathcal M)$ associated to the 
infinitesimal action $\id\otimes \lambda:\g\rightarrow \mathrm{Vec}(\mathcal 
G\times \mathcal M)$,
$X\mapsto (\mathrm{id}_\mathcal G^*\otimes \lambda(X))$,
 of $\mathcal G$ on $\mathcal G\times \mathcal M$ and show that $\Psi_1$ and 
$\Psi_2$ are locally both flat charts for this distribution. 

Let $\mathcal D_\mathcal G$ be the distribution on $\mathcal G\times \mathcal 
G\times \mathcal M$ which is spanned by vector fields of the form
$X\otimes \mathrm{id}_\mathcal G^*\otimes \mathrm{id}_\mathcal M^*$ for $X\in 
\g$. The distribution $\mathcal D_{\id\otimes\lambda}$ is spanned by vector 
fields of the form
$X\otimes \mathrm{id}_\mathcal G^*\otimes \mathrm{id}_\mathcal 
M^*+\mathrm{id}_\mathcal G^*\otimes \mathrm{id}_\mathcal G^*\otimes \lambda(X)$.
For $X\in \g$ and writing $\id$ for $\mathrm{id}_\mathcal G^*$ or 
$\mathrm{id}_\mathcal M^*$, we have
\begin{align*}
 (&\Psi_1)_*(X\otimes \id\otimes \id)
=(\tau\times \mathrm{id}_\mathcal M)_*(\mathrm{id}_\mathcal G\times 
\psi)_*(\tau\times \mathrm{id}_\mathcal M)_*(\mathrm{id}_\mathcal G\times 
\psi)_*(X\otimes \id\otimes\id)\\
&=(\tau\times \mathrm{id}_\mathcal M)_*(\mathrm{id}_\mathcal G\times 
\psi)_*(\id\otimes X\otimes \id)
=(\tau\times \mathrm{id}_\mathcal M)_*(\id\otimes X\otimes 
\id+\id\otimes\id\otimes \lambda(X))\\
&=X\otimes \id\otimes \id+\id\otimes \id\otimes \lambda(X),
\end{align*}
where the fact that $\psi$ is a flat chart for the distribution $\mathcal D$ 
associated $\lambda$ is used.
Similarly, using the fact that $\chi_*(X\otimes \id)=(X\otimes \id+\id\otimes 
X)$, we get
$ (\Psi_2)_*(X\otimes \id\otimes \id)=X\otimes \id\otimes \id+\id\otimes 
\id\otimes \lambda(X).$
Hence, $\Psi_1$ and $\Psi_2$ both transform $\mathcal D_\mathcal G$ into 
$\mathcal D_{\id\otimes\lambda}$, i.e.
$(\Psi_i)_*(\mathcal D_\mathcal G)=\mathcal D_{\id\otimes \lambda}$.

Remark that $\pi_\mathcal G\circ\Psi_1=\pi_\mathcal G=\pi_\mathcal G\circ 
\Psi_2$ if $\pi_\mathcal G:\mathcal G\times \mathcal G\times \mathcal 
M\rightarrow \mathcal G$
denotes the projection onto the first component.
Now, let
$\iota_e^{\mathcal G\times \mathcal M}:\mathcal G\times \mathcal 
M\hookrightarrow \{e\}\times \mathcal G\times \mathcal M\subset\mathcal 
G\times\mathcal G\times \mathcal M$
and 
$\iota_e^{\mathcal M}:\mathcal M\hookrightarrow \{e\}\times \mathcal M\subset 
\mathcal G\times \mathcal M$
denote the inclusions. Then a calculation shows 
$$ \Psi_1\circ\iota_e^{\mathcal G\times \mathcal M}
=\iota_e^{\mathcal G\times \mathcal M}\circ \psi
\ \ \ \text{ and }\ \ \ 
 \Psi_2\circ \iota_e^{\mathcal G\times \mathcal M}
=\iota_e^{\mathcal G\times \mathcal M}\circ\psi
$$
since $\psi\circ \iota_e^\mathcal M=\iota_e^\mathcal M$ and $(\chi\times 
\mathrm{id}_\mathcal M)\circ \iota_e^{\mathcal G\times \mathcal M}
=\iota_e^{\mathcal G\times \mathcal M}$ using that $e$ is the identity element 
of $\mathcal G$ and the definition of $\chi$.
This shows that $\Psi_1$ and $\Psi_2$ are both locally flat charts in $e$.

In order to check that $\Psi_1$ and $\Psi_2$ coincide everywhere, the special 
form of the sets in $\{U_\alpha\}_{\alpha\in A}$ is important.
Let $(g,g',p)\in G\times G\times M$ such that $\Psi_1$ and $\Psi_2$ are defined 
on a neighbourhood of $(g,g',p)$, i.e. such that
$(g',p)$, $(gg',p)$, $(g,\tilde{\varphi}(g,p))\in W$.
Then by definition of $\psi$, there exists $U_\alpha\in \{U_\gamma\}_{\gamma\in 
A}$ containing $g$, a neighbourhood $V_\alpha$ of $q=\tilde{\varphi}(g',p)$ in 
$M$
and a flat chart $\psi_\alpha:\mathcal U_\alpha^2\times \mathcal 
V_\alpha\rightarrow \mathcal G\times \mathcal M$ with respect to 
$(\mathcal D,U_\alpha^2, V_\alpha,e, \mathrm{id})$ and with 
$\psi|_{U_\alpha\times V_\alpha}=\psi_\alpha|_{U_\alpha\times V_\alpha}.$
Furthermore, choose $U_\beta\in\{U_\gamma\}_{\gamma\in A}$ containing $g'$ and 
$gg'$ and a neighbourhood $V_\beta$ of $p$ in $M$
such that there exists a flat chart 
$\psi_\beta:\mathcal U_\beta^2\times \mathcal V_\beta\rightarrow \mathcal 
G\times \mathcal M$ with respect to
$(\mathcal D, U_\beta^2,V_\beta, e,\mathrm{id})$ and with
$\psi|_{U_\beta\times V_\beta}=\psi_\beta|_{U_\beta\times V_\beta}.$

Shrink $V_\beta$ and choose a neighbourhood $U$ of $g'$ in $G$ with $U\subseteq 
U_\beta$ such that $\tilde{\varphi}(U\times V_\beta)\subseteq V_\alpha$.
By the special choice of the neighbourhood basis $\{U_\gamma\}_{\gamma\in A}$ of 
$e$ in $G$ either $U_\alpha\subseteq U_\beta$ or $U_\beta \subseteq U_\alpha$ is 
true.

First, let $U_\alpha\subseteq U_\beta$.
The map $\Psi_1$ is defined of $U_\alpha\times U\times V_\beta$ and the 
restriction of $\Psi_1$ to $U_\alpha\times U\times V_\beta$ is a flat chart with 
respect to 
$(\mathcal D_{\id\otimes \lambda},U_\alpha,U\times V_\beta,e,\psi)$.
Moreover, we have $\tilde{\mu}(U_\alpha\times U)=U_\alpha\cdot U\subseteq 
U_\beta^2$ and thus 
$\Psi_{2,\beta}=(\chi^{-1}\times \mathrm{id}_\mathcal M)\circ 
(\mathrm{id}_\mathcal G\times \psi_\beta)\circ(\chi\times \mathrm{id}_\mathcal 
M)$ 
is also defined on $U_\alpha \times U\times V_\beta$ and a flat chart 
with respect to $(\mathcal D_{\id\otimes \lambda},U_\alpha,U\times V_\beta, 
e,\psi)$. 
By the uniqueness of flat charts, $\Psi_1$ and $\Psi_{2,\beta}$ coincide on 
$U_\alpha\times U\times V_\beta$ so that $\Psi_1=\Psi_2$ near $(g,g',p)\in 
U_\alpha\times U\times V_\beta$. 

Consider now the case $U_\beta\subseteq U_\alpha$. The map 
$\Psi_{1,\alpha}=(\tau\times \mathrm{id}_\mathcal M)\circ (\mathrm{id}_\mathcal 
G\times \psi_\alpha)\circ (\tau\times \mathrm{id}_\mathcal 
M)\circ(\mathrm{id}_\mathcal G\times \psi)$
is defined on $U_\alpha^2\times U\times V_\beta$ and is a flat chart with 
respect to 
$(\mathcal D_{\id\otimes \lambda}, U_\alpha^2, U\times V_\beta,e,\psi)$.
The set $U_\beta(g')^{-1}$ contains $e$ and $g$.
Since $U_\beta (g')^{-1}\subset U_\beta \cdot U_\beta^{-1}=U_\beta ^2\subseteq 
U_\alpha^2$, the map $\Psi_{1,\alpha}$ may be restricted to
$U_\beta (g')^{-1}\times U\times V_\beta$ and is then a flat chart with respect 
to $(\mathcal D_{\id\otimes \lambda},U_\beta (g')^{-1}, U\times 
V_\beta,e,\psi)$.
After possibly shrinking $U$, we may assume $\tilde{\mu}(U_\beta (g')^{-1}\times 
U)=U_\beta (g')^{-1}\cdot U\subseteq U_\beta ^2$. 
Then $\Psi_2$ is defined on $U_\beta (g')^{-1}\times U\times V_\beta$ and is a 
flat chart with respect to 
$(\mathcal D_{\id\otimes \lambda}, U_\beta(g')^{-1}, U\times V_\beta,e,\psi)$. 
Again, the uniqueness of flat charts implies $\Psi_1=\Psi_2$ near $(g,g',p)\in 
U_\beta(g')^{-1}\times U\times V_\beta$.
\end{proof}

\begin{prop}\label{prop: infinitesimal action as required}
 The infinitesimal action $\lambda_\varphi:\g\rightarrow \mathrm{Vec}(\mathcal 
M),\,X\mapsto (X(e)\otimes \mathrm{id}_\mathcal M^*)\circ \varphi^*$ induced by
the local $\mathcal G$-action $\varphi$ is $\lambda$.
\end{prop}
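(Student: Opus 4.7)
The plan is to unwind the definition $\varphi = \pi_\mathcal{M} \circ \psi$, where $\psi$ is the morphism assembled from the flat charts $\psi_p$ for the distribution $\mathcal{D} = \mathcal{D}_\lambda$. For $X \in \g$ and $f \in \mathcal{O}_\mathcal{M}(M)$,
$$\lambda_\varphi(X)(f) = \iota_e^* \circ (X \otimes \mathrm{id}_\mathcal{M}^*) \circ \psi^* \circ \pi_\mathcal{M}^*(f).$$
Using the intertwining $(X \otimes \mathrm{id}_\mathcal{M}^*) \circ \psi^* = \psi^* \circ \psi_*(X \otimes \mathrm{id}_\mathcal{M}^*)$, which is valid since $\psi$ is a diffeomorphism onto its image, together with the flat-chart property $\psi \circ \iota_e = \iota_e$, the computation collapses to
$$\lambda_\varphi(X)(f) = \iota_e^*\bigl( \psi_*(X \otimes \mathrm{id}_\mathcal{M}^*)(\pi_\mathcal{M}^* f) \bigr).$$

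The central step is to identify $Z := \psi_*(X \otimes \mathrm{id}_\mathcal{M}^*)$ explicitly. Being the pushforward under a flat chart, $Z$ belongs to $\mathcal{D}$, so locally $Z = \sum_i f_i \bigl(Y_i + \lambda(Y_i)\bigr)$ for a homogeneous basis $\{Y_i\}$ of $\g$ and coefficients $f_i \in \mathcal{O}_{\mathcal{G} \times \mathcal{M}}$. On the other hand, the flat-chart condition $\pi_\mathcal{G} \circ \psi = \pi_\mathcal{G}$ implies $Z \circ \pi_\mathcal{G}^* = \pi_\mathcal{G}^* \circ X$; evaluating on $\pi_\mathcal{G}^* h$ and using that $\lambda(Y_i)$ annihilates $\mathcal{G}$-pullbacks yields $\sum_i f_i\, \pi_\mathcal{G}^*(Y_i h) = \pi_\mathcal{G}^*(Xh)$ for every $h \in \mathcal{O}_\mathcal{G}$. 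Writing $X = \sum_i c_i Y_i$ for scalars $c_i$ and expanding the $Y_i$ in local coordinates on $\mathcal{G}$, the resulting frame matrix is invertible at every point, which forces $f_i = c_i$. Hence $Z = X + \lambda(X)$, or more formally $Z = X \otimes \mathrm{id}_\mathcal{M}^* + \mathrm{id}_\mathcal{G}^* \otimes \lambda(X)$, as vector fields on $\mathcal{G} \times \mathcal{M}$.

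Substituting back, $X \otimes \mathrm{id}_\mathcal{M}^*$ annihilates $\pi_\mathcal{M}^* f$ while $\mathrm{id}_\mathcal{G}^* \otimes \lambda(X)$ acts on it as $\pi_\mathcal{M}^*(\lambda(X)(f))$. Since $\pi_\mathcal{M} \circ \iota_e = \mathrm{id}_\mathcal{M}$, we conclude
$$\lambda_\varphi(X)(f) = \iota_e^*\bigl( \pi_\mathcal{M}^*(\lambda(X)(f)) \bigr) = \lambda(X)(f),$$
giving $\lambda_\varphi = \lambda$. The main obstacle is the uniqueness step in the previous paragraph: in the super setting the coefficients $f_i$ a priori carry odd components, so the identification $Z = X + \lambda(X)$ cannot be read off pointwise on the reduced manifold alone, and has to be extracted from the pointwise linear independence of the right-invariant frame $\{Y_i(g)\} \subset T_g G$ together with a coordinate expansion on $\mathcal{G}$.
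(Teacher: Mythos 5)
Your proposal is correct and follows essentially the same route as the paper: push $X\otimes\mathrm{id}_\mathcal M^*$ forward under the locally-flat-chart $\psi$, note that the result lies in $\mathcal D$, use $\pi_\mathcal G\circ\psi=\pi_\mathcal G$ to pin down the coefficients against the right-invariant frame and conclude $\psi_*(X\otimes\mathrm{id}_\mathcal M^*)=X\otimes\mathrm{id}_\mathcal M^*+\mathrm{id}_\mathcal G^*\otimes\lambda(X)$, then evaluate at $e$ via $\psi\circ\iota_e=\iota_e$. The coefficient-identification step you flag as the main obstacle is handled in the paper by the same observation (constancy of the $a_i$ follows from $X=\sum a_iX_i$ with $\{X_i\}$ a basis of $\g$), so there is no gap.
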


\begin{proof}
Since the map $\psi$ is locally a flat chart, $\psi $ is a local 
diffeormorphism, $\pi_\mathcal G\circ \psi=\pi_\mathcal G$ and $\psi\circ 
\iota_e=\iota_e$. 
Moreover, we have locally $\psi_*(\mathcal D_\mathcal G)=\mathcal D$. Therefore 
the vector 
field $\psi_*(X\otimes \mathrm{id}_\mathcal M^*)$ on $\mathcal G\times \mathcal 
M$ belongs to the distribution $\mathcal D$ for each vector field $X\in\g$.
We have 
\begin{align*}
 \psi_*(X\otimes \mathrm{id}_\mathcal M^*)\circ\pi_\mathcal G^*
&=(\psi^{-1})^*\circ (X\otimes \mathrm{id}_\mathcal M^*)\circ \psi^*\circ 
\pi_\mathcal G^*
=(\psi^{-1})^*\circ(X\otimes\mathrm{id}_\mathcal M^*)\circ \pi_\mathcal G^*\\
&=(\psi^{-1})^*\circ \pi_\mathcal G^*\circ X
=\pi_\mathcal G^*\circ X
=(X\otimes \mathrm{id}_\mathcal M^*)\circ\pi_\mathcal G^*.
\end{align*}
 Let $X_1,\ldots,X_{k+l}$ be a basis of $\g$ and $a_1,\ldots,a_{k+l}$ local 
functions on $\mathcal G\times \mathcal M$ such that 
$$\psi_*(X\otimes \mathrm{id}_\mathcal M^*)=\sum_{i=1}^{k+l} a_i(X_i\otimes 
\mathrm{id}_\mathcal M^*+\mathrm{id}_\mathcal G^*\otimes \lambda(X_i)),$$
which is possible since $\psi_*(X\otimes \mathrm{id}_\mathcal M^*)$ belongs to 
$\mathcal D$.
Then, combining the above, we have
\begin{align*}
 (X\otimes \mathrm{id}_\mathcal M^*)\circ\pi_\mathcal G^*
=\left(\sum_{i=1}^{k+l} a_i(X_i\otimes \mathrm{id}_\mathcal 
M^*+\mathrm{id}_\mathcal G^*\otimes \lambda(X_i))\right) \circ\pi_\mathcal G^*
=\left(\left(\sum_{i=1}^{k+l} a_iX_i\right)\otimes \mathrm{id}_\mathcal 
M^*\right) \circ \pi_\mathcal G^*,
\end{align*}
which implies $X=\sum_{i=1}^{k+l} a_i X_i$. Since $X\in \g$ and 
$X_1,\ldots,X_{k+l}$ is a basis of $\g$, the $a_i$'s are all constants and hence
 $$\psi_*(X\otimes \mathrm{id}_\mathcal M^*)=X\otimes \mathrm{id}_\mathcal 
M^*+\mathrm{id}_\mathcal G^*\otimes \lambda(X).$$
Therefore,
\begin{align*}
\lambda_\varphi(X) 
&=(X(e)\otimes \mathrm{id}_\mathcal M^*)\circ\varphi^*
=\iota_e^*\circ (X\otimes \mathrm{id}_\mathcal M^*)\circ\psi^*\circ\pi_\mathcal 
M^*=\iota_e^*\circ \psi^*\circ \psi_*(X\otimes \mathrm{id}_\mathcal M^*)\circ 
\pi_\mathcal M^*\\
&=(\psi\circ\iota_e)^*\circ (X\otimes \mathrm{id}_\mathcal 
M^*+\mathrm{id}_\mathcal G^*\otimes \lambda(X))\circ \pi_\mathcal M^*
=\iota_e^*\circ (0+\pi_\mathcal M^*\circ \lambda(X))=\lambda(X).
\end{align*}\end{proof}

\section{Globalizations of infinitesimal actions}

After we proved the existence of a local action of a Lie supergroup $\mathcal G$ 
on a supermanifold $\mathcal M$ with a given induced 
infinitesimal action $\lambda:\g\rightarrow \mathrm{Vec}(\mathcal M)$, it is 
natural to ask in which cases this extends 
a global $\mathcal G$-action on $\mathcal M$.

A simple way to obtain examples of a local action which is not global is to 
start with an action on a 
supermanifold $\mathcal M'$. 
This action then induces a local action, which is not global, on every 
non-invariant open subsupermanifold $\mathcal M\subset \mathcal M'$

The aim of this section is to characterize all infinitesimal action which 
``arise'' in the just described way from a global action. 
These infinitesimal actions are called globalizable.

In the classical case (see \cite{Palais}, Chapter III), Palais found necessary 
and sufficient conditions for an infinitesimal action to be globalizable, 
allowing the larger manifold $M'$, a globalization of the infinitesimal action, 
to be a possibly non-Hausdorff manifold.

In this section, similar conditions for the existence of globalizations of 
infinitesimal actions of Lie supergroups on supermanifolds 
are proven, and differences to the classical case are pointed out. It is also 
shown by an example that an infinitesimal action on a supermanifold may not be 
globalizable even if its underlying infinitesimal action is.

In analogy to the classical case (cf. \cite{Palais}, Chapter III, Definition 
II), we define the notion of a globalization of an infinitesimal action.

\begin{defi}
 A globalization of an infinitesimal action $\lambda:\g\rightarrow 
\mathrm{Vec}(\mathcal M)$ of a Lie supergroup $\mathcal G$ on a supermanifold 
 $\mathcal M$ is a  pair $(\mathcal M',\varphi')$ with the following properties:
 \begin{enumerate}[(i)]
  \item $\mathcal M'$ is a supermanifold, whose underlying manifold $M'$ is 
allowed to be a non-Hausdorff manifold, and $\mathcal M$ is 
  an open subsupermanifold of $\mathcal M'$, and
  \item $\varphi':\mathcal G\times \mathcal M'\rightarrow \mathcal M'$ is an 
action of the Lie supergroup $\mathcal G$ on $\mathcal M'$ 
   such that its infinitesimal action restricted to $\mathcal M$ coincides with 
$\lambda$, and
  \item $\tilde\varphi'(G\times M)=M'$.
 \end{enumerate}
 If there is no chance of confusion the supermanifold $\mathcal M'$ is also 
called a globalization.
 The infinitesimal action $\lambda$ is called globalizable if there exists a 
globalization $(\mathcal M',\varphi')$
 of $\lambda$.
\end{defi}

\begin{defi}
 The domain of definition $\mathcal W=(W,\mathcal O_\mathcal W)$ of a local 
$\mathcal G$-action $\varphi:\mathcal W\rightarrow \mathcal M$ is called 
maximally balanced if
 $W_p=W_{\tilde \varphi(h,p)}h=\{gh|\,g\in W_{\tilde\varphi (h,p)}\}$ for all 
$(h,p)\in W$, where again
 $W_q=\{g\in G|\,(g,q)\in W\}$ for any $q\in M$.
\end{defi}

\begin{rmk}
 In \cite{Palais} a maximally balanced domain of definition of local action is 
called maximum (see Definition VII in Chapter III, \cite{Palais}). 
 Here, the term maximally balanced is used instead in order to avoid any 
confusion with maximal domains of definition.
\end{rmk}

In the classical case, the following theorem states necessary and sufficient 
conditions for the existence of a globalization of an infinitesimal 
action.

\begin{thm}[see \cite{Palais}, Chapter III, Theorem X]\label{thm: classical 
globalizations}
Let $\lambda:\g\rightarrow \mathrm{Vec}(M)$ be an infinitesimal action of a Lie 
group $G$ on a manifold $M$.
Then the following statements are equivalent:
\begin{enumerate}[(i)]
 \item The infinitesimal action $\lambda$ is globalizable.
 \item There exists a local action $\varphi:W\rightarrow M$ of $G$ on $M$ with 
infinitesimal action $\lambda$ whose domain of definition $W$ 
 is maximally balanced; this local action with a maximally balanced domain of 
definition is then unique and any other local action 
 with the same infinitesimal action $\lambda$ is a restriction of $\varphi$.
 \item Let $\mathcal D$ be the distribution on $G\times M$ associated to the 
infinitesimal action $\lambda$ and $\Sigma\subset G\times M$
 any leaf of $\mathcal D$, i.e. a maximal connected integral manifold. Then the 
map $\pi_G|_\Sigma:\Sigma\rightarrow M$ is injective,
 where $\pi_G:G\times M\rightarrow G$ denotes the projection onto $G$.
\end{enumerate}
\end{thm}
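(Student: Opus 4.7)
My plan is to establish the cyclic chain $(i) \Rightarrow (iii) \Rightarrow (ii) \Rightarrow (i)$, since each implication isolates a distinct geometric mechanism and the distribution $\mathcal D_\lambda$ provides the bridge between them.

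For $(i) \Rightarrow (iii)$, suppose a globalization $(M', \varphi')$ exists and consider the distribution $\mathcal D'$ on $G \times M'$ associated to the extended action. The integral curve of $X + \lambda(X)$ starting at $(e, p)$ is $t \mapsto (\exp(tX), \varphi'(\exp(tX), p))$, so by connectedness of $G$ the leaf of $\mathcal D'$ through $(e, p)$ is the graph $\{(g, \varphi'(g, p)) : g \in G\}$, which projects bijectively onto $G$. Since a leaf $\Sigma$ of $\mathcal D_\lambda$ on the open subset $G \times M$ is a connected integral submanifold of $\mathcal D'|_{G \times M}$, it sits inside a single such graph and $\pi_G|_\Sigma$ is therefore injective.

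For $(iii) \Rightarrow (ii)$, let $\Sigma_p$ denote the leaf of $\mathcal D_\lambda$ through $(e, p)$. The spanning fields $X + \lambda(X)$ project under $\pi_G$ to the right-invariant fields $X$, which span $TG$ pointwise, so $\pi_G|_{\Sigma_p}$ is an immersion; combined with $\dim \Sigma_p = \dim \mathfrak g = \dim G$ and the injectivity assumption, invariance of domain promotes it to an open embedding onto a connected open $U_p \subseteq G$ containing $e$. Its inverse composed with $\pi_M$ yields a smooth $\varphi_p : U_p \to M$ with $\varphi_p(e) = p$, and I assemble these into $\varphi$ on $W = \bigcup_p U_p \times \{p\}$; openness of $W$ and smoothness of $\varphi$ follow from flat charts of $\mathcal D_\lambda$ together with continuous dependence on initial data. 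For the group law and the maximally balanced condition I use that $R_h \times \mathrm{id}_M$ preserves every $X + \lambda(X)$ (because right-invariant vector fields are $R_h$-invariant), so it carries $\Sigma_{\varphi(h,p)}$ to a connected integral submanifold through $(h, \varphi(h, p)) \in \Sigma_p$; maximality of leaves forces this translate to equal $\Sigma_p$, which reads off as $U_p = U_{\varphi(h, p)} \cdot h$ together with $\varphi(g, \varphi(h, p)) = \varphi(gh, p)$. Uniqueness of the maximally balanced action, and the fact that any other local action is a restriction, follows from Corollary~\ref{cor: uniqueness of local action}.

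For $(ii) \Rightarrow (i)$, I define on $G \times M$ the relation $(g, p) \sim (gh^{-1}, \varphi(h, p))$ whenever $(h, p) \in W$. The maximally balanced hypothesis yields symmetry (since $(h^{-1}, \varphi(h, p)) \in W$ and $\varphi(h^{-1}, \varphi(h, p)) = p$) and transitivity (since $(h, p), (k, \varphi(h, p)) \in W$ forces $(kh, p) \in W$ with $\varphi(kh, p) = \varphi(k, \varphi(h, p))$), so it is a bona fide equivalence. Set $M' = (G \times M)/\sim$ with the quotient topology, equipped with the supermanifold structure transported from slices $\{g\} \times \mathcal V$ of $\mathcal G \times \mathcal M$; the map $\mathcal M \hookrightarrow \mathcal M'$, $p \mapsto [(e, p)]$, is then an open embedding and $G \cdot M = M'$ is tautological. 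The action $g' \cdot [(g, p)] = [(g'g, p)]$ descends because the relation touches only the second coordinate, producing a smooth $\mathcal G$-action extending $\varphi$. The main technical obstacle lies here: verifying coherence of the quotient supermanifold structure in the possibly non-Hausdorff setting, where distinct classes may fail to admit disjoint neighborhoods. This relies on the uniqueness of flat charts to ensure that the local slices used as charts descend injectively to $M'$; the non-Hausdorff possibility is precisely what is permitted in the definition of globalization, so it is not an obstacle to the theorem but an intrinsic feature.
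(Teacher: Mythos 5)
The paper itself offers no proof of this statement: it is quoted verbatim from Palais (Chapter III, Theorem X) and used as a black box, so there is no internal argument to compare against. Your reconstruction is essentially correct and follows the same leaf-space strategy that Palais uses and that the paper redevelops in the super setting in Section 5 (Theorem~\ref{thm: condition for globalizability} together with Propositions~\ref{prop: M^* supermanifold}, \ref{prop: globalizability implies maximally balanced domain of definition} and \ref{prop: maximally balanced domain of definition implies univalence}): leaves of $\mathcal D_\lambda$ over a globalization are graphs $\{(g,\varphi'(g,p))\}$, hence project injectively; injectivity of $\pi_G|_\Sigma$ lets you read each leaf $\Sigma_{(e,p)}$ as the graph of a partial action over $U_p=\pi_G(\Sigma_{(e,p)})$, with right translations $R_h\times\mathrm{id}$ permuting the leaves and delivering both the cocycle identity and the maximally balanced property; and the quotient of $G\times M$ by the orbit relation gives the globalization. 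The one difference in packaging is that your $(ii)\Rightarrow(i)$ quotient is by the equivalence relation generated by $\varphi$ rather than by the leaf space $(G\times M)/\!\!\sim$ of $\mathcal D_\lambda$ used in the paper's Definition~\ref{defi: definition of leaf space}; under the maximally balanced hypothesis the two quotients coincide, so nothing is lost.

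Two small repairs are needed. First, in $(ii)\Rightarrow(i)$ your justification that the left $G$-action descends (``the relation touches only the second coordinate'') is not what your relation does: $(g,p)\sim(gh^{-1},\varphi(h,p))$ moves the $G$-coordinate by a right translation. The action still descends, but the correct reason is that left and right translations on $G$ commute. Second, in $(iii)\Rightarrow(ii)$, Corollary~\ref{cor: uniqueness of local action} only compares two local actions with the \emph{same} domain; to conclude that an arbitrary local action $\chi$ is a restriction of $\varphi$ you must first show $W_{\chi,p}\subseteq W_p$, which follows from the observation that $\tilde\psi_\chi(W_{\chi,p}\times\{p\})$ is a connected integral manifold through $(e,p)$ and hence lies in $\Sigma_{(e,p)}$, exactly as in the paper's Proposition~\ref{prop: globalizability implies maximally balanced domain of definition}. (Also note the theorem is purely classical, so the calligraphic supermanifold notation in your last step is unnecessary, and the codomain in the paper's item $(iii)$ should read $\Sigma\rightarrow G$, as you implicitly assume.)
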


\begin{rmk}
 If $\varphi':\mathcal G\times\mathcal M'\rightarrow \mathcal M'$ is a 
globalization of the infinitesimal action $\lambda$,
 then the underlying action $\tilde \varphi ':G\times M'\rightarrow M'$ is a 
globalization of the reduced infinitesimal action
 $\tilde \lambda:\g_0\rightarrow \mathrm{Vec}(M)$. Therefore, a necessary 
condition for an infinitesimal action to be globalizable is the 
 existence of a globalization $M'$ of the reduced infinitesimal action.
\end{rmk}

\subsection{Univalent leaves and holonomy}
Throughout the rest of this section, let $\lambda:\g\rightarrow 
\mathrm{Vec}(\mathcal M)$ be a fixed infinitesimal action of a Lie supergroup 
$\mathcal G$ and let $\mathcal D$ be the associated distribution on $\mathcal 
G\times \mathcal M$. 
In the following, conditions for the existence of a globalization of $\lambda$ 
are studied.

Trying to generalize the classical result (Theorem \ref{thm: classical 
globalizations}) to supermanifolds, the question of 
an appropriate formulation of condition $(iii)$ arises since integral manifolds 
do not uniquely determine 
a distribution on a supermanifold (cf. Example \ref{ex: non-involutive 
distribution}).
For that purpose the notion of a univalent leaf $\Sigma\subset G\times M$, 
extending the classical notion,
is introduced. 

Then occurring holonomy phenomena of the distribution $\mathcal D$ are studied 
and a connection between absence of such phenomena and 
the notion of univalent leaves is established.

\begin{rmk}
 The infinitesimal action $\lambda:\g\rightarrow \mathrm{Vec}(\mathcal M)$ 
induces an infinitesimal action $\tilde{\lambda}:\g_0\rightarrow 
\mathrm{Vec}(M)$
 of the classical Lie group $G$ on the manifold $M$, where the Lie algebra of 
$G$ is identified with the even part $\g_0$ of $\g$. For any vector field $X\in 
\g_0(\subseteq \g)$ we define $\tilde{\lambda}(X)$ to be the reduced vector 
field $\tilde{Y}$ 
 of $Y=\lambda(X)$.
\end{rmk}

\begin{lemma}\label{lemma: underlying distribution}
 Let $\tilde{\lambda}:\g_0\rightarrow \mathrm{Vec}(M)$ denote the induced 
infinitesimal action of $G$ on $M$ and 
 $\mathcal D_{\tilde{\lambda}}$ the associated distribution on $G\times M$. 
 Then we have $\widetilde{\mathcal D}=\mathcal D_{\tilde{\lambda}}$, where 
$\widetilde{\mathcal D}$ denotes the 
 distribution on $G\times M$ induced by $\mathcal D$ (cf. Remark \ref{rmk: 
reduced distribution}).
\end{lemma}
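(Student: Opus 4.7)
The plan is to verify the two distributions coincide on $G\times M$ by checking both inclusions directly from their local generators.

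For the inclusion $\mathcal D_{\tilde\lambda}\subseteq \widetilde{\mathcal D}$, I would start with a generator $X+\tilde\lambda(X)$ of $\mathcal D_{\tilde\lambda}$ for $X\in\g_0$. The vector field $X+\lambda(X)$ on $\mathcal G\times\mathcal M$ is by definition a generator of $\mathcal D$, and its reduction to $G\times M$ is exactly $X+\tilde\lambda(X)$: the even right-invariant vector field $X$ on $\mathcal G$ reduces to the right-invariant vector field on $G$ it is identified with under $\g_0\cong\mathrm{Lie}(G)$, and $\widetilde{\lambda(X)}=\tilde\lambda(X)$ holds by the very definition of $\tilde\lambda$ given just before the lemma.

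For the reverse inclusion, I would argue locally. Fix a homogeneous basis $X_1,\ldots,X_k$ of $\g_0$ and $Y_1,\ldots,Y_l$ of $\g_1$, and set $A_i=X_i+\lambda(X_i)$ and $B_j=Y_j+\lambda(Y_j)$, so that the $A_i$ are even and the $B_j$ are odd sections of $\mathcal D$. By definition of the distribution $\mathcal D$, any even local section $Z$ of $\mathcal D$ can be written as
$$Z=\sum_i f_i A_i+\sum_j g_j B_j$$
with $f_i$ even and $g_j$ odd. Passing to the underlying manifold via the evaluation map, each summand $g_j B_j$ reduces to zero since $\mathrm{ev}(g_j)=0$ forces $\widetilde{g_jB_j}=0$ in the defining formula of the reduced vector field, while $\widetilde{f_iA_i}=\tilde f_i(X_i+\tilde\lambda(X_i))$. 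Hence $\tilde Z$ is a local section of $\mathcal D_{\tilde\lambda}$, giving $\widetilde{\mathcal D}\subseteq\mathcal D_{\tilde\lambda}$ and therefore equality.

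The only delicate point is the parity bookkeeping in the second step: although terms of the shape (odd function)$\,\cdot\,$(odd vector field) are even, and hence a priori could contribute to the reduction, the vanishing of $\mathrm{ev}(g_j)$ prevents this. This is precisely what keeps the odd generators of $\g$ from enlarging $\widetilde{\mathcal D}$ beyond $\mathcal D_{\tilde\lambda}$ and matches the expected rank $\dim\g_0$ of the reduced distribution.
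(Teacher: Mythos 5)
Your proof is correct and follows essentially the same route as the paper: the paper's argument is exactly that odd vector fields reduce to zero, so $\widetilde{\mathcal D}$ is spanned by the reductions $\tilde X+\tilde\lambda(X)$ of the even generators, which also span $\mathcal D_{\tilde\lambda}$. Your explicit check that a term of the form (odd function)$\cdot$(odd generator), though even, still reduces to zero because $\mathrm{ev}$ kills odd functions is a worthwhile elaboration of a point the paper leaves implicit, but it is the same proof.
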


\begin{proof}
 For any odd vector field $Y$ on a supermanifold we have $\tilde Y=0$. 
 Since $X+\lambda(X)$ has the same parity as $X$ if $X$ is homogeneous, the 
reduced distribution
 $\widetilde{\mathcal D}$ is spanned by vector fields of the form $\tilde 
X+\tilde\lambda(X)$ for $X\in \g_0$. These vector fields
 also generate $\mathcal D_{\tilde{\lambda}}$ and thus $\widetilde{\mathcal 
D}=\mathcal D_{\tilde{\lambda}}$.
\end{proof}

As a corollary of the identity $\widetilde{\mathcal D}=\mathcal 
D_{\tilde{\lambda}}$, 
we get the following relation between integral manifolds of 
the involutive distributions $\mathcal D$ on $\mathcal G\times \mathcal M$ and 
$\mathcal D_{\tilde \lambda}$ on $G\times M$.

\begin{cor}
 Every integral manifold $N\subset G\times M$ of the distribution $\mathcal 
D_{\tilde \lambda}$ is the underlying manifold of some
 integral manifold $\mathcal N\subset \mathcal G\times \mathcal M$ of the 
distribution~$\mathcal D$ and conversely
 the underlying manifold of every integral manifold of $\mathcal D$
 is an integral manifold of $\mathcal D_{\tilde \lambda}$.
\end{cor}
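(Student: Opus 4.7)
The plan is to prove the two directions separately, with the converse being essentially immediate and the forward direction requiring a local Frobenius argument together with a patching step.

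For the converse — that the underlying manifold of any integral manifold $\mathcal N\subset \mathcal G\times \mathcal M$ of $\mathcal D$ is an integral manifold of $\mathcal D_{\tilde\lambda}$ — I would argue as follows. Given $j:\mathcal N\to \mathcal G\times\mathcal M$ with $X(\ker j^*)\subseteq \ker j^*$ for all $X\in\mathcal D$, reducing modulo the ideal of nilpotents yields $\tilde X(\ker\tilde\jmath^*)\subseteq \ker\tilde\jmath^*$ for all $X\in\mathcal D$. By Lemma \ref{lemma: underlying distribution}, the reduced vector fields $\tilde X$ exhaust $\mathcal D_{\tilde\lambda}$, and the rank matches since the even part of $\dim \mathcal D$ equals $\dim \mathcal D_{\tilde\lambda}$, so $N=\tilde\jmath(N)$ is an integral manifold of $\mathcal D_{\tilde\lambda}$.

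For the forward direction, let $N\subset G\times M$ be an integral manifold of $\mathcal D_{\tilde\lambda}$ and fix $q\in N$. I would invoke the local Frobenius theorem (Theorem \ref{thm: local Frobenius}) for the involutive distribution $\mathcal D$ to obtain local coordinates $(x_1,\ldots,x_r,y_1,\ldots,y_{m-r},\theta_1,\ldots,\theta_s,\eta_1,\ldots,\eta_{n-s})$ on a neighbourhood of $q$ in $\mathcal G\times \mathcal M$ in which $\mathcal D$ is spanned by the derivations $\partial/\partial x_i$ and $\partial/\partial \theta_j$. Reducing, $\widetilde{\mathcal D}=\mathcal D_{\tilde\lambda}$ is spanned by the $\partial/\partial x_i$, so the plaque of $N$ through $q$ is locally cut out by $y_1=c_1,\ldots,y_{m-r}=c_{m-r}$. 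The candidate lift is the subsupermanifold defined by the ideal sheaf generated by $y_1-c_1,\ldots,y_{m-r}-c_{m-r},\eta_1,\ldots,\eta_{n-s}$: its underlying manifold is the given plaque, its rank is $(r|s)=\dim\mathcal D$, and $\mathcal D$ is tangent to it by construction because the generators of $\mathcal D$ annihilate the ideal.

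The main step — and the point I expect to be the only subtle one — is to verify that these local lifts patch into a globally defined subsupermanifold $\mathcal N$ with underlying manifold $N$. On the overlap of two Frobenius charts adapted to $\mathcal D$, the transition diffeomorphism preserves $\mathcal D$, so the new transverse coordinates $(y',\eta')$ have vanishing derivatives along $\partial/\partial x_i$ and $\partial/\partial \theta_j$; hence they are functions of $(y,\eta)$ alone. In particular each odd transverse coordinate $\eta'_k$ lies in the ideal generated by $\eta_1,\ldots,\eta_{n-s}$ (combined with the even $y_\ell-c_\ell$ along the plaque), so the joint zero locus of the transverse coordinates is chart-independent. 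Once this normal-form observation is established, the locally defined ideal sheaves agree on overlaps and glue to a coherent ideal sheaf, producing the integral manifold $\mathcal N\hookrightarrow \mathcal G\times \mathcal M$ with $\tilde\jmath(N)$ equal to the given $N$ and with $\mathcal D$ tangent to $\mathcal N$.
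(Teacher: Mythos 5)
Your argument is correct and follows the route the paper intends: the paper states this as an immediate consequence of Lemma~\ref{lemma: underlying distribution} ($\widetilde{\mathcal D}=\mathcal D_{\tilde\lambda}$) together with the local Frobenius theorem, and your proof simply fills in the details — reducing the tangency condition for the converse, and lifting via Frobenius coordinates (setting the transverse even coordinates to constants and the transverse odd coordinates to zero) with the overlap check for the forward direction. The patching observation that transverse coordinates of a $\mathcal D$-preserving transition depend only on $(y,\eta)$, hence generate the same ideal, is exactly the point the paper leaves implicit, and it holds as you state.
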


In the following, by a leaf $\Sigma\subset G\times M$ a leaf, i.e. a maximal 
connected integral manifold, of the distribution 
$\mathcal D_{\tilde \lambda}=\widetilde {\mathcal D} $ is meant. By the 
preceding corollary every leaf $\Sigma$ is as well the
underlying manifold of an integral manifold of the distribution $\mathcal D$.

The involutiveness of the distribution $\mathcal D_{\tilde \lambda}$ guarantees 
the existence of a leaf through each point $(g,p)\in G\times M$.
This leaf is denoted by $\Sigma_{(g,p)}$.

\begin{defi}\label{defi: univalence}
 A leaf $\Sigma\subset G\times M$ is called univalent (with respect to $\mathcal 
D$) 
 if for every path $\gamma:[0,1]\rightarrow \Sigma\subset G\times M$
 there exists a flat chart 
 $\psi:\mathcal U\times\mathcal V\rightarrow \mathcal G\times \mathcal M$ with 
respect to $(\mathcal D,U,V,\gamma_G(0),\mathrm{id})$, 
 for $\gamma_G\coloneqq \pi_G\circ \gamma:[0,1]\rightarrow G$,
 such that $\tilde \psi(U\times V)$ contains $\gamma([0,1])$.
 
 The infinitesimal action $\lambda$ is called univalent if all leaves 
$\Sigma\subset G\times M$ are univalent.
\end{defi}

\begin{rmk} By Remark~\ref{rmk: flat chart}, a leaf $\Sigma$ is univalent if for 
every path $\gamma:[0,1]\rightarrow\Sigma$ there exists a flat chart $\psi$ with 
respect to $(\mathcal D, U, V, \gamma_G(0),\rho)$ for some $\rho$ or 
equivalently, after shrinking, for any $\rho$, 
with $\gamma([0,1])\subset \tilde\psi(U\times V)$.
\end{rmk}

\begin{rmk}
 In the definition of a univalent leaf $\Sigma$ it is enough for the defining 
property to hold true for closed paths 
 $\gamma:[0,1]\rightarrow \Sigma$ because for any path $\gamma'$ the composition 
$(\gamma')^{-1}\cdot \gamma'$ of $\gamma'$
 and $(\gamma')^{-1}$, $(\gamma')^{-1}(t)=\gamma'(1-t)$, is a closed path with 
$\gamma'([0,1])=((\gamma')^{-1}\cdot \gamma')([0,1])$.
\end{rmk}

\begin{rmk}
 If $\lambda$ is univalent, then so is the induced infinitesimal action 
 $\tilde\lambda:\g_0\rightarrow\mathrm{Vec}(M)$.
\end{rmk}

In \cite{Palais}, an infinitesimal action (in the classical case) is called 
univalent
if the restriction of the projection $\pi_G:G\times M\rightarrow G$ to an 
arbitrary leaf $\Sigma \subset G\times M$ is injective. 
The above defined notion of univalent infinitesimal actions on supermanifolds 
extends this definition:

\begin{prop}
 In the case of classical manifolds $\mathcal G=G$ and $\mathcal M=M$, 
 an infinitesimal action is univalent if and only if
  the projection $\pi_G|_{\Sigma}:\Sigma\rightarrow G$ is injective
  for each leaf $\Sigma\subset G\times M$.
\end{prop}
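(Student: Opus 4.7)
The plan is to prove the two directions of the equivalence separately. Throughout I work in the classical setting, where the distribution $\mathcal D$ on $G \times M$ has rank $\dim G$ and each leaf $\Sigma$ is a maximal connected immersed integral submanifold carrying its own leaf topology.

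For the implication ``univalent $\Rightarrow$ $\pi_G|_\Sigma$ injective'', I would argue by contraposition. Suppose $\pi_G|_\Sigma$ is not injective for some leaf $\Sigma$: pick $(g, p_1), (g, p_2) \in \Sigma$ with $p_1 \neq p_2$ and join them by a path $\gamma: [0,1] \to \Sigma$, which is possible because leaves are path-connected in the leaf topology. By univalence, there is a flat chart $\psi: \mathcal U \times \mathcal V \to \mathcal G \times \mathcal M$ with respect to $(\mathcal D, U, V, g, \mathrm{id})$ with $\gamma([0,1]) \subset \tilde\psi(U \times V)$. In this foliation chart, $\Sigma \cap \tilde\psi(U \times V)$ decomposes as a disjoint union of plaques $\tilde\psi(U \times \{q\})$, which are open in the leaf topology, so $\gamma$ stays in the unique plaque through $\gamma(0)$. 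The $\rho = \mathrm{id}$ condition $\tilde\psi \circ \tilde\iota_g = \tilde\iota_g$ identifies this plaque as $\tilde\psi(U \times \{p_1\})$; applying $\pi_G \circ \tilde\psi = \pi_G$ to $\gamma(1) = (g, p_2)$ then forces $p_2 = p_1$, a contradiction.

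For the converse, given a path $\gamma: [0,1] \to \Sigma$ in a leaf, I would run an open-and-closed argument. Let $T \subset [0,1]$ be the set of those $t$ for which there exists a flat chart $\psi^{(t)}$ with respect to $(\mathcal D, U^{(t)}, V^{(t)}, \gamma_G(0), \mathrm{id})$ such that $\gamma([0,t]) \subset \tilde\psi^{(t)}(U^{(t)} \times V^{(t)})$. Nonemptiness of $T$ near $0$ is immediate from Proposition~\ref{prop: existence of flat charts}, and openness follows from the openness of a flat chart's image together with continuity of $\gamma$. The main difficulty is closedness of $T$: at a limit point $t_\infty$, I would take a local flat chart $\phi$ at $\gamma_G(t_\infty)$ covering $\gamma$ near $t_\infty$ and glue it to $\psi^{(t_n)}$ for $t_n \nearrow t_\infty$. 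Here the injectivity of $\pi_G$ on leaves is decisive: in the overlap, the plaques of $\Sigma$ and of neighbouring leaves through a common transversal correspond single-valuedly, so there is no holonomy obstruction to combining the two charts. The uniqueness of flat charts (Proposition~\ref{prop: uniqueness of flat charts}) then yields a single flat chart on the enlarged product $(U^{(t_n)} \cup \tilde U) \times V'$, giving $t_\infty \in T$, and connectedness of $[0,1]$ forces $T = [0,1]$, providing the desired global flat chart.
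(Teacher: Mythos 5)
Your proposal is correct and follows essentially the same route as the paper: the forward direction is the paper's argument (a path joining two points of a fibre of $\pi_G|_\Sigma$ must stay in a single plaque of the flat chart, and $\pi_G\circ\tilde\psi=\pi_G$ together with $\rho=\mathrm{id}$ forces the endpoints to coincide), and the converse rests on the same mechanism of gluing flat charts via Proposition~\ref{prop: uniqueness of flat charts}, with injectivity of $\pi_G|_\Sigma$ eliminating any mismatch of plaques on overlaps. The only difference is organizational: the paper covers $\gamma([0,1])$ by finitely many charts using compactness and glues them inductively, whereas you run an open-and-closed argument on $[0,1]$ --- a technique the paper itself employs in the proof of Proposition~\ref{prop: univalence and uniqueness of flat charts}.
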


\begin{proof}
 Let $\Sigma\subset G\times M$ be any leaf, $x=(g,p),y=(g,q)\in \Sigma$, 
 and $\gamma:[0,1]\rightarrow \Sigma$ a path from $x$ to $y$.
 Since $\lambda$ is univalent, there is a flat chart $\psi:U\times V\rightarrow 
G\times M$
 with respect to $(\mathcal D, U, V, g,\mathrm{id})$ with $\gamma([0,1])\subset 
\psi(U\times \{p\})$.
 We have $(g,q)=\gamma(1)=\psi(g,p)=(g,p)$ because $\pi_G\circ \psi=\pi_G$.

 Assume now 
 that $\pi_G|_\Sigma$ is injective for each leaf $\Sigma\subset G\times M$.
 Let $\Sigma \subset G\times M$ be a leaf and $\gamma:[0,1]\rightarrow \Sigma$ a 
path. Using the compactness of 
 $\gamma([0,1])$ there are $0=t_0<\ldots<t_k=1$ and  flat 
 charts $\psi_i:U_i\times V_i\rightarrow G\times M$, $i=0,\ldots,k-1$, with 
respect to $(\mathcal D, U_i, V_i, \gamma_G(t_i),\mathrm{id})$
 such that the intersection $U_i\cap U_j$ is connected for all $i,j$
 and $\gamma([t_{i},t_{i+1}])\subset \psi(U_i\times V_i)$. 
 
 Set $\psi_0'=\psi_0$ and, after possibly shrinking $V_0$, inductively define 
flat charts $\psi_i':U_i\times V_0\rightarrow G\times M$ 
 for $i\geq 1$ by composing $\psi_i$ and a local diffeomorphism of the form 
$(\mathrm{id}\times \rho_i)$ 
 such that $\psi_i'$ and $\psi_{i+1}'$ coincide
 on $(U_i\cap U_{i+1})\times V_0$ for $i=0,\ldots,k-2$ and $\gamma([t_i, 
t_{i+1}])\subset \psi_i'(U_i\times V_0)$ holds. 
 We have $\psi_0'(U_0\times \{p\})\subset \Sigma_{(g,p)}$ for $g\coloneqq 
\gamma_G(0)$ and any $p\in V_0$, and then by induction 
 $\psi_i'(U_i\times \{p\})\subset \Sigma_{(g,p)}$ for any $i$.
 Therefore, $\psi_i'=\psi_j'$ on $(U_i\cap U_j)\times V_0$ since 
$\pi_G|_{\Sigma_{(g,p)}}$ is injective and $\pi_G\circ \psi=\pi_G$ for any flat 
chart $\psi$.
 Consequently, we can define a flat chart $\psi:U\times V\rightarrow G\times M$, 
$U\coloneqq \bigcup_{i=0}^{k-1} U_i$, $V\coloneqq V_0$,
 with respect to $(\mathcal D, U, V, g=\gamma_G(0),\mathrm{id})$ by setting 
$\psi|_{U_i\times V_0}=\psi_i'$. 
 The map $\psi$ satisfies $\gamma([0,1])\subset \psi(U\times V)$ by 
construction.
\end{proof}

\begin{prop}\label{prop: univalence and uniqueness of flat charts}
 The infinitesimal action $\lambda$ is univalent if and only if for any two flat 
charts 
 $\psi_i:\mathcal U_i\times\mathcal V_i\rightarrow \mathcal G\times\mathcal M$ 
with respect to
 $(\mathcal D, U_i, V_i,g,\rho_i)$, $i=1,2$, and $\rho_1=\rho_2$ on $V_1\cap 
V_2$ we have
 $\psi_1=\psi_2$ on their common domain of definition.
\end{prop}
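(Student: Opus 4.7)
The plan is to prove both directions by combining the local uniqueness of flat charts (Proposition \ref{prop: uniqueness of flat charts}) with a path-lifting / monodromy argument: the strong uniqueness asserted here corresponds to triviality of monodromy for extensions of flat charts across overlapping domains, while univalence is precisely the statement that no monodromy arises when a path in $G$ is lifted into a leaf of $\mathcal D$.

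For the direction ``$\lambda$ univalent $\Rightarrow$ strong uniqueness'', let $\psi_1,\psi_2$ be flat charts with respect to $(\mathcal D, U_i, V_i, g, \rho_i)$ with $\rho_1=\rho_2$ on $V_1\cap V_2$. First, Proposition \ref{prop: uniqueness of flat charts} immediately gives $\psi_1=\psi_2$ on $C\times(V_1\cap V_2)$, where $C$ is the connected component of $g$ in $U_1\cap U_2$; the substantive task is to reach a point $(h,p)$ in the common domain with $h$ in a different component. I would pick paths $\gamma_1\subset U_1$ and $\gamma_2\subset U_2$ from $g$ to $h$ and form the closed loop $\bar\gamma=\gamma_2^{-1}\cdot\gamma_1$ in $G$ based at $g$. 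The maps $\eta_i(t)=\tilde\psi_i(\gamma_i(t),p)$ are paths in the leaf $\Sigma=\Sigma_{(g,\tilde\rho_1(p))}$ starting at the common point $(g,\tilde\rho_1(p))$. Subdividing $\bar\gamma$ finely and alternating between $\psi_1$ (along $\gamma_1$) and $\psi_2$ (along $\gamma_2^{-1}$) --- reinterpreting each $\psi_i$ as a flat chart at intermediate basepoints via Lemma \ref{lemma: almost flat chart}, and using Proposition \ref{prop: uniqueness of flat charts} to make the successive switches unambiguous --- I obtain a continuous lift of $\bar\gamma$ in $\Sigma$ passing through $\tilde\psi_1(h,p)$ and $\tilde\psi_2(h,p)$. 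Univalence of $\Sigma$ then supplies a single flat chart $\Psi$ whose image contains this entire lift, and since $\pi_G\circ\Psi=\pi_G$ and $\bar\gamma$ is closed, the lift must be closed as well. Translating this closure back to the morphism level through Lemma \ref{lemma: almost flat chart} yields equality of the $\rho$-data of $\psi_1$ and $\psi_2$ at $h$ on a neighborhood of $p$, and a final application of Proposition \ref{prop: uniqueness of flat charts} at $h$ gives $\psi_1=\psi_2$ near $(h,p)$.

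For the converse, assume strong uniqueness; let $\Sigma$ be a leaf and $\gamma:[0,1]\to\Sigma$ a path. By Proposition \ref{prop: existence of flat charts} and compactness, I would cover $\gamma([0,1])$ by finitely many flat charts $\psi^{(i)}:\mathcal U_i\times\mathcal V_i\to\mathcal G\times\mathcal M$ at $g_i=\gamma_G(t_i)$, $0=t_0<\cdots<t_k=1$, with connected $U_i$ and $\gamma([t_i,t_{i+1}])\subset\tilde\psi^{(i)}(U_i\times V_i)$. Inductively on $i$: by Lemma \ref{lemma: almost flat chart}, $\psi^{(i)}$ is also a flat chart at $g_{i+1}$ with respect to some $\rho^{(i)}_{g_{i+1}}$, and by Remark \ref{rmk: flat chart} I can compose $\psi^{(i+1)}$ with a suitable diffeomorphism of the form $(\mathrm{id}\times\rho)$ so that $\psi^{(i+1)}$ becomes a flat chart at $g_{i+1}$ with the same $\rho$-data as $\psi^{(i)}$. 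The hypothesis then forces $\psi^{(i)}=\psi^{(i+1)}$ on their full common domain. After shrinking a common second factor $V$, the adjusted charts glue into a single flat chart $\psi:(\bigcup_i U_i)\times V\to\mathcal G\times\mathcal M$ at $g_0$ whose image contains $\gamma([0,1])$; by the remark following Definition \ref{defi: univalence}, this shows $\Sigma$ is univalent.

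The main obstacle is in the forward direction, where the triviality of monodromy supplied by univalence --- which a priori controls only underlying points --- must be upgraded to equality of supermanifold morphisms on a neighborhood, pinning down the odd/nilpotent $\rho$-data as well. I expect to handle this by subdividing $\bar\gamma$ finely enough that each switch between flat charts falls within the scope of Proposition \ref{prop: uniqueness of flat charts}, so that matching of the full $\rho$-data propagates step by step together with the underlying continuous lift.
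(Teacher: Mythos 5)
Your proposal follows essentially the same route as the paper's proof: in the forward direction the paper likewise forms a closed loop in $U_1\cup U_2$ through $g$ and $h$, lifts it into the leaf via $\tilde\psi_1$ and $\tilde\psi_2$, invokes univalence to obtain one flat chart covering the lift, and concludes by two applications of Proposition~\ref{prop: uniqueness of flat charts}; in the converse it performs the same chaining of re-based flat charts along the path (phrased as a minimal-counterexample argument rather than your induction, which is a cosmetic difference). The only point to make explicit is that the continuity of your concatenated lift at the junction $h$ requires first establishing $\tilde\psi_1(h,p)=\tilde\psi_2(h,p)$ from the injectivity of $\pi_G|_\Sigma$ (a consequence of univalence of the reduced action), as the paper does, rather than deriving it from closedness of the lift afterwards.
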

Remark that by Proposition~\ref{prop: uniqueness of flat charts} any two flat 
charts coincide on their common domain of 
definition $(U_1\cap U_2)\times (V_1\cap V_2)$ if $U_1\cap U_2$ is connected.

\begin{proof}
 Let $\lambda$ be univalent and let $\psi_i$ be flat charts with respect to 
 $(\mathcal D, U_i, V_i, g,\rho_i)$ and $\rho_1=\rho_2$ on $V_1\cap V_2$.
 Let $(h,p)\in (U_1\cap U_2)\times (V_1\cap V_2)$.
 Since $\psi_1$ and $\psi_2$ 
 are flat charts, $\tilde \psi_1(U_1\times \{p\})$ and 
 $\tilde \psi_2( U_2\times \{p\})$ are both contained in the leaf 
 $\Sigma=\Sigma_{(g,\tilde\rho_1(p))}=\Sigma_{(g,\tilde\rho_2(p))}$.
 The univalence of the reduced infinitesimal action~$\tilde \lambda$ implies 
 that $\pi_G|_\Sigma:\Sigma\rightarrow G$ is injective and thus 
 $\tilde \psi_1(h,p)=\tilde \psi_2(h,p)$.
 Let $\gamma:[0,1]\rightarrow U_1\cup U_2$ be a closed path with 
$\gamma(0)=\gamma(1)=g$,
 $\gamma([0,\frac 1 2])\subset U_1$, $\gamma([\frac 1 2,1])\subset U_2$
 and $\gamma(\frac 1 2)=h$. Then 
 $$\gamma':[0,1]\rightarrow \Sigma,\, 
 \gamma'(t)=\begin{cases}
             \tilde \psi_1(\gamma(t),p),& t\leq \frac 1 2\\
             \tilde \psi_2(\gamma(t),p),& t> \frac 1 2
            \end{cases} $$
  is a closed path.
 As $\lambda$ is univalent, there is a flat chart $\psi:\mathcal U\times 
\mathcal V\rightarrow\mathcal G\times\mathcal M$
 with respect to $(\mathcal D, U, V, g,\mathrm{id})$ with 
 $\gamma'([0,1])=\tilde\psi_1(\gamma([0,\frac 1 2])\times \{p\})\cup \tilde 
\psi_2(\gamma([\frac 1 2,1])\times \{p\})
 \subset \tilde \psi (U\times V)$.
 Then, after possibly shrinking $V_1$, $\psi\circ (\mathrm{id}\times \rho_1)$ is 
a flat chart with 
 respect to $(\mathcal D, U, V_1, g,\rho_1)$. By Proposition~\ref{prop: 
uniqueness of flat charts} the flat charts
 $\psi\circ (\mathrm{id}\times \rho_1)$ and $\psi_1$ coincide on a neighbourhood 
of $\gamma([0,\frac 1 2])\times \{p\}$. 
 Moreover, $\psi\circ (\mathrm{id}\times \rho_1)$ and $\psi_2$ coincide on a 
neighbourhood of $\gamma([\frac 1 2,1])\times \{p\}$ since
 $\rho_1=\rho_2$ on $V_1\cap V_2$. In particular, we get
 $\psi_1=\psi\circ (\mathrm{id}\times \rho_1)=\psi_2$ near $(h,p)=(\gamma(\frac 
1 2),p)$.
 
 Suppose now that any two flat charts $\psi_i$ with respect to $(\mathcal D, 
U_i,V_i, g,\rho_i)$, $i=1,2$,
 with $\rho_1=\rho_2$ already coicide on their common domain of definition.
 Assume that there is a leaf $\Sigma\subset G\times M$ which is not univalent 
and 
 let $\gamma:[0,1]\rightarrow \Sigma$ be a path for which there is no flat 
chart 
 $\psi:\mathcal U\times \mathcal V\rightarrow \mathcal G\times\mathcal M$ with 
$\gamma([0,1]) \subset \tilde \psi(U\times V)$.
 Define $I$ to be the set of points $t\in [0,1]$ such that there exists a flat 
chart 
 $\psi:\mathcal U\times \mathcal V\rightarrow\mathcal G\times \mathcal M$ with 
 $\gamma([0,t])\subset \tilde \psi(U\times V)$.
 The set $I$ is open and $I\neq [0,1]$ by assumption.
 Let $s$ be the minimum of $[0,1]\setminus I$. 
 There is a flat 
 chart $\psi_1:\mathcal U_1\times\mathcal V_1\rightarrow \mathcal G\times 
\mathcal M$ with respect to 
 $(\mathcal D, U_1, V_1, \pi_G(\gamma(s)), \mathrm{id})$ with 
 $\gamma(s)\in \tilde \psi_1(U_1\times V_1)$. 
 By the choice of $s$ there is $t\in [0,s)$ with $\gamma([t,s])\subset \tilde 
\psi_1(U_1\times V_1)$ 
 such that there is a flat chart 
 $\psi_2:\mathcal U_2\times\mathcal V_2\rightarrow \mathcal G\times \mathcal M$ 
with respect to
 $(\mathcal D, U_2, V_2, \pi_G(\gamma(0)),\mathrm{id})$ with $\gamma([0,t])\in 
\tilde\psi_2(U_2\times V_2)$.
 Then $h=\pi_G(\gamma(t))\in U_1\cap U_2$ 
 and after possibly shrinking $V_2$ there exists a diffeomorphism $\rho$ such 
that 
 $\psi_1\circ(\mathrm{id}\times \rho)$ is a flat chart with respect to 
 $(\mathcal D, U_1, V_2, h,\rho')$ and $\psi_2$ with respect to $(\mathcal D, 
U_2, V_2, h, \rho')$ 
 for some $\rho':\mathcal V_2\rightarrow\mathcal M$.
 Therefore, $\psi_1\circ (\mathrm{id}\times \rho)$ and $\psi_2$ agree on their 
common domain of definition and they define 
 a flat chart $\psi:(\mathcal U_1\cup \mathcal U_2)\times V_2\rightarrow 
\mathcal G\times \mathcal M$
 with $\gamma([0,s])\subset \tilde \psi((U_1\cup U_2)\times V_2)$, contradicting 
the definition of $s$.
\end{proof}

The preceding proposition allows us to glue together flat charts in the case of 
a univalent infinitesimal action.
This also implies the next corollary.

\begin{cor}\label{cor: univalence and compact subsets}
 The infinitesimal action $\lambda$ is univalent if and only if 
 for any compact subset $\Sigma'$ of a leaf $\Sigma\subset G\times M$ there
 exists a flat chart $\psi:\mathcal U\times\mathcal V\rightarrow\mathcal 
G\times\mathcal M$
 with $\Sigma'\subset \tilde\psi(U\times V)$.
\end{cor}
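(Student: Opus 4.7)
The plan is to establish both implications, the reverse being almost immediate and the forward direction proceeding by a covering-and-gluing argument based on univalence. For the reverse direction, any path $\gamma:[0,1]\to\Sigma$ has compact image in $\Sigma$, so the hypothesis yields a flat chart $\psi:\mathcal U\times\mathcal V\to\mathcal G\times\mathcal M$ with $\gamma([0,1])\subset\tilde\psi(U\times V)$; since $\pi_\mathcal G\circ\psi=\pi_\mathcal G$ we have $\gamma_G(0)\in U$, and Lemma~\ref{lemma: almost flat chart} together with Remark~\ref{rmk: flat chart} lets me regard $\psi$, after shrinking $V$, as a flat chart with respect to $(\mathcal D,U,V,\gamma_G(0),\mathrm{id})$, which is exactly what univalence requires.

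For the forward direction I would fix a basepoint $x_0=(g_0,p_0)\in\Sigma$ and, for each $x\in\Sigma'$, choose a path $\gamma_x:[0,1]\to\Sigma$ from $x_0$ to $x$, using that the leaf $\Sigma$ is a connected, hence path-connected, manifold. Univalence, in the form of the remark following Definition~\ref{defi: univalence}, produces a flat chart $\psi_x:\mathcal U_x\times\mathcal V_x\to\mathcal G\times\mathcal M$ with respect to $(\mathcal D,U_x,V_x,g_0,\mathrm{id})$ whose image contains $\gamma_x([0,1])$. The decisive observation is that $\Sigma\cap\tilde\psi_x(U_x\times V_x)$ decomposes as a disjoint union of plaques $\tilde\psi_x(U_x\times\{p\})$ for those $p\in V_x$ with $(g_0,p)\in\Sigma$, and these plaques are precisely the connected components of the intersection in the leaf topology of $\Sigma$; since $\gamma_x$ is connected in $\Sigma$ and contains $(g_0,p_0)$, it is forced into the single plaque $\tilde\psi_x(U_x\times\{p_0\})$, whence $x\in\tilde\psi_x(U_x\times\{p_0\})$.

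Finally, using compactness of $\Sigma'$ in the leaf topology, I would extract a finite subcover by plaques indexed by $x_1,\ldots,x_n\in\Sigma'$. Setting $V\coloneqq\bigcap_{i=1}^n V_{x_i}$ (an open neighbourhood of $p_0$), each $\psi_{x_i}|_{\mathcal U_{x_i}\times\mathcal V}$ is still a flat chart with common data $(g_0,\mathrm{id})$; Proposition~\ref{prop: univalence and uniqueness of flat charts} then forces pairwise agreement on $(U_{x_i}\cap U_{x_j})\times V$, and these pieces glue to a single flat chart $\psi:\mathcal U\times\mathcal V\to\mathcal G\times\mathcal M$ with $U\coloneqq\bigcup_{i=1}^n U_{x_i}$, which is open and connected because each $U_{x_i}$ is connected and contains $g_0$. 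The chain $\Sigma'\subseteq\bigcup_i\tilde\psi_{x_i}(U_{x_i}\times\{p_0\})\subseteq\tilde\psi(U\times V)$ completes the argument. The main obstacle I anticipate is the plaque-identification step: verifying that the connected components of $\Sigma\cap\tilde\psi_x(U_x\times V_x)$ in the leaf topology are exactly the plaques, so that $\gamma_x$ is trapped in the plaque through $x_0$. This is what allows the $V_{x_i}$ to be shrunk to a common $V$ without losing points of $\Sigma'$ and keeps the $\rho$-data uniformly equal to $\mathrm{id}$, so that Proposition~\ref{prop: univalence and uniqueness of flat charts} applies across all charts.
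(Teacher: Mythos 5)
Your proposal is correct and follows essentially the route the paper intends: the paper gives no detailed proof, remarking only that the corollary follows because Proposition~\ref{prop: univalence and uniqueness of flat charts} allows flat charts to be glued, and your argument is precisely that gluing carried out in detail (cover $\Sigma'$ by plaques through a common basepoint, shrink to a common $V$, glue via the uniqueness proposition), together with the easy converse via Lemma~\ref{lemma: almost flat chart}. The plaque-identification step you flag is the standard fact that the plaques of a foliation chart are the connected components of its intersection with a leaf in the leaf topology, and it goes through here since $\rho=\mathrm{id}$ forces each plaque $\tilde\psi_x(U_x\times\{p\})$ into the leaf $\Sigma_{(g_0,p)}$.
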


In the following the structure of the distribution $\mathcal D$ associated to 
$\lambda$ is investigated further. 
Let $\Sigma\subset G\times M$ be a leaf, $(g,p)\in \Sigma$ and 
$\gamma:[0,1]\rightarrow \Sigma$ a closed path 
with $\gamma(0)=\gamma(1)=(g,p)$ and let $\gamma_G=\pi_G\circ \gamma$. 
We now want to associate a germ of a local diffeormorphism of $\mathcal M$ 
around $p$ measuring
the ``holonomy'' along the path~$\gamma$.

To do so, let $0=t_0<\ldots<t_k=1$ be a partition of $[0,1]$ such that there are 
flat charts
$\psi_i:\mathcal U_i\times \mathcal V\rightarrow \mathcal G\times \mathcal M$, 
$i=0,\ldots,k-1$,
with $\gamma([t_i,t_{i+1}])\subset \tilde \psi _i(U_i\times\{p\})$, such that 
$\psi_i$ and $\psi_{i+1}$ coincide on their common domain of definition
and such that $\psi_0$ is a flat chart with respect to $(\mathcal D, 
U_0,V,g,\mathrm{id})$.
We have $\tilde \psi _i(U_i\times \{p\})\subset \Sigma$ for any $i$.
By Lemma~\ref{lemma: almost flat chart} there is a diffeomorphism $\rho:\mathcal 
V\rightarrow \mathcal M$ onto its 
image such that $\psi_{k-1}$ is a flat chart with respect to $(\mathcal D, 
U_{k-1}, V, g=\gamma_G(1),\rho)$.
Since $(g,p)=\gamma(1)\in \tilde \psi _{k-1}(U_{k-1}\times \{p\})$, we have 
$(g,\tilde \rho (p))=\tilde \psi _{k-1} (g,p)=(g,p)$ and thus 
$\tilde \rho (p)=p$.

Define $\Phi(\gamma)$ to be the germ of the local diffeomorphism $\rho$ in $p$.
The local uniqueness of flat chart implies that $\Phi(\gamma)$ does not depend 
on the actual choice of the flat charts 
$\psi_i:\mathcal U_i\times \mathcal V_i\rightarrow \mathcal G\times \mathcal M$.
Let $\mathrm{Diff}_p(\mathcal M)$ denote the set of germs of local 
diffeomorphisms $\chi:\mathcal V_1\rightarrow \mathcal V_2$ in $p\in M$, where 
$\mathcal V_i=(V_i,\mathcal O_\mathcal M)$, $i=1,2$, are open subsupermanifolds 
of $\mathcal M$ with $p\in V_i$.
Then $\Phi(\gamma)$ is an element of $\mathrm{Diff}_p(\mathcal M)$ for each 
closed path $\gamma:[0,1]\rightarrow \Sigma$ with 
$\gamma(0)=\gamma(1)=(g,p)$.

In the case complex supermanifolds and holomorphic maps 
$\mathrm{Diff}_p(\mathcal M)$ should be replaced by $\mathrm{Hol}_p(\mathcal 
M)$, the 
set of germs of local biholomorphisms $\chi:\mathcal V_1\rightarrow\mathcal 
V_2$.

\begin{prop}
 The germ $\Phi(\gamma)$ only depends on the homotopy class $[\gamma]$ of the 
closed path $\gamma$ with $\gamma(0)=\gamma(1)=(g,p)$. 
 Therefore, the assignment 
 $[\gamma]\mapsto \Phi([\gamma])=\Phi(\gamma)$ defines a maps
 $$\Phi=\Phi_\Sigma=\Phi_{\Sigma,(g,p)}:\pi_1(\Sigma,(g,p))\rightarrow 
\mathrm{Diff}_p(\mathcal M).$$
\end{prop}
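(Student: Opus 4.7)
The plan is to verify first that $\Phi(\gamma)$ is independent of the auxiliary chain of flat charts used to define it, and then to show that the resulting assignment is invariant under continuous deformations of the path in the leaf $\Sigma$. The well-definedness on paths is a refinement argument: given two chains of flat charts $(\psi_i)$ and $(\psi_i')$ fulfilling the definition for the same path $\gamma$, I would pass to a common refinement of the underlying partitions of $[0,1]$ and apply the local uniqueness statement for flat charts (Proposition~\ref{prop: uniqueness of flat charts}) stepwise, forcing the two resulting germs of diffeomorphism at $p$ to coincide.

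For homotopy invariance, let $H:[0,1]\times[0,1]\rightarrow \Sigma$ be a continuous homotopy (taken with respect to the intrinsic leaf topology on $\Sigma$) from $\gamma_0$ to $\gamma_1$, with $H(0,s)=H(1,s)=(g,p)$ for every $s$, and write $\gamma_s=H(\cdot,s)$. The goal is to show that the map $s\mapsto \Phi(\gamma_s)\in \mathrm{Diff}_p(\mathcal M)$ is locally constant on $[0,1]$; connectedness of $[0,1]$ then forces $\Phi(\gamma_0)=\Phi(\gamma_1)$.

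Fix $s_0\in[0,1]$ and choose a partition $0=t_0<\ldots<t_k=1$ together with flat charts $\psi_i:\mathcal U_i\times \mathcal V\rightarrow\mathcal G\times\mathcal M$ computing $\Phi(\gamma_{s_0})$, so that $\gamma_{s_0}([t_i,t_{i+1}])$ is contained in the plaque $P_i:=\tilde\psi_i(U_i\times\{p\})$. Each $P_i$ is open in the leaf topology on $\Sigma$ because a flat chart carries the standard distribution $\mathcal D_\mathcal G$ to $\mathcal D$ and consequently sends transverse-parameter slices to plaques of the induced foliation on $G\times M$. Continuity of $H$ into $\Sigma$ together with the tube lemma, applied on the compact sets $\{s_0\}\times[t_i,t_{i+1}]\subseteq H^{-1}(P_i)$, yields $\varepsilon>0$ such that $(s_0-\varepsilon,s_0+\varepsilon)\times[t_i,t_{i+1}]\subseteq H^{-1}(P_i)$ for every $i$. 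Then for $|s-s_0|<\varepsilon$ the identical chain of flat charts computes $\Phi(\gamma_s)$ and yields the same terminal germ $\rho$, proving $\Phi(\gamma_s)=\Phi(\gamma_{s_0})$ nearby.

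The principal subtlety, and the point that must be nailed down carefully, is the insistence that $H$ be continuous into $\Sigma$ equipped with its intrinsic leaf (rather than subspace) topology, so that the plaques $P_i$, which are open in the leaf but generally not in the ambient $G\times M$, provide genuine open sets for the tube-lemma step. Once this convention is adopted, the transport of the entire chain of flat charts from $\gamma_{s_0}$ to all nearby $\gamma_s$ is essentially automatic, and the monodromy argument closes.
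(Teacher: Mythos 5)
Your proposal is correct and follows essentially the same route as the paper: fix $s_0$, take the chain of flat charts computing $\Phi(\gamma_{s_0})$, use compactness of the subintervals to show the identical chain still works for all nearby $\gamma_s$, and conclude by connectedness of $[0,1]$ that $s\mapsto\Phi(\gamma_s)$ is constant. The only cosmetic difference is that the paper runs the compactness argument with the ambient-open sets $\tilde\psi_i(U_i\times V_i)$ rather than the plaques $\tilde\psi_i(U_i\times\{p\})$, which lets it avoid the leaf-topology discussion you (correctly) flag as the delicate point.
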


\begin{proof}
 Let $\gamma_s:[0,1]\rightarrow \Sigma$, $s\in [0,1]$, be a continuous family of 
closed paths with $\gamma_s(0)=\gamma_s(1)=(g,p)$ and 
 $\gamma_0=\gamma$.
 Let $s_0\in [0,1]$, $0=t_0<\ldots<t_k=1$ and $\psi_i:\mathcal U_i\times 
\mathcal V_i\rightarrow\mathcal G\times\mathcal M$, $i=0,\ldots,k-1$, flat 
charts
 with $\gamma_{s_0}([t_i,t_{i+1}])\subset \tilde \psi _i(U_i\times V_i)$ and 
such that $\psi_0$ is a flat chart with respect to 
 $(\mathcal D, U_0,V_0,g,\mathrm{id})$ and $\psi_i$ and $\psi_{i+1}$ coincide on 
their common domain of definition.
 Then $\psi_{k-1}$ is a flat chart with respect to $(\mathcal D, U_{k-1}, 
V_{k-1}, g, \rho)$ for some local diffeomorphism $\rho$ around $p$ and 
 $\Phi(\gamma_{s_0})$ is the germ of $\rho$ in $p$. 
 Since all intervals $[t_i,t_{i+1}]$ are compact there exists an open 
neighbourhood $J\subseteq [0,1]$ of $s_0$
 such that $\gamma_s([t_i,t_{i+1}])\subset \tilde \psi _i(U_i\times V_i)$ for 
$i=0,\ldots,k-1$ and all $s\in J$. 
 Therefore, $\Phi(\gamma_{s_0})=\Phi(\gamma_s)$ for all $s\in J$ and the set 
$\{s\in [0,1]|\,\Phi(\gamma_s)=\Phi(\gamma)\}$ is open and closed. Since 
$\gamma=\gamma_0$ we 
 get $\Phi(\gamma_s)=\Phi(\gamma)$ for all $s\in [0,1]$.
\end{proof}

\begin{rmk}
 The definition of $\Phi$ implies that it is a group homomorphism: If $\gamma$ 
is a constant path, then $\Phi([\gamma])$ is the germ of the identity 
 $\mathrm{id}:\mathcal M\rightarrow \mathcal M$ and we have
 $\Phi([\gamma_1]\cdot[\gamma_2])=\Phi([\gamma_1])\circ\Phi([\gamma_2])$ for the 
composition $\gamma_1\cdot\gamma_2$ of two closed paths 
 $\gamma_1$ and $\gamma_2$.
\end{rmk}

\begin{rmk}\label{rmk: holonomy-even part}
 By Lemma~\ref{lemma: even part of a flat chart} and Remark~\ref{rmk: equality 
of flat charts and even parts} the morphism $\Phi$ does not 
 depend on whether its construction is done with respect to the distribution 
$\mathcal D$ on $\mathcal G\times \mathcal M$
 associated to $\lambda$ or to the distribution $\mathcal D_0$ on 
$G\times\mathcal M$ associated to 
 the infinitesimal action $\lambda_0=\lambda|_{\g_0}$.
\end{rmk}

\begin{rmk}
 Due to the connectedness of the leaves $\Sigma$ the map $\Phi_{\Sigma,(g,p)}$ 
is trivial for some $(g,p)\in \Sigma$
 if and only if $\Phi_{\Sigma, (h,q)}$ is trivial for all $(h,p)\in \Sigma$.
 
 The triviality of the map $\Phi=\Phi_\Sigma=\Phi_{\Sigma,(g,p)}$ can be viewed 
as a sort of absence of holonomy for the leaf $\Sigma$.
\end{rmk}

\begin{ex}\label{ex: S^1-example}
 Let $\mathcal G=S^1$, with coordinate $\phi$, and $\mathcal M=\R^{0|2}$, with 
coordinates $\theta_1,\theta_2$.
 Let $X=\theta_1\frac{\partial}{\partial \theta_2}$ and consider the 
infinitesimal $S^1$-action $\lambda:\mathrm{Lie}(S^1)\cong 
\R\rightarrow\mathrm{Vec}(\mathcal M)$, $\lambda(t)=tX$.
 The unique leaf of the distribution $\mathcal D$ on $S^1\times\mathcal M$, 
spanned by $\frac{\partial}{\partial \phi}+X$,
 is $\Sigma=S^1\times \{0\}=S^1\times M$.
 Let $\phi_0\in S^1$ and $r:\Omega\rightarrow \R$ be a local inverse around 
$1\in \Omega\subset S^1$ 
 of $\R\rightarrow S^1$, $t\mapsto e^{it}$.
 Then 
 
$$\psi^*(\phi,\theta_1,\theta_2)=(\phi,\rho^*(\theta_1),\rho^*(\theta_2))+(0,0,
r(\phi {\phi_0}^{-1})\rho^*(\theta_1))$$
 defines the pullback of a flat chart $\psi$ with respect to $(\mathcal D, U, V, 
\phi_0,\rho)$ 
 for $V=\{0\}=M$, $U\subset S^1$ with $\phi_0\in U$ and $U{\phi_0}^{-1}\subseteq 
\Omega$ and a diffeormorphism $\rho:\R^{0|2}\rightarrow \R^{0|2}$.
 For arbitrary $\phi_0'\in U$ the map $\psi$ is also a flat chart with respect 
to 
 $(\mathcal D, U, V, \phi_0',\rho')$ for $\rho':\R^{0|2}\rightarrow \R^{0|2}$ 
with pullback
 
$$({\rho'})^*(\theta_1,\theta_2)=\rho^*(\theta_1,\theta_2)+(0,r(\phi_0'{\phi_0}^
{-1})\rho^*(\theta_1))
 =\rho^*(\theta_1,\theta_2)+\left(0,\bigg(\int_{\phi_0}^{\phi_0'} 1 
d\phi\bigg)\rho^*(\theta_1)\right),
 $$
 where the integral might be taken along any path in $\Omega$. In particular 
$(\rho')^*(\theta_1)=\rho^*(\theta_1)$.
 
 A calculation shows that the map $\Phi:\pi_1(\Sigma,(\phi_0,0))\rightarrow 
\mathrm{Diff}_0(\R^{0|2})=\mathrm{Diff}(\R^{0|2})$
 is given by
 $$\Phi([\gamma])^*(\theta_1,\theta_2)=\mathrm{id}^*(\theta_1,\theta_2)
 +\left(0,\bigg(\int_{\gamma}1 d\phi\bigg)\theta_1\right)
 =\left(\theta_1,\theta_2+\bigg(\int_\gamma 1 d\phi\bigg)\theta_1\right)$$
 for any $\gamma:[0,1]\rightarrow S^1\cong\Sigma$, $\gamma(0)=\gamma(1)=\phi_0$.
 Thus, identifying $\pi_1(\Sigma,(\phi_0,0))$ and $\Z$, we get
 $$\Phi:\Z\rightarrow 
\mathrm{Diff}(\R^{0|2}),\,\Phi(k)^*(\theta_1,\theta_2)=(\theta_1,\theta_2+2\pi 
k\theta_1).$$
\end{ex}

We now establish an equivalence between univalent infinitesimal actions and 
infinitesimal actions with univalent reduced infinitesimal action
and leaves without holonomy.

\begin{prop}\label{prop: univalence}
 The infinitesimal action $\lambda$ is univalent if and only if the reduced 
infinitesimal action 
 $\tilde \lambda :\g_0\rightarrow \mathrm{Vec}(M)$ is univalent and for all 
leaves $\Sigma$ the map
 $\Phi_\Sigma$ is trivial.
\end{prop}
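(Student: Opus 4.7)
The plan is to prove the two implications separately, leveraging the uniqueness characterization of univalence from Proposition~\ref{prop: univalence and uniqueness of flat charts}. For the forward direction ($\Rightarrow$), the implication that univalence of $\lambda$ entails univalence of $\tilde\lambda$ was already recorded in the remark preceding the proposition, so only the triviality of $\Phi_\Sigma$ remains to verify. Fix a closed path $\gamma:[0,1]\to\Sigma$ based at $(g,p)$. Univalence of $\lambda$ supplies a single flat chart $\psi:\mathcal U\times\mathcal V\to\mathcal G\times\mathcal M$ with respect to $(\mathcal D,U,V,g,\mathrm{id})$ whose image contains $\gamma([0,1])$. I then compute $\Phi([\gamma])$ with the degenerate partition $k=1$, taking $\psi_0=\psi$, so that $\psi_{k-1}=\psi$ is already a flat chart with respect to $\rho=\mathrm{id}$; the uniqueness of $\rho$ in Lemma~\ref{lemma: almost flat chart} identifies the germ of $\rho$ at $p$ with that of the identity, so $\Phi([\gamma])$ is trivial.

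For the backward direction ($\Leftarrow$), I reduce to the uniqueness property of flat charts in Proposition~\ref{prop: univalence and uniqueness of flat charts}: it suffices to show that any two flat charts $\psi_i:\mathcal U_i\times\mathcal V_i\to\mathcal G\times\mathcal M$, $i=1,2$, with respect to $(\mathcal D,U_i,V_i,g,\rho_i)$ with $\rho_1=\rho_2$ on $V_1\cap V_2$ coincide on their common domain. After replacing $\psi_i$ by $\bar\psi_i=\psi_i\circ(\mathrm{id}_{\mathcal U_i}\times\rho_i^{-1})$ (Remark~\ref{rmk: flat chart}), I may assume $\rho_i=\mathrm{id}$. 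On the connected component $C_g$ of $U_1\cap U_2$ containing $g$ the charts agree by the local uniqueness of flat charts (Proposition~\ref{prop: uniqueness of flat charts}). For any other component $C$, I pick $q\in C$, choose a path $\alpha$ in $U_1$ from $g$ to $q$ and a path $\alpha'$ in $U_2$ from $q$ back to $g$, and form the closed loop $\beta=\alpha'\cdot\alpha$ in $G$. Univalence of $\tilde\lambda$ (equivalently, injectivity of $\pi_G|_\Sigma$) ensures that the lift $\tilde\beta$ defined by $\bar\psi_1$ on the first half and $\bar\psi_2$ on the second is a well-defined closed path in the leaf $\Sigma_{(g,p)}$, since the two lifts automatically match at $(q,p)$.

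I then apply the construction of $\Phi$ along $\tilde\beta$ with a sufficiently fine partition: the initial chart is a suitable restriction of $\bar\psi_1$, the terminal chart is a suitable restriction of $\bar\psi_2$, and intermediate charts around $q$ are produced by the local existence of flat charts (Proposition~\ref{prop: existence of flat charts}) and re-based, via Remark~\ref{rmk: flat chart}, so that consecutive charts coincide on their common domain. The terminal chart's diffeomorphism $\rho_{\mathrm{end}}$ represents $\Phi_{\Sigma_{(g,p)}}([\tilde\beta])$, which is trivial by hypothesis, and this forces $\bar\psi_1=\bar\psi_2$ in a neighborhood of $(q,p)$; by Proposition~\ref{prop: uniqueness of flat charts} this equality extends to the whole component $C$. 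Hence $\bar\psi_1=\bar\psi_2$ on the full common domain, and the uniqueness characterization yields univalence of $\lambda$. The main obstacle is the bookkeeping in this last step: one must verify that the net holonomy computed along $\tilde\beta$ via the auxiliary flat charts coincides precisely with the local discrepancy between $\bar\psi_1$ and $\bar\psi_2$ at $(q,p)$, a routine but delicate check using the local uniqueness of flat charts applied to each segment of the partition.
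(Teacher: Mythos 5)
Your proof is correct and follows essentially the same route as the paper: the forward direction reads off triviality of $\Phi_\Sigma$ from a single flat chart covering the loop, and the backward direction reduces to the uniqueness criterion of Proposition~\ref{prop: univalence and uniqueness of flat charts} and identifies the holonomy of the loop ``out along $\psi_1$, back along $\psi_2$'' with the local discrepancy $\rho$ between the two charts. The paper streamlines your final step by working at an arbitrary point $(h,p)$ of the common domain (no component-by-component bookkeeping) and by taking the return half of the chain to be $\psi_2\circ(\mathrm{id}\times\rho)$ outright, so that $\Phi([\alpha])=\rho$ is immediate from the definition of $\Phi$ and your ``routine but delicate check'' disappears.
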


\begin{proof}
 If the infinitesimal action $\lambda$ is univalent, then the reduced 
infinitesimal action $\tilde \lambda$ is univalent as a direct 
 consequence. Moreover, for any closed path $\gamma:[0,1]\rightarrow \Sigma$ 
there is a flat chart $\psi:\mathcal U\times \mathcal V\rightarrow \mathcal 
G\times \mathcal M$ with $\gamma([0,1])\subset \tilde\psi (U\times V)$ and thus 
$\Phi([\gamma])=\mathrm{id}$.
 
 Now, suppose that $\tilde \lambda$ is univalent and $\Phi_{\Sigma}$ is trivial 
for all leaves $\Sigma\subset G\times M$.
 We need to show that any two
 flat charts $\psi_i:\mathcal U_i\times \mathcal V_i\rightarrow\mathcal 
G\times\mathcal M$, $i=1,2$, 
 with respect to $(\mathcal D, U_i,V_i,g,\rho_i)$ with $\rho_1=\rho_2$ coincide 
on their 
 common domain of definition (cf. Proposition~\ref{prop: univalence and 
uniqueness of flat charts}).
 By replacing $\psi_i$ by $\psi_i\circ (\mathrm{id}\times {\rho_i}^{-1})$ it is 
enough to show $\psi_1=\psi_2$ in 
 the case of $\rho_1=\rho_2=\mathrm{id}$.
 
 Let $(h,p)\in (U_1\times V_1)\cap (U_1\times V_1)=(U_1\cap U_2)\times (V_1\cap 
V_2)$ be arbitrary. 
 We have $\tilde \psi_i(h,p)\in\tilde \psi_i(U_i\times\{p\})\subset 
\Sigma_{(g,p)}$ 
 and thus $\tilde \psi_1(h,p)=\tilde \psi_2(h,p)$ since 
$\pi_G|_{\Sigma_{(g,p)}}$ is injective.
 After shrinking $V_2$ there is a diffeomorphism $\rho:\mathcal V_2\rightarrow 
\mathcal M$ such that 
 $\psi_2\circ(\mathrm{id}\times \rho)$ is defined and coincides with $\psi_1$ 
near $(h,p)$.
 Note that $\tilde\rho(p)=p$ since $\tilde\psi_1(h,p)=\tilde\psi_2(h,p)$.
 The composition $\psi_2\circ(\mathrm{id}\times \rho)$ is a flat chart with 
respect to $(\mathcal D, U_2, V_2, g, \rho)$.
 Let $\alpha:[0,1]\rightarrow \Sigma_{(g,p)}$ be a closed path with 
$\alpha(0)=\alpha(1)=(g,p)$, 
 $\alpha(\frac 1 2)=\tilde \psi_1(h,p)=\tilde 
\psi_2(h,\tilde\rho(p))=\tilde\psi_2(h,p)$ and 
 $\alpha([0,\frac 1 2])\subset \tilde \psi_1(U_1\times \{p\})$
 and $\alpha([\frac 1 2,1])\subset \tilde \psi_2(U_2\times \{p\})=\tilde \psi_2( 
(\mathrm{id}\times\tilde\rho)(U_2\times\{p\}))$. 
 By the definition of $\Phi=\Phi_{\Sigma_{(g,p)}}$
 we have $\Phi([\alpha])=\rho$. The triviality of $\Phi$ then gives 
$\rho=\mathrm{id}$ so that
 $\psi_1$ and $\psi_2$ agree near $(h,p)$. 
\end{proof}

\begin{cor}\label{cor: univalence-even part}
 The infinitesimal action $\lambda$ is univalent if and only if its 
 restriction $\lambda_0=\lambda|_{\g_0}$ is univalent.
\end{cor}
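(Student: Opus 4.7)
The plan is to apply Proposition~\ref{prop: univalence} twice, once to $\lambda$ and once to $\lambda_0=\lambda|_{\g_0}$, and show that both applications produce the same pair of conditions. Concretely, Proposition~\ref{prop: univalence} tells us that $\lambda$ is univalent if and only if
\begin{enumerate}[(a)]
 \item the reduced infinitesimal action $\tilde\lambda:\g_0\to\mathrm{Vec}(M)$ is univalent, and
 \item for every leaf $\Sigma\subset G\times M$ of $\mathcal D_{\tilde\lambda}$, the holonomy map $\Phi_\Sigma$ associated to $\mathcal D=\mathcal D_\lambda$ is trivial.
\end{enumerate}
The same proposition, applied with $\lambda_0:\g_0\to\mathrm{Vec}(\mathcal M)$ in place of $\lambda$, yields that $\lambda_0$ is univalent if and only if its reduction is univalent and the analogous holonomy maps for $\mathcal D_0=\mathcal D_{\lambda_0}$ are trivial.

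The two reductions agree, since the reduction of $\lambda_0=\lambda|_{\g_0}$ is literally $\tilde\lambda$, so condition (a) appears verbatim in both characterisations, and the leaves $\Sigma\subset G\times M$ of $\widetilde{\mathcal D}=\mathcal D_{\tilde\lambda}$ depend only on $\tilde\lambda$. It remains to identify the two holonomy conditions. For this I would invoke Remark~\ref{rmk: holonomy-even part}, which asserts exactly that the construction of $\Phi_\Sigma$ along a closed path in $\Sigma$ produces the same germ in $\mathrm{Diff}_p(\mathcal M)$ whether one uses flat charts for $\mathcal D$ on $\mathcal G\times\mathcal M$ or the corresponding flat charts for $\mathcal D_0$ on $G\times\mathcal M$ (these correspondences being provided by Lemma~\ref{lemma: even part of a flat chart} and Remark~\ref{rmk: equality of flat charts and even parts}).

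Combining these observations, both (a) and (b) hold for $\lambda$ if and only if they hold for $\lambda_0$, and hence the two univalence conditions are equivalent. No further calculation is needed.

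I do not expect a genuine obstacle here: the corollary is essentially a bookkeeping consequence of the preceding proposition and of the even-part compatibility already established for flat charts and for holonomy. The only subtle point to state carefully is that one is applying Proposition~\ref{prop: univalence} to the classical Lie group $G$ (viewed as a purely even Lie supergroup with Lie superalgebra $\g_0$), and that in this degenerate situation the distribution $\mathcal D_0$ produced by the proposition coincides with the one appearing in Lemma~\ref{lemma: even part of a flat chart}, so that Remark~\ref{rmk: holonomy-even part} really does identify the two holonomy homomorphisms.
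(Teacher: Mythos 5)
Your proposal is correct and follows essentially the same route as the paper, which likewise deduces the corollary from Proposition~\ref{prop: univalence} combined with Remark~\ref{rmk: holonomy-even part}; you merely spell out more explicitly that the proposition is applied a second time to $\lambda_0$ viewed as an infinitesimal action of the purely even supergroup $G$, and that the two reductions and leaf spaces coincide. No gap here.
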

\begin{proof}
 The statement follows from the preceding propostion and the observation 
formulated in Remark~\ref{rmk: holonomy-even part}. 
\end{proof}

\subsection{The action on $\mathcal G\times \mathcal M$ from the right and 
invariant functions}
In this section, the action $R:\mathcal G\times(\mathcal G\times \mathcal 
M)\rightarrow \mathcal G\times \mathcal M$
of the Lie supergroup~$\mathcal G$ on the product $\mathcal G\times \mathcal M$ 
from the right,
which is in the classical case given by $(g,(h,x))\mapsto (hg^{-1},x)$, is 
introduced.
Its behaviour with respect to the distribution $\mathcal D$ associated to 
$\lambda:\g\rightarrow \mathrm{Vec}(\mathcal M)$ and in particular $\mathcal 
D$-invariant functions
is studied.

\begin{defi}
 Let $\mathcal O_{\mathcal G\times \mathcal M}^\mathcal D$ be the sheaf of 
$\mathcal D$-invariant functions on $\mathcal G\times \mathcal M$,
 i.e. $$\mathcal O_{\mathcal G\times\mathcal M}^\mathcal D(\Omega)
 =\{f\in\mathcal O_{\mathcal G\times\mathcal M}(\Omega)|\,\mathcal D\cdot 
f=0\}$$ 
 for any open subset $\Omega\subset G\times M$.
\end{defi}

\begin{rmk}\label{rmk: invariant functions}
 For any $\mathcal D$-invariant function $f$ on $\mathcal G\times\mathcal M$, 
the underlying 
 function $\tilde f$ is constant along the leaves, and in the classical case 
every such function is $\mathcal D$-invariant.
 
 Moreover, if $\psi:\mathcal U\times \mathcal V\rightarrow \mathcal G\times 
\mathcal M$ is a flat chart
 and $f$ is $\mathcal D$-invariant, then $\psi^*(f)$ is $\mathcal D_\mathcal 
G$-invariant
 since $\psi_*(\mathcal D_\mathcal G)=\mathcal D$. The $\mathcal D_\mathcal 
G$-invariant functions on
 $\mathcal U\times\mathcal V\subseteq\mathcal G\times\mathcal M$ are of the form 
$f_\mathcal M=1\otimes f_\mathcal M$ for 
 $f_\mathcal M\in \mathcal O_\mathcal M(V)$ so that $\psi^*(f)=1\otimes 
f_\mathcal M$ for an appropiate $f_\mathcal M$.
\end{rmk}

We have the following identity principle for $\mathcal D$-invariant functions on 
$\mathcal G\times \mathcal M$:

\begin{lemma}\label{lemma: uniqueness of invariant functions}
 Let $W\subseteq G\times M$ be open, $\Sigma\subset G\times M$
 be a leaf with $\Sigma \subset W$ and let $f_1,f_2\in \mathcal O_{\mathcal 
G\times \mathcal M}^\mathcal D (W)$.
 If $f_1=f_2$ on an open neighbourhood of some $x\in \Sigma$, then 
 $f_1$ and $f_2$ coincide on an open neighbourhood of the leaf $\Sigma$.
\end{lemma}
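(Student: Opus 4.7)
The approach is an open-and-closed argument along the leaf~$\Sigma$. Let
$$W' = \{z\in W:\ f_1\text{ and }f_2\text{ agree on some open neighbourhood of }z\},$$
which is open in $G\times M$, contains the given point $x$, and is the largest open subset of $W$ on which $f_1=f_2$. It suffices to show $\Sigma\subseteq W'$, for then $W'$ itself will be the required open neighbourhood of~$\Sigma$.

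Consider $T=\Sigma\cap W'$, viewed as a subset of $\Sigma$ with its intrinsic (leaf) manifold topology. It is nonempty (since $x\in T$) and open in~$\Sigma$, because the inclusion $\Sigma\hookrightarrow G\times M$ is continuous. As $\Sigma$ is connected by definition of a leaf, it remains to verify that $T$ is closed in~$\Sigma$.

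So let $y\in\overline{T}^{\Sigma}$. Using Proposition~\ref{prop: existence of flat charts}, choose a flat chart $\psi:\mathcal U\times\mathcal V\to\mathcal G\times\mathcal M$ with $y=\tilde\psi(u_0,q_0)$ for some $(u_0,q_0)\in U\times V$. The plaque $P=\tilde\psi(U\times\{q_0\})$ is a neighbourhood of $y$ in the leaf topology of~$\Sigma$, so $P\cap T\neq\emptyset$, and we may fix some $x'=\tilde\psi(u',q_0)\in T$. Since $x'\in W'$, the functions $f_1$ and $f_2$ agree on some open neighbourhood $\Omega\subseteq\tilde\psi(U\times V)$ of $x'$. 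By Remark~\ref{rmk: invariant functions} the pullbacks $\psi^* f_i$ are $\mathcal D_\mathcal G$-invariant, hence of the form $\psi^* f_i=1\otimes h_i$ with $h_i\in\mathcal O_\mathcal M(V)$. Pulling the equality $f_1=f_2$ back via $\psi$ to a neighbourhood of $(u',q_0)$ yields $1\otimes h_1=1\otimes h_2$ there, forcing $h_1=h_2$ on an open $V''\ni q_0$ inside~$V$. Consequently $\psi^* f_1=\psi^* f_2$ on the whole tube $U\times V''$, and pushing this equality forward gives $f_1=f_2$ on the open neighbourhood $\tilde\psi(U\times V'')$ of $y$ in $G\times M$. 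Therefore $y\in W'\cap\Sigma=T$, so $T$ is closed and hence $T=\Sigma$, completing the proof. The only delicate point is distinguishing the leaf topology of $\Sigma$ from its subspace topology: the assumption $y\in\overline{T}^{\Sigma}$ must be used at the level of plaques (rather than at the level of the ambient image $\tilde\psi(U\times V)$) in order to place the witness $x'$ on the same slice $\tilde\psi(U\times\{q_0\})$ as~$y$, since only then does the tensor-product description of $\mathcal D_\mathcal G$-invariant functions propagate the equality from a neighbourhood of $x'$ to an entire tube around~$y$.
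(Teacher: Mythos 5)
Your proof is correct and follows essentially the same route as the paper's: an open--closed (connectedness) argument along the leaf, using flat charts and the fact that $\mathcal D_{\mathcal G}$-invariant functions pull back to the form $1\otimes h$, so that equality near one point of a plaque propagates to a full tube $U\times V''$ around the plaque. You are merely more explicit than the paper about working in the leaf topology and about the (harmless) shrinking needed so that the chart's image lies in $W$.
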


\begin{proof}
 Define $\Sigma'\subseteq \Sigma$ to be the subset of points $y\in \Sigma$ such 
$f_1$ and $f_2$ are equal on some
 open neighbourhood of $y$. The set $\Sigma'$ is open and contains $x$ by 
assumption.

 For any flat chart $\psi:\mathcal U\times\mathcal V\rightarrow \mathcal 
G\times\mathcal M$ with $\tilde \psi(U\times V)\subset W$
 we have $\psi^*(f_i)=1\otimes f_{\mathcal M,i}$ for appropriate $f_{\mathcal 
M,i}$ by Remark~\ref{rmk: invariant functions}.
 Therefore, $f_1=f_2$ near $\tilde \psi(U\times\{q\})$ if and only if 
$f_{\mathcal M,1}=f_{\mathcal M,2}$ near $q\in M$.
 It follows that the set $\Sigma'$ is also closed and thus equal to $\Sigma$. 
\end{proof}

\begin{defi}
 Let $\mu:\mathcal G\times\mathcal G\rightarrow \mathcal G$ denote the 
multiplication of $\mathcal G$, $\iota:\mathcal G\rightarrow \mathcal G$ 
 the inversion and $\tau:\mathcal G\times\mathcal G\rightarrow \mathcal 
G\times\mathcal G$
 the morphism which interchanges the two components. 
 Then define the action of $\mathcal G$ on itself from the right as
 $r:\mathcal G\times\mathcal G\rightarrow \mathcal 
G,\,r=\mu\circ\tau\circ(\iota\times\mathrm{id}),$
 whose underlying action is given by $(g,h)\mapsto hg^{-1}$.
 Define now a $\mathcal G$-action on $\mathcal G\times\mathcal M$ by
 $$R:\mathcal G\times (\mathcal G\times\mathcal M)\rightarrow\mathcal G\times 
\mathcal M,\,R=r\times\mathrm{id}.$$
\end{defi}

\begin{lemma}
 For every right-invariant vector field $X$ on $\mathcal G$ we have 
 
$$(\mathrm{id}^*\otimes(X\otimes\mathrm{id}^*+\mathrm{id}
^*\otimes\lambda(X)))\circ R^*
 =R^*\circ(X\otimes\mathrm{id}^*+\mathrm{id}^*\otimes\lambda(X)).$$
\end{lemma}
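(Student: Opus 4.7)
The plan is to split the vector field $X\otimes\mathrm{id}^*+\mathrm{id}^*\otimes\lambda(X)$ into its two summands and verify the intertwining relation for each piece separately. Since $R=r\times\mathrm{id}_\mathcal M$, we have $R^*=r^*\otimes\mathrm{id}_\mathcal M^*$, and the intertwining relation decomposes as
\begin{align*}
(\mathrm{id}^*\otimes X\otimes\mathrm{id}^*)\circ R^* &= R^*\circ(X\otimes\mathrm{id}^*),\\
(\mathrm{id}^*\otimes\mathrm{id}^*\otimes\lambda(X))\circ R^* &= R^*\circ(\mathrm{id}^*\otimes\lambda(X)).
\end{align*}
The second identity is formal: because $R$ acts as the identity on $\mathcal M$, the pullback $R^*$ acts only on the $\mathcal G$-tensor slots, and $\lambda(X)$ acts only on the $\mathcal M$-tensor slot; these two operations live in disjoint tensor factors and therefore commute. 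No group-theoretic input is needed here.

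The real content lies in the first identity, which reduces (by stripping off the inert $\mathrm{id}_\mathcal M^*$ factor) to the claim
$$(\mathrm{id}^*\otimes X)\circ r^* \;=\; r^*\circ X$$
on $\mathcal O_\mathcal G\to\mathcal O_{\mathcal G\times\mathcal G}$. This is exactly the statement that a right-invariant vector field on $\mathcal G$ is invariant under the right action $r$. To prove it, I would unfold the definition $r=\mu\circ\tau\circ(\iota\times\mathrm{id})$ to obtain $r^*=(\iota\times\mathrm{id})^*\circ\tau^*\circ\mu^*$, then push $X$ through each of the three factors in turn. Right-invariance of $X$ (in the form $\mu^*\circ X=(X\otimes\mathrm{id}^*)\circ\mu^*$) moves $X$ past $\mu^*$, producing a vector field that acts on the first $\mathcal G$ factor. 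The swap $\tau$ then sends this to a vector field on the second factor; concretely one checks $\tau^*\circ(X\otimes\mathrm{id}^*)=(\mathrm{id}^*\otimes X)\circ\tau^*$ by testing both sides on the generating pullbacks $\pi_1^*f$ and $\pi_2^*g$. Finally, $\iota\times\mathrm{id}$ affects only the first factor, so it commutes with any vector field supported on the second factor: $(\iota\times\mathrm{id})^*\circ(\mathrm{id}^*\otimes X)=(\mathrm{id}^*\otimes X)\circ(\iota\times\mathrm{id})^*$, again checked on $\pi_1^*f$ and $\pi_2^*g$. Stringing these three identities together yields $r^*\circ X=(\mathrm{id}^*\otimes X)\circ r^*$.

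The main (modest) obstacle is purely bookkeeping: one has to keep track of which $\mathrm{id}^*$ refers to which factor in a triple tensor product, and verify the two auxiliary commutation relations for $\tau^*$ and $(\iota\times\mathrm{id})^*$. Both follow because $\tau$ acts on objects disjoint from those touched by $X$, so no serious differential-geometric computation is required. The proof is essentially a diagram chase built on top of the definition of right-invariance.
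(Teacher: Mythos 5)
Your proof is correct and follows the same route as the paper: the paper's proof likewise reduces everything to the identity $(\mathrm{id}^*\otimes X)\circ r^*=r^*\circ X$, derived from right-invariance in the form $\mu^*\circ X=(X\otimes\mathrm{id}^*)\circ\mu^*$, and notes that this directly implies the claim. You have merely spelled out the "short calculation" (the commutations with $\tau^*$ and $(\iota\times\mathrm{id})^*$, and the triviality of the $\lambda(X)$ summand) that the paper leaves implicit.
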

 
\begin{proof}
 The right-invariance of $X$ is equivalent to $\mu^*\circ X=(X\otimes 
\mathrm{id}^*)\circ \mu^*$ and a short calculation yields 
 $(\mathrm{id}^*\otimes X)\circ r^*=r^*\circ X$, which directly implies the 
desired equality.
\end{proof}

\begin{cor}\label{cor: right-multiplication and invariant functions}
 We have $$R^*(\mathcal O_{\mathcal G\times\mathcal M}^\mathcal D)
 \subset \tilde R_*(\mathcal O_{\mathcal G\times\mathcal G\times \mathcal 
M}^{\id\otimes\mathcal D}),$$
 where $\id\otimes \mathcal D$ is the distribution on $\mathcal G\times(\mathcal 
G\times\mathcal M)$
 spanned by vector fields of the form $\mathrm{id}^*\otimes Y$ for vector fields 
$Y$ belonging to $\mathcal D$
 and $\mathcal O_{\mathcal G\times\mathcal G\times \mathcal 
M}^{\id\otimes\mathcal D}$ denotes the sheaf of 
 $\id\otimes\mathcal D$-invariant functions.
\end{cor}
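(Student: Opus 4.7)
The plan is to reduce the statement to an application of the preceding lemma. Let $f\in\mathcal O_{\mathcal G\times\mathcal M}^{\mathcal D}(\Omega)$ be a $\mathcal D$-invariant function on some open $\Omega\subseteq G\times M$. I must show that $R^*(f)$ is annihilated by every vector field $Z$ belonging to the distribution $\id\otimes\mathcal D$ on its domain of definition, i.e.\ that $R^*(f)$ lies in $\mathcal O_{\mathcal G\times\mathcal G\times\mathcal M}^{\id\otimes\mathcal D}(\tilde R^{-1}(\Omega))$.

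Since $\mathcal D$ is by definition spanned by the vector fields $X+\lambda(X)=X\otimes\id^*+\id^*\otimes\lambda(X)$ for $X\in\g$, and $f$ is $\mathcal D$-invariant, we have in particular $(X+\lambda(X))(f)=0$ for every $X\in\g$. Applying the lemma immediately preceding the corollary yields
$$
\big(\id^*\otimes(X\otimes\id^*+\id^*\otimes\lambda(X))\big)(R^*(f))
= R^*\big((X+\lambda(X))(f)\big)=R^*(0)=0
$$
for every $X\in\g$. Hence $R^*(f)$ is annihilated by each of the generators $\id^*\otimes(X+\lambda(X))$ of $\id\otimes\mathcal D$.

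It remains to observe that this is enough to conclude $R^*(f)\in\mathcal O^{\id\otimes\mathcal D}_{\mathcal G\times\mathcal G\times\mathcal M}$, i.e.\ that $\id^*\otimes Y$ annihilates $R^*(f)$ for every $Y$ belonging to $\mathcal D$, not merely for the basis generators. Locally any such $Y$ can be written as $Y=\sum_i a_i(X_i+\lambda(X_i))$ with $a_i\in\mathcal O_{\mathcal G\times\mathcal M}$ and $\{X_i\}$ a basis of $\g$. The extension $\id^*\otimes Y$ to $\mathcal G\times\mathcal G\times\mathcal M$ is then $\sum_i\pi_{23}^*(a_i)\big(\id^*\otimes(X_i+\lambda(X_i))\big)$, where $\pi_{23}$ denotes the projection onto the last two factors, and applying this expression to $R^*(f)$ gives zero by the previous paragraph. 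I expect no real obstacle here; the argument is essentially a bookkeeping check that the preceding lemma, which intertwines the generators of $\mathcal D$ with their extensions along $R$, transports $\mathcal D$-invariance to $(\id\otimes\mathcal D)$-invariance, and the only point requiring a little care is the passage from generators of the distribution to arbitrary sections, which is handled by the $\mathcal O$-linearity just exhibited.
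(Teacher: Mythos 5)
Your proposal is correct and follows essentially the same route as the paper: apply the preceding lemma to the generators $X\otimes\id^*+\id^*\otimes\lambda(X)$ of $\mathcal D$ and conclude that $R^*(f)$ is annihilated by their extensions. The paper leaves the passage from generators to arbitrary sections of the distribution implicit; your explicit $\mathcal O$-linearity check of that step is harmless and fills in the same bookkeeping the paper takes for granted.
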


\begin{proof}
Using the preceding lemma, we have
$$(\mathrm{id}^*\otimes(X\otimes\mathrm{id}^*+\mathrm{id}^*\otimes 
\lambda(X)))(R^*(f))
=R^*\circ(X\otimes\mathrm{id}^*+\mathrm{id}^*\otimes\lambda(X))(f)=R^*(0)=0$$
for any $\mathcal D$-invariant function $f$ on $\mathcal G\times \mathcal M$ and 
$X\in \g$.
Thus, $R^*(f)$ is $\id\otimes \mathcal D$-invariant.
\end{proof}

\begin{defi}
 For $g\in G$, let $r_g:\mathcal G\rightarrow \mathcal G$ denote the composition
 of the action~$r$ and the inclusion
 $\mathcal G\hookrightarrow\{g\}\times\mathcal G\subset\mathcal G\times\mathcal 
G$,
 and define 
 $$R_g:\mathcal G\times\mathcal M\rightarrow\mathcal G\times\mathcal 
M,\,R_g=(r_g\times\mathrm{id}).$$
Since $r$ is an action, $r_g$ and $R_g$ are diffeomorphisms, 
$(r_g)^{-1}=r_{g^{-1}}$ and $(R_g)^{-1}=R_{g^{-1}}$.
\end{defi}

\begin{lemma}
 Let $\iota_h:\mathcal M\hookrightarrow\{h\}\times\mathcal M
 \subset\mathcal G\times \mathcal M$ denote again the canonical inclusion for 
any $h\in G$.
 For each $g\in G $ the map $R_g:\mathcal G\times \mathcal M\rightarrow \mathcal 
G\times \mathcal M$ satisfies
\begin{enumerate}[(i)]
 \item $(R_g)_*(\mathcal D_\mathcal G)=\mathcal D_\mathcal G$, and
 \item $(R_g)_*(\mathcal D)=\mathcal D$.
\end{enumerate}
\end{lemma}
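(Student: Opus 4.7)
The strategy is to exploit the product structure $R_g = r_g \times \mathrm{id}_\mathcal M$ together with the right-invariance of the generators of $\mathcal D$.

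For (i), the distribution $\mathcal D_\mathcal G$ is generated by vector fields of the form $X \otimes \mathrm{id}_\mathcal M^*$ for arbitrary vector fields $X$ on $\mathcal G$. Since $R_g$ is a product morphism with the identity on $\mathcal M$, pushforward distributes as $(R_g)_*(X \otimes \mathrm{id}_\mathcal M^*) = ((r_g)_* X) \otimes \mathrm{id}_\mathcal M^*$, which lies again in $\mathcal D_\mathcal G$. Hence $(R_g)_* \mathcal D_\mathcal G \subseteq \mathcal D_\mathcal G$, and the reverse inclusion follows from $R_{g^{-1}} = R_g^{-1}$.

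For (ii), $\mathcal D$ is generated by $X \otimes \mathrm{id}_\mathcal M^* + \mathrm{id}_\mathcal G^* \otimes \lambda(X)$ with $X \in \g$ right-invariant. Because $R_g$ acts trivially on $\mathcal M$, the second summand is preserved, while the first becomes $((r_g)_* X) \otimes \mathrm{id}_\mathcal M^*$. Hence (ii) reduces to the key identity $(r_g)_* X = X$ for every right-invariant $X \in \g$, which I regard as the main step.

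To prove this identity I would decompose $r_g$ via $r = \mu \circ \tau \circ (\iota \times \mathrm{id})$ and $r_g = r \circ \iota_g^\mathcal G$, where $\iota_g^\mathcal G$ denotes the inclusion $\mathcal G \hookrightarrow \{g\} \times \mathcal G$. Combining the defining right-invariance identity $\mu^* \circ X = (X \otimes \mathrm{id}_\mathcal G^*) \circ \mu^*$ with the swap property of $\tau^*$, and using that $(\iota^* \otimes \mathrm{id}^*)$ and $(\mathrm{id}^* \otimes X)$ commute as operators acting on disjoint tensor factors, a short algebraic manipulation yields $r^* \circ X = (\mathrm{id}_\mathcal G^* \otimes X) \circ r^*$. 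Precomposing with $(\iota_g^\mathcal G)^*$ and using that this evaluation acts only on the first factor then gives $r_g^* \circ X = X \circ r_g^*$, i.e.\ $(r_g)_* X = X$. The main obstacle is little more than carefully book-keeping these tensor-factor commutations in the graded setting; once this is done, both (i) and (ii) follow directly.
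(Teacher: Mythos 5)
Your proposal is correct and follows essentially the same route as the paper: property (i) from the definition of $R_g$ as a product morphism, and property (ii) reduced to the identity $(r_g)_*X=X$ for right-invariant $X$, which the paper likewise derives from $\mu^*\circ X=(X\otimes\mathrm{id}_\mathcal G^*)\circ\mu^*$ via the decomposition $r=\mu\circ\tau\circ(\iota\times\mathrm{id})$ (this is exactly the content of the lemma preceding this one in the paper, giving $(\mathrm{id}^*\otimes X)\circ r^*=r^*\circ X$). Your write-up simply fills in the book-keeping that the paper leaves implicit.
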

\begin{proof}
 Property $(i)$ can be directly obtained from the definition of $R_g$. 
Property~$(ii)$ follows from the fact
that $(r_g)_*(X)=X$ for every right-invariant vector field $X$, and therefore 
$(R_g)_*(X+\lambda(X))=(r_g)_*(X)+\lambda(X)=X+\lambda(X)$ so that 
$(R_g)_*(\mathcal D)=\mathcal D$.
\end{proof}

The composition of flat charts and maps of the form $R_g$ exhibits a special 
behaviour as specified in the following lemma.

\begin{lemma}\label{lemma: symmetry of flat charts}
 Let $g\in G$ and let $\psi:\mathcal U\times \mathcal V\rightarrow \mathcal 
G\times \mathcal M$ be a flat chart with respect to 
 $(\mathcal D, U, V, h, \rho)$.
Then the composition 
$$\psi'=R_{g^{-1}}\circ\psi\circ( R_g|_{Ug\times V}):\mathcal Ug\times \mathcal 
V\rightarrow \mathcal G\times \mathcal M$$ is a flat chart 
with respect to $(\mathcal D, Ug,V,hg,\rho)$ where $Ug=\{ug|\,u\in U\}$ and 
$\mathcal{U}g=(Ug,\mathcal O_\mathcal G|_{Ug})$.
\end{lemma}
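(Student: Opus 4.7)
The plan is to directly verify the three defining conditions of a flat chart for $\psi'$ with respect to $(\mathcal D, Ug, V, hg, \rho)$, exploiting the properties of $R_g$ established in the previous lemma together with the assumed properties of $\psi$. Since $\psi'$ is defined as a composition of diffeomorphisms onto their images, it is itself a diffeomorphism onto its image, and its domain is correctly $\mathcal Ug \times \mathcal V$ because $\tilde R_g$ maps $Ug \times V$ onto $U \times V$, which lies in the domain of $\psi$.

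First I would verify condition (i), that $\psi'_*(\mathcal D_\mathcal G) = \mathcal D$: by the preceding lemma, $(R_g)_*(\mathcal D_\mathcal G) = \mathcal D_\mathcal G$ and $(R_{g^{-1}})_*(\mathcal D) = \mathcal D$, and $\psi_*(\mathcal D_\mathcal G) = \mathcal D$ by assumption, so composing pushforwards gives the result. For condition (ii), I would use that $\pi_\mathcal G \circ R_g = r_g \circ \pi_\mathcal G$ (a direct consequence of $R_g = r_g \times \mathrm{id}_\mathcal M$) together with $\pi_\mathcal G \circ \psi = \pi_\mathcal G|_{U\times V}$ to get
\[
\pi_\mathcal G \circ \psi' = r_{g^{-1}} \circ \pi_\mathcal G \circ \psi \circ R_g = r_{g^{-1}} \circ r_g \circ \pi_\mathcal G|_{Ug\times V} = \pi_\mathcal G|_{Ug\times V}.
\]

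For condition (iii), the key identity is $R_g \circ \iota_{hg} = \iota_h$, which follows from $\tilde r_g(hg) = hg \cdot g^{-1} = h$ together with the fact that $R_g$ acts as the identity on the $\mathcal M$-factor; analogously $R_{g^{-1}} \circ \iota_h = \iota_{hg}$. Using these,
\[
\psi' \circ \iota_{hg}|_V = R_{g^{-1}} \circ \psi \circ R_g \circ \iota_{hg}|_V = R_{g^{-1}} \circ \psi \circ \iota_h|_V = R_{g^{-1}} \circ \iota_h|_{\tilde\rho(V)} \circ \rho = \iota_{hg}|_{\tilde\rho(V)} \circ \rho,
\]
where the third equality uses the flat-chart property of $\psi$ at $h$.

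There is no real obstacle here; the proof is essentially a bookkeeping exercise once the compatibility $R_g \circ \iota_{hg} = \iota_h$ and the behaviour of $R_g$ on the distributions $\mathcal D_\mathcal G$ and $\mathcal D$ are recorded. The mildest care is needed in checking the correct left/right conventions (so that one indeed lands at $hg$ and not $hg^{-1}$) and in verifying that all compositions are defined on the stated open sets, which amounts to noting that $\tilde R_g$ restricts to a diffeomorphism $Ug \times V \to U \times V$.
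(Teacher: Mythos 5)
Your proof is correct and follows essentially the same route as the paper, which simply cites $(R_g)_*(\mathcal D_{\mathcal G})=\mathcal D_{\mathcal G}$ and $(R_g)_*(\mathcal D)=\mathcal D$ for condition (i) and leaves the identities $\pi_{\mathcal G}\circ\psi'=\pi_{\mathcal G}$ and $\psi'\circ\iota_{hg}=\iota_{hg}\circ\rho$ as ``direct calculations.'' You have merely written out those calculations explicitly, including the key compatibility $R_g\circ\iota_{hg}=\iota_h$, and the left/right conventions all check out.
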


\begin{proof}
 We have $ (\psi')_*(\mathcal D_\mathcal G)
=\mathcal D$ using $(R_g)_*(\mathcal D_\mathcal G)=\mathcal D_\mathcal G$ and 
$(R_g)_*(\mathcal D)=\mathcal D$.
Moreover, direct calculations show
$\pi_\mathcal G\circ \psi'=\pi_\mathcal G $
and 
$\psi'\circ \iota_{hg}=\iota_{hg}\circ \rho.$
\end{proof}

\begin{cor}
 The underlying classical Lie group $G$ acts on the space of leaves 
$\Sigma\subset G\times M$
 by $(g,\Sigma)\mapsto \tilde R_g(\Sigma)$.
 For $(h,p)\in G\times M$ we have $\tilde 
R_g(\Sigma_{(h,p)})=\Sigma_{(hg^{-1},p)}$.
\end{cor}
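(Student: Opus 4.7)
The plan is to deduce everything from the previous lemma, which says $(R_g)_*(\mathcal D)=\mathcal D$ (hence also $(\tilde R_g)_*(\widetilde{\mathcal D})=\widetilde{\mathcal D}=\mathcal D_{\tilde\lambda}$), together with the fact that $R_g$, and in particular $\tilde R_g$, is a diffeomorphism of $\mathcal G\times\mathcal M$, respectively $G\times M$.

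First, I would verify that the assignment $(g,\Sigma)\mapsto \tilde R_g(\Sigma)$ is well defined, i.e.\ that it sends leaves to leaves. Since $\tilde R_g:G\times M\to G\times M$ is a diffeomorphism with $(\tilde R_g)_*(\mathcal D_{\tilde\lambda})=\mathcal D_{\tilde\lambda}$, it maps integral manifolds of $\mathcal D_{\tilde\lambda}$ to integral manifolds and preserves connectedness and maximality. Hence $\tilde R_g(\Sigma)$ is again a leaf for every leaf $\Sigma\subset G\times M$.

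Second, I would check the action axioms. From $r(g,(g',h))$ inducing $(h,g)\mapsto hg^{-1}$ one computes $r_g\circ r_{g'}=r_{gg'}$ and $r_e=\mathrm{id}_G$, hence $R_g\circ R_{g'}=R_{gg'}$ and $R_e=\mathrm{id}_{\mathcal G\times\mathcal M}$; passing to underlying maps yields $\tilde R_g\circ \tilde R_{g'}=\tilde R_{gg'}$ and $\tilde R_e=\mathrm{id}$, which is exactly the left $G$-action property on the set of leaves.

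Finally, the identification of the leaf: since $\tilde R_g(h,p)=(hg^{-1},p)$ by definition of $R_g=r_g\times\mathrm{id}$, the image $\tilde R_g(\Sigma_{(h,p)})$ is a leaf containing $(hg^{-1},p)$, and there is a unique leaf through any point, so it must equal $\Sigma_{(hg^{-1},p)}$.

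The verification is essentially formal; no step poses a real obstacle, the only point needing care is that $\tilde R_g$ actually preserves maximality of connected integral manifolds, which however is immediate because $\tilde R_g$ is a global diffeomorphism of $G\times M$ preserving $\mathcal D_{\tilde\lambda}$ and both $\tilde R_g$ and its inverse $\tilde R_{g^{-1}}$ map integral manifolds to integral manifolds.
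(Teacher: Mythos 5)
Your proposal is correct and follows essentially the same route as the paper: both use $(R_g)_*(\mathcal D)=\mathcal D$ (equivalently the invariance of $\mathcal D_{\tilde\lambda}=\widetilde{\mathcal D}$) to conclude that the diffeomorphism $\tilde R_g$ maps leaves onto leaves, and then identify $\tilde R_g(\Sigma_{(h,p)})$ as $\Sigma_{(hg^{-1},p)}$ via the point $(hg^{-1},p)=\tilde R_g(h,p)$ and uniqueness of the leaf through a point. Your explicit check of the action axioms $r_g\circ r_{g'}=r_{gg'}$, $r_e=\mathrm{id}$ is a minor addition the paper leaves implicit.
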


\begin{proof}
 Since $R_g$ preserves the distribution $\mathcal D$, 
 $\tilde R_g$ maps leaves diffeomorphically onto leaves.
 We have $\tilde R_g(\Sigma_{(h,p)})=\Sigma_{(hg^{-1},p)}$ because 
$(hg^{-1},p)=\tilde R_g(h,p)\in \tilde R_g(\Sigma_{(h,p)})$.
\end{proof}

\begin{rmk}
 If there exists an action $\varphi:\mathcal G\times\mathcal M\rightarrow 
\mathcal M$ with infinitesimal action~$\lambda$,
 then $\tilde R_g(\Sigma_{(e,p)})=\Sigma_{(g^{-1},p)}=\Sigma_{(e,\tilde\varphi 
(g,p))}$ for any $p\in M$:
 
 \noindent Let $\psi=(\mathrm{id}\times\varphi)\circ(\mathrm{diag}\times 
\mathrm{id}):\mathcal G\times\mathcal M\rightarrow\mathcal G\times\mathcal M$.
 The map $\psi$ is a flat chart (cf. Lemma \ref{lemma: local action and flat 
charts}) and 
 $\tilde\psi(G\times\{p\})=\Sigma_{(e,p)}$. 
 We have $(e,\tilde\varphi(g,p))=\tilde R_g(g,\tilde\varphi(g,p))=\tilde 
R_g(\tilde\psi(g,p))\in \tilde R_g(\tilde \psi(G\times\{p\}))$
 and thus $\tilde 
R_g(\Sigma_{(e,p)})=\Sigma_{(g^{-1},p)}=\Sigma_{(e,\tilde\varphi (g,p))}$.
\end{rmk}

\begin{prop}\label{prop: univalent action and special leaves}
 The infinitesimal action $\lambda:\g\rightarrow\mathrm{Vec}(\mathcal M)$ is 
univalent 
 if and only if every leaf of the form $\Sigma_{(e,p)}$ for $p\in M$ is 
univalent.
\end{prop}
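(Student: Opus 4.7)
The forward direction is immediate from the definition of univalence. For the converse, the plan is to use the right-action diffeomorphisms $R_g$ to transport an arbitrary leaf back to one based at the identity. More precisely, recall that $\tilde R_g(\Sigma_{(h,p)})=\Sigma_{(hg^{-1},p)}$, so for any leaf $\Sigma$ and any base point $(h_0,p_0)\in \Sigma$ we have $\tilde R_{h_0}(\Sigma)=\Sigma_{(e,p_0)}$.

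Given an arbitrary leaf $\Sigma$ and a path $\gamma:[0,1]\to \Sigma$, I would set $h_0=\gamma_G(0)$ and $p_0=\pi_M(\gamma(0))$, and consider the transported path $\gamma'\coloneqq \tilde R_{h_0}\circ\gamma$, which lies entirely in the leaf $\Sigma_{(e,p_0)}$ with $\gamma'(0)=(e,p_0)$ and $\gamma'_G(0)=e$. By the hypothesis that $\Sigma_{(e,p_0)}$ is univalent, there exists a flat chart $\psi:\mathcal U\times \mathcal V\to \mathcal G\times\mathcal M$ with respect to $(\mathcal D,U,V,e,\mathrm{id})$ whose image contains $\gamma'([0,1])$.

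Then I would apply Lemma \ref{lemma: symmetry of flat charts} with $g=h_0$: the composition $\psi'=R_{h_0^{-1}}\circ \psi\circ R_{h_0}|_{Uh_0\times V}$ is a flat chart with respect to $(\mathcal D,Uh_0,V,h_0,\mathrm{id})$. Since $\tilde R_{h_0}$ sends $Uh_0\times V$ bijectively onto $U\times V$, the image of $\tilde\psi'$ equals $\tilde R_{h_0^{-1}}(\tilde\psi(U\times V))$, which contains $\tilde R_{h_0^{-1}}(\gamma'([0,1]))=\gamma([0,1])$. Because $h_0=\gamma_G(0)$, this $\psi'$ is precisely the flat chart required by the definition of univalence for the path $\gamma$ in $\Sigma$.

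The argument is essentially bookkeeping: the real content has already been extracted into the fact that $R_g$ preserves $\mathcal D$ (hence permutes leaves) and into the symmetry lemma that shows flat charts transport covariantly under the $R_g$. The only potential subtlety I anticipate is verifying that $\psi'$ retains the normalization $\rho=\mathrm{id}$ rather than some twisted $\rho$, but the symmetry lemma preserves $\rho$, so this is immediate. Apart from that, there is nothing to grind through.
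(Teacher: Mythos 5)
Your argument is correct and follows essentially the same route as the paper: both directions hinge on transporting the leaf and the flat chart via $R_{g}$ and the symmetry lemma, which preserves $\rho$. The only cosmetic difference is that the paper phrases the transport in terms of relatively compact subsets $\Omega\subset\Sigma$ rather than path images $\gamma([0,1])$, which changes nothing of substance.
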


\begin{proof}
 If $\lambda$ is univalent, then all leaves are univalent, in particular each 
leaf of the form $\Sigma_{(e,p)}$.
 Assume now that all leaves $\Sigma_{(e,p)}$ are univalent. Let $\Sigma$ be an 
arbitrary 
 leaf and let $(g,p)\in \Sigma$. We have $\Sigma_{(e,p)}=\tilde 
R_g(\Sigma_{(g,p)})=\tilde R_g(\Sigma)$.
 If $\Omega\subset \Sigma=\Sigma_{(g,p)}$ is a relatively compact subset, then 
$\tilde R_{g}(\Omega)\subset \Sigma_{(e,p)}$ 
 is relatively compact and the univalence of $\Sigma_{(e,p)}$ yields the 
existence of a flat chart 
 $\psi:\mathcal U\times\mathcal V\rightarrow\mathcal G\times\mathcal M$ with
 $\tilde R_{g}(\Omega)\subset\tilde \psi (U\times V)$.
 By Lemma \ref{lemma: symmetry of flat charts} the map 
 $$R_{g^{-1}}\circ\psi\circ R_{g}: \mathcal U g\times\mathcal 
V\rightarrow\mathcal G\times \mathcal M$$
 is a flat chart and 
 $\Omega=\tilde R_{g^{-1}}(\tilde R_{g}(\Omega))\subset \tilde R_{g^{-1}}(\tilde 
\psi (U\times V))
 =(\tilde R_{g^{-1}}\circ \tilde\psi\circ\tilde R_{g})(Ug\times V).$
\end{proof}

\subsection{Globalizations of infinitesimal actions on supermanifolds}
We now study conditions for the existence of globalizations. 
The main result is the following:

\begin{thm}\label{thm: condition for globalizability}
 Let $\lambda:\g\rightarrow\mathrm{Vec}(\mathcal M)$ be an infinitesimal action. 
Then the following statements are equivalent:
 \begin{enumerate}[(i)]
  \item The infinitesimal action $\lambda$ is globalizable.
  \item The restricted infinitesimal action $\lambda_0=\lambda|_{\g_0}$ is 
globalizable.
  \item The infinitesimal action $\lambda$ is univalent.
  \item The reduced infinitesimal action $\tilde \lambda$ is univalent, i.e. 
$\pi_G|_\Sigma$ is injective for an arbitrary leaf $\Sigma$,
  and all leaves $\Sigma\subset G\times M$ are ``holonomy free'',
  i.e. the morphism $\Phi_\Sigma$ is trivial.
  \item There exists a local action $\varphi:\mathcal W\rightarrow \mathcal M$ 
with induced infinitesimal action~$\lambda$ whose domain 
  of definition is maximally balanced.
 \end{enumerate}
\end{thm}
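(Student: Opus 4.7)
The plan is to establish the cycle (i) $\Rightarrow$ (ii) $\Rightarrow$ (iii) $\Rightarrow$ (v) $\Rightarrow$ (i), together with the equivalence (iii) $\Leftrightarrow$ (iv). The latter is essentially already done: Proposition~\ref{prop: univalence} states that $\lambda$ is univalent iff $\tilde\lambda$ is univalent and every $\Phi_\Sigma$ is trivial, and Remark~\ref{rmk: holonomy-even part} identifies the $\Phi_\Sigma$ computed from $\mathcal D$ and from $\mathcal D_0$, so (iii) and (iv) say the same thing. The implication (i) $\Rightarrow$ (ii) is immediate: a global $\mathcal G$-action on $\mathcal M'$ restricts (via the canonical inclusion $G\hookrightarrow \mathcal G$) to a global $G$-action on the same supermanifold whose infinitesimal action on $\mathcal M$ is $\lambda_0$.

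For (ii) $\Rightarrow$ (iii), let $\varphi':G\times\mathcal M'\to\mathcal M'$ globalize $\lambda_0$. By Lemma~\ref{lemma: local action and flat charts} applied to $\varphi'$, the morphism $\psi' = (\mathrm{id}_G\times\varphi')\circ(\mathrm{diag}\times\mathrm{id}_{\mathcal M'})$ is, on any relatively compact connected open $U\subset G$, a flat chart for the distribution on $G\times\mathcal M'$ associated to $\varphi'$. Restricting such a flat chart to $G\times\mathcal M$ yields a flat chart for $\mathcal D_0$ whose underlying image contains any prescribed compact subset of the given leaf $\Sigma\subset G\times M$. By Corollary~\ref{cor: univalence and compact subsets} this makes $\lambda_0$ univalent, and then Corollary~\ref{cor: univalence-even part} promotes univalence to $\lambda$.

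For (iii) $\Rightarrow$ (v) I would glue flat charts into a maximally balanced local action. For each $p\in M$, let $W_p\subset G$ be the set of $g\in G$ for which there exists a flat chart $\psi:\mathcal U\times\mathcal V\to\mathcal G\times\mathcal M$ with respect to $(\mathcal D,U,V,e,\mathrm{id})$ with $g\in U$ and $p\in V$. Proposition~\ref{prop: univalence and uniqueness of flat charts} guarantees that any two such flat charts agree on their common domain, so, just as in Proposition~\ref{prop: local action}, they paste into a single $\psi:\mathcal W\to\mathcal G\times\mathcal M$ on $W=\bigcup_p W_p\times\{p\}$, and $\varphi=\pi_{\mathcal M}\circ\psi$ is a local action inducing $\lambda$ (Proposition~\ref{prop: infinitesimal action as required}). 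Maximal balance $W_p = W_{\tilde\varphi(h,p)}\cdot h$ then follows from Lemma~\ref{lemma: symmetry of flat charts}: conjugating a flat chart at $e$ by the right translation $R_h$ produces a flat chart at $h$, which turns the maximal domain at $p$ into the maximal domain at $\tilde\varphi(h,p)$.

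The main obstacle is (v) $\Rightarrow$ (i), the construction of the supermanifold globalization. On the underlying level the classical Theorem~\ref{thm: classical globalizations} already yields a (possibly non-Hausdorff) manifold $M'$, realized as the quotient of $G\times M$ by the equivalence relation $(g,p)\sim(h,q)\Leftrightarrow q=\tilde\varphi(h^{-1}g,p)$, together with the $G$-action induced from the right action $\tilde R$ on $G\times M$. To promote this to a supermanifold $\mathcal M'$, I would equip each open subset $[\mathcal U\times\mathcal V]\subset\mathcal M'$ coming from a flat chart $\psi:\mathcal U\times\mathcal V\to\mathcal G\times\mathcal M$ with the product structure sheaf transported through the quotient map; the transition between overlapping flat-chart images is forced to be a diffeomorphism by Proposition~\ref{prop: uniqueness of flat charts}, and the univalence (injectivity of $\pi_G|_\Sigma$ together with triviality of $\Phi_\Sigma$) is exactly what guarantees that these transition maps are well-defined and compatible on triple overlaps. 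An equivalent description is available via Lemma~\ref{lemma: uniqueness of invariant functions}, realizing $\mathcal O_{\mathcal M'}$ as the sheaf of $\mathcal D$-invariant functions on the saturation of $\{e\}\times\mathcal M$ in $\mathcal G\times\mathcal M$. The $\mathcal G$-action on $\mathcal M'$ is then defined by descending the right action $R$ on $\mathcal G\times\mathcal M$, which preserves $\mathcal D$ (cf.\ Corollary~\ref{cor: right-multiplication and invariant functions}) and hence passes to the quotient; that its induced infinitesimal action on $\mathcal M\subset\mathcal M'$ equals $\lambda$ is a local check in a flat chart at $e$. The delicate point throughout this last step is verifying that the transported structure sheaves agree on overlaps, and this is precisely where holonomy-freeness of the leaves is needed.
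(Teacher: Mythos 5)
Your overall strategy (flat charts, univalence, the leaf space $\mathcal M^*$) is close to the paper's, and most of the individual links are sound: (iii)$\Leftrightarrow$(iv) is indeed Proposition~\ref{prop: univalence}; (i)$\Rightarrow$(ii) by restricting along $G\hookrightarrow\mathcal G$ is fine; and your (ii)$\Rightarrow$(iii) works, modulo a tube-lemma shrinking needed to force the image of the flat chart $(\mathrm{id}\times\varphi')\circ(\mathrm{diag}\times\mathrm{id})$ of the globalized action back into $G\times M$ before invoking Corollary~\ref{cor: univalence and compact subsets} and Corollary~\ref{cor: univalence-even part}. The paper arranges the implications differently (it proves (iii)$\Rightarrow$(i), (i)$\Rightarrow$(v), (v)$\Rightarrow$(iii), and only then gets (i)$\Leftrightarrow$(ii) from Corollary~\ref{cor: univalence-even part}), and your (v)$\Rightarrow$(i) is in substance the paper's Proposition~\ref{prop: M^* supermanifold}, i.e.\ the implication (iii)$\Rightarrow$(i).

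That is exactly where the genuine gap sits. You prove (v)$\Rightarrow$(i) by building $\mathcal M^*$ from flat charts and you say explicitly that univalence ``is exactly what guarantees that these transition maps are well-defined''. But univalence is condition (iii)/(iv); inside the cycle (i)$\Rightarrow$(ii)$\Rightarrow$(iii)$\Rightarrow$(v)$\Rightarrow$(i) you may only assume (v) when proving the last link. As written, your argument establishes (i)$\Rightarrow$(ii)$\Rightarrow$(iii)$\Rightarrow$(v) together with (iii)$\Rightarrow$(i), so (i)--(iv) are equivalent and imply (v), but nothing shows that (v) implies the others: under your argument a maximally balanced local action could a priori exist for a non-univalent $\lambda$. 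The missing ingredient is the paper's Proposition~\ref{prop: maximally balanced domain of definition implies univalence}: if $\varphi:\mathcal W\rightarrow\mathcal M$ has maximally balanced domain, then by Lemma~\ref{lemma: whole leaf} (resting on the classical Palais result) the locally flat chart $\psi=(\mathrm{id}\times\varphi)\circ(\mathrm{diag}\times\mathrm{id})$ satisfies $\tilde\psi(W_p\times\{p\})=\Sigma_{(e,p)}$, so every relatively compact piece of every leaf $\Sigma_{(e,p)}$ lies in a single flat chart, and Proposition~\ref{prop: univalent action and special leaves} then gives univalence. Adding this step (v)$\Rightarrow$(iii) closes the equivalence; without it the theorem is not proved. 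A secondary, repairable soft spot: in (iii)$\Rightarrow$(v) you assert the glued $\varphi$ is a local action ``just as in Proposition~\ref{prop: local action}'', but the associativity identity $(\star)$ there is proved using the special nested basis $\{U_\alpha\}_{\alpha\in A}$; under univalence the global uniqueness of flat charts (Proposition~\ref{prop: univalence and uniqueness of flat charts}) does let you dispense with that basis, but this should be said rather than left implicit.
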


 The equivalence of $(iii)$ and $(iv)$ is the content of Proposition~\ref{prop: 
univalence} and once the equivalence 
 of $(i)$ and $(iii)$ is established the equivalence of $(i)$ and $(ii)$ is a 
consequence of Corollary~\ref{cor: univalence-even part}.
 The other equivalences shall be proven 
 in the following: The implication $(iii)\Rightarrow (i)$ is the content of 
Proposition~\ref{prop: M^* supermanifold},
 $(i)\Rightarrow (v)$ follows from Proposition~\ref{prop: globalizability 
implies maximally balanced domain of definition},
 and $(v)\Rightarrow (iii)$ is proven in Proposition~\ref{prop: maximally 
balanced domain of definition implies univalence}.

\begin{rmk}[see \cite{Palais}, Chapter III, Theorem IV and Theorem V]
 In the classical case, Palais shows that for a univalent infinitesimal action 
the space of leaves $\Sigma\subset G\times M$ 
 of the distribution $\mathcal D$,
 which is denoted by $M^*=(G\times M)/\!\!\sim$, carries in a natural way the 
structure of a possibly non-Hausdorff manifold, 
 the action of $G$ on $G\times M$ by $g\cdot (h,p)=(hg^{-1},p)$ induces an 
action on the quotient space $M^*$ and 
 $M\rightarrow M^*$, $p\mapsto \Sigma_{(e,p)}$ is an injective embedding.
\end{rmk}

A similar construction is also important in the case of supermanifolds.

\begin{defi}\label{defi: definition of leaf space}
 Let $\lambda$ be an infinitesimal action of $\mathcal G$ on $\mathcal M$ and 
$\mathcal D$ the associated distribution on 
 $\mathcal G\times\mathcal M$.
 Define $$M^*=(G\times M)/\!\!\sim$$ to be the space of leaves $\Sigma\subset 
G\times M$ and 
 denote by $\tilde \pi:G\times M\rightarrow M^*$, $\tilde 
\pi(g,p)=\Sigma_{(g,p)}$, the projection. 
 Now endow $M^*$ with the quotient topology and define the sheaf $\mathcal 
O_{\mathcal M ^*}$ of $\mathbb Z_2$-graded algebras on $M^*$
 by setting  
 $$\mathcal O_{\mathcal M^*}=\tilde \pi_*(\mathcal O_{\mathcal G\times\mathcal 
M}^\mathcal D),$$ i.e. 
 $\mathcal O_{\mathcal M^*}(\Omega)=\mathcal O_{\mathcal G\times \mathcal 
M}^\mathcal D(\tilde\pi^{-1}(\Omega))$
 for any open subset $\Omega \subset M^*$, where $\mathcal O_{\mathcal G\times 
\mathcal M}^\mathcal D$ denotes again the 
 sheaf of $\mathcal D$-invariant functions on $\mathcal G\times\mathcal M$.
 Define the ringed space $$\mathcal M^*=(M^*,\mathcal O_{\mathcal M^*})$$ and
 let $\pi=(\pi^*,\tilde \pi):\mathcal G\times \mathcal M\rightarrow \mathcal 
M^*$,
 where $\pi^*:\mathcal O_{\mathcal M^*}\rightarrow \tilde\pi_*(\mathcal 
O_{\mathcal G\times \mathcal M})$ is given by the canonical inclusion
 $\mathcal O_{\mathcal M^*}(\Omega)=\mathcal O_{\mathcal G\times \mathcal 
M}^\mathcal D(\tilde\pi^{-1}(\Omega))
 \hookrightarrow \mathcal O_{\mathcal G\times\mathcal 
M}(\tilde\pi^{-1}(\Omega))$ for any open subset $\Omega\subseteq M^*$.
\end{defi}

\begin{defi}
 For any open subset $V\subseteq M$, $\mathcal V=(V,\mathcal O_\mathcal M|_V)$, 
and $g\in G$ we
 define a morphism of ringed spaces
 $$\iota_{\mathcal V,g}:\mathcal V\rightarrow \mathcal M^*\text{ by } 
\iota_{\mathcal V,g}=\pi\circ \iota_g,$$
 where $\iota_g:\mathcal V\hookrightarrow \{g\}\times \mathcal V\subset \mathcal 
G\times \mathcal M$
 denotes again the inclusion. The reduced map of $\iota_{\mathcal V, g}$ is 
given by 
 $p\mapsto \Sigma_{(g,p)}$.
 For $g=e$ and $\mathcal V=\mathcal M$, let
 $$\iota_\mathcal M=\iota_{\mathcal M,e}:\mathcal M\rightarrow \mathcal M^*,\, 
\iota_\mathcal M=\pi\circ \iota_e.$$ 
\end{defi}

\begin{rmk}
 Let $\psi:\mathcal U\times \mathcal V\rightarrow \mathcal G\times \mathcal M$ 
be a flat chart with
 respect to $(\mathcal D, U, V, g,\mathrm{id})$ and let
 $\pi_\mathcal M=\pi_\mathcal M|_{U\times V}:\mathcal U\times\mathcal 
V\rightarrow \mathcal V$ denote the projection onto the second component.
 As $\iota_g:\mathcal V\rightarrow \mathcal U\times \mathcal V$ is a section of 
$\pi_\mathcal M$ and 
 $\psi\circ\iota_g=\iota_g$, the diagram 
 $$\xymatrix{\mathcal U\times \mathcal V\ar[r]^\psi\ar[d]_{\pi_\mathcal M} & 
\mathcal G\times\mathcal M\ar[d]^\pi\\
 \mathcal V\ar[r]_{\iota_{\mathcal V,g}} &\mathcal M^*
 }$$ 
 is commutative.
\end{rmk}

We will see later on that $\mathcal M^*$ carries the structure of a 
supermanifold and $\iota_\mathcal M:\mathcal M\rightarrow \mathcal M^*$ is 
an open embedding 
if and only if the infinitesimal action $\lambda$ is globalizable. 
In this case $\mathcal M^*$ is itself a globalization of $\lambda$ and the 
$\mathcal G$-action on 
$\mathcal M^*$ is induced by the $\mathcal G$-action $R$ on $\mathcal 
G\times\mathcal M$.

\begin{rmk}
 The topological space $M^*$ only depends on the underlying infinitesimal action 
$\tilde\lambda:\g_0\rightarrow\mathrm{Vec}(M)$
 since $\widetilde{\mathcal D} =\mathcal D_{\tilde\lambda}$ (cf. 
Lemma~\ref{lemma: underlying distribution}).
 Hence, the map $\tilde\pi$ is an open map as in the classical 
 case~(cf. \cite{Palais}, Chapter~I, Theorem~III, or \cite{HeinznerIannuzzi}, 
\S~2, Proposition~2).
 
 The topological space $M^*$ fulfills the second axiom of countability because 
 $\tilde \pi$ is an open quotient map and $G\times M$ a manifold.
\end{rmk}

\begin{lemma}
 The space $\mathcal M^*=(M^*, \mathcal O_{\mathcal M^*})$ is a locally ringed 
space.
\end{lemma}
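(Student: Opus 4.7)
The plan is to verify that every stalk $\mathcal O_{\mathcal M^*,\Sigma}$ at a point $\Sigma\in M^*$ is a local ring by exhibiting its unique maximal ideal as the kernel of a canonical evaluation homomorphism into the base field $\K$.

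First, I would construct the evaluation. Any germ $[f]\in\mathcal O_{\mathcal M^*,\Sigma}$ is represented by a $\mathcal D$-invariant function $f\in\mathcal O_{\mathcal G\times\mathcal M}(\tilde\pi^{-1}(\Omega))$ with $\Sigma\in\Omega$. Since $\widetilde{\mathcal D}=\mathcal D_{\tilde\lambda}$, the underlying function $\tilde f$ is constant along each leaf, as noted in the remark on invariant functions. Thus $\mathrm{ev}_\Sigma([f]):=\tilde f(x)$ for any $x\in\Sigma$ does not depend on the choices involved, yielding a $\K$-algebra homomorphism $\mathrm{ev}_\Sigma\colon\mathcal O_{\mathcal M^*,\Sigma}\to\K$ with kernel $\mathfrak m_\Sigma$.

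Next, I would show that every germ outside $\mathfrak m_\Sigma$ is a unit. If $\mathrm{ev}_\Sigma([f])\neq 0$, then $\tilde f$ does not vanish on $\Sigma$, and the locus $V:=\{y\in\tilde\pi^{-1}(\Omega):\tilde f(y)\neq 0\}$ is open and, as a union of leaves, saturated; hence $\Omega':=\tilde\pi(V)$ is an open neighbourhood of $\Sigma$ in $M^*$ with $\tilde\pi^{-1}(\Omega')=V$. On $V$ the body of $f$ is nowhere zero, so the standard theory of smooth (respectively holomorphic) supermanifolds produces an inverse $g=f^{-1}\in\mathcal O_{\mathcal G\times\mathcal M}(V)$.

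The crux is the $\mathcal D$-invariance of $g$. Covering $V$ by flat charts $\psi\colon\mathcal U\times\mathcal V\to\mathcal G\times\mathcal M$, the local description of $\mathcal D$-invariant functions recalled in the earlier remark gives $\psi^*(f)=1\otimes f_{\mathcal M}$ for some $f_{\mathcal M}\in\mathcal O_{\mathcal M}$. Since $\tilde f_{\mathcal M}$ is nowhere zero on the relevant open set, $f_{\mathcal M}$ is invertible and $\psi^*(g)=1\otimes f_{\mathcal M}^{-1}$, which is tautologically $\mathcal D_{\mathcal G}$-invariant. Pulling back through $\psi^{-1}$ shows $g\in\mathcal O^{\mathcal D}_{\mathcal G\times\mathcal M}(V)=\mathcal O_{\mathcal M^*}(\Omega')$, and its germ at $\Sigma$ inverts $[f]$. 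Hence $\mathfrak m_\Sigma$ is the unique maximal ideal and $(M^*,\mathcal O_{\mathcal M^*})$ is a locally ringed space. The only nontrivial step is this $\mathcal D$-invariance of the inverse, and it dissolves once one invokes the flat-chart normal form for invariant functions.
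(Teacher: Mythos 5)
Your proof is correct and follows the same overall strategy as the paper: identify $\mathfrak m_\Sigma=\{[f]_\Sigma\mid \tilde f(\Sigma)=0\}$ as the candidate maximal ideal, use that $\tilde f$ is constant on leaves to pass to a saturated open set on which the body of $f$ is nowhere zero, invert $f$ there, and check that the inverse is again $\mathcal D$-invariant so that it defines a section of $\mathcal O_{\mathcal M^*}$. The one step where you diverge is the last one, which you rightly single out as the crux. You establish $\mathcal D$-invariance of $g=f^{-1}$ by covering with flat charts and using the normal form $\psi^*(f)=1\otimes f_{\mathcal M}$, so that $\psi^*(g)=1\otimes f_{\mathcal M}^{-1}$ is visibly $\mathcal D_{\mathcal G}$-invariant. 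The paper instead gets this in one line from the Leibniz rule: for any $Y$ belonging to $\mathcal D$,
$$0=Y(1)=Y(gf)=Y(g)f+(-1)^{|g||Y|}gY(f)=Y(g)f,$$
and invertibility of $f$ forces $Y(g)=0$. The paper's computation is more elementary and keeps the lemma independent of the flat-chart machinery (it needs only the definition of $\mathcal O^{\mathcal D}_{\mathcal G\times\mathcal M}$ as a sheaf annihilated by derivations), whereas your route leans on the local existence of flat charts and the classification of $\mathcal D_{\mathcal G}$-invariant functions; both are available at this point in the paper, so your argument is valid, just heavier than necessary for this particular step. A minor cosmetic remark: the kernel of your evaluation $\mathrm{ev}_\Sigma$ is automatically maximal, so once you show every germ outside it is a unit you are done; the paper phrases the same conclusion as a proof by contradiction.
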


\begin{proof}
 For $f\in \mathcal O_{\mathcal M^*}(\Omega)=\mathcal O_{\mathcal G\times 
\mathcal M}^\mathcal D(\tilde \pi^{-1}(\Omega))$ 
 with $\Sigma\in \Omega\subseteq M^*$ denote by $[f]_\Sigma$ the germ of $f$ in 
the stalk 
 $(\mathcal O_{\mathcal M^*})_\Sigma$.
 We can define the ideal 
 $$\mathfrak m_\Sigma=\{[f]_\Sigma|\,\tilde f(\Sigma)=0\}\lhd (\mathcal 
O_{\mathcal M^*})_\Sigma$$
 for any leaf $\Sigma \in M^*$.
 Assume that $\mathfrak m_\Sigma$ is not a maximal ideal in the stalk $(\mathcal 
O_{\mathcal M^*})_\Sigma$.
 Then there is a proper ideal $I\lhd(\mathcal O_{\mathcal M^*})_\Sigma$ which is 
not contained in~$\mathfrak m_\Sigma$.
 Let $[f]_\Sigma\in I\setminus \mathfrak m_\Sigma$. Then we have $\tilde 
f(\Sigma)\neq 0$ and the continuity 
 of $\tilde f$ gives the existence of an open neighbourhood $\Omega$ of 
$\Sigma\in M^*$ such that 
 $f$ is defined on $\tilde\pi^{-1}(\Omega)$, 
 i.e. $$f\in \mathcal O_{\mathcal M^*}(\Omega)=\mathcal O_{\mathcal 
G\times\mathcal M}^\mathcal D(\tilde\pi^{-1}(\Omega))
 \subset \mathcal O_{\mathcal G\times\mathcal M}(\tilde\pi^{-1}(\Omega)),$$
 and $\tilde f(x)\neq 0$ for all $x\in \tilde\pi^{-1}(\Omega)$.
 Consequently, there exists $g\in \mathcal O_{\mathcal G\times\mathcal 
M}(\tilde\pi^{-1}(\Omega))$
 with $gf=fg=1$.
 The function $g$ is also $\mathcal D$-invariant since 
 $$0=Y(1)=Y(gf)=Y(g)f+(-1)^{|g||Y|}gY(f)=Y(g)f+0=Y(g)f$$ for all vector fields 
$Y$ belonging to $\mathcal D$
 and thus $Y(g)=0$.
 Therefore, $g$ defines an element in $\mathcal O_{\mathcal M^*}(\Omega)$ and 
$[g]_\Sigma [f]_\Sigma=[gf]_\Sigma=1\in I$ 
 which contradicts the assumption $I\neq (\mathcal O_{\mathcal M^*})_\Sigma$.
\end{proof}

\begin{lemma}
 The action of $\mathcal G$ on $\mathcal G\times \mathcal M$ from the right 
 $R:\mathcal G\times (\mathcal G\times\mathcal M)\rightarrow \mathcal G\times 
\mathcal M$ 
 induces an action $\chi$ of $\mathcal G$ on the space $\mathcal M^*$, i.e. a 
morphism of ringed spaces satisfying the usual action properties,
 such that 
 $$\xymatrix{\mathcal G\times (\mathcal G\times \mathcal 
M)\ar[r]^/.4em/R\ar[d]_{\mathrm{id}\times\pi}&\mathcal G\times\mathcal 
M\ar[d]^\pi\\
 \mathcal G\times \mathcal M^*\ar[r]_\chi& \mathcal M^*
 }$$ commutes.
 In particular, if $\mathcal M^*$ is a supermanifold, then $\chi$ is an action 
of the Lie supergroup $\mathcal G$ on $\mathcal M^*$.
\end{lemma}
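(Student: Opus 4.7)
The plan is to construct $\chi$ by invoking the universal property of the quotient $\pi:\mathcal G\times\mathcal M\to\mathcal M^*$, applied to the composition $\pi\circ R:\mathcal G\times(\mathcal G\times\mathcal M)\to\mathcal M^*$. Since $\mathcal M^*$ is only a ringed space, the entire construction takes place in the category of $\mathbb Z_2$-graded ringed spaces, and the structure sheaf on the ringed space $\mathcal G\times\mathcal M^*$ is taken to be $(\mathrm{id}\times\tilde\pi)_*\bigl(\mathcal O^{\,\id\otimes\mathcal D}_{\mathcal G\times\mathcal G\times\mathcal M}\bigr)$, which identifies $\mathrm{id}\times\pi$ with the canonical quotient map for the distribution $\id\otimes\mathcal D$ on $\mathcal G\times(\mathcal G\times\mathcal M)$.

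First I would define the underlying map. By the corollary that $\tilde R_g(\Sigma_{(h,p)})=\Sigma_{(hg^{-1},p)}$, the map $\tilde R$ sends leaves to leaves in a well-defined way, so the assignment
\[
\tilde\chi(g,\Sigma_{(h,p)})=\Sigma_{(hg^{-1},p)}
\]
gives a well-defined set-theoretic map $G\times M^*\to M^*$ making the underlying diagram commute. Continuity follows from the fact that $\tilde\pi$ is an open quotient map and that $\tilde\pi\circ\tilde R=\tilde\chi\circ(\mathrm{id}\times\tilde\pi)$, so a preimage $\tilde\chi^{-1}(\Omega)$ is open exactly because its preimage under $\mathrm{id}\times\tilde\pi$ equals the open set $\tilde R^{-1}(\tilde\pi^{-1}(\Omega))$.

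Next I would define the pullback $\chi^*:\mathcal O_{\mathcal M^*}\to\tilde\chi_*\mathcal O_{\mathcal G\times\mathcal M^*}$ by using $R^*$. Given an open $\Omega\subseteq M^*$ and $f\in\mathcal O_{\mathcal M^*}(\Omega)=\mathcal O^{\mathcal D}_{\mathcal G\times\mathcal M}(\tilde\pi^{-1}(\Omega))$, Corollary~\ref{cor: right-multiplication and invariant functions} asserts that $R^*(f)\in\mathcal O^{\,\id\otimes\mathcal D}_{\mathcal G\times\mathcal G\times\mathcal M}(\tilde R^{-1}(\tilde\pi^{-1}(\Omega)))$, and by the identity $\tilde R^{-1}\circ\tilde\pi^{-1}=(\mathrm{id}\times\tilde\pi)^{-1}\circ\tilde\chi^{-1}$ and the definition of $\mathcal O_{\mathcal G\times\mathcal M^*}$, this is precisely an element of $\mathcal O_{\mathcal G\times\mathcal M^*}(\tilde\chi^{-1}(\Omega))=\tilde\chi_*\mathcal O_{\mathcal G\times\mathcal M^*}(\Omega)$. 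Setting $\chi^*(f):=R^*(f)$ then gives a morphism of sheaves of graded algebras; compatibility with the pullback $\pi^*$ is built into the definition, so the square
\[
(\mathrm{id}\times\pi)^*\circ\chi^*=R^*\circ\pi^*
\]
holds tautologically, which is exactly the claimed commutativity.

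Finally I would deduce the action properties of $\chi$ from those of $R$. The maps $\pi$ and $\mathrm{id}_{\mathcal G}\times\pi$ are epimorphisms of ringed spaces (they are surjective on underlying spaces, and the structure sheaves on the target are defined as direct images of invariant subsheaves on the source), so equalities of morphisms out of $\mathcal M^*$, resp.\ $\mathcal G\times\mathcal M^*$, can be tested after precomposing with these quotient maps. The identity $\chi\circ(e\times\mathrm{id}_{\mathcal M^*})=\mathrm{id}_{\mathcal M^*}$ follows because both sides, precomposed with $\pi$, yield $\pi\circ R\circ(e\times\mathrm{id}_{\mathcal G\times\mathcal M})=\pi$; the associativity $\chi\circ(\mu\times\mathrm{id}_{\mathcal M^*})=\chi\circ(\mathrm{id}_{\mathcal G}\times\chi)$ follows similarly from the corresponding associativity of $R$ after precomposing with $\mathrm{id}_{\mathcal G\times\mathcal G}\times\pi$. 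The one step I expect to require the most care is the verification that the proposed structure sheaf on $\mathcal G\times\mathcal M^*$ correctly realises $\mathrm{id}\times\pi$ as a quotient, so that ``test by precomposition'' is legitimate; this ultimately reduces to the observation that a function on $\mathcal G\times(\mathcal G\times\mathcal M)$ is $\id\otimes\mathcal D$-invariant precisely when it is locally of the form $1\otimes h$ with $h$ pulled back from $\mathcal M^*$, which is a straightforward consequence of the local form of $\mathcal D$-invariant functions noted in Remark~\ref{rmk: invariant functions}.
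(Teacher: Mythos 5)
Your proposal is correct and follows essentially the same route as the paper: define $\tilde\chi$ on leaves via $\tilde R_g(\Sigma_{(h,p)})=\Sigma_{(hg^{-1},p)}$, get continuity from $\tilde\pi\circ\tilde R=\tilde\chi\circ(\mathrm{id}\times\tilde\pi)$ and openness of $\tilde\pi$, define $\chi^*$ as $R^*$ restricted to $\mathcal D$-invariant functions using Corollary~\ref{cor: right-multiplication and invariant functions}, and inherit the action axioms from $R$ by precomposing with the quotient maps. The only quibble is your closing claim that an $\id\otimes\mathcal D$-invariant function is locally of the form $1\otimes h$ — it may depend nontrivially on the first $\mathcal G$-factor, so the correct identification is with $\mathcal O_\mathcal G\hat\otimes\mathcal O^{\mathcal D}_{\mathcal G\times\mathcal M}$ as the paper states, but this does not affect the argument.
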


\begin{proof}
 The underlying action~$\tilde \chi$ of $G$ on $M^*$ is given by 
 $$\tilde\chi:G\times M^*\rightarrow M^*,\,(g,\Sigma_{(h,p)})\mapsto \tilde 
R_g(\Sigma_{(h,p)})=\Sigma_{(hg^{-1},p)}.$$
 and continuous since $\tilde\pi\circ\tilde 
R=\tilde\chi\circ(\mathrm{id}_G\times \tilde\pi)$.

 If $f$ is any $\mathcal D$-invariant function on $\mathcal G\times \mathcal M$, 
then $R^*(f)$ is 
 $(\id\otimes \mathcal D)$-invariant (see Corollary~\ref{cor: 
right-multiplication and invariant functions}).
 Therefore, 
 $$(\pi\circ R)^*(f)\in (\mathcal O_{\mathcal G\times \mathcal G\times \mathcal 
M}^{\id\otimes\mathcal D})
 ((\tilde \pi\circ\tilde R)^{-1}(\Omega))\cong (\mathcal O_\mathcal 
G\hat{\otimes} \mathcal O_{\mathcal G\times \mathcal M}^\mathcal D)
 ((\tilde \pi\circ\tilde R)^{-1}(\Omega))$$
 for any $f\in \mathcal O_{\mathcal M^*}(\Omega)=\mathcal O_{\mathcal 
G\times\mathcal M}^\mathcal D(\tilde \pi^{-1}(\Omega))$, $\Omega\subseteq M^*$,
 and $(\pi\circ R)^*$ induces a morphism 
 $\chi^*:\mathcal O_{\mathcal M^*}\rightarrow \tilde \chi_*(\mathcal O_{\mathcal 
G\times \mathcal M^*})$ 
 with $R^*\circ \pi^*=(\mathrm{id}\times \pi)^*\circ \chi^*$. The map~$\chi$ 
defines an action of $\mathcal G$ on $\mathcal M^*$
 since $R$ is an action and $\chi$ inherits the respective properties.
\end{proof}

The infinitesimal action $$\lambda_\chi:\g\rightarrow \mathrm{Vec}(\mathcal 
M^*),\, \lambda_\chi(X)=(X(e)\otimes {\mathrm{id}}^*)\circ\chi^*$$
on the ringed space $\mathcal M^*$ extends the infinitesimal action $\lambda$ on 
$\mathcal M$ in the following sense:

\begin{lemma}\label{lemma: extension of infinitesimal action}
 For any $X\in \g$ we have 
 $$\lambda(X)\circ{\iota_\mathcal M}^*={\iota_\mathcal M}^*\circ 
\lambda_\chi(X).$$
\end{lemma}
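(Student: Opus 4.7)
The plan is to relate $\lambda_\chi(X)$ back to $\lambda(X)$ by studying, in two different ways, the composite morphism
\[
\Psi := \chi\circ(\mathrm{id}_{\mathcal G}\times\iota_\mathcal M)\colon \mathcal G\times \mathcal M\longrightarrow\mathcal M^*.
\]
First I would precompose the compatibility $\chi\circ(\mathrm{id}_\mathcal G\times \pi) = \pi\circ R$ with $\mathrm{id}_\mathcal G\times\iota_e^\mathcal M$, using that $\tilde R(e,h,p)=(h,p)$, to obtain the alternate description $\Psi = \pi\circ(\iota_\mathcal G\times\mathrm{id}_\mathcal M)$, where $\iota_\mathcal G\colon\mathcal G\to\mathcal G$ denotes the group inversion. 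This gives two factorizations
\[
\Psi^*=(\mathrm{id}_\mathcal G\times\iota_\mathcal M)^*\circ\chi^*=(\iota_\mathcal G\times\mathrm{id}_\mathcal M)^*\circ\pi^*,
\]
which I will compare after applying $(X(e)\otimes\mathrm{id}_\mathcal M^*)$ to each.

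On the $\chi$-side, since a vector field of the form $X\otimes\mathrm{id}^*$ acts only on the $\mathcal G$-factor, it commutes with the pullback $(\mathrm{id}_\mathcal G\times\iota_\mathcal M)^*$, which affects only the other factor. This yields
\[
(X(e)\otimes\mathrm{id}_\mathcal M^*)\circ\Psi^* \;=\; \iota_\mathcal M^*\circ\lambda_\chi(X).
\]
On the $\pi$-side, set $\tilde f:=\pi^*(f)$; by definition of $\mathcal O_{\mathcal M^*}$, this is a $\mathcal D$-invariant function on $\mathcal G\times \mathcal M$. Using that $\iota_\mathcal G(e)=e$ and that the differential of $\iota_\mathcal G$ at $e$ is $-\mathrm{id}$---equivalently, that for a right-invariant $X$ one has $(\iota_e^\mathcal M)^*\circ(X\otimes\mathrm{id}^*)\circ(\iota_\mathcal G\times\mathrm{id})^*=-(\iota_e^\mathcal M)^*\circ(X\otimes\mathrm{id}^*)$---the second factorization reduces to $-(X(e)\otimes\mathrm{id}^*)(\tilde f)$. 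The $\mathcal D$-invariance $(X\otimes\mathrm{id}^*+\mathrm{id}^*\otimes\lambda(X))(\tilde f)=0$ then rewrites this as $(\iota_e^\mathcal M)^*\bigl((\mathrm{id}^*\otimes\lambda(X))(\tilde f)\bigr)=\lambda(X)(\iota_\mathcal M^*f)$, so the two sign flips cancel and equality of the two factorizations of $(X(e)\otimes\mathrm{id}_\mathcal M^*)\circ\Psi^*$ yields the desired identity $\iota_\mathcal M^*\circ\lambda_\chi(X)=\lambda(X)\circ\iota_\mathcal M^*$.

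The argument is essentially forced by the two descriptions of $\Psi$; the main obstacle is purely bookkeeping---keeping careful track of which factor each $\mathrm{id}^*$ refers to and verifying that the vector-field extensions really commute with the pullbacks (they do, because they act on disjoint tensor factors of the local model). The conceptual input appears only at the very last step, where the $\mathcal D$-invariance encoded in the definition of $\mathcal O_{\mathcal M^*}$ is used to convert differentiation in the $\mathcal G$-direction into application of $\lambda(X)$.
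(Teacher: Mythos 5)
Your argument is correct and is essentially the paper's own proof: the two factorizations of $\Psi^*$ amount exactly to the identities $\pi\circ R=\chi\circ(\mathrm{id}\times\pi)$ and $R\circ(\mathrm{id}\times\iota_e)=(\iota\times\mathrm{id})$ that the paper combines with the $\mathcal D$-invariance of $\pi^*(f)$ and the sign coming from $(X(e)\otimes\mathrm{id}^*)\circ\iota_{\mathcal G}^*=-X(e)\otimes\mathrm{id}^*$. (One cosmetic slip: the evaluation you actually need is $\tilde R(g,(e,p))=(g^{-1},p)$ rather than $\tilde R(e,(h,p))=(h,p)$, but the identity $\Psi=\pi\circ(\iota_\mathcal G\times\mathrm{id}_\mathcal M)$ you derive from it is correct.)
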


\begin{proof}
 Let $\Omega\subseteq M^*$ be open and $f\in \mathcal O_{\mathcal M^*}(\Omega)$.
 Then $\pi^*(f)$ is $\mathcal D$-invariant on $\tilde\pi^{-1}(\Omega)\subset 
G\times M$ and thus 
 $(\mathrm{id}^*\otimes \lambda(X))(\pi^*(f))=-(X\otimes 
\mathrm{id}^*)(\pi^*(f))$ for $X\in\g$.
 Consequently, we have
 $$  (\lambda(X)\circ {\iota_\mathcal M}^*)(f)
  =(\lambda(X)\circ {\iota_e}^*)(\pi^*(f))
  ={\iota_e}^*( (\mathrm{id}^*\otimes \lambda(X))(\pi^*(f)))
  =-((X(e)\otimes \mathrm{id}^*) (\pi^*(f)).$$
 A calculation using the identities $\pi\circ R=\chi\circ(\mathrm{id}\times 
\pi)$ and $R\circ(\mathrm{id}\times \iota_e)=(\iota\times\mathrm{id})$ 
 gives 
 ${\iota_\mathcal M}^*\circ\lambda_\chi(X)=((-X(e))\otimes \mathrm{id}^*)\circ 
\pi^*$
 so that $\lambda(X)\circ{\iota_\mathcal M}^*={\iota_\mathcal M}^*\circ 
\lambda_\chi(X)$.
\end{proof}

\begin{prop}\label{prop: M^* supermanifold}
 Let $\lambda$ be univalent. Then $\mathcal M^*$ is a supermanifold, with a 
possibly non-Hausdorff underlying manifold $M^*$.
 
 Moreover, the morphism $\iota_\mathcal M:\mathcal M\rightarrow \mathcal M^*$ is 
an open embedding, 
 the infinitesimal action $\lambda_\chi$ induced by $\chi$
 extends the infinitesimal action of $\lambda$ on 
 $\mathcal M\cong (\tilde \iota_\mathcal M (M), \mathcal O_{\mathcal 
M^*}|_{\tilde \iota_\mathcal M (M)})$, and
 we have $G\cdot \tilde\iota_\mathcal M(M)=\tilde\chi(G\times 
\tilde\iota_\mathcal M(M))=M^*$.
 
 Thus $\mathcal M^*$ is a globalization of $\lambda$ and the infinitesimal 
action $\lambda$ is globalizable.
\end{prop}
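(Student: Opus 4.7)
The strategy is to use flat charts to give $\mathcal M^*$ a supermanifold atlas, and then to read off the embedding and equivariance claims from the construction. For each leaf $\Sigma \in M^*$, pick $(g,p) \in \Sigma$ and, by Proposition \ref{prop: existence of flat charts}, a flat chart $\psi:\mathcal U \times \mathcal V \to \mathcal G \times \mathcal M$ with respect to $(\mathcal D, U, V, g, \mathrm{id})$, shrinking so that $\mathcal V$ is isomorphic to a superdomain. My candidate for a local chart of $\mathcal M^*$ near $\Sigma$ is then $\iota_{\mathcal V,g}=\pi\circ \iota_g:\mathcal V \to \mathcal M^*$, with image the open set $\Omega = \tilde\pi(\tilde\psi(U\times V))$ (open because $\tilde\pi$ is an open map and $\tilde\psi(U\times V)$ is open).

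The first thing I need is that $\tilde\iota_{V,g}:V \to M^*$ is injective, and this is where univalence is essential: if $\Sigma_{(g,p_1)} = \Sigma_{(g,p_2)}$, pick a path in that leaf joining $(g,p_1)$ to $(g,p_2)$; univalence provides a flat chart whose image contains that path, and since $\pi_\mathcal G$ is preserved by any flat chart, injectivity of $\pi_G$ on the underlying leaf (the reduced univalence) forces $p_1=p_2$. Combined with openness of $\Omega$, this makes $\tilde\iota_{V,g}:V\to\Omega$ a homeomorphism. Next, by Remark \ref{rmk: invariant functions}, $\psi$ identifies the $\mathcal D$-invariant functions on $\tilde\psi(U\times V')$ (for $V' \subseteq V$) with functions of the form $1\otimes h$ for $h\in\mathcal O_\mathcal M(V')$. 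This gives a natural isomorphism $\iota_{\mathcal V,g}^*:\mathcal O_{\mathcal M^*}|_{\Omega}\xrightarrow{\sim}(\tilde\iota_{V,g})_*\mathcal O_\mathcal V$, so $(\iota_{\mathcal V,g},\iota_{\mathcal V,g}^*)$ is an isomorphism of locally ringed spaces onto its image. The transition between two such charts around the same leaf is, by construction, a composition of flat charts and their inverses, and is a superdiffeomorphism by Proposition \ref{prop: uniqueness of flat charts} (applied after adjusting the $\rho$-data); hence $\mathcal M^*$ inherits a supermanifold structure. Second countability of $M^*$ was already noted, and Hausdorffness is not claimed.

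The open embedding statement for $\iota_\mathcal M$ is the special case $g=e$, $V=M$ of the above (the global injectivity of $\tilde\iota_\mathcal M$ again using univalence via the same path argument). That the infinitesimal action $\lambda_\chi$ extends $\lambda$ under the identification $\mathcal M \cong (\tilde\iota_\mathcal M(M),\mathcal O_{\mathcal M^*}|_{\tilde\iota_\mathcal M(M)})$ is exactly Lemma \ref{lemma: extension of infinitesimal action}. For the orbit equality, fix a leaf $\Sigma_{(g,p)}\in M^*$; by the formula $\tilde\chi(h,\Sigma_{(k,q)})=\Sigma_{(kh^{-1},q)}$ we have $\tilde\chi(g^{-1},\tilde\iota_\mathcal M(p))=\tilde\chi(g^{-1},\Sigma_{(e,p)})=\Sigma_{(g,p)}$, so $\tilde\chi(G\times \tilde\iota_\mathcal M(M))=M^*$. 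All three defining properties of a globalization are thereby in place.

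\textbf{Main obstacle.} The technical heart is turning each flat chart into a genuine supermanifold chart at the level of structure sheaves: combining (i) the univalence-based injectivity of $\tilde\iota_{V,g}$ with (ii) the openness of $\tilde\pi$ and (iii) the fact that $\mathcal D$-invariant functions in a flat chart are precisely ``constant in the $\mathcal G$-direction'', to conclude that $\iota_{\mathcal V,g}^*$ is an isomorphism of sheaves rather than merely a locally injective map. Once this local description is secured, checking compatibility of overlapping charts reduces to the uniqueness of flat charts, and the remaining items (open embedding, extension of $\lambda$, $G$-orbit property) are formal consequences.
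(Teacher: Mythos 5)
Your overall strategy coincides with the paper's: use $\iota_{\mathcal V,g}=\pi\circ\iota_g$ together with a flat chart $\psi$ as the local charts of $\mathcal M^*$, get injectivity of the underlying map from univalence, openness from openness of $\tilde\pi$, and then deduce the remaining claims (open embedding, extension of $\lambda$ via Lemma~\ref{lemma: extension of infinitesimal action}, and the orbit identity) exactly as the paper does. However, there is a genuine gap at the step you yourself flag as the ``technical heart'': the claim that the identification of $\mathcal D$-invariant functions on $\tilde\psi(U\times V')$ with $1\otimes h$ ``gives a natural isomorphism $\iota_{\mathcal V,g}^*:\mathcal O_{\mathcal M^*}|_{\Omega}\xrightarrow{\sim}(\tilde\iota_{V,g})_*\mathcal O_{\mathcal V}$.'' By Definition~\ref{defi: definition of leaf space}, a section of $\mathcal O_{\mathcal M^*}$ over $\Omega'=\tilde\pi(\tilde\psi(U\times V'))$ is a $\mathcal D$-invariant function on the full saturation $\tilde\pi^{-1}(\Omega')$, which is a union of entire leaves and is in general strictly larger than $\tilde\psi(U\times V')$. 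So what you must prove is that restriction from the saturation to the flat-chart image is a bijection on invariant functions. Injectivity is the identity principle along leaves (Lemma~\ref{lemma: uniqueness of invariant functions}); surjectivity is the nontrivial extension statement of Lemma~\ref{lemma: extensions of invariant functions}, namely that an invariant function defined near part of a leaf extends $\mathcal D$-invariantly over the whole saturated set. That extension is built by covering compact pieces of each leaf by flat charts and gluing, and its well-definedness uses the uniqueness of flat charts guaranteed by univalence of $\lambda$ (Proposition~\ref{prop: univalence and uniqueness of flat charts}). Neither of the ingredients (i)--(iii) you list supplies this step: item (iii) is a statement inside a single flat chart only.

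The gap is not cosmetic. In Example~\ref{ex: C-example}(ii) the reduced infinitesimal action is univalent, so your topological argument (injectivity of $\tilde\iota_{V,g}$, openness of $\Omega$) goes through verbatim, yet $\mathcal M^*$ fails to be a supermanifold because invariant functions on a flat-chart image do \emph{not} extend to the saturation when the holonomy $\Phi_\Sigma$ is nontrivial. This shows that the surjectivity of $\iota_{\mathcal V,g}^*$ is precisely where the full (super) univalence hypothesis, rather than just univalence of $\tilde\lambda$, must enter; as written, your argument never uses it at the sheaf level and would ``prove'' a false statement in that example. To repair the proof, insert the extension lemma (or reprove it) before asserting the sheaf isomorphism; once that is in place, your treatment of chart transitions, the open embedding, and the globalization properties is correct and matches the paper.
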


The proof of the proposition makes use of the next lemma.

\begin{lemma}\label{lemma: extensions of invariant functions}
 Let $\lambda$ be univalent and $W\subseteq G\times M$ an open connected subset.
 For any $f\in \mathcal O_{\mathcal G\times\mathcal M}^\mathcal D(W)$
 there exists a unique extension
 $$\hat{f}\in \mathcal O_{\mathcal G\times\mathcal M}^\mathcal 
D(\tilde\pi^{-1}(\tilde\pi(W)))
 \text{ with }\hat f|_{W}=f.$$
\end{lemma}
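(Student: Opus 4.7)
Uniqueness is immediate from the identity principle for invariant functions (Lemma~\ref{lemma: uniqueness of invariant functions}): if $\hat f_1, \hat f_2$ are two such extensions, they agree on $W$, and every leaf $\Sigma \subseteq \tilde\pi^{-1}(\tilde\pi(W))$ meets $W$ by definition of the saturation, so $\hat f_1 = \hat f_2$ on an open neighbourhood of each such $\Sigma$, which forces $\hat f_1 = \hat f_2$ throughout $\tilde\pi^{-1}(\tilde\pi(W))$.

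For existence I would construct $\hat f$ locally on images of flat charts and then glue. Given $(g,p) \in \tilde\pi^{-1}(\tilde\pi(W))$, pick $(g_0, p_0) \in \Sigma_{(g,p)} \cap W$ and a path $\gamma: [0,1] \to \Sigma_{(g,p)}$ from $(g_0, p_0)$ to $(g, p)$. Univalence (Definition~\ref{defi: univalence}) supplies a flat chart $\psi: \mathcal U \times \mathcal V \to \mathcal G \times \mathcal M$ with respect to $(\mathcal D, U, V, g_0, \mathrm{id})$ whose image contains $\gamma([0,1])$, and after shrinking $V$ one may assume $\{g_0\} \times V \subseteq W$. I set
\[
 \hat f_\psi := (\psi^{-1})^* \bigl( \pi_\mathcal V^* \iota_{g_0}^*(f) \bigr) \in \mathcal O_{\mathcal G \times \mathcal M}\bigl(\tilde\psi(U \times V)\bigr),
\]
where $\pi_\mathcal V: \mathcal U \times \mathcal V \to \mathcal V$ is the projection and $\iota_{g_0}: \mathcal V \hookrightarrow \mathcal U \times \mathcal V$ is the inclusion at $g_0$. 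The right-hand side is manifestly $\mathcal D_\mathcal G$-invariant, so $\hat f_\psi$ is $\mathcal D$-invariant because $\psi_*(\mathcal D_\mathcal G) = \mathcal D$. Using $\psi \circ \iota_{g_0} = \iota_{g_0}$ and the local description of $\mathcal D_\mathcal G$-invariant functions in a flat chart (Remark~\ref{rmk: invariant functions}), $\hat f_\psi$ agrees with $f$ on a neighbourhood of $\{g_0\} \times V$ inside $W \cap \tilde\psi(U \times V)$.

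For the gluing, suppose $(g,p)$ is covered by two such local extensions $\hat f_{\psi_1}, \hat f_{\psi_2}$ built from base points $(g_i, p_i) \in \Sigma_{(g,p)} \cap W$, $i = 1, 2$. I concatenate paths in $\Sigma_{(g,p)}$ into one running from $(g_1, p_1)$ through $(g, p)$ to $(g_2, p_2)$; univalence produces a single flat chart $\psi_3$ whose image engulfs this entire concatenated path. The associated extension $\hat f_{\psi_3}$ is $\mathcal D$-invariant on $\tilde\psi_3(U_3 \times V_3)$ and, after arranging the base point and shrinking appropriately, agrees with $f$ near each $(g_i, p_i)$. Inside $\tilde\psi_3(U_3 \times V_3)$ all three functions $\hat f_{\psi_1}, \hat f_{\psi_2}, \hat f_{\psi_3}$ pull back under $\psi_3$ to $\mathcal D_\mathcal G$-invariant functions on (open subsets of) $\mathcal U_3 \times \mathcal V_3$, i.e.\ to functions of $\mathcal V_3$ alone by Remark~\ref{rmk: invariant functions}; coincidence near the base points then forces the corresponding functions on $\mathcal V_3$ to coincide, so $\hat f_{\psi_1} = \hat f_{\psi_3} = \hat f_{\psi_2}$ on a neighbourhood of $(g,p)$. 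The local pieces therefore patch into a well-defined global $\hat f \in \mathcal O_{\mathcal G \times \mathcal M}^\mathcal D(\tilde\pi^{-1}(\tilde\pi(W)))$ extending $f$.

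The crux — and the main obstacle — is precisely this overlap consistency. Univalence is exactly the hypothesis that allows one to engulf two overlapping flat charts, together with the slices of $W$ anchoring them, inside a single large flat chart, making the transport of $f$ along a leaf path-independent. Without univalence, the holonomy morphism $\Phi_\Sigma$ could act non-trivially on the local extensions along different paths from $W$ to $(g,p)$, and no coherent global $\hat f$ would exist.
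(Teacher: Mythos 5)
Your proof is correct and follows essentially the same route as the paper: transport $f$ through flat charts supplied by univalence (where $\mathcal D$-invariance becomes independence of the $\mathcal G$-variable) and use the univalence-induced uniqueness of flat charts for consistency of the local extensions. The only cosmetic difference is that you re-derive the overlap consistency by engulfing concatenated paths in a single flat chart, whereas the paper cites Proposition~\ref{prop: univalence and uniqueness of flat charts} directly — but that proposition is proved by exactly your argument.
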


\begin{proof}
Note that the open set $$\tilde \pi^{-1}(\tilde\pi(W))=\bigcup_{\substack{\Sigma 
\text{ leaf}\\ \Sigma\cap W\neq \emptyset}} \Sigma$$ is 
 again connected
 and by Lemma~\ref{lemma: uniqueness of invariant functions} a $\mathcal 
D$-invariant extension $\hat f$ of $f$ is unique.
 
 Let $\Sigma$ be a leaf with $\Sigma\cap W\neq \emptyset$, i.e. $\Sigma\in 
\tilde \pi(W)$, and let
 $\Sigma'\subset \Sigma$ be relatively compact. Since $\lambda$ is univalent, 
 there exists a flat chart $\psi:\mathcal U\times\mathcal V\rightarrow\mathcal 
G\times \mathcal M$
 with $\Sigma'\subset \tilde\psi(U\times V)$ and $\tilde \psi(U'\times V)\subset 
W$ for some open subset $U'\subseteq U$.
 The function $(\psi|_{\tilde\psi^{-1}(W)})^*(f)$ is $\mathcal D_\mathcal 
G$-invariant and thus  of the form $(\psi|_{\tilde\psi^{-1}(W)})^*(f)=1\otimes 
f_\mathcal M$ on $U'\times V\subseteq \tilde\psi^{-1}(W)$
 for some $f_\mathcal M\in \mathcal O_\mathcal M(V)$.
 Now, $1\otimes f_\mathcal M$ is already defined on $U\times V$ and we define
 $\hat f$ on $\tilde \psi(U\times V)$ by 
 $$\hat f|_{\tilde \psi(U\times V)}=(\psi^{-1})^*(1\otimes f_\mathcal M).$$
 This yields a well-defined function $\hat f$ on $\tilde \pi^{-1}(\tilde\pi(W))$ 
due to the 
 uniqueness of flat charts following from the univalence of $\lambda$ (see 
Proposition~\ref{prop: univalence and uniqueness of flat charts}).
 Moreover, $\hat f$ is $\mathcal D$-invariant by construction and $\hat f|_W=f$.
\end{proof}

\begin{proof}[Proof of Proposition~\ref{prop: M^* supermanifold}]
 We prove that in the case of a univalent infinitesimal action the morphism 
$\iota_{\mathcal V,g}:\mathcal V\rightarrow\mathcal M^*$ 
 defines a chart for $\mathcal M^*$ if there is a flat chart $\psi:\mathcal 
U\times\mathcal V\rightarrow\mathcal G\times \mathcal M$
 with respect to $(\mathcal D, U, V, g, \mathrm{id})$. Due to the local 
existence of flat charts this implies that $\mathcal M^*$ is a supermanifold.
 
 By Proposition~\ref{prop: univalence}, the restriction $\tilde\pi_\mathcal 
G|_\Sigma:\Sigma\rightarrow G$ of the projection
 $\tilde \pi_\mathcal G:G\times M\rightarrow G$ is injective for all leaves 
$\Sigma\subset G\times M$. 
 Therefore, we have $\tilde \iota_{\mathcal 
V,g}(p)=\Sigma_{(g,p)}=\Sigma_{(g,q)}=\tilde \iota_{\mathcal V,g}(q) $ if and 
only if $p=q$.
 The map $\tilde\iota_{\mathcal V,g}$ is open because for any open subset 
$V'\subseteq V$
 the set $\tilde\iota_{\mathcal V,g}(V')=\tilde \iota_{\mathcal V,g}(\tilde 
\pi_\mathcal M(U\times V'))
 =\tilde \pi(\tilde \psi(U\times V))$ is open in $M^*$ using that $\psi$ is a 
local diffeomorphism and $\tilde\pi$ an open map.
 Consequently, $\tilde\iota_{\mathcal V,g}$ is a homeomorphism onto its image.
 
 To show that ${\iota_{\mathcal V, g}}^*$ is injective, let $\Omega\subseteq 
M^*$ be open and 
 $f_1,f_2\in \mathcal O_{\mathcal M^*}(\Omega)$.
 If ${\iota_{\mathcal V, g}}^*(f_1)={\iota_{\mathcal V,g}}^*(f_2)$, then also 
 $$\psi^*(\pi^*(f_1))={\pi_\mathcal M}^*({\iota_{\mathcal V, 
g}}^*(f_1))={\pi_\mathcal M}^*({\iota_{\mathcal V, 
g}}^*(f_2))=\psi^*(\pi^*(f_2)).$$
 Since $\pi^*:\mathcal O_{\mathcal M^*}=\tilde\pi_*(\mathcal O_{\mathcal G\times 
\mathcal M}^\mathcal D)\
 \hookrightarrow \tilde \pi_*(\mathcal O_{\mathcal G\times \mathcal M})$ is the 
canoncial inclusion,
 $\pi^*(f_i)$ and $f_i$ can be identified.
 If $\psi^*(f_1)=\psi^*(f_2)$, then $f_1=f_2$ on $\tilde \psi(\tilde 
\psi^{-1}(\tilde\pi^{-1}(\Omega)))$ and thus
 $f_1=f_2$ by Lemma~\ref{lemma: uniqueness of invariant functions}.
 
 For any $f_\mathcal V\in \mathcal O_{\mathcal M}|_V({\tilde\iota_{\mathcal 
V,g}}^{-1}(\Omega))$
 we have $1\otimes f_\mathcal V={\pi_\mathcal M}^*(f_\mathcal V)\in 
 \mathcal O_{\mathcal G\times \mathcal M}^{\mathcal D_\mathcal G}(U\times 
{\tilde\iota_{\mathcal V,g}}^{-1}(\Omega))
 =\mathcal O_{\mathcal G\times\mathcal M}^{\mathcal D_\mathcal G}(\tilde 
\psi^{-1}(\tilde\pi^{-1}(\Omega)))$.
 Thus $(\psi^{-1})^*(1\otimes f_\mathcal V)$ is a $\mathcal D$-invariant 
function on 
 $\tilde \psi(U\times V)\cap \tilde \pi^{-1}(\Omega)=\tilde \psi(\tilde 
\psi^{-1}(\tilde\pi^{-1}(\Omega)))$.
 By Lemma~\ref{lemma: extensions of invariant functions} there is a $\mathcal 
D$-invariant 
 extension $\hat f\in \mathcal O_{\mathcal G\times\mathcal M}^\mathcal 
D(\tilde\pi^{-1}(\Omega))=\mathcal O_{\mathcal M^*}(\Omega)$
 of $(\psi^{-1})^*(1\otimes f_\mathcal V)$ and we have ${\iota_{\mathcal V, 
g}}^*(\hat f)=f_\mathcal V$ 
 since $\psi^*(\pi^*(\hat f))=\psi^*((\psi^{-1})^*(1\otimes f_\mathcal 
V))=1\otimes f_\mathcal V$.
 Consequently, ${\iota_{\mathcal V, g}}^*$ is also surjective and thus 
$\iota_{\mathcal V, g}$ is a chart for $\mathcal M^*$.
 
 The morphism $\iota_\mathcal M=\iota_{\mathcal M, e}$ is an open embedding 
since 
 the univalence of $\lambda$ implies that $\tilde \iota_\mathcal M$ is injective 
with the same argument as for 
 $\tilde\iota_{\mathcal V,g}$ and locally $\iota_\mathcal M$ is of the form 
$\iota_{\mathcal V, e}$ such 
 that there exists a flat chart $\psi$ with respect to $(\mathcal D, U, V, e, 
\mathrm{id})$.
 
 By Lemma~\ref{lemma: extension of infinitesimal action}, the infinitesimal 
action $\lambda_\chi$ induced by $\chi$
 extends the infinitesimal action of $\lambda$ on 
 $\mathcal M\cong (\tilde \iota_\mathcal M (M), \mathcal O_{\mathcal 
M^*}|_{\tilde \iota_\mathcal M (M)})$.
 Since $$g\cdot \Sigma_{(e,p)}=\tilde \chi(g,\Sigma_{(e,p)})=\tilde 
R_g(\Sigma_{(e,p)})=\Sigma_{(g^{-1},p)}$$ 
 for any $p\in M$ and $\tilde \iota_\mathcal M(M)$ consists exactly of those 
leaves which intersect $\{e\}\times M$,
 we have $G\cdot\tilde \iota_\mathcal M(M) =\tilde \chi(G\times 
\tilde\iota_\mathcal M(M))=M^*$.
\end{proof}

\begin{lemma}\label{lemma: whole leaf}
 Let $\varphi:\mathcal W\rightarrow \mathcal M$ a local action with induced 
infinitesimal action 
 $\lambda$ and maximally balanced domain of definition.
 Then we have $\tilde \psi(W_p\times \{p\})=\Sigma_{(e,p)}$ for any $p\in M$ and
 $\psi=(\mathrm{id}\times \varphi)\circ (\mathrm{diag}\times 
\mathrm{id}):\mathcal W\rightarrow \mathcal G\times \mathcal M$, which is 
locally 
 a flat chart.
\end{lemma}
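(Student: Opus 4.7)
The plan is to split the argument into two parts. First, I will show that $\psi$ is a flat chart in a neighborhood of every point of $\mathcal W$; second, I will show the image equality $\tilde\psi(W_p \times \{p\}) = \Sigma_{(e,p)}$.

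For the first part, notice that $\psi$ already satisfies conditions (i) and (ii) of the definition of a flat chart globally on $\mathcal W$: the relation $\pi_\mathcal G \circ \psi = \pi_\mathcal G$ is immediate from $\psi = (\mathrm{id}_\mathcal G \times \varphi) \circ (\mathrm{diag} \times \mathrm{id}_\mathcal M)$, and the computation $\psi_*(X \otimes \mathrm{id}_\mathcal M^*) = X \otimes \mathrm{id}_\mathcal M^* + \mathrm{id}_\mathcal G^* \otimes \lambda(X)$ carried out in Lemma~\ref{lemma: local action and flat charts} uses only the local action property of $\varphi$ and thus gives $\psi_*(\mathcal D_\mathcal G) = \mathcal D$ on all of $\mathcal W$. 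By Lemma~\ref{lemma: almost flat chart} it then suffices to check that $\psi$ is locally a diffeomorphism at every $(g_0, p_0) \in W$, and this is exactly where the maximally balanced hypothesis enters. Writing $q_0 = \tilde\varphi(g_0, p_0)$, the identity $W_{p_0} = W_{q_0} \cdot g_0$ forces $g_0^{-1} \in W_{q_0}$ and, via the action property (whose common domain of definition coincides under maximally balancedness), the maps $\tilde\varphi(g_0, -)$ and $\tilde\varphi(g_0^{-1}, -)$ are mutually inverse local diffeomorphisms between neighborhoods of $p_0$ and $q_0$. The analogous statement on the level of structure sheaves will then show $\psi$ is a local diffeomorphism at $(g_0, p_0)$.

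For the set equality, the inclusion $\tilde\psi(W_p \times \{p\}) \subseteq \Sigma_{(e,p)}$ follows quickly from the local flat chart structure just established: each slice $\tilde\psi(U \times \{p\})$ lies in a single leaf of $\mathcal D_{\tilde\lambda}$, and connectedness of $W_p$ together with $\tilde\psi(e, p) = (e, p) \in \Sigma_{(e,p)}$ forces the whole image into $\Sigma_{(e,p)}$. For the reverse inclusion, which is the main obstacle, I will fix $(h, q) \in \Sigma_{(e,p)}$, pick a path $\gamma : [0, 1] \to \Sigma_{(e,p)}$ from $(e, p)$ to $(h, q)$, and show that $T := \{ t \in [0, 1] : \gamma(t) \in \tilde\psi(W_p \times \{p\}) \}$ is clopen in $[0,1]$. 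Openness is immediate from the flat chart structure; the delicate step is closedness.

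The closedness argument uses maximally balancedness together with openness of $W$ around $\{e\} \times M$. Suppose $t_n \in T$ converges to $t$, write $\gamma(t_n) = (g_n, \tilde\varphi(g_n, p))$ with $g_n \in W_p$, and set $g := \pi_G(\gamma(t))$, $q' := \pi_M(\gamma(t))$, so that $g_n \to g$ and $\tilde\varphi(g_n, p) \to q'$. Since $(e, q') \in \{e\} \times M \subset W$ and $W$ is open, the sequence $(g g_n^{-1}, \tilde\varphi(g_n, p)) \to (e, q')$ lies in $W$ for large $n$, whence $g g_n^{-1} \in W_{\tilde\varphi(g_n, p)}$. Maximally balancedness then gives $g = (g g_n^{-1}) \cdot g_n \in W_{\tilde\varphi(g_n, p)} \cdot g_n = W_p$, and the action property yields $\tilde\varphi(g, p) = \tilde\varphi(g g_n^{-1}, \tilde\varphi(g_n, p)) \to \tilde\varphi(e, q') = q'$, so that $\gamma(t) = (g, \tilde\varphi(g, p)) = \tilde\psi(g, p) \in \tilde\psi(W_p \times \{p\})$ and $t \in T$. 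Connectedness of $[0,1]$ then gives $T = [0,1]$, in particular $(h, q) \in \tilde\psi(W_p \times \{p\})$.
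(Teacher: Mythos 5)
Your argument is correct, but it takes a genuinely different route from the paper. The paper's entire proof is a one-line reduction to the classical case: the sets $W_p$, the map $\tilde\psi$, and the leaf $\Sigma_{(e,p)}$ (a leaf of the reduced distribution $\mathcal D_{\tilde\lambda}=\widetilde{\mathcal D}$) depend only on the underlying local action $\tilde\varphi$, and $\mathcal W$ is maximally balanced if and only if $W$ is maximally balanced for $\tilde\varphi$; hence the identity $\tilde\psi(W_p\times\{p\})=\Sigma_{(e,p)}$ is exactly Theorem VI of Chapter II of \cite{Palais}. You instead reprove that classical statement from scratch, using maximal balancedness twice: once to get $g_0^{-1}\in W_{\tilde\varphi(g_0,p_0)}$ and hence that $\psi$ is everywhere a local diffeomorphism (so locally a flat chart via Lemma~\ref{lemma: almost flat chart}), and once in the closedness half of the clopen argument along a path in the leaf. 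That closedness argument ($gg_n^{-1}\in W_{\tilde\varphi(g_n,p)}$ for large $n$, hence $g\in W_p$ and $\tilde\varphi(g,p)=q'$ by continuity and the associativity identity) is exactly right and is the real content of Palais's proof. Two small points: the assertion $\psi_*(\mathcal D_{\mathcal G})=\mathcal D$ presupposes that $\psi$ is a local diffeomorphism, so it should logically come after, not before, that verification (one can instead phrase it as the intertwining relation $(X\otimes\mathrm{id}^*+\mathrm{id}^*\otimes\lambda(X))\circ\psi^*=\psi^*\circ(X\otimes\mathrm{id}^*)$, which needs no inverse); and the openness of your set $T$ is not quite ``immediate'' --- it uses the standard but nontrivial fact that a continuous path with image in a leaf cannot jump between the (countably many) plaques of a foliation chart, i.e.\ is continuous for the leaf topology. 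What your approach buys is self-containedness; what the paper's reduction buys is brevity, at the cost of asking the reader to check that every object in the statement really is determined by the underlying data.
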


\begin{proof}
 The lemma follows from the analogous classical result (see \cite{Palais}, 
Chapter II, Theorem VI) since
 $\varphi:\mathcal W\rightarrow \mathcal M$ has a maximally balanced domain of 
definition if and only if the domain of definition $W$
 of the reduced local action is maximally balanced.
\end{proof}

\begin{prop}\label{prop: globalizability implies maximally balanced domain of 
definition}
  Let $\varphi':\mathcal G\times\mathcal M'\rightarrow \mathcal M'$ be a 
globalization of $\lambda$. 
  Then there is a local action $\varphi:\mathcal W\rightarrow \mathcal M$ with 
maximally balanced domain of definition
  and infinitesimal action $\lambda$.
  
  Moreover, $\varphi:\mathcal W\rightarrow \mathcal M$ is the unique maximal 
local action with infinitesimal action $\lambda$.
  Any two local actions $\varphi_i:\mathcal W_i\rightarrow \mathcal M$, $i=1,2$, 
with infinitesimal action $\lambda$ coincide
  on their common domain of definition and define a local action $\chi:\mathcal 
W_1\cup\mathcal W_2\rightarrow \mathcal M$ with 
  $\chi|_{W_1}=\varphi_1$ and $\chi|_{W_2}=\varphi_2$.
\end{prop}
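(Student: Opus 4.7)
The plan is to construct $\mathcal W$ directly from the globalization $\varphi'$, then exploit the flat chart machinery to get uniqueness and gluing. For each $p \in M$ set
$$\Omega_p = \{g \in G \mid \tilde\varphi'(g,p) \in M\}, \qquad W_p = \text{connected component of } e \text{ in } \Omega_p,$$
and $W = \{(g,p) \in G \times M \mid g \in W_p\}$. Since $M \subset M'$ is open and $\tilde\varphi'$ is continuous, each $\Omega_p$ is open, and $W$ itself is open by a standard flat chart argument at points of $\{e\}\times M$. Because $\tilde\varphi'(W) \subseteq M$ and $\mathcal M$ is an open subsupermanifold of $\mathcal M'$, the restriction of $\varphi'$ to $\mathcal W = (W, \mathcal O_{\mathcal G \times \mathcal M}|_W)$ factors through $\mathcal M$, giving a morphism $\varphi:\mathcal W \to \mathcal M$. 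The identities $\varphi \circ \iota_e = \mathrm{id}_{\mathcal M}$ and associativity are inherited from $\varphi'$ on the open subsupermanifold of $\mathcal G \times \mathcal G \times \mathcal M$ where both sides are defined, and the induced infinitesimal action on $\mathcal M$ is $\lambda$ because $\varphi'$ induces $\lambda$ on $\mathcal M \subseteq \mathcal M'$ by assumption.

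For the maximally balanced property, I would use the cocycle identity $\tilde\varphi'(g,\tilde\varphi'(h,p)) = \tilde\varphi'(gh,p)$, which gives $\Omega_{\tilde\varphi(h,p)} = \Omega_p \cdot h^{-1}$ whenever $(h,p) \in W$. Right-translation by $h^{-1}$ is a homeomorphism of $G$, and since $h \in W_p$ lies in the connected component of $e$ in $\Omega_p$, right translation sends this component onto the connected component of $e$ in $\Omega_p \cdot h^{-1}$. Hence $W_{\tilde\varphi(h,p)} = W_p \cdot h^{-1}$, i.e. $W_p = W_{\tilde\varphi(h,p)} \cdot h$.

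For uniqueness and gluing, let $\varphi_i:\mathcal W_i \to \mathcal M$, $i=1,2$, both induce $\lambda$. By Lemma~\ref{lemma: local action and flat charts}, the morphisms $\psi_i = (\mathrm{id}_{\mathcal G} \times \varphi_i)\circ(\mathrm{diag}\times \mathrm{id}_{\mathcal M})$ are locally flat charts for $\mathcal D_\lambda$ with respect to $(\mathcal D_\lambda, U, V, e, \mathrm{id})$ around each $(e,p)$, so by Proposition~\ref{prop: uniqueness of flat charts} they coincide, hence $\varphi_1 = \varphi_2$, on an open neighbourhood of $\{e\} \times M$ inside $W_1 \cap W_2$. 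To propagate this to the whole intersection, fix $p$ and a point $g \in (W_1 \cap W_2)_p$; join $e$ to $g$ by a path in the connected set $(W_1)_p \cap (W_2)_p$, cover the path by flat charts, and use uniqueness of flat charts at each step (together with the identity at $e$) to show $\psi_1 = \psi_2$ along the path, whence $\varphi_1 = \varphi_2$ on $\mathcal W_1 \cap \mathcal W_2$. The glued morphism $\chi$ on $\mathcal W_1 \cup \mathcal W_2$ is therefore well-defined; its fibers $(W_1 \cup W_2)_p$ are unions of two connected sets containing $e$ and are thus connected, and the action properties of $\chi$ are inherited since they hold on each $\mathcal W_i$ and both sides of the associativity equation agree wherever simultaneously defined. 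Applied with $\varphi_1 = \varphi$ (the local action built from the globalization) and $\varphi_2$ arbitrary, this yields $\mathcal W_2 \subseteq \mathcal W$, so $\varphi$ is the unique maximal local action with infinitesimal action $\lambda$. The main obstacle is the path-continuation step: one must carefully transfer the strictly local uniqueness of flat charts, valid only on connected charts where an initial condition pins down the $\rho$, into global agreement along an arbitrary path in $(W_1 \cap W_2)_p$.
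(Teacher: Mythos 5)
Your construction of $\varphi$ is exactly the paper's: restrict $\varphi'$ to $W=\bigcup_p W_p\times\{p\}$ with $W_p$ the connected component of $e$ in $\{g\mid\tilde\varphi'(g,p)\in M\}$, and your verification of the maximally balanced property via the cocycle identity $\Omega_{\tilde\varphi(h,p)}=\Omega_p\cdot h^{-1}$ is a correct write-up of the ``direct calculation'' the paper only alludes to. The uniqueness and maximality part, however, has two genuine gaps.

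First, your propagation argument assumes that $(W_1)_p\cap(W_2)_p$ is connected so that every $g\in(W_1\cap W_2)_p$ can be joined to $e$ by a path inside the intersection. This is unjustified: each $(W_i)_p$ is connected, but the intersection of two connected open subsets of $G$ need not be, so path continuation from $e$ only reaches the component of $e$ and does not give agreement on the whole common domain. Second, and more seriously, the conclusion ``this yields $\mathcal W_2\subseteq\mathcal W$'' does not follow from anything you proved: agreement on $W\cap W_2$ plus gluing produces a local action on $W\cup W_2$, which says nothing about containment of domains. The missing ingredient is Lemma~\ref{lemma: whole leaf}: because $\mathcal W$ is maximally balanced, $\tilde\psi(W_p\times\{p\})$ is the \emph{entire} leaf $\Sigma_{(e,p)}$, whereas for an arbitrary local action $\varphi_2$ the set $\tilde\psi_2((W_2)_p\times\{p\})$ is a connected integral manifold through $(e,p)$ and hence contained in $\Sigma_{(e,p)}$; applying $\pi_G$ and using $\pi_G\circ\tilde\psi_i=\pi_G$ gives $(W_2)_p\subseteq\pi_G(\Sigma_{(e,p)})=W_p$. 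Once $W_2\subseteq W$ is known, Corollary~\ref{cor: uniqueness of local action} applied on the fixed domain $\mathcal W_2$ (whose fibres \emph{are} connected) gives $\varphi_2=\varphi|_{W_2}$, which settles both the coincidence on common domains and the gluing statement without any path continuation. You should replace your continuation argument by this leaf-projection step.
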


\begin{proof}
 The set $(\tilde \varphi ')^{-1}(M)\cap (G\times M)$ is open in $G\times M$ and 
contains $\{e\}\times M$. 
 Let $W_p$ be the connected component of $e$ in $\{g\in G|\,\tilde \varphi 
'(g,p)\in M\}$ for $p\in M$ and 
 define $$W=\bigcup_{p\in M} W_p\times \{p\}.$$
 The set $W\subseteq G\times M$ is open and the largest domain of definition of 
a local action included in 
 $(\tilde \varphi ')^{-1}(M)\cap (G\times M)$ (cf. \cite{Palais}, Chapter II, 
Theorem I). 
 Let $\mathcal W=(W,\mathcal O_\mathcal W)$ and define $\varphi=\varphi 
'|_W:\mathcal W\rightarrow \mathcal M$. 
 The map $\varphi$ is a local action of $\mathcal G$ on $\mathcal M$.
 A direct calculation shows that the domain of definition $\mathcal W$ of 
$\varphi$ is maximally balanced. By the preceding lemma
 we have $\tilde \psi(W_p\times \{p\})=\Sigma_{(e,p)}$ for any $p\in M$, where 
 $\psi=(\mathrm{id}\times \varphi)\circ (\mathrm{diag}\times 
\mathrm{id}):\mathcal W\rightarrow \mathcal G\times \mathcal M$.
 
 Let $\chi:\mathcal W_\chi\rightarrow \mathcal M$ be any local action with 
induced infinitesimal action $\lambda$ and set
 $\psi_\chi=(\mathrm{id}\times \chi)\circ (\mathrm{diag}\times 
\mathrm{id}):\mathcal W_\chi \rightarrow \mathcal G\times \mathcal M$.
 For any $p\in M$, $\tilde \psi _\chi(W_{\chi,p}\times \{p\})$ is contained in 
the leaf $\Sigma_{(e,p)}$.
 Since $\Sigma_{(e,p)}=\tilde \psi(W_p\times\{p\})$, we get 
 $$W_{\chi,p}=\pi_G(\tilde\psi_\chi(W_{\chi,p}\times\{p\}))\subseteq 
\pi_G(\Sigma_{(e,p)})=\pi_G(\tilde\psi(W_p\times\{p\}))=W_p,$$
 which implies $W_\chi\subseteq W$. The uniqueness of local actions with a given 
domain of definition (see Corollary \ref{cor: uniqueness of local action}) 
yields $\varphi=\chi$ on $W_\chi$.
\end{proof}

\begin{prop}\label{prop: maximally balanced domain of definition implies 
univalence}
 Let $\varphi:\mathcal W\rightarrow \mathcal M$ be a local action with maximally 
balanced domain of definition. 
 Then its induced infinitesimal action is univalent.
\end{prop}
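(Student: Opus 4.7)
The plan is to invoke Proposition~\ref{prop: univalent action and special leaves}, which reduces univalence of $\lambda=\lambda_\varphi$ to univalence of every leaf of the form $\Sigma_{(e,p)}$, $p\in M$. Fix such a $p$ and a path $\gamma:[0,1]\to \Sigma_{(e,p)}$, and write $\gamma_G=\pi_G\circ \gamma$. The key object is the morphism
$$\psi=(\mathrm{id}\times \varphi)\circ (\mathrm{diag}\times \mathrm{id}):\mathcal W\to \mathcal G\times\mathcal M,$$
which by Lemma~\ref{lemma: whole leaf} is locally a flat chart for $\mathcal D$ and satisfies $\tilde\psi(W_p\times\{p\})=\Sigma_{(e,p)}$. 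In particular $\gamma(t)=(\gamma_G(t),\tilde\varphi(\gamma_G(t),p))$, so $t\mapsto (\gamma_G(t),p)$ is a continuous lift of $\gamma$ to the compact subset $\gamma_G([0,1])\times\{p\}\subset W$. To prove univalence of $\Sigma_{(e,p)}$ it therefore suffices to produce an open connected neighbourhood $U\subset W_p$ of $\gamma_G([0,1])$ and an open neighbourhood $V\ni p$ in $M$ with $U\times V\subset W$ such that $\psi|_{\mathcal U\times\mathcal V}$ is a bona fide flat chart; the inclusion $\gamma([0,1])\subset\tilde\psi(U\times V)$ is then automatic, and univalence follows via the remark after Definition~\ref{defi: univalence} (which allows an arbitrary $\rho$).

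Since $\psi$ is locally a flat chart, the properties $\pi_\mathcal G\circ\psi=\pi_\mathcal G$ and $\psi_*(\mathcal D_\mathcal G)=\mathcal D$ hold on any $\mathcal U\times\mathcal V\subset\mathcal W$, and the action property of $\varphi$ supplies $\psi\circ\iota_{\gamma_G(0)}=\iota_{\gamma_G(0)}\circ\rho$ with $\rho=\varphi\circ\iota_{\gamma_G(0)}|_V$. The only remaining condition is that $\psi|_{\mathcal U\times\mathcal V}$ be a diffeomorphism onto its image. Being locally a flat chart, $\psi$ is a local diffeomorphism, and its underlying map is $(g,q)\mapsto(g,\tilde\varphi(g,q))$. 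Hence injectivity on $U\times V$ reduces to injectivity of $\tilde\varphi(g,-)|_V$ for each $g\in U$. For every $g_0\in\gamma_G([0,1])$ the action axiom $\tilde\varphi(g_0^{-1},\tilde\varphi(g_0,q))=q$ (valid where defined) identifies $\tilde\varphi(g_0^{-1},-)$ as a local inverse of $\tilde\varphi(g_0,-)$ near $p$; openness of $W$ together with continuity then yields neighbourhoods $U_{g_0}\subset W_p$ of $g_0$ and $V_{g_0}\ni p$ with $U_{g_0}\times V_{g_0}\subset W$ such that $\tilde\varphi(g,-)|_{V_{g_0}}$ is injective for every $g\in U_{g_0}$. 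Covering the compact arc $\gamma_G([0,1])$ by finitely many $U_{g_i}$, setting $V=\bigcap_i V_{g_i}$, and letting $U$ be the connected component of $\bigcup_i U_{g_i}$ containing $\gamma_G([0,1])$ produces the required flat chart.

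The main obstacle is this uniform injectivity step: extracting a single $V$ that works for every $g$ in a neighbourhood of the whole compact arc $\gamma_G([0,1])$, not merely for one $g$ at a time. Compactness reduces the task to finitely many choices, and the local inverse $\tilde\varphi(g^{-1},-)$ coming from the action axiom handles each individual $g$. With this in place, Proposition~\ref{prop: univalent action and special leaves} completes the proof that $\lambda_\varphi$ is univalent.
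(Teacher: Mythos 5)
Your proof is correct and follows essentially the same route as the paper: reduce to the leaves $\Sigma_{(e,p)}$ via Proposition~\ref{prop: univalent action and special leaves}, use Lemma~\ref{lemma: whole leaf} to see that $\psi=(\mathrm{id}\times\varphi)\circ(\mathrm{diag}\times\mathrm{id})$ covers the whole leaf, and restrict $\psi$ to a suitable $\mathcal U\times\mathcal V$ containing the given path. The only divergence is that the paper simply invokes Lemma~\ref{lemma: local action and flat charts} (applied to a relatively compact connected $U\subset W_p$ containing $e$ and $\gamma_G([0,1])$) to obtain the flat chart, whereas you re-derive its conclusion by hand, in particular spelling out the injectivity of $\tilde\psi$ on $U\times V$ via the compactness and local-inverse argument.
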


\begin{proof}
 Let $\psi=(\mathrm{id}\times \varphi)\circ (\mathrm{diag}\times 
\mathrm{id}):\mathcal W\rightarrow \mathcal G\times \mathcal M$ be the locally 
 flat chart associated to the local action $\varphi$. By Lemma \ref{lemma: whole 
leaf} we have $\tilde \psi(W_p\times\{p\})=\Sigma_{(e,p)}$ for any 
 $p\in M$.
 
 Let $\Omega\subset \Sigma_{(e,p)}$ be a relatively compact connected subset. By 
Lemma \ref{lemma: local action and flat charts} 
 there are subsets $U\subset G$ and $V\subset M$, $p\in V$, such that 
 $\psi|_{U\times V}$ is a flat chart and 
 $\Omega\subset \tilde\psi(U\times V)$. 
 Consequently, $\Sigma_{(e,p)}$ is univalent for any $p\in M$ and hence 
$\lambda$
 is univalent by Proposition~\ref{prop: univalent action and special leaves}
\end{proof}

\begin{ex}\label{ex: C-example}
 Let $\mathcal M=(\C\setminus \{0\})\times \C^{0|2}$, with coordinates 
$z,\theta_1,\theta_2$, 
 and let $\alpha:\C\setminus \{0\}\rightarrow \C$ be a holomorphic function.
 Consider the even holomorphic vector field $$X_\alpha=(1+\alpha(z) 
\theta_1\theta_2)\frac{\partial}{\partial z}$$
 on $\mathcal M$.
 We now examine for which $\alpha$ the infinitesimal $\C$-action 
$\lambda_\alpha$, $\lambda_\alpha(t)=tX_\alpha$, generated by $X_\alpha$ is 
globalizable.
 
 Let $\mathcal D_\alpha$ be the distribution spanned by 
$\frac{\partial}{\partial t}+X_\alpha$ on $\C\times\mathcal M$.
 The leaves $\Sigma\subset \C\times M$ of $\mathcal D_\alpha$ are of the 
 form $\Sigma=\Sigma_{(t,z)}=\{(t+s,z+s)|\,s\in \C\setminus\{-z\}\}$ for 
$(t,z)\in \C\times (\C\setminus\{0\})$.
 Each leaf $\Sigma$ is therefore biholomorphic to $\C\setminus\{0\}$.
 
 The reduced vector field $\tilde X_\alpha=\frac{\partial}{\partial z}$ always 
generates a globalizable infinitesimal action 
 and a globalization of $M=\C\setminus\{0\}$ is $M^*=\C$ with the usual addition 
as $\C$-action.
 If $\lambda_\alpha$ is globalizable, then the globalization 
 $\mathcal M^*_\alpha=(M^*,\tilde \pi_*\mathcal O_{\C\times \mathcal 
M}^{\mathcal D_\alpha})$ is a complex supermanifold of dimension $(1|2)$. 
 Every complex supermanifold $\mathcal N$ with underlying manifold $\C$ is split 
since $\C$ is Stein (see \cite{Onishchik}, Theorem 3.4). 
 Moreover, $\mathcal N$ is isomorphic to $\C^{1|n}$ for some $n\in \N$ since all 
holomorphic vector bundles on $\C$ are trivial.
 Therefore, $\mathcal M^*_\alpha$ is isomorphic to $\C^{1|2}$ if it is a 
supermanifold.

 An element $f=f_0+ f_1\theta_1+ f_2\theta_2+f_{12}\theta_1\theta_2\in \mathcal 
O_{\C\times \mathcal M}(\C\times M)$
 is $\mathcal D_\alpha$-invariant if and only if there
 exist holomorphic functions $g_i:\C\rightarrow \C$, $i=1,2,3$,
 with $$f_i(t,z)=g_i(z-t)$$
 and $f_{12}$ is locally of the form 
 $$f_{12}(t,z)=-A(z)\left(\frac{\partial}{\partial 
z}f_0\right)(t,z)+g_{12}(z-t),$$
 where $A$ is a primitive of $\alpha$ and $g_{12}$ a holomorphic function.
 There are two different cases:
 \begin{enumerate}[(i)]
  \item 
 If $\alpha$ has a global primitive $A:\C\setminus\{0\}\rightarrow \C$ 
 we have 
 \begin{align*} \mathcal O_{\mathcal 
M_\alpha^*}(M^*)&=\left\{g_0+g_1\theta_1+g_2\theta_2
 +\Big(g_{12}-A\frac{\partial}{\partial z}g_0\Big)\theta_1\theta_2 \Big|
 \, g_0,g_1,g_2,g_{12} \text{ holomorphic}\right\}\\
 &\cong \mathcal O_{\C^{1|2}}(\C).\end{align*}
 and it turns out that $\lambda_\alpha$ is globalizable
 with globalization $\mathcal M_\alpha^*\cong\C^{1|2}$, where $\C$ acts on 
$\C^{1|2}$ by the extension of the usual addition on $\C$, i.e.
 $(z,\theta_1,\theta_2)\mapsto(z+t,\theta_1,\theta_2).$
 Moreover, the open embedding $\iota_\mathcal M:\mathcal M\rightarrow\mathcal 
M_\alpha^*$
 can be realised as 
 $$\iota_\mathcal M:\C\setminus\{0\}\times \C^{0|2}\hookrightarrow \C^{1|2},\,
 {\iota_\mathcal 
M}^*(z,\theta_1,\theta_2)=(z-A(z)\theta_1\theta_2,\theta_1,\theta_2).$$
 
 \item 
 In the case where $\alpha$ does not have a global primitive $A$ on 
$\C\setminus\{0\}$, 
 we get 
 $$
 \mathcal O_{\mathcal M_\alpha^*}(M^*)
 =\left\{\lambda+g_1\theta_1+g_2\theta_2
 +g_{12}\theta_1\theta_2 \Big|
 \,\lambda \in \C,\,g_1,g_2,g_{12} \text{ holomorphic}\right\},$$
 which is not isomorphic to $\mathcal O_{\C^{1|2}}(\C)$. 
 Therefore, the ringed space $\mathcal M_\alpha^*$ is not a supermanifold and 
 $\lambda_\alpha$ is globalizable.
 \end{enumerate}
 For any $\alpha$ a flat chart $\psi$ with respect to 
 $(\mathcal D_\alpha, U, V,t_0,\rho)$ is given by the pullback 
 $$ \psi^*(t,z,\theta_1,\theta_2)\\
 =\Big(t,\rho^*(z,\theta_1,\theta_2)\Big)
 +\Big(0,(t-t_0)+\big(A(\tilde\rho(z)+(t-t_0))
 -A(\tilde\rho(z))\big)\rho^*(\theta_1\theta_2),0,0\Big),$$
 where $U\subseteq\C$ and $V\subseteq \C\setminus\{0\}$ need to be open subsets 
such that there
 exists a primitive $A$ of $\alpha$ on 
 $U+\tilde\rho(V)
 =\{t+z|\,t\in U,z\in \tilde \rho (V)\}\subset \C\setminus \{0\}$.
 
 We shall now explicitly describe the morphism
 $\Phi=\Phi_{\Sigma,{(s,z)}}:\pi_1(\Sigma,(s,z))\rightarrow 
 \mathrm{Hol}_z(\C\setminus\{0\}\times \C^{0|2})$
 for a leaf $\Sigma\subset \C\times M$ with $(s,z)\in \Sigma$.
 First remark that if $\psi$ is a flat chart with respect to 
 $(\mathcal D_\alpha,U, V, t_0,\rho)$ and 
 $t_0'\in U$, then $\psi$ is also a flat chart with respect to 
 $(\mathcal D_\alpha, U, V, t_0',\rho')$
 for 
 \begin{align*}
 &(\rho')^*(z,\theta_1,\theta_2)
 =\rho^*(z,\theta_1,\theta_2)
 +\Big((t_0'-t_0)+\big(A(\tilde\rho(z)+(t_0'-t_0))
 -A(\tilde\rho(z))\big)\rho^*(\theta_1\theta_2),0,0\Big)\\
 &=\rho^*(z,\theta_1,\theta_2)+\left(\int_{t_0}^{t_0'}1 dt
 +\bigg(\int_{t_0}^{t_0'}\alpha(\tilde\rho(z)
 +(t-t_0))dt\bigg)\rho^*(\theta_1\theta_2),0,0\right),
 \end{align*}
 where the integrals do not depend on the path if contained in U since $\alpha$ 
has a primitive on $U+\tilde\rho(V)$.
 If $\psi'$ is another flat chart with respect to $(\mathcal D, 
U',V',t_0',\rho')$ with $V\cap V'\neq \emptyset$
 and $t_0''\in U'$, then $\psi'$ is also a flat chart with respect to 
 $(\mathcal D, U',V', t_0'',\rho'')$ for 
 \begin{align*}
  (\rho'')^*(z,\theta_1,\theta_2)
  =\rho^*(z,\theta_1,\theta_2)
  +\left(\int_{t_0}^{t_0''}1 dt+\bigg(\int_{t_0}^{t_0''}\alpha(\tilde\rho(z)+
  (t-t_0))dt\bigg)\rho^*(\theta_1\theta_2),0,0\right),
 \end{align*}
 where the integrals need to be taken along appropiate paths.
 For any closed path $\gamma:[0,1]\rightarrow \Sigma$, 
$\gamma(r)=(\gamma_1(r),\gamma_2(r))$,
 $\gamma(0)=\gamma(1)=(s,z)$,
 we consequently get 
 $$\Phi([\gamma])^*(z,\theta_1,\theta_2) =\left(z+\bigg(\int_{\gamma_1}
 \alpha_{s,z} dt \bigg)\theta_1\theta_2,\theta_1,\theta_2\right)$$
 for $\alpha_{s,z}(t)=\alpha((z-s)+t)$.
 Thus, $\Phi$ is trivial if and only if $\alpha$ has a global primitive on 
$\C\setminus\{0\}$.
 Using Theorem~\ref{thm: condition for globalizability} this shows again that 
$\lambda_\alpha$ 
 is globalizable precisely if $\alpha$ has a global primitive.
 
 In the special case of $\alpha(z)=z^{-1}$, we have for example 
 $$\Phi:\Z\rightarrow \mathrm{Hol}_p(\C\setminus\{0\}\times \C^{0|2}),
 \,\Phi(k)^*(z,\theta_1,\theta_2)
 =\left(z+2\pi i k \theta_1\theta_2,\theta_1,\theta_2\right)$$
 for any leaf $\Sigma$, identifying the fundamental group of $\Sigma\cong 
\C\setminus \{0\}$ with $\Z$.
\end{ex}

\section{Actions of simply-connected Lie supergroups}
In this section a few consequences of the characterization of globalizable 
infinitesimal actions of
simply-connected Lie supergroups, i.e. Lie supergroups $\mathcal G$ whose 
underlying Lie group~$G$ is simply-connected, are given.

If $G$ is simply-connected and acts on the classical manifold $M$, the leaves 
$\Sigma\subset G\times M$ of the distribution associated to the infinitesimal 
action are all isomorphic to $G$ and thus simply-connected.
This yields consequences for the existence of globalizations of infinitesimal 
actions of 
simply-connected Lie supergroups
since there are no holonomy phenomena, i.e. the morphisms 
$\Phi_{\Sigma}:\pi_1(\Sigma)\rightarrow \mathrm{Diff}_p(\mathcal M)$ are all 
trivial, 
if the reduced infinitesimal action is global.

\begin{defi} 
 An infinitesimal action $\lambda:\g\rightarrow\mathrm{Vec}(\mathcal M)$ is 
called global if there is a $\mathcal G$-action on $\mathcal M$ which induces 
$\lambda$.
\end{defi}

\begin{rmk}[cf. \cite{Palais}, Chapter II, Section 4]
 Let $G$ be a Lie group acting on a manifold $M$. Then the leaves 
 $\Sigma\subset G\times M$ of the distribution 
 $\mathcal D_{\tilde\lambda}$ associated to the infinitesimal action 
 $\tilde \lambda$ induced by the $G$-action are
 all isomorphic to $G$.
\end{rmk}

As a consequence we obtain the following theorem.

\begin{thm}\label{thm: actions of simply-connected Lie supergroups}
 Let $\mathcal G$ be a simply-connected Lie supergroup and 
 $\lambda:\g\rightarrow \mathrm{Vec}(\mathcal M)$
 an infinitesimal action of $\mathcal G$ such that its reduced action 
 $\tilde\lambda:\g_0\rightarrow\mathrm{Vec}(M)$ 
 is global.
 Then the infinitesimal action $\lambda$ is globalizable, and $\mathcal M$ is 
 the unique globalization.
\end{thm}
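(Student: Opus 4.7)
The plan is to verify condition $(iv)$ of Theorem~\ref{thm: condition for globalizability}. Since $\tilde\lambda:\g_0\rightarrow\mathrm{Vec}(M)$ is global by hypothesis, the classical result of Palais (Theorem~\ref{thm: classical globalizations}) immediately yields that $\tilde\lambda$ is univalent, so that $\pi_G|_\Sigma:\Sigma\rightarrow G$ is injective for every leaf $\Sigma\subset G\times M$. Writing $\varphi:G\times M\rightarrow M$ for the corresponding global $G$-action, Lemma~\ref{lemma: local action and flat charts} provides a globally defined flat chart $\psi=(\mathrm{id}\times\varphi)\circ(\mathrm{diag}\times\mathrm{id}):G\times M\rightarrow G\times M$, whence each leaf $\Sigma_{(e,p)}=\tilde\psi(G\times\{p\})$ is diffeomorphic to $G$.

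Next, using the identity $\tilde R_g(\Sigma_{(h,p)})=\Sigma_{(hg^{-1},p)}$ together with Lemma~\ref{lemma: symmetry of flat charts}, I would observe that every leaf $\Sigma\subset G\times M$ is obtained from one of the form $\Sigma_{(e,p)}$ by applying some $\tilde R_g$, and hence is also diffeomorphic to $G$. Since $G$ is simply-connected, so is every leaf, and therefore $\pi_1(\Sigma)=0$. Consequently the holonomy morphism $\Phi_\Sigma:\pi_1(\Sigma)\rightarrow\mathrm{Diff}_p(\mathcal M)$ is trivial for every leaf, which is precisely condition $(iv)$ of Theorem~\ref{thm: condition for globalizability}. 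This gives globalizability of $\lambda$ together with the explicit globalization $\mathcal M^*$ furnished by Proposition~\ref{prop: M^* supermanifold}.

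The remaining task is to identify $\mathcal M^*$ with $\mathcal M$. By Proposition~\ref{prop: M^* supermanifold}, $\iota_\mathcal M:\mathcal M\rightarrow\mathcal M^*$ is an open embedding whose underlying map sends $p$ to $\Sigma_{(e,p)}$. Using the remark following Proposition~\ref{prop: univalent action and special leaves}, which in the present situation reads $\tilde R_g(\Sigma_{(e,p)})=\Sigma_{(e,\tilde\varphi(g,p))}$, every leaf is already of the form $\Sigma_{(e,q)}$ for some $q\in M$, so $\tilde\iota_\mathcal M$ is bijective and $\iota_\mathcal M$ becomes an isomorphism of supermanifolds. For uniqueness of the globalization, any $(\mathcal M',\varphi')$ satisfying the definition restricts on $\mathcal M$ to a local $\mathcal G$-action with infinitesimal action $\lambda$ which, by Proposition~\ref{prop: globalizability implies maximally balanced domain of definition}, is contained in the unique maximal local action with that property; since the latter is already global on $\mathcal M\cong\mathcal M^*$, we obtain $\tilde\varphi'(G\times M)\subseteq M$ and hence $M'=M$. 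I expect the only real subtlety to lie in this last identification step, which requires combining several structural lemmas about $R_g$ and flat charts, whereas the verification of $(iv)$ is essentially forced by the simply-connectedness of $G$.
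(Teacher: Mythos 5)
Your proof is correct, and for the main assertion (globalizability) it follows the paper's route exactly: univalence of $\tilde\lambda$ from the classical Palais theorem, simple-connectedness of all leaves because each is diffeomorphic to $G$ when the reduced action is global, hence triviality of every $\Phi_\Sigma$, and then condition $(iv)$ of Theorem~\ref{thm: condition for globalizability}. Where you diverge is the identification and uniqueness step. The paper argues more economically: for \emph{any} globalization $(\mathcal M',\varphi')$ of $\lambda$, the underlying action is a globalization of $\tilde\lambda$, and since $\tilde\lambda$ is global its unique globalization is $M$ itself (Palais, Ch.~III, Thm.~XII(4)); hence $M'=M$ and, $\mathcal M$ being an open subsupermanifold of $\mathcal M'$ with the same underlying space, $\mathcal M'=\mathcal M$. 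You instead first show $\tilde\iota_\mathcal M$ is surjective (every leaf is of the form $\Sigma_{(e,q)}$ via $\tilde R_g(\Sigma_{(e,p)})=\Sigma_{(e,\tilde\varphi(g,p))}$, which indeed only needs the reduced action since leaves depend only on $\widetilde{\mathcal D}=\mathcal D_{\tilde\lambda}$), conclude $\mathcal M^*\cong\mathcal M$, and then deduce uniqueness through the maximal local action of Proposition~\ref{prop: globalizability implies maximally balanced domain of definition}. Both arguments are valid; yours buys an explicit description of the globalization as $\mathcal M^*$ at the cost of a longer uniqueness argument, while the paper's reduction to the classical uniqueness statement is shorter and avoids the detour through maximally balanced domains.
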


\begin{proof}
 Let $\mathcal D$ denote again the distribution on $\mathcal G\times\mathcal M$ 
 associated to $\lambda$. 
 By Lemma~\ref{lemma: underlying distribution} we have 
 $\widetilde{\mathcal D}=\mathcal D_{\tilde\lambda}$
 and by definition the leaves of $\mathcal D$ are the leaves of 
 $\mathcal D_{\tilde\lambda}$.
 By the preceding remark all leaves $\Sigma\subset G\times M$ are isomorphic 
 to $G$ and consequently simply-connected. 
 Therefore, the morphisms 
 $\Phi:\pi_1(\Sigma,(g,p))\cong\{1\}\rightarrow \mathrm{Diff}_p(\mathcal M)$
 are all trivial. Since $\tilde\lambda$ is global and thus in particular 
globalizable, $\tilde\lambda$ is univalent.
 This implies that $\lambda$ is globalizable using the equivalent 
characterizations of globalizability formulated in 
 Theorem~\ref{thm: condition for globalizability}. 
 Let $\mathcal M'$ be a globalization of $\lambda$ and 
 $\iota_\mathcal M:\mathcal M\rightarrow \mathcal M'$ the open embedding.
 Then $M'$ is a globalization of $\tilde\lambda$ and because $M'$ is the unique 
globalization of
 $\tilde\lambda$ 
 (cf. \cite{Palais}, Chapter III, Theorem XII,(4)) we have $M=M'$ and 
 $\tilde\iota_\mathcal M=\mathrm{id}_M$.
 Since $\iota_\mathcal M$ is an open embedding, this implies 
 $\iota_\mathcal M=\mathrm{id}_\mathcal M$ and $\mathcal M'=\mathcal M$.
\end{proof}

If the assumption on the simply-connectedness of $\mathcal G$ is dropped in the 
above theorem, there exist counterexamples to
the statement, see e.g. Example~\ref{ex: S^1-example}.
Also, as e.g. illustrated in Example~\ref{ex: C-example}, it is not enough for 
$\tilde\lambda$ to be globalizable.
We really need that $\tilde\lambda$ is global.

\begin{defi}
 Let $\lambda:\g\rightarrow\mathrm{Vec}(\mathcal M)$ an infinitesimal action.
 The set of points $p\in M$ such that there exists an even vector field 
 $X\in \g_0$ with $\lambda(X)(p)\neq 0$ is called the support of $\lambda$.
\end{defi}

\begin{rmk}
 The definition of the support of an infinitesimal action~$\lambda$ implies that 
the support of $\lambda$ coincides 
 with the support of the underlying infinitesimal action~$\tilde\lambda$.
\end{rmk}

In the classical case, we have the following two theorems 
on actions of simply-connected Lie groups.

\begin{thm}[see \cite{Palais}, Chapter III, Theorem XVIII]
 Let $G$ be a simply-connected Lie group and 
 $\tilde\lambda:\g_0\rightarrow\mathrm{Vec}(M)$ an infinitesimal
 action of $G$ on a manifold $M$. If the support of $\tilde\lambda$ is 
relatively compact in $M$,
 then $\tilde\lambda$ is global.
 
 In particular,
 any infinitesimal action of a simply-connected Lie group $G$ on a compact 
manifold $M$ is global.
\end{thm}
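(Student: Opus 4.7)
The plan is to combine two ingredients: (a) the relatively compact support of $\tilde\lambda$ forces every $\tilde\lambda(X)$ with $X\in\g_0$ to have a global flow on $M$, and (b) once all generators of $\g_0$ act by complete vector fields, simply-connectedness of $G$ implies globality of the infinitesimal action --- this is the classical analogue of Corollary~2, originally due to Palais.

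For the first step, I would note that by the definition of $\mathrm{supp}(\tilde\lambda)$, every $\tilde\lambda(X)$ with $X\in\g_0$ vanishes outside $\mathrm{supp}(\tilde\lambda)$, which has compact closure in $M$. Hence $\tilde\lambda(X)$ has compact support. A standard escape-lemma argument then shows completeness: a maximal integral curve leaving every compact set would eventually enter the open region on which $\tilde\lambda(X)=0$ and be constant there, contradicting maximality. Therefore $\phi^{\tilde\lambda(X)}:\R\times M\to M$ is globally defined for every $X\in\g_0$.

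For the second step, I would argue as follows. Considering $X\in\g_0$ as a vector field on $G\times M$ via $X\otimes\mathrm{id}^*$ and $\tilde\lambda(X)$ via $\mathrm{id}^*\otimes\tilde\lambda(X)$, the two summands commute and each has a global flow, so the combined field $X+\tilde\lambda(X)$ has a global flow given explicitly by $(g,p)\mapsto(\phi^X_t(g),\phi^{\tilde\lambda(X)}_t(p))$. Hence every leaf $\Sigma\subset G\times M$ of $\mathcal D_{\tilde\lambda}$ is invariant under these flows. The projection $\pi_G|_\Sigma:\Sigma\to G$ is a local diffeomorphism and, by concatenation of the flows (any short path in $G$ is a product of segments of one-parameter subgroups), has the path-lifting property. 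It follows that $\pi_G|_\Sigma$ is a covering map, and since $G$ is connected and simply-connected, it must be a bijection. Theorem~\ref{thm: classical globalizations}~(iii) then yields that $\tilde\lambda$ is globalizable. Moreover, the classical leaf-space description (the classical analogue of Definition~\ref{defi: definition of leaf space}) combined with the bijectivity of every $\pi_G|_\Sigma$ shows that every leaf contains a unique point of the form $(e,q)$, so the canonical embedding $M\hookrightarrow M^*$ is surjective, hence an isomorphism. Therefore $\tilde\lambda$ is global. The ``in particular'' assertion is immediate, since every subset of a compact manifold is relatively compact.

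The main obstacle I expect is the path-lifting step: although essentially classical, one must verify that the global flows of the $X+\tilde\lambda(X)$ on $G\times M$ lift arbitrary paths in $G$, which requires a Trotter-type decomposition showing that sufficiently short segments of any path in $G$ lie in the image of a flow of some $X\in\g_0$ (available because $G$ is connected and any neighborhood of $e$ generates $G$).
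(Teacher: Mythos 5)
Your argument is correct, but it is worth noting that the paper itself offers no proof of this statement: it is quoted verbatim from Palais (Chapter III, Theorem XVIII) as a classical black box, and is only \emph{used} (via Theorem~\ref{thm: actions of simply-connected Lie supergroups}) to deduce the super-version. So the comparison to make is with Palais's original route, which is genuinely different from yours. Palais argues that relatively compact support forces the local action to be \emph{uniform}, i.e.\ defined on a set of the form $U\times M$ for a fixed neighbourhood $U$ of $e$ (off a compact set nothing moves, and on the compact set one has a uniform time of existence), and then shows that a uniform local action of a simply-connected group extends to a global one. You instead observe that compact support makes every $\tilde\lambda(X)$ complete and then essentially re-prove Palais's Chapter~IV, Theorem~III (the other classical result quoted in Section~6): completeness of all the flows makes $\pi_G|_\Sigma:\Sigma\rightarrow G$ a surjective covering map of each leaf, simple connectivity makes it a bijection, condition $(iii)$ of Theorem~\ref{thm: classical globalizations} gives globalizability, and surjectivity of $\pi_G|_\Sigma$ forces every leaf to meet $\{e\}\times M$, so $M^*=M$. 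This route buys you both classical theorems of Section~6 at once and meshes better with the leaf-space formalism the paper actually develops; Palais's route avoids the covering-space argument but needs the separate ``uniform implies global'' lemma. Two small points of hygiene in your write-up: the escape-lemma step should conclude that an integral curve reaching the zero set of $\tilde\lambda(X)$ \emph{extends to all of} $\R$ by uniqueness (rather than ``contradicting maximality''); and the path-lifting step is cleanest if phrased via canonical coordinates of the second kind, $(t_1,\ldots,t_n)\mapsto\exp(t_1X_1)\cdots\exp(t_nX_n)g$, whose lift through any point of the fibre is furnished by the global flows of the $X_i+\tilde\lambda(X_i)$ and yields uniformly evenly-covered neighbourhoods --- one does not need to decompose arbitrary paths into one-parameter-subgroup segments.
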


\begin{thm}[see \cite{Palais}, Chapter IV, Theorem III]
 Let $\tilde\lambda:\g_0\rightarrow\mathrm{Vec}(M)$ be an infinitesimal action 
of a simply-connected Lie group $G$ on $M$. 
Suppose there exists a set of generators $\{X_i\}_{i\in I}$, $X_i\in \g_0$, of 
the Lie algebra $\g_0$ 
 such that the flow of each vector field $\tilde\lambda(X_i)$ is global. 
 Then the infinitesimal action $\tilde\lambda$ is global.
\end{thm}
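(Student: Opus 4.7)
The plan is to apply Theorem~\ref{thm: condition for globalizability} to the classical infinitesimal action $\tilde\lambda$ on $M$ (reading $\mathcal G = G$ and $\mathcal M = M$): it will suffice to establish univalence of $\tilde\lambda$ and then to check that the maximally balanced local action actually has domain $G \times M$. The driving observation is that each vector field $X_i + \tilde\lambda(X_i)$ on $G \times M$ has a globally defined flow, namely
$$(g,p) \longmapsto (\exp(tX_i)\, g,\, \phi^i_t(p)),$$
where $\phi^i_t$ is the global flow of $\tilde\lambda(X_i)$ on $M$ and $g \mapsto \exp(tX_i) g$ is the (always global) flow of the right-invariant vector field $X_i$ on $G$. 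Since these vector fields belong to $\mathcal D_{\tilde\lambda}$, their flows preserve every leaf $\Sigma \subset G \times M$.

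For each leaf $\Sigma$ the projection $\pi_G|_\Sigma : \Sigma \to G$ is a local diffeomorphism, because $\mathcal D_{\tilde\lambda}$ is transverse to the fibres of $\pi_G$ and has the same rank as $G$. Starting from a point $(e,p) \in \Sigma$ and concatenating finitely many of the global flows above, one reaches every point of the form $(\exp(t_k X_{i_k}) \cdots \exp(t_1 X_{i_1}),\, q)$ inside $\Sigma$. Because $\{X_i\}_{i \in I}$ generate $\g_0$ as a Lie algebra, the subgroup of $G$ consisting of all such products is a Lie subgroup with Lie algebra $\g_0$, hence open, and therefore equals $G$ by connectedness. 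So $\pi_G|_\Sigma$ is surjective.

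Next I would upgrade surjectivity to the covering property by path-lifting. Any path $\alpha : [0,1] \to G$ starting at $e$ is homotopic (with fixed endpoints) to a broken path along the one-parameter subgroups $\exp(\R X_i)$; each such segment lifts uniquely through $\Sigma$ using the global flow of the corresponding $X_i + \tilde\lambda(X_i)$, and compactness of $[0,1]$ lets one glue finitely many segment-lifts into a single lift. This identifies $\pi_G|_\Sigma$ as a covering map onto $G$. Since $G$ is simply-connected and $\Sigma$ is connected, this covering must be trivial, so $\pi_G|_\Sigma$ is a diffeomorphism for every leaf $\Sigma$; equivalently, $\tilde\lambda$ is univalent.

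With univalence in hand, Theorem~\ref{thm: condition for globalizability} furnishes a maximally balanced local action $\varphi : \mathcal W \to M$ inducing $\tilde\lambda$, and Lemma~\ref{lemma: whole leaf} gives $W_p = \pi_G(\Sigma_{(e,p)}) = G$ for every $p \in M$. Hence $W = G \times M$ and $\varphi$ is a global $G$-action, which proves the theorem. The main obstacle is the path-lifting step: one must verify that broken paths through the prescribed one-parameter subgroups approximate arbitrary paths in $G$ (this is exactly where the Lie-algebra generation hypothesis enters in an essential way) and that segment-by-segment lifts glue consistently, so that $\pi_G|_\Sigma$ really is a covering rather than just a surjective local diffeomorphism.
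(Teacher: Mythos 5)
The paper itself offers no proof of this statement: it is quoted as classical input from Palais (Chapter~IV, Theorem~III) and then only \emph{applied}, via Theorem~\ref{thm: actions of simply-connected Lie supergroups}, to deduce the super version. So the comparison here is against Palais, not against anything in the paper. Judged on its own terms, the outer shell of your argument is correct: the flow of $X_i+\tilde\lambda(X_i)$ is indeed globally $(g,p)\mapsto(\exp(tX_i)g,\phi^i_t(p))$, it preserves leaves, concatenation plus the fact that the $\exp(\R X_i)$ generate $G$ gives surjectivity of $\pi_G|_{\Sigma}$, and once injectivity of $\pi_G|_\Sigma$ (univalence) is also known, the chain Theorem~\ref{thm: condition for globalizability} $\rightarrow$ Proposition~\ref{prop: globalizability implies maximally balanced domain of definition} $\rightarrow$ Lemma~\ref{lemma: whole leaf} does force $W_p=\pi_G(\Sigma_{(e,p)})=G$ and hence a genuinely global action on $\mathcal M=M$ itself.

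The gap is the one you flag, but it is larger than ``broken paths approximate arbitrary paths''. As written, you never lift an arbitrary path $\alpha$; you lift a broken exponential path homotopic to it. For a map that is at this stage only a surjective local diffeomorphism this proves nothing: homotopy lifting and monodromy are \emph{consequences} of being a covering, not tools for establishing it. What is actually needed is a monodromy statement proved from scratch: the endpoint of the (always-existing, by completeness) lift of a broken exponential path from $e$ to $g$ depends only on the homotopy class of the path rel endpoints. Proving this requires lifting two-parameter families of broken exponential paths and running a connectedness argument on the parameter square, using at every stage that the segment lifts are globally defined; it also requires the local controllability statement that short broken exponential paths issuing from $g$ fill a neighbourhood of $g$ \emph{while staying inside a prescribed neighbourhood} (this is precisely where ``the $X_i$ generate $\g_0$ as a Lie algebra'' rather than span it is used). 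Note that because the $X_i$ only generate $\g_0$, the fields $X_i+\tilde\lambda(X_i)$ need not span $\mathcal D_{\tilde\lambda}$ pointwise, so $\pi_G|_\Sigma$ does not carry a complete parallelism and the standard Ehresmann-type lemma (``complete lifts of a global frame make an \'etale map a covering'') does not apply directly; your broken-path scheme is the right workaround, but the monodromy step is its entire mathematical content. Only after that step does simple connectivity of $G$ yield a well-defined section $G\rightarrow\Sigma$ and hence injectivity of $\pi_G|_\Sigma$. In short: correct plan, correct bookkeeping at both ends, but the central step of the theorem is still missing.
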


Applying Theorem~\ref{thm: actions of simply-connected Lie supergroups}, these 
results in the classical case can be directly 
carried over to the case of infinitesimal actions of simply-connected Lie 
supergroups on supermanifolds.

\begin{cor}
 Let $\mathcal G$ be a simply-connected Lie supergroup and 
 $\lambda:\g\rightarrow \mathrm{Vec}(\mathcal M)$ an
 infinitesimal action whose support is relatively compact in $M$. 
 Then the infinitesimal action $\lambda$ is global.
 
 In particular, any infinitesimal action of a simply-connected Lie supergroup on 
a supermanifold
 with compact underlying manifold
 is global.
\end{cor}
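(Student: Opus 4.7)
The plan is to chain together the two theorems immediately preceding the corollary. By the remark just before the statement, the support of $\lambda$ coincides with the support of the reduced infinitesimal action $\tilde\lambda:\g_0\rightarrow\mathrm{Vec}(M)$. Hence the hypothesis that the support of $\lambda$ is relatively compact in $M$ is exactly the hypothesis of the classical Palais result (Theorem XVIII of Chapter III in \cite{Palais}, quoted above): under this assumption, the reduced infinitesimal action $\tilde\lambda$ of the simply-connected Lie group $G$ on $M$ is global.

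Now I apply Theorem \ref{thm: actions of simply-connected Lie supergroups} to $\lambda$. Its hypotheses are precisely that $\mathcal G$ be simply-connected and that $\tilde\lambda$ be global, both of which are now in hand. The conclusion of that theorem is that $\lambda$ is globalizable and, moreover, that $\mathcal M$ itself is the unique globalization. Since a globalization $(\mathcal M', \varphi')$ with $\mathcal M' = \mathcal M$ is by definition a genuine (global) $\mathcal G$-action on $\mathcal M$ inducing $\lambda$, this shows that $\lambda$ is global.

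For the final clause, observe that if the underlying manifold $M$ is compact, then any subset of $M$ — in particular the support of $\lambda$ — is automatically relatively compact, so the first part of the corollary applies and yields that every such $\lambda$ is global. There is no real obstacle here: the work has already been done in Theorem \ref{thm: actions of simply-connected Lie supergroups} (which handled the passage from an existing classical global action to a global super action via the absence of holonomy on the simply-connected leaves) and in the cited classical result of Palais. The present corollary is simply the combination of these two inputs, made possible by the observation that taking ``support'' commutes with reduction to the underlying action.
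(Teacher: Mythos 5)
Your proposal is correct and is exactly the route the paper takes: the remark identifies the support of $\lambda$ with that of $\tilde\lambda$, Palais' classical Theorem XVIII makes $\tilde\lambda$ global, and Theorem~\ref{thm: actions of simply-connected Lie supergroups} then yields that $\mathcal M$ itself is the (unique) globalization, so $\lambda$ is global. The compact case is handled identically.
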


\begin{cor}
 Let $\lambda:\g\rightarrow\mathrm{Vec}(\mathcal M)$ be an infinitesimal action 
of a simply-connected Lie supergroup $\mathcal G$
 such that there exists a set of generators $\{X_i\}_{i\in I}$, $X_i\in \g_0$, 
of $\g_0$ such that 
 each vector field $\tilde\lambda(X_i)$ has a global flow.
 Then the infinitesimal action $\lambda$ is global.
\end{cor}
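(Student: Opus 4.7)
The plan is to reduce the statement to a combination of two results already at our disposal: the classical theorem of Palais (Chapter IV, Theorem III), stated just above, and Theorem~\ref{thm: actions of simply-connected Lie supergroups}. Since the Lie supergroup $\mathcal G$ is simply-connected, its underlying Lie group $G$ is by definition simply-connected as well, and the Lie algebra $\g_0$ is identified with the Lie algebra of right-invariant vector fields on $G$. The hypothesis furnishes generators $\{X_i\}_{i\in I}\subset\g_0$ of $\g_0$ such that each $\tilde\lambda(X_i)$ has a global flow on~$M$. This is exactly the hypothesis of the classical Palais theorem applied to the reduced infinitesimal action $\tilde\lambda:\g_0\rightarrow\mathrm{Vec}(M)$, from which I would conclude that $\tilde\lambda$ is global, i.e.\ there is a $G$-action on $M$ inducing $\tilde\lambda$.

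With globality of $\tilde\lambda$ established, I would invoke Theorem~\ref{thm: actions of simply-connected Lie supergroups}, whose hypotheses are now met: $\mathcal G$ is simply-connected and the reduced infinitesimal action is global. That theorem asserts not merely that $\lambda$ is globalizable but that $\mathcal M$ itself is the unique globalization; hence the open embedding $\iota_\mathcal M:\mathcal M\rightarrow \mathcal M'$ into any globalization is the identity, and $\lambda$ arises from a genuine $\mathcal G$-action on $\mathcal M$, i.e.\ $\lambda$ is global.

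There is no real obstacle here: the proof is a two-line chaining of the classical result with Theorem~\ref{thm: actions of simply-connected Lie supergroups}, and the only point worth noting is that the notion of simply-connectedness used in both statements (simply-connectedness of the underlying Lie group) agrees, so no compatibility issue arises. In this sense the corollary is the precise super-analogue of the classical Palais theorem, and Tuynman's earlier DeWitt-category result is a special case.
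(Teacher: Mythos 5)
Your proposal is correct and coincides with the paper's own argument: the classical Palais theorem (Chapter IV, Theorem III) applied to the reduced action $\tilde\lambda$ gives that $\tilde\lambda$ is global, and Theorem~\ref{thm: actions of simply-connected Lie supergroups} then yields that $\lambda$ is globalizable with $\mathcal M$ as its unique globalization, hence global. No gaps.
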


A slightly weaker version of this corollary, in a formulation for DeWitt 
supermanifolds, has been
proven, in a different way, in \cite{Tuynman}. The assumption there is that all 
even vector fields have global flows, and not only a set of generators.


\begin{thebibliography}{MSV93}

\bibitem[Ber87]{Berezin}
F.~A. Berezin, \emph{Introduction to superanalysis}, Mathematical Physics and
  Applied Mathematics, vol.~9, D. Reidel Publishing Co., Dordrecht, 1987,
  Translation edited by Dimitri Le{\u\i}tes.

\bibitem[GW13]{GarnierWurzbacher}
S.~Garnier and T.~Wurzbacher, \emph{Integration of vector fields on smooth and
  holomorphic supermanifolds}, Documenta Math. \textbf{18} (2013), 519--545.

\bibitem[HI97]{HeinznerIannuzzi}
P.~Heinzner and A.~Iannuzzi, \emph{Integration of local actions on holomorphic
  fiber spaces}, Nagoya Math. J. \textbf{146} (1997), 31--53.

\bibitem[KN63]{KobayashiNomizu}
S.~Kobayashi and K.~Nomizu, \emph{Foundations of differential geometry. {V}ol
  {I}}, Interscience Publishers, a division of John Wiley \& Sons, New
  York-Lond on, 1963.

\bibitem[Kos77]{Kostant}
B.~Kostant, \emph{Graded manifolds, graded {L}ie theory, and prequantization},
  Differential geometrical methods in mathematical physics ({P}roc. {S}ympos.,
  {U}niv. {B}onn, {B}onn, 1975), Springer, Berlin, 1977, pp.~177--306. Lecture
  Notes in Math., Vol. 570.

\bibitem[Kos83]{Koszul}
J.-L. Koszul, \emph{Graded manifolds and graded {L}ie algebras}, Proceedings of
  the international meeting on geometry and physics ({F}lorence, 1982)
  (Bologna), Pitagora, 1983, pp.~71--84.

\bibitem[Lee03]{Lee}
J.~M. Lee, \emph{Introduction to smooth manifolds}, Graduate Texts in
  Mathematics, vol. 218, Springer-Verlag, New York, 2003.

\bibitem[Le{\u\i}80]{Leites}
D.~A. Le{\u\i}tes, \emph{Introduction to the theory of supermanifolds}, Uspekhi
  Mat. Nauk \textbf{35} (1980), no.~1(211), 3--57, 255.

\bibitem[MSV93]{MonterdeSanchez}
J.~Monterde and O.~A. S{\'a}nchez-Valenzuela, \emph{Existence and uniqueness of
  solutions to superdifferential equations}, J. Geom. Phys. \textbf{10} (1993),
  no.~4, 315--343.

\bibitem[Oni98]{Onishchik}
A.~L. Onishchik, \emph{Non-abelian cohomology and supermanifolds}, Tech.
  report, 1998.

\bibitem[Pal57]{Palais}
R.~S. Palais, \emph{A global formulation of the {L}ie theory of transformation
  groups}, Mem. Amer. Math. Soc. No. \textbf{22} (1957).

\bibitem[Tuy13]{Tuynman}
G.~M. Tuynman, \emph{Integration infinitesimal (super) actions},
  arXiv:1302.2823v1 (2013).

\bibitem[Var04]{Varadarajan}
V.~S. Varadarajan, \emph{Supersymmetry for mathematicians: an introduction},
  Courant Lecture Notes in Mathematics, vol.~11, New York University Courant
  Institute of Mathematical Sciences, New York, 2004.

\bibitem[Vis11]{Vishnyakova}
E.~G. Vishnyakova, \emph{On complex {L}ie supergroups and split homogeneous
  supermanifolds}, Transform. Groups \textbf{16} (2011), no.~1, 265--285.

\end{thebibliography}
\end{document}